\documentclass[leqno]{amsart}
\usepackage{amsmath,amssymb,amsthm,amsfonts,graphicx,color}
\usepackage{amssymb}
\makeatletter
\def\theequation{\thesection.\@arabic\c@equation}
\makeatother

\long\def\red#1{{\color{red}#1}}
\long\def\blue#1{{\color{black}#1}}

\long\def\comment#1{\marginpar{\raggedright\small$\bullet$\ #1}}

%\textwidth=6.5in \oddsidemargin=-0.0in \evensidemargin=0.0in
%\baselineskip=
%\textheight 8 in

\newcommand{\ttt}{\tilde }

\newcommand{\nn}{ {\nabla}  }
\newcommand{\inn}{  \quad\hbox{in } }
\newcommand{\onn}{  \quad\hbox{on } }

\newcommand{\pp}{ {\partial} }

\newcommand{\ww}{{\tt w}}
\newcommand{\www}{{\bf w}  }

\newcommand{\A}{\mathcal{A}}
\newcommand{\vp}{\varphi}

\newcommand{\cJ}{\mathcal{J}}
\newcommand{\tJ}{{\mathcal{J}_0}}
\newcommand{\J}{ \mathbb J}
\newcommand{\cB}{\mathcal{B}}
\newcommand{\cT}{\mathcal{T}}

\newcommand{\cN}{{\mathcal{N}}}

\newcommand{\NNN}{ \mathbf N}

\newcommand{\R} {\mathbb R}
\newcommand{\cuad}{{\sqcap\kern-.68em\sqcup}}

\newcommand{\dist}{{\rm dist}\, }
\newcommand{\foral}{\quad\mbox{for all}\quad}
\newcommand{\ve}{\varepsilon}

\newcommand{\be}{\begin{equation}}
\newcommand{\ee}{\end{equation}}

\newcommand{\la}{\lambda}

\newcommand{\uu }{{\bf u}}
\newcommand{\equ}[1]{{(\ref{#1})}}

%%%%%%%%%%%%%%%%%%%%%%%%%%%%

\newcommand{\tth}{{\tt h}}

\newcommand{\tL}{{\tt L}}
\newcommand{\tB}{{\tt B}}
\newcommand{\cL}{{\mathcal  L}}
\newcommand{\cF}{{\mathcal  F}}

\newcommand{\tg}{{\tt g}}

\newcommand{\tf}{{\tt f}}
\newcommand{\NN}{{\tt N}}
\newcommand{\tS}{{\tt S}}

%%%%%%%%%%%%%%%%%%%%%%%%%%%%%%%%%%

\renewcommand{\theequation}{\thesection.\arabic{equation}}

\newtheorem{lemma}{Lemma}[section]

\newtheorem{teo}{Theorem}[section]
\newtheorem{proposition}{Proposition}[section]
\newtheorem{corollary}{Corollary}[section]
\newtheorem{remark}{Remark}[section]
\newcommand{\bremark}{\begin{remark} \em}
\newcommand{\eremark}{\end{remark} }

%\renewcommand{\subjclassname}{\textup{2000} Mathematics Subject Classification}
%%%%%%%%%%%%%%%%%%%%%%%%%%%%%%%%%%%%%%%%%%%%%%%%%%%%%%%%%%%%%%%%%%%%%%
%%%%%%%%%%%%%%%%%%%%%%%%%%%%%%%%%%%%%%%%%%%%%%%%%%%%%%%%%%%%%%%%%%%%%%
\begin{document}

\title[De Giorgi conjecture in dimension $N\ge 9$]
{On De Giorgi conjecture in dimension $N\ge 9$}
%%%%%%%%%%%%%%%%%%%%%%%%%%%%%%%%%%%%%%%%%%%%%%%%%%%%%%%%%%%%%%%%%%%%%%
\author{Manuel del Pino}
\address{\noindent M. del Pino - Departamento de
Ingenier\'{\i}a  Matem\'atica and CMM, Universidad de Chile,
Casilla 170 Correo 3, Santiago,
Chile.}
\email{delpino@dim.uchile.cl}

\author{Michal Kowalczyk}
\address{\noindent  M. Kowalczyk - Departamento de
Ingenier\'{\i}a  Matem\'atica and CMM, Universidad de Chile,
Casilla 170 Correo 3, Santiago,
Chile.}
\email{kowalczy@dim.uchile.cl}

\author{Juncheng Wei}
\address{\noindent J. Wei - Department of Mathematics, Chinese University of Hong Kong, Shatin, Hong Kong.
} \email{wei@math.cuhk.edu.hk}

%%%%%%%%%%%%%%%%%%%%%%%%%%%%%%%%%%%%%%%%%%%%%%%%%%%%%%%%%%%%%%%%%%%%%%
%%%%%%%%%%%%%%%%%%%%%%%%%%%%%%%%%%%%%%%%%%%%%%%%%%%%%%%%%%%%%%%%%%%%%%
\keywords{De Giorgi conjecture, Allen-Cahn equation, minimal graph, counterexample, infinite-dimensional Liapunov-Schmidt reduction}
\subjclass{ 35J25, 35J20, 35B33, 35B40}
\begin{abstract}
A celebrated conjecture due to De Giorgi states that any bounded solution of the  equation
$\Delta u + (1-u^2) u = 0 \  \hbox{in} \ \R^N $
with  $\pp_{y_N}u >0$ must be such that its level sets
$\{u=\la\}$ are all  hyperplanes, {\em \bf at least} for dimension $N\le 8$.  A counterexample for $N\ge 9$ has long been believed to exist.
Based on a minimal graph $\Gamma$ which is not a hyperplane, found by Bombieri, De Giorgi and Giusti in $\R^N$, $N\ge 9$, we prove  that for any
small $\alpha >0$ there is a bounded solution $u_\alpha(y)$ with $\pp_{y_N}u_\alpha >0$, which resembles
 $ \tanh \left ( \frac t{\sqrt{2}}\right ) $,
where $t=t(y)$ denotes a choice of signed distance to the blown-up minimal graph $\Gamma_\alpha := \alpha^{-1}\Gamma$.
This solution constitutes a counterexample to De Giorgi conjecture for $N\ge 9$.
\end{abstract}
%%%%%%%%%%%%%%%%%%%%%%%%%%%%%%%%%%%%%%%%%%%%%%%%%%%%%%%%%%%%%%%%%%%%%%
\date{\today}\maketitle

\tableofcontents
%%%%%%%%%%%%%%%%%%%%%%%%%%%%%%%%%%%%%%%%%%%%%%%%%%%%%%%%%%%%%%%%%%%%%%
%%%%%%%%%%%%%%%%%%%%%%%%%%%%%%%%%%%%%%%%%%%%%%%%%%%%%%%%%%%%%%%%%%%%%%

\setcounter{equation}{0}
\section{Introduction}
This paper deals with entire solutions of the Allen-Cahn equation
\be \Delta u + (1-u^2) u = 0 \quad \hbox{in} \ \R^N .
\label{ac}\ee Equation \equ{ac} arises in the gradient theory of
phase transitions by Cahn-Hilliard and Allen-Cahn, in connection
with the energy functional in bounded domains $\Omega$ \be J_\ve
(u) = \frac \ve 2\int_\Omega|\nabla u|^2   + \frac 1{4\ve}
\int_\Omega (1-u^2)^2 ,\quad \ve>0 \label{energy}\ee whose
Euler-Lagrange equation corresponds precisely to a $\ve$-scaling
of equation \equ{ac} in the expanding domain $\ve^{-1}\Omega$. The
theory of $\Gamma$-convergence developed in the 70s and 80s,
showed a  deep connection between this problem and the theory of
minimal surfaces, see  Modica, Mortola, Kohn, Sternberg,
\cite{ks,modica1,modica2,modicamortola,sternberg}. In fact, it is
known that  for a family $u_\ve$  of local minimizers of $u_\ve$
with uniformly bounded energy must converge, up to subsequences,
in $L^1$-sense  to a function of the form $\chi_E - \chi_{E^c}$
where $\chi$ denotes characteristic function, and $\partial E$ has
minimal perimeter. Thus the interface between the stable {\em
phases} $u=1$ and $u=-1$, represented by the sets $[u_\ve =\la]$
with $|\la|<1$ approach a minimal hypersurface, see Caffarelli
and C\'ordoba \cite{caffarellicordoba,caffarellicordoba2}  (also R\"{o}ger and Tonegawa \cite{rt}) for stronger convergence and uniform regularity results on these level
surfaces.

\medskip
The above described connection led  E. De Giorgi \cite{dg} to
formulate in 1978 the following celebrated conjecture concerning
entire solutions of equation \equ{ac}.

\medskip
{\em De Giorgi's Conjecture: Let $u$ be a bounded solution of equation $\equ{ac}$ such that $\pp_{x_N}u >0$. Then the level sets
$\{u=\la\}$ are all  hyperplanes, {\em \bf at least} for dimension $N\le 8$. }

\medskip
Equivalently, $u$ depends on just one Euclidean variable so that it must have the form
\be
u(x) = \tanh \left (\,\frac{ x\cdot a -b }{\sqrt{2}}\,\right ),
\label{formu}\ee
for some $b\in \R$ and some $a$ with $|a|=1$ and $a_N>0$.  We observe that the function
$
w(t) =  \tanh \left (  t /{\sqrt{2}} \right )
$ is the unique solution of
 the one-dimensional problem,
$$ w'' + (1-w^2) w = 0, \quad w(0)= 0 \quad w(\pm \infty) = \pm 1\ . $$

\medskip
The monotonicity assumption in $u$ makes the scalings $u(x/\ve)$
local minimizers in suitable sense of $J_\ve$, moreover the level
sets of $u$ are all graphs. In this setting, De Giorgi's
conjecture is a natural, parallel statement to {\em Bernstein
theorem} for minimal graphs, which in its most general form, due
to Simons \cite{simons}, states that any minimal hypersurface  in
$\R^N$, which is also a graph of a function of $N-1$ variables,
must be a hyperplane if $N\le 8$. Strikingly, Bombieri, De Giorgi
and Giusti \cite{bdg} proved that this fact is false in dimension
$N\ge 9$. This  was most certainly the reason for the particle
{\em at least} in De Giorgi's statement.

Great advance in De Giorgi conjecture has been achieved in recent years, having been fully established  in dimensions $N=2$ by Ghoussoub and Gui \cite{gg} and for $N=3$ by Ambrosio and Cabr\'e  \cite{cabre}. More recently Savin  \cite{savin} established its validity for $4\le N\le 8$  under the following additional assumption (see \cite{aac} for a discussion of this condition)  
\begin{equation}
\label{gibbs}
\lim_{x_N \to \pm \infty} u(x^{'}, x_N)= \pm 1.
\end{equation}

Condition (\ref{gibbs}) is related to the  so-called Gibbons'
Conjecture:

\medskip
{\em Gibbons' Conjecture:  Let $u$ be a bounded solution of equation $\equ{ac}$ satisfying
\begin{equation}
\label{gibbs1} \lim_{x_N \to \pm \infty} u(x^{'}, x_N)= \pm 1, \
\mbox{uniformly in} \ x'.
\end{equation}
 Then the level sets $\{u=\la\}$ are all  hyperplanes.}

\medskip
Gibbons' Conjecture has been proved in all dimensions  with
different methods by Caffarelli and C\'ordoba \cite{caffarellicordoba2}, Farina \cite{Fa}, Barlow, Bass and Gui
\cite{BBG}, and Berestycki, Hamel, and Monneau \cite{BHM}.  In references \cite{caffarellicordoba2, BBG} it is proven that the conjecture is true for any solution that has one level set which is a globally Lipschitz graph. If the
uniformity in (\ref{gibbs1}) is dropped, a counterexample can be
built using the method by  Pacard and the authors in \cite{dkpw},
so that Savin's result is nearly optimal.

\medskip
A counterexample to De Giorgi's Conjecture in  dimension $N\ge 9$ has long been believed to exist, but the issue has remained elusive.  Partial progress in this direction has been achieved by  Jerison and Monneau \cite{jerison}
and by Cabr\'e and Terra \cite{cabreterra}. See the survey article by Farina and Valdinoci \cite{FV}.

\bigskip
In this paper we disprove De Giorgi's conjecture in dimension $N\ge 9$ by constructing a bounded solution of equation \equ{ac} which is monotone in one direction whose level sets are not hyperplanes.
The basis of our construction is a minimal graph different from a hyperplane  built by Bombieri, De Giorgi and Giusti \cite{bdg}.
In this work a solution of the zero mean curvature equation

\begin{equation}
\nabla \cdot \left (\, \frac{\nabla F}{\sqrt{1+ |\nabla F|^2 }}\right ) = 0\quad \hbox{in } \R^{N-1}
\label{0mc}\end{equation}
different from a linear affine function was built, provided that $N\ge 9$, in other words a non-trivial minimal graph in $\R^{N}$.
 Let us observe that if $F(x')$ solves equation \equ{0mc} then so does
$$
F_\alpha (x') := \alpha^{-1}F(\alpha x'), \quad \alpha>0 , $$ and
hence \be \Gamma_\alpha = \{ (x',x_N )\ / x'\in \R^{N-1},\ x_N=
F_\alpha ( x') \} \label{ga}\ee is a minimal graph in $\R^N$.

Our main result states as follows:
\begin{teo}\label{main}
Let $N\ge 9$.
There is a solution  $F$ to equation $\equ{0mc}$ which is not a linear affine function, such that for all $\alpha>0$ sufficiently small there exists a bounded solution $u_\alpha(y)$ of equation $\equ{ac}$ such that $u_\alpha (0) = 0$,
$$\partial_{y_N} u_\alpha (y)  >0 \foral y \in \R^N, $$
%$$\lim_{y_N \to - \infty} u(Y',y_N) = -1,\quad \lim_{y_N \to + \infty} u(Y',y_N) = +1 \foral Y'\in \R^{N-1}$$
and
\be
|u_\alpha (y)|\  \rightarrow \ 1  \quad\hbox{ as }\    {\rm dist}\, (y, \Gamma_\alpha )\  \rightarrow \ + \infty .
\label{prop}\ee
uniformly in small $\alpha>0$, where $\Gamma_\alpha$ is given by $\equ{ga}$.
\end{teo}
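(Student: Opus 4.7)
The plan is to carry out an infinite-dimensional Lyapunov-Schmidt reduction in the spirit suggested by the keywords. First I set up Fermi coordinates $(y',t)$ in a tubular neighborhood of $\Gamma_\alpha$, where $y' \in \Gamma_\alpha$ and $t$ is the signed distance oriented so that $t$ increases with $y_N$. In these coordinates the Laplacian reads
\[
\Delta = \partial_{tt} + H_{\Gamma_{\alpha,t}}\, \partial_t + \Delta_{\Gamma_{\alpha,t}},
\]
where $\Gamma_{\alpha,t}$ is the parallel surface at distance $t$. Since $\Gamma_\alpha$ is minimal, $H_{\Gamma_{\alpha,t}} = t\,|A_{\Gamma_\alpha}|^2 + O(t^2|A_{\Gamma_\alpha}|^3)$. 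The natural first approximation is $w(t) = \tanh(t/\sqrt 2)$, glued to the constants $\pm 1$ by a cutoff in $t$ outside the tubular neighborhood. Because $\Gamma_\alpha = \alpha^{-1}\Gamma$, all curvature quantities carry a factor of $\alpha$, so the error produced by this approximation is of size $O(\alpha)$ in the relevant weighted norms.

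I would then refine the ansatz by allowing the interface to be displaced: set
\[
u(y) = w\bigl(t - h(y')\bigr) + \phi(y', t),
\]
where $h: \Gamma_\alpha \to \R$ is smooth and $\phi$ is a small correction satisfying the orthogonality
\[
\int_\R \phi(y', t)\, w'(t - h(y'))\, dt = 0 \foral y' \in \Gamma_\alpha .
\]
Substituting into $\equ{ac}$ and linearizing around $w$ produces an operator whose restriction to each normal fiber has one-dimensional kernel spanned by $w'$. The orthogonality condition allows the projection orthogonal to $w'$ to be solved for $\phi = \phi[h]$ in a weighted $L^\infty$ space, Lipschitz in $h$ and small in $\alpha$. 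Projecting what remains onto $w'(t-h)$ yields, at leading order, a Jacobi-type equation on the minimal graph,
\[
J_{\Gamma_\alpha} h := \Delta_{\Gamma_\alpha} h + |A_{\Gamma_\alpha}|^2\, h = G_\alpha(h),
\]
with $G_\alpha(h)$ collecting the higher-order terms.

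The heart of the argument, and the main analytic obstacle, is to invert $J_{\Gamma_\alpha}$ with estimates uniform in $\alpha$ small. This is exactly where the fine structure of the Bombieri-De Giorgi-Giusti example is essential: the super-/subsolution construction in \cite{bdg} yields a positive function on $\Gamma$ that is a supersolution to the linearized minimal surface operator, with a definite polynomial decay rate governed by the Simons cone to which $\Gamma$ is asymptotic. Rescaling by $\alpha$ produces a positive barrier for $J_{\Gamma_\alpha}$, and a maximum-principle comparison with this barrier gives the a priori estimates needed to invert $J_{\Gamma_\alpha}$ in a weighted $L^\infty$ class whose weight matches the barrier. The smallness of the right-hand side $G_\alpha(h)$, which again carries a factor of $\alpha$ via the scaling, then allows a contraction mapping argument to produce a small $h$, and hence a genuine solution $u_\alpha$ of $\equ{ac}$.

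The qualitative properties of $u_\alpha$ then follow from the ansatz together with soft maximum-principle arguments. Boundedness and the asymptotic behavior $\equ{prop}$ are built into the construction, while $u_\alpha(0)=0$ can be arranged by translating $\Gamma$ in the $y_N$-direction before performing the blow-down. The monotonicity $\partial_{y_N} u_\alpha > 0$ is the most delicate point: to leading order $\partial_{y_N} u_\alpha = w'(t)\,(\nu\cdot e_N) + O(\alpha)$, which is strictly positive on $\Gamma_\alpha$ because that surface is a graph over $\R^{N-1}$ with upward-pointing normal. Since $\psi := \partial_{y_N} u_\alpha$ satisfies the linearized equation $-\Delta \psi = f'(u_\alpha)\psi$ with $f(u)=(1-u^2)u$, and $f'(u_\alpha)\to -2 < 0$ in the far field where $|u_\alpha|\to 1$, the strong maximum principle combined with a barrier in the far field rules out sign changes, giving $\partial_{y_N}u_\alpha > 0$ throughout $\R^N$ and completing the construction.
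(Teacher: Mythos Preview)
Your outline is essentially the one the paper follows: Fermi coordinates, ansatz $w(t-h)+\phi$ with orthogonality to $w'$, reduction to a Jacobi-type equation $J_{\Gamma_\alpha}h=G_\alpha(h)$, and monotonicity by maximum principle. The architecture is right, and the monotonicity argument is exactly what the paper does in its final section.

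Where your proposal is genuinely thin is the step you flag as ``the heart of the argument.'' You assert that the barriers from \cite{bdg} let you invert $J_{\Gamma_\alpha}$ in a weighted class, and that the right-hand side is small enough to close a contraction. In fact the leading contribution to $G_\alpha(h)$ is the term $\sum_i\kappa_i^3$, which decays only like $r^{-3}$ along $\Gamma_\alpha$; this is exactly one power short of what a direct barrier argument for the Jacobi operator can absorb (the natural supersolutions give invertibility only against data decaying faster than $r^{-4}$). The paper closes this gap by three ingredients you do not mention: (i) a sharp refinement of the BDG existence result showing $F=F_0+O(r^{-\sigma})$ for an explicit model $F_0=r^3 g(\theta)$, which allows comparison of $J_\Gamma$ with an \emph{approximate} Jacobi operator $J_{\Gamma_0}$ whose supersolutions $r^\sigma t^{\sigma_1}/\sqrt{1+|\nabla F_0|^2}$ are constructed by hand in adapted $(t,s)$-coordinates; (ii) the odd symmetry $F(u,v)=-F(v,u)$, which forces $\sum\kappa_i^3$ to vanish on the cone $u=v$ and hence to gain an extra factor $g(\theta)^{\sigma_1}$, just enough to be captured by those supersolutions; and (iii) a gluing between the outer solution built from $J_{\Gamma_0}$ and an inner solution for $J_\Gamma$. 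Without (i)--(iii), or a substitute for them, the contraction does not close.

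Two small corrections: the Fermi-coordinate Laplacian has $-H_{\Gamma_{\alpha,t}}\partial_t$, not $+$; and $u_\alpha(0)=0$ in the paper comes for free from the odd symmetry $u_\alpha(u,v,x_9)=-u_\alpha(v,u,-x_9)$, not from a translation.
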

Property \equ{prop} implies that the $0$ level set of $u_\alpha$ lies inside  the region ${\rm dist}\, (y, \Gamma_\alpha ) < R$ for some
fixed $R>0$ and all small $\alpha$, and hence it cannot  be a hyperplane.  Much more accurate information on the solution will
be drawn from the proof. The idea is simple. If $t(y)$ denotes a choice of signed distance to the graph $\Gamma_\alpha$ then, for
a small fixed number $\delta >0$, our solution looks like
$$ u_\alpha (y) \sim \tanh \left ( \frac t{\sqrt{2}}\right ) \quad\hbox{if } |t| < \frac \delta\alpha .
$$

\bigskip
As we have mentioned, a key ingredient of our proof is the construction of a non-trivial solution of equation \equ{0mc} carried out in \cite{bdg}.
 We shall derive  accurate information on its asymptotic behavior, which in particular will help us to find global estimates for its derivatives. This is a crucial step since
 the mean curvature operator yields in general poor gradient estimates. In addition we shall derive a similar theory of
existence and uniform estimates for the Jacobi operator around the minimal graph thus found. This work is carried out in sections \S 2 and \S 3.
In \S 4 a suitable first approximation for a solution is built, around which
 we linearize and carry out an infinite-dimensional Lyapunov Schmidt reduction, which eventually reduces the full problem to one of solving a nonlinear, nonlocal equation which involves as a main term the Jacobi operator of the minimal graph.
Schemes of this type have been successful in capturing solutions to singular perturbation elliptic problems in various settings, while in finding concentrating solutions on higher dimensional objects many difficulties arise. For the Allen-Cahn equation
in compact situations this has been done in the works del Pino, Kowalczyk and Wei \cite{dkw1}, Kowalczyk \cite{k}, Pacard and Ritore \cite{pacard}. In particular in \cite{pacard} solutions concentrating on a minimal submanifold of a compact Riemannian manifold are found through an argument that shares some similarities with the one used here.    In the non-compact settings for nonlinear Schr\"odinger equation solutions   have been constructed in del Pino, Kowalczyk and Wei \cite{dkw}, del Pino, Kowalczyk, Pacard and Wei \cite{dkpw},  and Malchiodi \cite{malchiodi3}.  See also  Malchiodi and Montenegro \cite{malchiodi1,malchiodi2}. We should emphasize here the importance of our earlier work \cite{dkpw} in the context of the present paper, and especially the idea of constructing solutions concentrating on a family of unbounded  sets, all coming from a suitably  rescaled basic set. While in \cite{dkpw} the concentration set was determined by solving a Toda system and the rescaling was the one appropriate to this system, here the concentration set is the minimal graph and the rescaling is the one that  leaves invariant the mean curvature operator.

\medskip
Let us observe that a counterexample to De Giorgi conjecture in $N=9$ induces one in $\R^N = \R^9\times \R^{N-9}$ for any $N> 9$,  by just extending the solution in $\R^9$ to the remaining variables in a constant manner.
For this reason, in what follows of this paper we shall assume $N=9$.

\section{Preliminaries and  an outline of the argument}\label{sec preliminaries}

In this section after introducing  the necessary notations we will outline our argument.

Let us recall that for any $\alpha >0$ the  minimal  surface $\Gamma_\alpha$ found in \cite{bdg} is given as the graph of  the function  $F_\alpha$ \blue{which is in a natural way obtained as scaling of the function $F$, a solution of (\ref{0mc}), namely $F_\alpha(x')=\alpha^{-1} F(\alpha x')$. In addition the function has $F$ the following properties} 
%\comment{slight change}:
\begin{align}
\begin{aligned}
x_9=F(u,v), \quad u&=(x_1^2+\dots+x_4^2)^{1/2}, \quad v=(x_{5}^2+\dots+x_8^2)^{1/2},
\end{aligned}
\label{per 9}
\end{align}
and additionally satisfies
\begin{align}
F(u,v)=-F(v,u).
\label{per 10}
\end{align}
We observe that $\Gamma_\alpha$ is an embedded manifold and so it has a natural differential structure inherited from $\R^9$. The first and second covariant derivatives on $\Gamma_\alpha$ will be denoted, respectively by:
\begin{align*}
\nabla_{\Gamma_\alpha}\quad\mbox{and}\ \nabla_{\Gamma_\alpha}^2.
\end{align*}

%Because of the symmetry assumption (\ref{per 10}) we expect the solution to (\ref{2.1})  also
%to be symmetric.
%Because of the symmetry of $F_\alpha$ we will call a function $f$ an admissible a function of
%$(u,v$ if
%\begin{align*}
%f(u, v)=-f(v, u).
%\end{align*}
In order to introduce the ansatz we fix an orientation  of  $\Gamma_\alpha$ and introduce the  Fermi coordinates in a neighborhood of $\Gamma_\alpha$:
\begin{align}
\begin{aligned}
x\mapsto (y,z),\ &\mbox{where}\ x=y+zn(y), \quad y=\big(y_1, \dots, y_8, F_\alpha(u',v')\big)\in \Gamma_\alpha,\\
& u'=\big((y_1)^2+\dots+(y_4)^2\big)^{1/2}, \quad v'=\big((y_{5})^2+\dots+(y_8)^2\big)^{1/2},
\end{aligned}
\label{pre 1}
\end{align}
and  $n(y)$ is the unit normal to $\Gamma_\alpha$ at $y$.  Let us denote:
\begin{align}
\Pi_{\R^8}(y)=(y_1, \dots, y_8), \quad r_\alpha(y)= \sqrt{1+\alpha^2|\Pi_{\R^8}(y)|^2}, \quad y\in \Gamma_\alpha.
\label{pre def r}
\end{align}
%In the sequel we will use the stretched coordinate $t=z/\ve$.
We notice that the Fermi coordinates are well  defined in
\blue{the following  neighborhood of $\Gamma_\alpha$:
\begin{align}
{\mathcal U}_{\theta_0}=\{x\in \R^9\mid x=y+zn(y), y\in \Gamma_\alpha,  |z|<\frac{\theta_0r_\alpha}{\alpha}\},
\label{def ndelta1}
\end{align}
where $\theta_0>0$ is a fixed small number independent of $\alpha$. This fact will be proven later 
 (see section \ref{sec grad est}, formula  (\ref{coord 2}) and the argument that follows).}
%functions of its arguments.   By definition we have
%\begin{align*}
%z(u, v, x_9)=-z(v,u, -x_9).
%\end{align*}
%hence $z$ and $t$ are  admissible functions.
One advantage of working with the Fermi coordinates  is the fact that the Laplacian in $\R^9$ has a particularly simple expression in these coordinates. \blue{To explain this let us denote  $H_{\Gamma_{\alpha, z}}$ the mean curvature of the surface $\Gamma_{\alpha, z}$ obtained from $\Gamma_{\alpha}$  after translating it by $z$ in the direction of the normal. Then we have
\begin{align}
\Delta&=\Delta_{\Gamma_{\alpha,z}}+\partial_z^2-H_{\Gamma_{\alpha, z}}\partial_z,
\label{prelim 60}
\end{align}
where $\Delta_{\Gamma_{\alpha,z}}$ is the Laplace-Beltrami operator on $\Gamma_{\alpha, z}$.}

Intuitively, near $\Gamma_\alpha$ the solution of (\ref{ac}) we are after should resemble a function of the form
\begin{align}
u(x)\sim  w(z-h_\alpha), \ \mbox{where}\ h_\alpha=h_\alpha(y), \quad y\in \Gamma_\alpha,
\label{pre 1a}
\end{align}
and  $w(t)=\tanh\Big(\frac{t}{\sqrt{2}}\Big)$ is the heteroclinic solution of one dimensional version of (\ref{ac}). \blue{The introduction of the  new, uknown function $h_\alpha$ reflects the fact that while we expect the profile of the solution in the direction transversal to $\Gamma_\alpha$  to be similar to the one dimensional heteroclinic, however its zero level set is not expected to coincide  with $\Gamma_\alpha$ but to be its small perturbation.} 
Later on we will see that this perturbation, represented by $h_\alpha$ is actually a small quantity,  of order  $o(1)$,  as $\alpha\to 0$. 
\blue{Finally we observe that we can identify  $h_\alpha$ as a function on $\Gamma_\alpha$ with a function defined on the original surface $\Gamma$ via the formula:
\begin{align*}
h(\alpha y)=h_\alpha({y}), \quad y\in \Gamma_\alpha,
\end{align*} 
where $h\colon \Gamma\to \R$ is a given function. This identification will be very useful in the sequel and used without further reference whenever it does not cause confusion. }
%It is natural to  look for the solution of (\ref{2.1}) in  the class of admissible functions of  the form
%$u(\tu_2,\tu_2,\tx)=U(\tu_1,\tu_2,\tx)+\phi(\tu_1, \tu_2, \tx)$ where $U(\tu_1,\tu_2,\tx)$ is a suitable extension of
%$W(t-h)$.  Thus

Now, let $\chi$ 
be a smooth cutoff function such that $\chi(s)=1$, $-1\leq s\leq 1$, and
$\chi(s)=0$, $|s|>2$.
Given the  hypersurface $\Gamma_\alpha$ and the local Fermi coordinates $x\mapsto (y,z)\in \blue{{\mathcal U}}_{\theta_0}$,  as above we set:
\begin{align}
{\tt w}(x)=\begin{cases} \chi\big(\frac{4\alpha z}{\theta_0r_\alpha})\big(w(z-h_\alpha)+1\big)-1, &\quad z<0,\\
\chi\big(\frac{4\alpha z}{\theta_0r_\alpha})\big(w(z-h_\alpha)-1\big)+1, &\quad 0\leq z,
\end{cases}.
\label{pre 7}
\end{align}
Notice that  function $\ww$ is well defined  in an expanding (as a function of $r_\alpha$) neighborhood ${\mathcal U}_{\theta_0}$ of $\Gamma_\alpha$. \blue{To complete this definition we need to extend  
$\ww(x)$ as a smooth function to the whole $\R^9$. Noting that $\R^9\setminus{\mathcal U}_{\theta_0}$ consists of two disjoint components (since $\Gamma_\alpha$ is a graph) we define  $\ww$ to be the smooth function which satisfies (\ref{pre 7}) and takes only values $\pm 1$ in  $\R^9\setminus{\mathcal U}_{\theta_0}$. We will use the same symbol $\ww$ for the extended function.}

We will look for the solution of (\ref{ac}) in the form:
\begin{align}
u_\alpha=\ww(x)+\phi(x).
\label{pre 7a}
\end{align}
Substituting in (\ref{ac}) we get for  the function $\phi$
\begin{align}
\Delta\phi+f'(\ww)\phi=S[\ww]+N(\phi),
\label{pre 8}
\end{align}
where
\begin{align}
S[\ww]=-\Delta\ww-f(\ww), \quad N(\phi)=-[f(\ww+\phi)-f(\ww)-f'(\ww)\phi], \quad f(\ww)=\ww(1-\ww^2).
\label{pre 5}
\end{align}
For future references let us denote as well:
\begin{align}
\cL(\phi)=\Delta\phi+f'(\ww)\phi.
\label{def l1}
\end{align}
The remaining part of this paper is devoted to solving (\ref{pre 8}) and in particular to showing that
$\cL$ has a uniformly (in small $\alpha$) bounded inverse in a suitable function space.  To explain the theory we will need let us observe that locally, that is near $\Gamma_\alpha$, for $\alpha$ small $\cL$ resembles the following operator
\begin{align*}
L(\phi)=\Delta_{\Gamma_\alpha}\phi+\partial_{zz}\phi+f'(w)\phi,
\end{align*}
where $\Delta_{\Gamma_\alpha}$ is the Laplace-Beltrami operator on $\Gamma_\alpha$.  \blue{This follows from the fact that $\ww(x)\sim w(z-h_\alpha)$, and also (\ref{prelim 60}) since $\Delta_{\Gamma_{\alpha, z}}\sim \Delta_{\Gamma_\alpha}$ and $H_{\Gamma_{\alpha, z}}\sim H_{\Gamma_\alpha}$.} Immediately we notice that function $w_z(z-h_\alpha)$ \blue{satisfies,
\begin{align*}
L\big(w(z-h_\alpha)\big)&=-\Delta_{\Gamma_\alpha}h_\alpha w_{z}(z-h_\alpha)+|\nabla_{\Gamma_\alpha}h_\alpha|^2 w_{zz}(z-h_\alpha)\\
&=o(1),
\end{align*}}
and consequently we do not expect  to find a uniformly bounded  inverse of  $\cL$ without introducing  some restriction on its range. In this paper we deal with this difficulty using a version of infinite dimensional Lyapunov-Schmidt  reduction (c.f \cite{dkw,dkw1,dkpw}). The essence of this method is to introduce a function $c(y)$,
$y\in \Gamma_\alpha$ and consider the following problem:
\begin{align}
\begin{aligned}
L(\phi)&=S[\ww]+N(\phi)+c(y) w_z(z-h_\alpha),\quad (y,z)\in \Gamma_\alpha\times\R,\\
\int_\R\phi(y,z)w_z(z-h_\alpha)\,dz&=0, \quad \forall y\in \Gamma_\alpha.
\end{aligned}
\label{pre 9}
\end{align}
Recall that the ansatz $\ww$ depends on, still undetermined, function $h_\alpha$. Solving (\ref{pre 9})  for a given $h_\alpha$ and then adjusting $h_\alpha$ in such a way that 
\begin{align}
c(y;h_\alpha)=0, \quad \forall y\in \Gamma_\alpha,
\label{pre 10}
\end{align}
we get the solution of (\ref{pre 8}). 
Actually, the following extra steps are needed to solve (\ref{pre 8}): (1) {\it gluing} the local (inner) solution of (\ref{pre 9}) and a suitable  outer solution; (2) a fixed point argument;  but  the main point of the method is to solve  (\ref{pre 9}).  Equation (\ref{pre 10}), called here the {\it reduced problem}, is a nonlocal PDE for $h_\alpha$ and its solvability is in itself a nontrivial step extensively treated in this paper (sections \ref{sec superl} and \ref{sec sred}).

\subsection{Improvement of the initial approximation}\label{sec imp}

Let us consider more carefully $S[\ww]$.  Near the surface $\Gamma_\alpha$ (say in the set 
${\mathcal U}_{\theta_0/4}$), where the Fermi coordinates are well defined and where $\ww=w$ we have, using (\ref{prelim 60}):
\begin{align}
\begin{aligned}
S[\ww]&=-\Delta_{\Gamma_{\alpha,z}}\ww-\partial_z^2\ww+H_{\Gamma_{\alpha, z}}\partial_z\ww-f(\ww)\\
&=-\Delta_{\Gamma_{\alpha,z}}w+H_{\Gamma_{\alpha, z}}\partial_z w.
\end{aligned}
\label{imp 1}
\end{align}
As we will see later the first term above is of relatively small size. The second term  is of the leading order and it has to be treated separately.
To see this we \blue{use Taylor expansion of $H_{\Gamma_{\alpha,z}}$ around $z=0$:
\begin{align}
H_{\Gamma_{\alpha,z}}(y)=z|A_{\Gamma_\alpha}(y)|^2+z^2 {\mathcal R}_{\alpha}(y,z), \quad y\in \Gamma_\alpha,
\label{blue 1}
\end{align}
where $|A_{\Gamma_\alpha}(y)|$ is the norm of the second fundamental form on $\Gamma_\alpha$ and ${\mathcal R}_\alpha$ is the remainder in the Taylor's expansion.  Here, for future references, we observe that 
\begin{align}
H_{\Gamma_{\alpha,z}}=\sum_{i=1}^8\frac{\kappa_i}{1-z\kappa_i}, 
\label{hgammaz}
\end{align}
where $\kappa_i$ denote the principal curvatures of $\Gamma_\alpha$. From this (\ref{blue 1}) follows immediately.
Next, using that $\ww(x)\sim w(z-h_\alpha)$ in ${\mathcal U}_{\theta_0}$, we get setting $\bar z=z-h_\alpha$:}
\begin{align}
\begin{aligned}
H_{\Gamma_{\alpha, z}}\partial_z w(z-h_\alpha)
&\sim (\bar z+h_\alpha)|A_{\Gamma_\alpha}|^2\partial_{\bar z}w
%+(\bar z+h_\alpha)^2{\bf A}_{1\alpha}\partial_{\bar z}w
\\
&\sim \bar z|A_{\Gamma_\alpha}|^2\partial_{\bar z}w.
%+\frac{O(\alpha^2|h_\alpha|+\alpha^3r_\alpha^{-1})}{r_\alpha^2}.
\end{aligned}
\label{imp 2}
\end{align}
\blue{This formal calculation is justified be the fact that 
\begin{align}
|A_{\Gamma_\alpha}(y)|^2\sim r_\alpha^{-2}(y)
\label{blue 2}
\end{align}
and we expect that all terms carrying $h_\alpha$ are, besides being small in $\alpha$ are in addition decaying in $r_\alpha$}. We observe that:
\begin{align}
\int_\R\bar z|A_{\Gamma_\alpha}|^2w_{\bar z}^2\,d\bar z=0,
\label{imp 3}
\end{align}
hence there exists a unique solution $w_1$ of the problem
\begin{align}
\partial^2_{\bar z}w_1+f'(w)w_1=\bar z|A_{\Gamma_\alpha}|^2\partial_{\bar z}w, \quad \bar z\in \R,
\label{imp 4}
\end{align}
which is explicitly given by:
\begin{align}
w_1(\bar z)=-|A_{\Gamma_\alpha}|^2 w'(\bar z)\int_0^{\bar z}\frac{d\sigma}{(w'(\sigma))^2}\int_{-\infty}^\sigma \eta(w'(\eta))^2\,d\eta.
\label{imp 5}
\end{align}
We define now 
\begin{align}
{\ww}_1(x)=\chi\big(\frac{4\alpha z}{\theta_0r_\alpha})w_1(\bar z).
\label{imp 6}
\end{align}
Function $\ww_1$ gives an improvement of the initial approximation, and in some sense it represents the  first term in the asymptotic expansion of $\phi$ in (\ref{pre 8}). \blue{Let us explain this. Going back to (\ref{imp 1})--(\ref{blue 1}) we see, again formally, that the error of the new approximation is:
\begin{align}
\begin{aligned}
S[\ww+\ww_1]&=S[\ww]+S[\ww_1]-[f(\ww+\ww_1)-f(\ww)-f(\ww_1)]\\
&\sim S[\ww]-\partial^2_{\bar z}w_1-f'(w)w_1\\
&\sim (\Delta_{\Gamma_\alpha}h_\alpha+|A_{\Gamma_\alpha}|^2 h_\alpha)w_{\bar z}+\bar z^2{\mathcal R}_{\alpha}(y,0)w_{\bar z}.
\end{aligned}
\label{blue 3}
\end{align}
In  (\ref{blue 3}) we have neglected  those terms that are  decaying fast in $r_\alpha$, which means here faster than $r_\alpha^{-3}$. This agrees with   our expectation that  we should have ${\mathcal R}_{\alpha}(y,0)\sim r_\alpha^{-3}$. In turn, this will imply that, if we write $u_\alpha=\ww+\ww_1+\phi$, then this "new" $\phi\sim r_\alpha^{-3}$, while $\ww_1\sim r_\alpha^{-2}$, which apparently is a technical, but rather crucial point in our analysis.

Now, let us recall that $w_{\bar z}$ is an element of the "approximate" kernel of the operator $L$ appearing in  (\ref{pre 9}) and thus the problem of bounded solvability of (\ref{pre 9}) depends, roughly speaking,  on the orthogonality  of the right hand side of (\ref{pre 9}) and the function $w_{\bar z}$. This in turn be achieved if 
\begin{align}
\int_\R[(\Delta_{\Gamma_\alpha}+|A_{\Gamma_\alpha}|^2) h_\alpha+\bar z^2{\mathcal R}_{\alpha}(y,0)]w^2_{\bar z}\,d\bar z\approx 0,
\label{blue 4}
\end{align}
which is equivalent to  (\ref{pre 10}). 
Let us now summarize what is needed  in order to reduce the full nonlinear problem  (\ref{pre 9}) to the  {\it reduced problem} (\ref{blue 4}) and solve (\ref{ac}) at the end. 
\begin{itemize}
\item[(i)] The linearized operator $L$ has a bounded inverse in the space of functions satisfying the orthogonality condition in (\ref{pre 9}).
\item[(ii)]
Problem of the form:
\begin{align}
(\Delta_{\Gamma_\alpha}+|A_{\Gamma_\alpha}|^2) h_\alpha=f_\alpha\sim O(r_\alpha^{-3}),
\label{blue 5}
\end{align}
given on the manifold $\Gamma_\alpha$ can be solved in a suitable function space whose norm takes into account the decay of its elements in $r_\alpha$. 
\end{itemize}
The rest of this paper is devoted to addressing (i) and (ii) above. In fact, since these two issues are not quite independent it is convenient to first treat (ii) and then later deal with (i). }
One of the main steps required to carry out the plan outlined above is  a refinement of the existence result of Bombieri-De Giorgi and Giusti \cite{bdg} (section 3). In addition we need to find precise  decay estimates for the minimal graph $F_\alpha$ and its derivatives (up to order $3$)  which amounts to a refinement of a  result of Simon \cite{simon 1} (section 4).

\setcounter{equation}{0}
\section{The minimal surface equation}
In this section we will consider only one fixed minimal graph, denoted here by $F$, since as we have pointed out $\Gamma_\alpha$ is obtained as a graph of $F_\alpha(x')=\alpha^{-1}F(\alpha x')$, $x'\in \R^8$.
Thus, we consider the mean
curvature equation in $\mathbb{R}^8$
\begin{equation}\label{mc1a}
  \sum_{i=1}^8  \partial_{x_i} \left (\, \frac{ F_{x_i} }{\sqrt{1+ |\nabla F|^2}}\right ) = 0\quad \hbox{in } \R^8 .
\end{equation}
Bombieri, De Giorgi and Giusti \cite{bdg} found that this equation possesses a non-constant  entire solution
if $N\ge 8$, therefore a minimal graph different from a hyperplane exists in  dimensions 9 or higher.
The solution found in \cite{bdg} enjoys some simple symmetries and also is a function of the variables $(u,v)$ defined  above.
%For $x =(x_1,\ldots , x_8) \in \R^8 =\R^4\times \R^4$ we set
%$$
%u= \sqrt{ x_1^2+\cdots +x_4^2 }, \quad  v= \sqrt{ x_5^2+\cdots + x_8^2 }, \quad
%$$
%We seek for a solution of equation  \equ{mc1} in the form $F= F(u,v)$.
It is straightforward to check that
the mean curvature operator  written in terms of these variables  becomes
\be
H[F]:= \frac{1}{ (uv)^3} \nabla \cdot \left ( \frac{ (uv)^3 \nabla F }{\sqrt{1+|\nabla F|^2}} \right),\quad \nabla F=(F_u, F_v),
\label{56.1}\ee
while the equation (\ref{mc1a}) reads
\be
\frac 1{u^3}\, \partial_u \left ( \frac { u^3 F_u }{ \sqrt{ 1+ |\nabla F|^2 } }\right )
\ +\
\frac 1{v^3}\, \partial_v\left ( \frac { v^3 F_v }{ \sqrt{ 1+ |\nabla F|^2}} \right )\ = \ 0.
\label{eqmc}\ee

\medskip
Since the solution in \cite{bdg} satisfies
$$F(u, v) = -F(v , u)\quad\hbox{if }u <  v,$$
and  in particular $F= 0$ on the cone  $u = u$, therefore it is sufficient to consider (\ref{eqmc}) in the region (see figure \ref{fig:two}).
\begin{align}
T=\{(u,v)\mid u>0, v>0, u<v\}.
\label{def sector}
\end{align}
\begin{figure}
\vspace{0.75cm} 
\resizebox{8.0cm}{8.0cm}
{\includegraphics[angle=0]{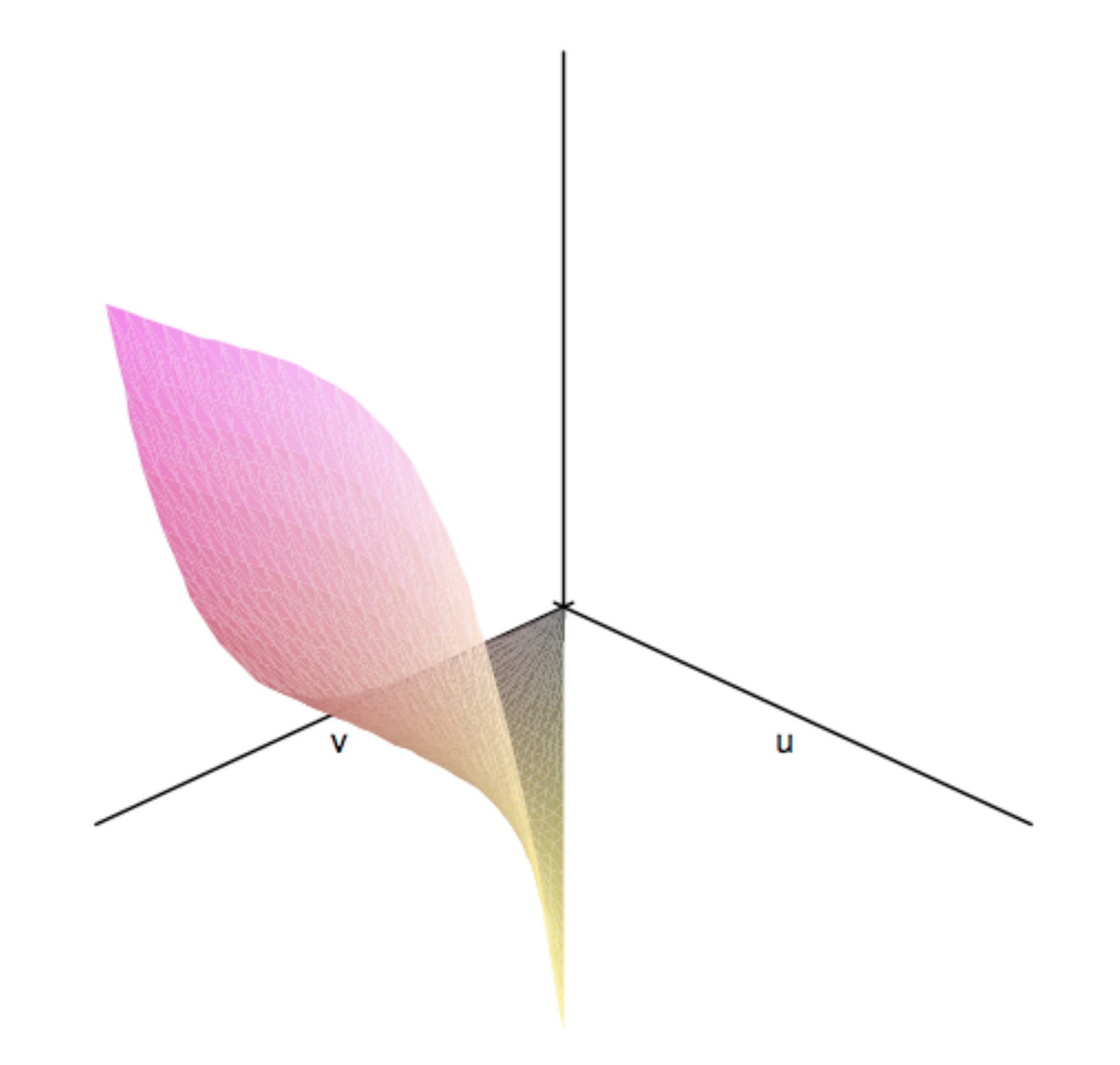}}
\put(-232,58){$\theta=\frac{\pi}4$}
\put(-109,33){$\theta=\frac{\pi}2$}
\put(-158,38){$T$}
\caption{\small{Schematic view of the function $F(u,v)$ representing  $\Gamma$ in the sector $T=\{u>0,v>0,u<v\}$.}} \label{fig:two}
\end{figure}

Let us introduce polar coordinates in $T$, setting
$$
u = r\cos\theta, \quad v = r\sin \theta,\quad \theta\in \big(\frac{\pi}{4},\frac \pi 2\big),
$$
so that $r= |\uu|$, $\uu=(u,v)$. First, we will  show that the solution in \cite{bdg} can be described at main order as
$$
F(r,\theta) \sim r^3 g(\theta ) \quad\hbox{as } r\to \infty ,
$$
where $g$ satisfies $g(\frac \pi 4 )= 0$, $g_\theta(\frac \pi 4) > 0$, $g_\theta(\frac \pi 2) = 0$. Intuitively  $g(\theta)$ resembles
$-\cos 2\theta$, $\pi/4\leq \theta\leq \pi/2$. In the sequel we will denote $F_0=r^3g(\theta)$.

Second, we introduce  coordinates $(s,t)$ in $T$ which are adapted to $F_0$ and play a fundamental role in this paper.
%It is of fundamental importance for our purpose to understand invertibility properties of the linearized
%operator  $H'(F_0)$ where $F_0 = r^3g(\theta )$, in other words
%$$
%H'(F_0) [\phi]= L(\phi) = \nabla \cdot \left (\,
%\frac{\nabla \phi }{\sqrt{1+ |\nabla F_0|^2 }}- \frac{(\nabla F_0 \cdot \nabla \phi )\nabla F_0}
%{ (1+ |\nabla F_0|^2 )^{\frac 32} }\right ).
%$$
%While we will eventually restrict ourselves to functions $\phi$ that enjoy the symmetries of $F_0$, we shall reexpress the above
%operator in a system of coordinates in $\R^8$, suitably adapted to the graph, in which the operator $H'(F_0)$ can be interpreted in a simpler way.
%Let us introduce a set of 8 coordinates to parametrize points  $\uu \in \R^8$ as follows.
%First we introduce coordinates $t,s$ that describe the magnitudes $u_1,u_2$.

%\setcounter{equation}{0}
\subsection{Equation for $g$}
Since we  expect
$$
F(u,v)\sim F_0(u,v)=r^3 g(\theta), \quad r \gg 1,
$$
therefore it is reasonable to require that $F_0$ should be a solution of
\begin{align*}
\nabla\cdot \Big(\frac{\nabla F_0}{|\nabla F_0|}\Big)=0, \quad \nabla =(\partial_u, \partial_v).
\end{align*}
Assuming that $F_0=r^3g(\theta)\geq 0$ in the sector $T$ we get the following equation for
the positive function $g(\theta)$
\begin{equation}
\label{9}
\frac{ 21 \sin^3 (2\theta) g}{\sqrt{ 9 g^2 + g_\theta^2}} + \Bigl( \frac{\sin^3 (2\theta) g_\theta}{ \sqrt{  9 g^2 + g_\theta^2}}\Bigr)_\theta=0,
\quad \theta\in (\frac \pi 4, \frac\pi 2),
\end{equation}
with the boundary conditions
\begin{equation}
\label{10}
g (\frac{\pi}{4})=0,\quad g_\theta(\frac \pi 2 )=0.
\end{equation}
The boundary conditions (\ref{10}) follow from the symmetries of $F$.

Let us observe that if $g(\theta)$ is a solution of (\ref{9}) then so is $C g(\theta)$, for any constant $C$. The following lemma proves the existence of solutions to (\ref{9}).

\begin{lemma}\label{lemma min 1}
Problem (\ref{9}) has a solution such that:
\begin{align}
g(\theta) &\geq 0,  \quad g_{\theta\theta}(\theta)\leq 0,\quad 
g_\theta(\theta)\geq 0,  \label{min 4g}
\end{align}
and the last inequality is strict for $\theta\in [\frac{\pi}{4}, \frac{\pi}{2})$.
\end{lemma}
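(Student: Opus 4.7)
The plan is to prove existence of $g$ together with the qualitative properties by reducing equation (\ref{9}) to a first-order nonlinear ODE via its scale invariance $g\mapsto \lambda g$, solving the resulting singular two-point boundary value problem by a shooting argument, and finally verifying monotonicity and concavity directly from the reduced equation.

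Setting $v(\theta):=g_\theta/g$ wherever $g>0$, a direct substitution converts (\ref{9}) into the first-order equation
\begin{equation*}
\frac{9\,v_\theta}{9+v^2}+6\cot(2\theta)\,v+21 = 0, \qquad \theta\in\Big(\frac{\pi}{4},\frac{\pi}{2}\Big).
\end{equation*}
The conditions $g(\pi/4)=0$ and $g_\theta(\pi/2)=0$ (together with the expected $g(\pi/2)>0$) become the singular endpoint data $v\to+\infty$ at $\theta=\pi/4^+$ and $v\to 0^+$ at $\theta=\pi/2^-$. A formal power-series analysis at each endpoint singles out a unique admissible asymptotic: near $\pi/4$, smoothness of $g$ selects the leading behavior $v(\theta)=(\theta-\pi/4)^{-1}-\tfrac{13}{3}(\theta-\pi/4)+O\bigl((\theta-\pi/4)^3\bigr)$, arising from the root $A=1$ of the indicial equation $4A^2-7A+3=0$ (the other root $A=3/4$ corresponds to a non-smooth $g\sim(\theta-\pi/4)^{3/4}$ and is discarded), while near $\pi/2$ the unique admissible behavior is $v(\theta)=\tfrac{21}{4}(\pi/2-\theta)+O\bigl((\pi/2-\theta)^3\bigr)$. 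A contraction argument in a weighted Banach space promotes each formal expansion to a genuine local solution.

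I then continue the local solution near $\pi/4$ into the interior by standard ODE theory and show its maximal extension reaches $\pi/2$ and matches the local solution built there. For this I employ an invariant-region argument in the $(\theta,v)$ phase plane: explicit sub- and supersolutions (for example, an upper barrier $\bar v(\theta)=K\cot(2\theta-\pi/2)$ and a lower barrier motivated by the $\pi/2$-asymptotic) trap $v$ in a compact subset of $\{v>0\}$, ruling out both blow-up and premature vanishing. Once $v$ is built on the whole interval, set $g(\theta):=\exp\bigl(\int_{\pi/3}^\theta v(s)\,ds\bigr)$, normalized so that $g(\pi/4)=0$, which holds because the singularity $v\sim(\theta-\pi/4)^{-1}$ forces $\int_{\pi/4}^{\pi/3}v(s)\,ds=-\infty$. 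Monotonicity $g_\theta=vg>0$ on $[\pi/4,\pi/2)$ is immediate (strict since $v>0$ there). For concavity, $g_{\theta\theta}=(v_\theta+v^2)g$ and the ODE yield
\begin{equation*}
v_\theta+v^2 = -21-\tfrac{4}{3}v^2-\tfrac{2}{3}v\cot(2\theta)(9+v^2),
\end{equation*}
which on $(\pi/4,\pi/2)$ is nonpositive by a direct sign analysis combining $\cot(2\theta)\le 0$ on the interval with the established endpoint asymptotics.

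The main obstacle is the invariant-region step: since the scale-reduction has exhausted the only free parameter of the original problem, the matching of the two singular endpoint solutions cannot be achieved by tuning a shooting parameter and must instead be established by topological or comparison arguments. This is the ODE analog of the Bombieri--De Giorgi--Giusti existence theorem for the minimal graph, and demands a careful choice of barriers adapted to the singular structure of the reduced ODE at both endpoints.
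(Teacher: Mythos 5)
Your reduction to the first-order ODE for $v=g_\theta/g$, the trapping barriers of the form $-K\tan(2\theta)$ (your $K\cot(2\theta-\pi/2)$ is the same function), and the recovery of $g$ by exponentiation are all exactly the paper's strategy. One thing you overcomplicate: because the condition $\psi(\pi/2)=0$ together with boundedness already selects a unique trajectory of the singular ODE, no two-sided "matching" of local solutions at $\pi/4$ and $\pi/2$ is required; the paper simply starts that one trajectory at $\pi/2$ and uses the comparison functions $-2\tan(2\theta)$ and $-11\tan(2\theta)$ to show it survives to $\pi/4$ with $\psi\to+\infty$.

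Two concrete gaps. First, the concavity step has a sign error. On $(\pi/4,\pi/2)$ one has $\cot(2\theta)<0$ and $v>0$, so in your identity $v_\theta+v^2=-21-\tfrac{4}{3}v^2-\tfrac{2}{3}v\cot(2\theta)(9+v^2)$ the last term is nonnegative; it opposes the claimed nonpositivity rather than supporting it, and the endpoint asymptotics do not control it in the interior of the interval. To close this you need a quantitative upper bound on $v$ valid on all of $(\pi/4,\pi/2)$, which is precisely what the paper's barrier $\psi<-11\tan(2\theta)$ and its refinement $\psi\le-2\tan(2\theta)+A(-\tan2\theta)^\eta$, $\eta<1$, provide. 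Second, the strict inequality $g_\theta(\pi/4)>0$ is not "immediate" from $g_\theta=vg$: at $\theta=\pi/4$ this is an $\infty\cdot0$ indeterminacy, since $g\to0$ while $v\to\infty$. If $v\sim A(\theta-\pi/4)^{-1}$ with $A>1$ then $g\sim c(\theta-\pi/4)^{A}$ and $g_\theta\to0$, so one must prove that the actual trajectory realizes the leading coefficient $A=1$ and, more precisely, that $v+2\tan(2\theta)$ is integrable near $\pi/4$. Your indicial computation identifies $A=1$ as the smooth branch, but showing that the solution determined by the $\pi/2$ data lies on that branch is a genuine step — this is what the refined upper barrier in the paper accomplishes — and the formal expansion plus "contraction argument" you invoke is exactly the place where the rigor is needed.
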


%{\bf (Mike: could you please  check your arguments  here?)}
\proof{}
If $g$ is a solution to (\ref{9}) then function
\begin{align*}
\psi(\theta)=\frac{g_\theta(\theta)}{g(\theta)},\quad g(\theta)\neq 0,
\end{align*}
satisfies the following equation:
\begin{align}
9\psi'+(9+\psi^2)[21+6\cot(2\theta)\psi]=0.
\label{min 4a}
\end{align}
Our strategy is to solve (\ref{min 4a}) first and then find the function  $g$.
To this end we will look for a solution of (\ref{min 4a}) in the interval
$I=(\pi/4,\pi/2)$ with
\begin{align}
\psi(\pi/2)=0.
\label{min 4b}
\end{align}
%\red{Replace with a supersolution which should be $\psi=-\frac{21}{6}\tan(2\theta)$ and local existence  for odes} 
%Notice that if $\psi$ is a solution of (\ref{min 4a})--(\ref{min 4b}) then we have the following formula:
%\begin{align}
%\begin{aligned}
%\psi(\theta)&=\int_{\theta}^{\pi/2}\frac{21\sin^3(2t)\,dt}{{\sin^3(2\theta)}}
%+\int_{\theta}^{\pi/2}\frac{7\psi^2\sin^3(2t)\,dt}{{3\sin^3(2\theta)}}\\
%&\quad +\int_{\theta}^{\pi/2}\frac{2\psi^3\cot(2t)\sin^3(2t)\,dt}{3\sin^3(2\theta)}
%\end{aligned}
%\label{min 4c}
%\end{align}
%Conversely, one can use (\ref{min 4c}) to show the existence of a unique solution of
%(\ref{min 4a})--(\ref{min 4b}).
\blue{In order to define the function $g$  we also need  $\psi$ to be defined and positive in the whole interval $(\frac{\pi}{4},\frac{\pi}{2}]$ and $\lim_{\theta\to \frac{\pi}{4}^+}\psi(\theta)=+\infty$. Let $(\theta^*,\frac{\pi}{2}]$, $\frac{\pi}{4}\leq \theta^*$ be the maximal interval for which the solution of (\ref{min 4a}) exists.

We set $\psi_+(\theta)=-11\tan(2\theta)$. Then we have
\begin{align*}
9\psi_+'+(9+\psi_+^2)[21+6\cot(2\theta)\psi_+]& < 0, \quad \theta\in (\frac{\pi}{4},\frac{\pi}{2}],\\
\psi_+(\frac{\pi}{2})=0=\psi(\frac{\pi}{2})&,\quad \psi_+'(\frac{\pi}{2})=-22<-21=\psi'(\frac{\pi}{2}).
\end{align*}
}
Substituting $\psi_-(\theta)=-2\tan(2\theta)$ for $\psi$ in  (\ref{min 4a}) we get:
\begin{align}
9\psi_-'+(9+\psi_-^2)[21+6\cot(2\theta)\psi_-] >  0.
\label{min 4d}
\end{align}
We have $\psi(\pi/2)=\psi_-(\pi/2)=0$ and, from (\ref{min 4a}),
\begin{align*}
\psi'(\pi/2)=-21<-4= \psi_-'(\pi/2).
\end{align*}
\blue{From this we get that the maximal solution of (\ref{min 4a}) satisfies:
\begin{align}
\psi_+(\theta)=-11\tan(2\theta)>\psi(\theta)\geq \psi_-(\theta)=-2\tan(2\theta)\geq 0, \quad \theta\in (\theta^*,\pi/2),
\label{min 4e}
\end{align}
and that $\theta^*= \frac{\pi}{4}$. }
Let us now define
\begin{align}
g(\theta)=\exp\Big\{-\int_\theta^{\pi/2}\psi(t)\,dt\Big\},
\label{min 4f}
\end{align}
where $\psi$ is the unique solution of  (\ref{min 4a})--(\ref{min 4b}).  Clearly we have
$g_\theta(\pi/2)=0$ and from (\ref{min 4e}) it follows $g(\pi/4)=0$.  Thus $g$ defined in (\ref{min 4f}) is a solution of (\ref{9})--(\ref{10}).

\blue{We have  $g_\theta>0$ in $(\frac{\pi}{4},\frac{\pi}2)$, since $g_\theta=g\psi$. 
To show that  $g_\theta(\frac{\pi}{4})>0$ we will improve the upper bound on $\psi$.  Let us define:
\begin{align*}
\psi_1 =-2\tan(2\theta)+\tilde\psi, \quad{\mbox{where}}\ \tilde\psi=A\big(-\tan(2\theta)\big)^{\eta},
\end{align*}
and $\frac{2}{3}<\eta<1$, $A>1$ are to be chosen. 
Direct calculations give:
\begin{align*}
&9\psi_1'+(9+\psi_1^2)[21+6\psi_1\cot(2\theta)]=9\tilde\psi'\cos^2(2\theta)+45\cos^2(2\theta)\\
&\quad +6\tilde\psi\cot(2\theta)[4+5\cos^2(2\theta)]+36\tilde\psi\sin(2\theta)(-\cos(2\theta))\\
&\quad+9\tilde\psi^2\cos^2(2\theta)+6\tilde\psi^2\cot(2\theta)[4\sin(2\theta)(-\cos(2\theta))+\tilde\psi\cos^2(2\theta)].
\end{align*}
Using the definition of $\tilde\psi$, after some calculation we find that the last expression is negative for $\theta\in (\frac{\pi}{4},\frac{\pi}{2})$ when 
\begin{align*}
0&>-18A\eta+45(-\tan(2\theta))^{1-\eta}\cos^2(2\theta)-6A[4+5\cos^2(2\theta)]+36A\sin^2(2\theta)\\&\quad-15A^2(-\tan(2\theta))^{1+\eta}\cos^2(2\theta)-6A^3(-\tan(2\theta))^{1-2\eta}\sin(2\theta)(-\cos(2\theta)),
\end{align*}
which can be achieved if $\frac{2}{3}<\eta<1$ and $A$ is chosen sufficiently large. Since $\eta<1$ it follows that 
\begin{align*}
\psi(\theta)\leq \psi_1(\theta), \quad \theta\in(\frac{\pi}{4},\frac{\pi}{2}),
\end{align*}
hence, for certain constant $C>0$, 
\begin{align}
-C\cos(2\theta)\leq g(\theta)\leq -\cos(2\theta), \quad \theta\in [\frac{\pi}{4},\frac{\pi}{2}].
\label{min 4h}
\end{align}
In fact the inequalities in (\ref{min 4g}) are  strict for $\theta\in (\frac{\pi}{4}, \frac{\pi}{2})$. It follows in addition that:
\begin{align*}
g_\theta(\theta)\geq C\sin(2\theta), \quad \theta\in[\frac{\pi}{4}, \frac{\pi}{2}].
\end{align*}
This shows in particular $g_\theta>0$ in $[\frac{\pi}{4}, \frac{\pi}{2})$. The remaining estimate for $g_{\theta\theta}$ follows from the second order equation for $g$.}
\qed

%Note that $g$ is uniquely determined only after a multiple of constant. Later, we shall choose
% such constant.

Given the function  $g$  let us define:
\begin{equation}
\label{11}
\cos  \phi = \frac{ 3 g}{ \sqrt{ 9 g^2 +g_\theta^2}}, \quad \sin  \phi = \frac{ g_\theta }{ \sqrt{ 9 g^2 +g_\theta^2}}.
\end{equation}
We see from Lemma \ref{lemma min 1} that $\phi$ \blue{satisfies}:
\begin{equation}
\label{12}
 \phi'+ 7 + 6 \cot (2 \theta) \tan \phi=0, \qquad \phi(\frac \pi 4 )=\frac{\pi}{2},\quad \phi (\frac{\pi}{2})=0.
\end{equation}
We  need the following lemma:
\begin{lemma}
It holds
\begin{equation}
\label{fact1}
\phi' (\frac{\pi}{4})=-3,\quad   \phi' (\frac{\pi}{2})=-\frac{7}{4}, \quad \phi' (\theta) >-3 \ \mbox{for} \ \theta \in (\frac{\pi}{4}, \frac{\pi}{2}).
\end{equation}
\end{lemma}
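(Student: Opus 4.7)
\smallskip

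\noindent\textbf{Plan.} All three assertions will flow from the ODE $\phi' + 7 + 6\cot(2\theta)\tan\phi = 0$, combined with the identity $\tan\phi = \psi/3$ where $\psi = g_\theta/g$, and the two-sided bounds on $\psi$ established in Lemma \ref{lemma min 1}. For $\phi'(\pi/2) = -7/4$ I would set $\epsilon = \pi/2 - \theta$ and use $\cot(2\theta) \sim -1/(2\epsilon)$, together with the Taylor expansion $\tan\phi \sim \phi \sim -\phi'(\pi/2)\epsilon$ coming from $\phi(\pi/2) = 0$, so that the indeterminate product $6\cot(2\theta)\tan\phi$ converges to $3\phi'(\pi/2)$ as $\epsilon \to 0^+$. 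Passing to the limit in the ODE yields $\phi'(\pi/2) + 7 + 3\phi'(\pi/2) = 0$, hence $\phi'(\pi/2) = -7/4$.

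For the interior strict bound $\phi' > -3$ on $(\pi/4, \pi/2)$ the plan is purely algebraic. First I would upgrade the non-strict lower bound in Lemma \ref{lemma min 1} to the strict inequality $\psi(\theta) > -2\tan(2\theta)$ on the open interval, by a standard comparison for the Riccati equation for $\psi$ against the subsolution $\psi_- = -2\tan(2\theta)$: if $\psi$ and $\psi_-$ met at an interior $\theta^\ast$, the ODE for $\psi$ together with the strict subsolution inequality (\ref{min 4d}) would force $(\psi - \psi_-)'(\theta^\ast) < 0$, contradicting $\psi \ge \psi_-$ in a neighborhood. Dividing by $3$ gives $\tan\phi > -(2/3)\tan(2\theta) > 0$; since $\cot(2\theta) < 0$ on $(\pi/4, \pi/2)$, multiplying flips the inequality and produces $\cot(2\theta)\tan\phi < -(2/3)\cot(2\theta)\tan(2\theta) = -2/3$. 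Substituting back into the ODE, $\phi' = -7 - 6\cot(2\theta)\tan\phi > -7 + 4 = -3$.

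For $\phi'(\pi/4) = -3$ I would expand near $\theta = \pi/4$: using $\cot(2\theta) \sim -2(\theta - \pi/4)$ and $\tan\phi \sim -1/[\phi'(\pi/4)(\theta-\pi/4)]$ (from $\phi(\pi/4) = \pi/2$), the ODE at leading order reduces to the quadratic $\phi'(\pi/4)^2 + 7\phi'(\pi/4) + 12 = 0$, whose roots are $-3$ and $-4$. Selecting the correct root is the main obstacle. To do this I would invoke the refined upper bound $\psi \leq -2\tan(2\theta) + A(-\tan(2\theta))^\eta$, $\eta \in (2/3, 1)$, from Lemma \ref{lemma min 1}: since $(-\tan(2\theta))^{\eta-1} \to 0$, combining with the lower bound gives the sharp asymptotic $\psi(\theta) = -2\tan(2\theta)(1 + o(1))$ as $\theta \to \pi/4^+$, whence $\tan\phi \sim 1/[3(\theta - \pi/4)]$. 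Writing $\phi = \pi/2 - \beta$ with $\cot\beta = \tan\phi$ then gives $\beta \sim 3(\theta - \pi/4)$, i.e.\ $\phi'(\pi/4) = -3$. Without this subleading information only the dichotomy $\{-3,-4\}$ would survive, which is precisely why the improved supersolution estimate of Lemma \ref{lemma min 1} is needed here.
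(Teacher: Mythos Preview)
Your argument is correct, but it diverges from the paper in two places and is slightly heavier than necessary in one of them.

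For the strict interior bound $\phi'>-3$ your route is genuinely different from the paper's. The paper does not upgrade $\psi\ge\psi_-$ to a strict inequality; instead it argues by contradiction: if $\phi'(\theta_1)=-3$ at an interior point, a direct computation of $\phi''(\theta_1)$ from the ODE (using $2\sin(2\theta_1)\cos\phi+3\cos(2\theta_1)\sin\phi=0$) gives $\phi''(\theta_1)<0$, which is incompatible with $\theta_1$ being the touching point closest to $\pi/2$ (where $\phi'=-7/4>-3$). Your approach---strict comparison for the Riccati equation, then the one-line algebra $\phi'=-\tfrac13[21+6\cot(2\theta)\psi]>-3$---is cleaner and avoids the second-derivative calculation entirely; the paper's approach, on the other hand, does not need to revisit the $\psi$ analysis.

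For $\phi'(\pi/4)=-3$ your use of the refined supersolution $\psi_1=-2\tan(2\theta)+A(-\tan(2\theta))^\eta$ works, but your closing remark that this subleading information is ``needed here'' is not accurate. The paper resolves the $\{-3,-4\}$ dichotomy with much less: from $\tan\phi=\psi/3$ one gets the identity
\[
\phi'=\tfrac13\psi'\cos^2\phi=-\tfrac13\bigl[21+6\cot(2\theta)\,\psi\bigr],
\]
and the \emph{non-strict} lower bound $\psi\ge -2\tan(2\theta)$ already forces $\phi'\ge -3$ on $(\pi/4,\pi/2)$; by continuity this rules out $-4$. So only the crude subsolution $\psi_-$, not the refined $\psi_1$, is required for this endpoint value. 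Your computation of $\phi'(\pi/2)=-7/4$ is exactly what the paper has in mind when it says ``simple analysis near $\theta=\pi/2$.''
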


\proof{}
To prove the first identity we observe that $\tan\phi=\frac{1}{3}\psi$ which after differentiation yields:
\begin{align}
\phi'=\frac{1}{3}\psi'\cos^2\phi=-\frac{1}{3}[21+6\cot(2\theta)\psi]\geq -3,
\label{12a}\end{align}
since $\psi(\theta)\geq  -2\tan(2\theta)$. Now considering (\ref{12}) we see that when $\theta\to\pi/4^+$ we can have $\phi'(\pi/4^+)=-3$ or $\phi'(\pi/4^+)=-4$. From (\ref{12a}) 
%(notice that the inequality is strict at $\theta=\frac{\pi}{4}$) 
we get the required formula.

The second identity  follows from simple analysis near  $ \theta=\frac{\pi}{2}$.

To prove the last estimate, we suppose that there exists a point $\theta_1 \in (\frac{\pi}{4}, \frac{\pi}{2}) $ such that $ \phi' (\theta_1)=-3$. We claim that $ \phi'' (\theta_1) <0$. This gives a contradiction. (We may take $\theta_1$ to be the point closest to $\frac{\pi}{2}$. Then necessarily $ \phi' (\theta_1) \geq 0$.)  In fact, from (\ref{12}), we deduce that
$$
 2\sin (2\theta_1) \cos \phi + 3 \cos (2 \theta_1) \sin \phi=0,
$$
which is equivalent to
\begin{equation}
 5 \sin (2 \theta_1+\phi) = \sin (2\theta_1 -\phi).
\end{equation}
Note that $ 2 \theta -\phi \in (0, \pi)$ and hence $ 0 <2\theta-\phi <2\theta +\phi <\pi$. Now we compute
\begin{align*}
\phi{''} (\theta_1) &= \frac{6}{ \sin^2 \theta_1 \cos^2 \phi} (\sin2 \phi -\frac{1}{2} \sin 4 \theta_1 \phi{'})
%$$
%$$
%=  \frac{6}{ \sin^2 \theta_1 \cos^2 \phi} (\sin2 \phi +\frac{3}{2} \sin 4 \theta_1 )
%$$
%$$
%=  \frac{6}{ \sin^2 \theta_1 \cos^2 \phi} ( \frac{5}{2} \sin (2\theta_1+\phi) \cos (2\theta_1-\phi) + \frac{1}{2} \cos (2\theta_1+\phi) \sin (2\theta_1-\phi) )
%$$
%$$
%= \frac{6}{ \sin^2 \theta_1 \cos^2 \phi} (\frac{1}{2} ( \sin (2\theta_1 -\phi) \cos (2\theta_1 -\phi) +\cos (2\theta_1+\phi) \sin (2\theta_1+\phi)))
%$$
%$$
\\
&= \frac{6}{ \sin^2 \theta_1 \cos^2 \phi} \sin (2\theta_1-\phi) \cos (2\theta_1) \cos \phi <0,
\end{align*}
which completes the proof.

\qed

\subsection{A new system of coordinates}\label{st}

%In this section we shall re-express the mean curvature equation
%\be
%\frac{1}{(uv)^3} \nabla\cdot \left ( \frac { (uv)^3 \nabla F} { \sqrt{1+ |\nabla F|^2} }\right ) = 0
%\label{56.1}\ee
%in terms of new system of coordinates adapted to the graph of $F_0$.

In this section we will introduce a system of coordinates  in  the sector $T$  (see (\ref{def sector}))which depends on the function  $F_0$ defined above.  The idea is that the coordinate lines on the ($2$ dimensional) surface  given by the graph of $F_0$ are orthogonal. As we will see this property is extremely useful in further developments.
\begin{lemma}\label{lemma ts}
There exists a diffeomorphism $\Phi:Q\to   T$, where $Q=\{(t,s)\mid t>0, s>0\}$ such that
%The variables $(t,s)$, which we will write down explicitly in a moment, parametrize pairs of
%positive numbers
$\Phi(t,s)= \uu(t,s)= \big(u(t,s),v(t,s)\big)$
% in the form
% $$
% x(t,s) =  \left ( \begin{matrix} u(t,s)  \\  v(t,s) \end{matrix}\right )
%$$
and $\uu$ satisfies the coupled system of differential equations
\be
\frac{\partial \uu}{\partial t} \,= \, \frac {\nabla F_0} {|\nabla F_0|^2} ,\qquad \frac {\partial \uu}{\partial s} \,= \, \frac 1{(uv)^3}  \frac { \nabla F_0^\perp }{|\nabla F_0|},
\label{coord1}\ee
where we denote
$$
\nabla F =(F_u, F_v), \qquad \nabla F^\perp  =( F_v, -F_u) .
$$
Moreover $\Phi$ maps  $(t=0,s)$ onto the line $u=v$ and $(t,s=0)$ onto $(u=0,v)$.
\end{lemma}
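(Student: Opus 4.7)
The plan is to construct the inverse map $(t,s)\colon T\to Q$ by taking $t(u,v):=F_0(u,v)$ and producing $s$ as a primitive of a suitable closed $1$-form on $T$. With $\omega_1:=dt=F_{0,u}\,du+F_{0,v}\,dv$, the desired form of $\partial_s\uu$ in (\ref{coord1}) dictates the choice
\begin{equation*}
\omega_2 := \frac{(uv)^3}{|\nabla F_0|}\bigl(F_{0,v}\,du-F_{0,u}\,dv\bigr)\quad\text{on }T.
\end{equation*}
A direct computation gives
\begin{equation*}
d\omega_2=-\nabla\!\cdot\!\Bigl(\frac{(uv)^3\nabla F_0}{|\nabla F_0|}\Bigr)\,du\wedge dv,
\end{equation*}
and writing this divergence in polar coordinates with $F_0=r^3g(\theta)$ one recognises exactly the left-hand side of (\ref{9}), so $d\omega_2=0$ by Lemma \ref{lemma min 1}. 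Since $T$ is simply connected, there is $s\in C^\infty(T)$ with $ds=\omega_2$; and because the factor $(uv)^3$ forces $\omega_2$ to vanish on the ray $\{u=0\}$, the additive constant can be fixed so that $s\to 0$ as $u\to 0^+$.

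With $(t,s)$ in hand, I next verify (\ref{coord1}). Setting $X:=\nabla F_0/|\nabla F_0|^2$ and $Y:=\nabla F_0^\perp/[(uv)^3|\nabla F_0|]$, one checks that the coframe $(\omega_1,\omega_2)$ is dual to the frame $(X,Y)$: indeed $\omega_1(X)=1$, $\omega_1(Y)=0$ (since $Y\perp\nabla F_0$), $\omega_2(X)=0$, and $\omega_2(Y)=1$ by a short calculation using $|\nabla F_0^\perp|=|\nabla F_0|$. Consequently, writing $\Phi$ for the local inverse of $(t,s)$, one has $\partial_t\Phi=X$ and $\partial_s\Phi=Y$, which is precisely (\ref{coord1}). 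The Jacobian is
\begin{equation*}
\omega_1\wedge\omega_2=-(uv)^3|\nabla F_0|\,du\wedge dv,
\end{equation*}
and in the interior of $T$ we have $|\nabla F_0|^2=r^4(9g^2+g_\theta^2)>0$ by Lemma \ref{lemma min 1}, so $(u,v)\mapsto(t,s)$ is a local $C^\infty$-diffeomorphism.

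The principal remaining task is global bijectivity of $(t,s)\colon T\to Q$. Each level curve $\{F_0=t_0\}\cap\overline T$ is, in polar form, $r(\theta)=(t_0/g(\theta))^{1/3}$ for $\theta\in(\pi/4,\pi/2]$, starting from $u=0$ (at $\theta=\pi/2$, where $s=0$) and escaping to infinity as $\theta\to\pi/4^+$. Pulling $\omega_2$ back to this curve, a short calculation yields
\begin{equation*}
ds\bigr|_{F_0=t_0}=-\frac{t_0^{7/3}\sin^3(2\theta)\sqrt{9g^2+g_\theta^2}}{24\,g^{10/3}}\,d\theta,
\end{equation*}
which is of constant sign, so $s$ is strictly monotone along the level curve; and from (\ref{min 4h}) combined with $g_\theta(\pi/4)>0$ one has $g(\theta)\sim c(\theta-\pi/4)$ near $\pi/4$, making the integrand non-integrable there, so $s$ sweeps out all of $(0,\infty)$. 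Since $t=F_0$ itself attains every positive value on $T$ this gives surjectivity, while monotonicity of $s$ on each level curve of $t$ gives injectivity; the boundary correspondences $\{t=0\}\leftrightarrow\{u=v\}$ and $\{s=0\}\leftrightarrow\{u=0\}$ follow from $F_0(u,u)=0$ (the odd symmetry (\ref{per 10}) applied to $F_0$) and from the normalization of $s$. The main technical obstacle is the quantitative vanishing of $g$ at $\theta=\pi/4$ needed to guarantee that $s$ diverges along each level curve; once that is in place, everything else is essentially formal.
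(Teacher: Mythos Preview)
Your argument is correct and complete. The approach, however, differs from the paper's in an interesting way.

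The paper works in polar coordinates $(r,\theta)$ from the start: it writes the system \equ{coord1} as four equations for $r_t,\theta_t,r_s,\theta_s$, formally inverts the Jacobian to obtain first-order PDEs for $t$ and $s$ as functions of $(r,\theta)$, and then integrates these explicitly. This yields the closed-form expressions
\[
t=r^3g(\theta),\qquad s=\frac{r^7\sin^3(2\theta)\,g_\theta}{56\sqrt{9g^2+g_\theta^2}},
\]
after which the diffeomorphism property is verified directly from the properties of $g$ in Lemma~\ref{lemma min 1}. Your route is more intrinsic: you recognise $t=F_0$ immediately, build $s$ as the primitive of a $1$-form $\omega_2$ whose closedness is \emph{exactly} the equation \equ{9} satisfied by $g$, and then argue global bijectivity by parametrising each level curve $\{F_0=t_0\}$ and showing $s$ runs monotonically over $(0,\infty)$ along it.

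What each buys: the paper's explicit formula for $s$ is not merely cosmetic---it is used repeatedly afterwards (e.g.\ in \equ{fact2a}, \equ{fact3a}, and throughout the supersolution constructions of \S3 and \S\ref{sec superl}), so if you were writing the paper you would still have to extract it. Conversely, your argument makes transparent \emph{why} such an $s$ exists at all: the integrability condition for the coordinate system is precisely the degenerate minimal-surface equation for $F_0$, a point the paper's computation establishes but does not highlight. Your level-curve argument for surjectivity, with the non-integrability of $g^{-10/3}$ near $\theta=\pi/4$, is also more explicit than the paper's one-line ``direct check''.
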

\proof{}
Introducing polar coordinates
$$ u = r\cos\theta, \quad v = r\sin\theta,$$
and using  \equ{coord1} we find:
\begin{align}
\begin{aligned}
&\left \{ \begin{matrix} \frac{\partial r}{\partial t} &=&\frac{F_{0,r}}{|\nabla F_0|^2}&=& \frac{3g}{r^2(9 g^2 + g_\theta^2) } \\[2mm]
                        \frac{\partial \theta }{\partial t} &=&\frac{F_{0,r}}{|\nabla F_0|^2}&=& \frac{g_\theta }{r^3(9 g^2 + g_\theta^2) } \\
         \end{matrix} \right.  , \\
&\left \{ \begin{matrix} \frac{\partial r}{\partial s}&=&\frac{8F_{0\theta}}{r^7\sin^3(2\theta)|\nabla F_0|} &=& \frac{8g_\theta }{r^6\sin^3(2\theta)\sqrt{9 g^2 + g_\theta^2} } \\[2mm]
                        \frac{\partial \theta }{\partial s}&=&\frac{-8F_{0r}}{r^7\sin^3(2\theta)|\nabla F_0|} &=& -\frac{24g}{r^7\sin^3(2\theta)\sqrt{9 g^2 + g_\theta^2} }\\
         \end{matrix}\right.\ .
\end{aligned}
\label{coord2}
\end{align}
Using the formal relations
$$
\left [ \begin{matrix} t_r & t_\theta \\ s_r & s_\theta \end{matrix} \right ] \, \left [ \begin{matrix} r_t & r_s  \\ \theta_t & \theta_s
\end{matrix} \right ]\ =  \ \left [ \begin{matrix} 1 &  0 \\ 0 &  1 \end{matrix} \right ]
$$
we  arrive in particular at the equations for $s$
\begin{align*}
3gs_r  + \frac{g_\theta} r  s_\theta &= 0, \\
 \frac{8g_\theta s_r} {r^6 \sin^3 2\theta \sqrt{9g^2 +g_\theta^2} } -
 \frac{24 g s_\theta} {r^7 \sin^3 2\theta \sqrt{9g^2 +g_\theta^2} }& =1,
\end{align*}
or
$$
\left \{ \begin{matrix} \frac{\partial s}{\partial \theta } &=& -\frac{3r^7 \sin^3 2\theta  g }{ 8 \sqrt{ 9 g^2 + g_\theta^2} } \\[2mm]
                        \frac {\partial s}{\partial  r } &=& \frac{r^6 \sin^3 2\theta g_\theta }{8 \sqrt{ 9 g^2 + g_\theta^2} } \\
         \end{matrix} \right.   \ , \qquad
$$
which are satisfied by the function
\be
s=  \frac { r^7 \sin^3 (2\theta ) g_\theta }{ 56  \sqrt{ 9g^2 + g_\theta^2}}
\label{s}\ee
because of the equation satisfied by $g$. Similarly we obtain the solution for $t$
\be t= r^3g(\theta ) . \qquad \label{t}\ee
Using the properties of the function $g$ we can directly check  that function given by  the formulas (\ref{s})--(\ref{t}) is  a diffeomorphism with the required properties.
\qed

For future references let us keep in mind  that setting $\sin\phi,\ \cos\phi$ as in formula \equ{11}, we find simply
\begin{align}
\label{fact2a}
\begin{aligned}
\frac{\partial r}{\partial s} &= \frac r{7s} \sin^2\phi,  \\
\frac{\partial \theta }{\partial s} &= - \frac 1{14s} \sin (2\phi),
\end{aligned}
\end{align}
and
\begin{align}
\label{fact3a}
\begin{aligned}
\frac{\partial r}{\partial t} &= \frac r{3t} \cos^2\phi,  \\
\frac{\partial \theta }{\partial t} &=  \frac 1{6t} \sin (2\phi). 
\end{aligned}
\end{align}

Our next goal is to express the mean curvature operator (\ref{56.1}) in terms of the variables $(t,s)$.
Denoting by $\uu'$ the matrix $(\uu_t,\uu_s)$ problem \equ{eqmc} is transformed to
\be
 (uv)^{-3} \frac {1}{\sqrt{\det\uu'{\uu'}^T}} \nabla_{t,s} \cdot \left (  \frac { (uv)^3 \sqrt{\det\uu'{\uu'}^T}} { \sqrt{1+ |\nabla F|^2} }\, (\uu'{\uu'}^T)^{-1} \nabla_{t,s} F\, \right ) \ = \ 0.
\label{56.4}\ee
From Lemma (\ref{lemma ts}) we find
\be
 \left <  \uu_t , \uu_t \right > = \frac{1}{|\nabla F_0|^2} ,\quad
\left <  \uu_t , \uu_s \right > = 0 ,\quad \left <  \uu_s , \uu_s \right > = \frac{1}{(uv)^6}:=\rho^2, \label{56.3}\ee
hence we  compute
\be
{\det }\uu'= \frac{-\rho}{|\nabla F_0|}, \quad (\uu{'} {\uu'}^T)^{-1}=
\left  ( \begin{matrix}  |\nabla F_0|^2 &    0 \\[2mm]
                            0 & \rho^{-2}
         \end{matrix} \right )\ .
\label{56.5}\ee
Then equation \equ{56.4} becomes
\be
|\nabla F_0|\partial_t  \Big( \frac{ |\nabla F_0|\partial_t F} {\sqrt{1+|\nabla F|^2}}\Big)\,
+
|\nabla F_0|\partial_s\Big( \frac{ \rho^{-2} \partial_s  F} { |\nabla F_0| \sqrt{1+|\nabla F|^2}}\Big)=0.
\label{56.6}\ee
Let us observe that:
\begin{align*}
\nabla F &=
\left < \nabla F, \frac{\nabla F_0}{|\nn F_0|} \right > \,
\frac{\nabla F_0}{|\nn F_0|}
+ \left < \nabla F, \frac{\nabla F_0^\perp }{|\nn F_0|} \right > \,
\frac{\nabla F_0^\perp }{|\nn F_0|}
\\&=
F_t \nn F_0 + \rho^{-1} F_s
\frac{\nabla F_0^\perp }{|\nn F_0|} .
\end{align*}
%On the other hand, we have
%$$
%|\nn F|^2 = F_t^2 |\nn F_0|^2 + \rho^{-2} F_s^2 ,
%$$
From this we have
\begin{align*}
1+ |\nn F|^2 &=
1+ |\nn F_0|^2\left( \,
  F_t ^2  + \frac{ \rho^{-2} F_s^2}{|\nn F_0 |^2} \, \right ) \\
  &= |\nabla F_0|^2   (  \frac{1}{|\nabla F_0|^2} + F_t^2 + \frac{\rho^{-2} F_s^2}{|\nabla F_0|^2} ).
\end{align*}
Denoting by $Q(\nabla_{t,s}F)$ the function
$$
Q(\nabla_{t,s}F)= \frac{1}{|\nabla F_0|^2} + F_t^2 + \frac{\rho^{-2} F_s^2}{|\nabla F_0|^2},
$$
we see  the mean curvature equation is equivalent to
$$
H[F]=\frac{|\nabla F_0|}{Q^{3/2}(\nabla_{t,s}F)}G[F]=0
$$
where
\begin{align}
\begin{aligned}
G[F]&= Q(\nabla_{t,s}F) F_{tt}- \frac{1}{2} \partial_t Q(\nabla_{t,s}F)  F_t
+Q(\nabla_{t,s}F) \partial_s \left(\frac{\rho^{-2} F_s}{|\nabla F_0|^2}\right)
\\
&\quad-\frac{1}{2} \partial_s Q(\nabla_{t,s}F) \frac{\rho^{-2} F_s}{|\nabla F_0|^2}.
\end{aligned}
\label{defG}
\end{align}
%Expanding the above equation, we obtain
%$$
%G[F]= ( 1+ \rho^{-2} F_s^2) F_{tt} + \rho^{-2} (\frac{1}{|\nabla F_0|^2} + F_t^2) F_{ss} - 2 F_t F_s \rho^{-2} F_{ts}+b(\nabla_{t,s} F,t,s).
%$$
%%$$
%% - (\frac{1}{2} |\nabla F_0|^2 \partial_t \frac{1}{|\nabla F_0|^2}) F_t +\frac{1}{|\nabla F_0|} \partial_s (\frac{\rho^{-2}}{|\nabla F_0|}) F_s
%%$$
%%$$
%%- \frac{1}{2} |\nabla F_0|^2 \partial_t (\frac{\rho^{-2}}{|\nabla F_0|^2})
%%F_t F_s^2
%%+ \frac{1}{2} \rho^{-2} \partial_s (\frac{\rho^{-2}}{|\nabla F_0|^2}) F_s^3
%%+ |\nabla F_0|^2 \partial_s (\frac{\rho^{-2}}{|\nabla F_0|^2}) F_s F_t^2
%%$$
%Note  the leading order second order operator is of the mean curvature type (c.f \cite{gilb}).

Now  we derive the mean curvature operator for functions of the form
$$
F = F_0 +A \varphi (t,s) = t+A \varphi (t,s),
$$
where $A$ is constant parameter.
Our goal  is to  write the resulting equation in the form of a polynomial in $A$.  In general we assume that for $r \gg1$,
\be
\label{phis}
|\varphi_t| + \frac{|\varphi_s \rho^{-1}|}{ |\nabla F_0|}
 = o(1).
 \ee
We compute
\begin{align*}
\nabla F &=
\nabla F_0 +
\left < \nabla\varphi, \frac{\nabla F_0}{|\nn F_0|} \right > \,
\frac{\nabla F_0}{|\nn F_0|}
+ \left < \nabla\varphi, \frac{\nabla F_0^\perp }{|\nn F_0|} \right > \,
\frac{\nabla F_0^\perp }{|\nn F_0|}\\
&=
\nn F_0 + \varphi_t \nn F_0 + \rho^{-1} \varphi_s
\frac{\nabla F_0^\perp }{|\nn F_0|} .
\end{align*}
Then we have
%$$
%|\nn F|^2 = (1+ \phi_t )^2 |\nn F_0|^2 + \rho^{-2} \phi_s^2 ,
%$$
%$$
\begin{align*}
1+ |\nn F|^2 &=
1+ |\nn F_0|^2\Big[
 (1+A \varphi_t )^2  + A^2\frac{ \rho^{-2} \phi_s^2}{|\nn F_0 |^2} \Big] \\
&= |\nabla F_0|^2 \Big( 1+\frac{1}{|\nabla F_0|^2} + 2A \varphi_t +A^2 R_1\Big),
\end{align*}
where we denote
$$
R_1=  \varphi_t^2 +  \frac{\rho^{-2} \varphi_s^2}{|\nabla F_0|^2}.
$$
It is convenient to introduce
$$
R= \Big( 1+\frac{1}{|\nabla F_0|^2} + 2A \varphi_t +A^2 R_1\Big).
$$
With these notations we have:
\begin{align}
\begin{aligned}
&|\nabla F_0|^{-1} R^{3/2}H[F_0+A\varphi]\\
&=\Big[AR\partial_t^2\varphi-\frac{1}{2}(1+A\partial_t\varphi)\partial_t R+AR\partial_s\Big(\frac{\rho^{-2}\partial_s\varphi}{|\nabla F_0|^2}\Big)-\frac{1}{2}A\frac{\rho^{-2}\partial_s\varphi}{|\nabla F_0|^2}\partial_s R\Big]
\\
&= -\frac{1}{2}\partial_t|\nabla F_0|^{-2}+A\Big[|\nabla F_0|^{-2}\partial_t^2\varphi-\frac{1}{2}\partial_t|\nabla F_0|^{-2}\partial_t\varphi\\
&\qquad\quad+\partial_s\Big(\frac{\rho^{-2}\partial_s\varphi}{|\nabla F_0|^2}\Big)(1+|\nabla F_0|^{-2})-\frac{1}{2}\Big(\frac{\rho^{-2}\partial_s\varphi}{|\nabla F_0|^2}\Big)\partial_s|\nabla F_0|^{-2}\Big]
\\
&\qquad+A^2\Big[\partial_t\varphi\partial^2_t\varphi-\frac{1}{2}\partial_t R_1+2\partial_t\varphi\partial_s\Big(\frac{\rho^{-2}\partial_s\varphi}{|\nabla F_0|^2}\Big)-\Big(\frac{\rho^{-2}\partial_s\varphi}{|\nabla F_0|^2}\Big)\partial^2_{ts}\varphi\Big]\\
&\qquad+A^3\Big[R_1\partial_t^2\varphi-\frac{1}{2}\partial_t\varphi\partial_t R_1+R_1\partial_s\Big(\frac{\rho^{-2}\partial_s\varphi}{|\nabla F_0|^2}\Big)-\frac{1}{2}\Big(\frac{\rho^{-2}\partial_s\varphi}{|\nabla F_0|^2}\Big)\partial_s R_1\Big].
\end{aligned}
\label{a terms}
\end{align}
In the sequel we will refer to the consecutive term in (\ref{a terms}) as the $A^0$, $A^1$, $A^2$ and $A^3$ terms respectively.
For future references we observe that the $A^0$ term can be written as
\begin{align}
 -\frac{1}{2}\partial_t|\nabla F_0|^{-2}=|\nabla F_0|^{-1}(1+|\nabla F_0|^{-2})^{3/2}H[F_0],
 \label{a0 term}
 \end{align}
and the $A^1$ term can be written as
\begin{align}
\begin{aligned}
&\Big[\cdot\Big]=|\nabla F_0|^{-1} \tilde L_0[\varphi]\\
&\qquad - \frac{3}{2}\partial_t|\nabla F_0|^{-2}\partial_t\varphi+|\nabla F_0|^{-2}\partial_s\Big(\frac{\rho^{-2}\partial_s\varphi}{|\nabla F_0|^2}\Big)-\frac{1}{2}\Big(\frac{\rho^{-2}\partial_s\varphi}{|\nabla F_0|^2}\Big)\partial_s|\nabla F_0|^{-2},
\end{aligned}
\label{a1 term}
\end{align}
where
\begin{align}
\tilde L_0[\varphi]=|\nabla F_0|\Big[\partial_t\Big(\frac{\partial_t \varphi}{|\nabla F_0|^2}\Big)
+
\partial_s\Big( \frac{ \rho^{-2}\partial_s  \varphi} {|\nabla F_0|^2}\Big)\Big].
\label{L0}
\end{align}

%Introducing the variable $(t, \ts)$, where
%  \[ \partial_{\ts}= \rho^{-1} \partial_s\]
%then $H[F]$ can be approximated by
%$$
%|\nabla F_0| H[F] \approx   ( 1+ F_{\ts}^2) F_{tt} + (\frac{1}{r^4} + F_t^2)
% F_{\ts \ts} - 2 F_t F_{\ts}  F_{t \ts}
%$$
%$$
%+ O(\frac{1}{r^3}) F_t + O( \frac{1}{ r^4}) F_{\ts}
%$$
%$$
%+ O( \frac{1}{r^3}) F_t F_{\ts}^2 + O(\frac{1}{r^5}) F_{\ts}^3 + O(\frac{1}{r}) F_{\ts} F_t^2
%$$

%The elliptic coefficients
%\[
%  ( 1+ F_{\ts}^2) \xi^2  + (\frac{1}{r^4} + F_t^2)
% \eta^2  - 2 F_t F_{\ts}  \xi \eta \geq \xi_i^2 + ( F_t \eta - F_{\ts} \xi)^2
%\]
%\[ \geq     \min (1, F_t) ( \xi^2 +\eta^2)
%\]

%So if we can show that $F_t \geq C$, then this is a uniformly elliptic operator.

\bigskip

\subsection{A refinement of the result of Bombieri, De Giorgi and Giusti }
In this  section, taking  the existence result in \cite{bdg} as the point of departure, we find the asymptotic behavior of the minimal graph. Our goal is prove the following theorem:
\begin{teo}\label{exist fmc}
There exists a solution $F=F(u,v)$ to the mean curvature equation with the following properties
\begin{equation}
F_0\leq F\leq  F_0 + \frac{\tt C}{r^\sigma}\min\{F_0,1\}, \quad r>R_0,
\end{equation}
where $0< \sigma<1 $, ${\tt C}\geq 1$, and  $R_0$, are  positive constants.
\label{thm-mc1}
\end{teo}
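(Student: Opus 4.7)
The strategy is to prove Theorem~\ref{thm-mc1} by the method of sub/super-solutions, refining the barriers used in the original construction of Bombieri--De Giorgi--Giusti~\cite{bdg}. The subsolution will be the ansatz $F_0 = r^3 g(\theta)$ itself; the supersolution will take the form $\bar F = F_0 + C r^{-\sigma}\min\{F_0,1\}$ for constants $C \geq 1$ and $\sigma \in (0,1)$ to be chosen. With both barriers in place, one runs the scheme of \cite{bdg}: solve the minimal graph Dirichlet problem on the expanding subdomains $T \cap \{r<R\}$ with boundary data trapped between $F_0$ and $\bar F$, apply the interior gradient estimates for minimal graphs, and pass to the limit $R\to\infty$ to obtain a solution $F$ inheriting the two-sided bound $F_0 \leq F \leq \bar F$.

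For the subsolution property, identity (\ref{a0 term}) reduces the sign of $H[F_0]$ to that of $-\partial_t |\nabla F_0|^{-2}$. Since $|\nabla F_0|^2 = r^4(9g^2+g_\theta^2)$ and, by (\ref{coord2}), $\partial r/\partial t > 0$ throughout $T$, the quantity $|\nabla F_0|^{-2}$ is strictly decreasing along the $t$-lines, so $H[F_0] > 0$ and $F_0$ is a strict subsolution. The boundary behavior matches the desired solution: both $F_0$ and $F$ vanish on $\{\theta = \pi/4\}$ by the anti-symmetry $F(u,v) = -F(v,u)$, and both have vanishing $\theta$-derivative on $\{\theta = \pi/2\}$.

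For the supersolution, I substitute $\bar F = F_0 + \varphi$ with $\varphi = Cr^{-\sigma}\min\{F_0,1\}$ into the polynomial expansion (\ref{a terms}) with $A=1$. Schematically,
\[
|\nabla F_0|^{-1} R^{3/2} H[\bar F] \;=\; -\tfrac12 \partial_t |\nabla F_0|^{-2} \;+\; |\nabla F_0|^{-1}\tilde L_0[\varphi] \;+\; \mathcal{E},
\]
where $\mathcal{E}$ collects the remainder of the $A^1$ contribution from (\ref{a1 term}) together with the $A^2$ and $A^3$ terms. I will evaluate $\tilde L_0[\varphi]$ separately in the two regions. In $\{F_0 \geq 1\}$, where $\varphi = C r^{-\sigma}$, the operator $\tilde L_0$ acts on $r^{-\sigma}$ essentially as a radial Laplacian in $(t,s)$; using (\ref{fact2a})--(\ref{fact3a}) the leading contribution turns out to be strictly negative for every $\sigma \in (0,1)$. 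In $\{F_0 < 1\}$, where $\varphi = C r^{-\sigma} F_0$, one exploits that $F_0$ is annihilated by $\tilde L_0$ in the infinite-slope limit, so $\tilde L_0[\varphi]$ reduces to a cross-derivative term again of negative leading sign. Choosing $\sigma \in (0,1)$ so that this negative term strictly dominates the positive $A^0$ contribution, and then taking $C$ large, forces $H[\bar F] \leq 0$.

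The main obstacle is twofold. First, one must rigorously control $\mathcal{E}$ by verifying that the $A^2$ and $A^3$ terms in (\ref{a terms}) are of strictly lower order than $\tilde L_0[\varphi]$; this relies on (\ref{phis}) and has to be checked a posteriori for the specific $\varphi$. Second, $\min\{F_0,1\}$ is not $C^2$ across the level set $\{F_0=1\}$, so it must be replaced by a smooth cut-off of $F_0$ at height one, with the supersolution inequality verified pointwise in each of the three zones (below, transition, above); the transition zone contributes only a bounded error that is absorbed by enlarging $C$. Once these technicalities are settled, the BDG existence procedure applied between $F_0$ and $\bar F$ produces the solution $F$ with the claimed bound.
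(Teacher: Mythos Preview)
Your overall architecture is right, and the subsolution side is essentially what the paper does (though your one-line justification is too quick: $|\nabla F_0|^{-2}$ depends on both $r$ and $\theta$, so the sign of $\partial r/\partial t$ alone does not give $\partial_t|\nabla F_0|^{-2}\le 0$; one needs the full computation via \equ{fact3a} together with $\phi'\ge -3$, and the inequality is not strict on $\theta=\pi/4$).

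The real gap is on the supersolution side. You propose a \emph{single} barrier $\bar F=F_0+Cr^{-\sigma}\min\{F_0,1\}$ and plan to absorb the $A^2,A^3$ terms in \equ{a terms} as ``lower order''. In the region $\{F_0<1\}$, where the perturbation is $\varphi=Ctr^{-\sigma}$, this works for a much stronger reason than you state: a direct computation (the paper's Appendix~A) shows $H_2\le 0$ and $H_3\le 0$ there, so the barrier is a supersolution on $\{r>a_0\}$ with $a_0$ \emph{independent} of $C$. But in $\{F_0\ge 1\}$, where the perturbation is $\varphi=Cr^{-\sigma}$, the higher terms satisfy only $A^2H_2=C^2O(r^{-9-2\sigma})$ and $A^3H_3=C^3O(r^{-12-3\sigma})$ with no sign control. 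Comparing with the good linear term $AH_1\sim -C\sigma r^{-6-\sigma}$, the ratio is $\sim C r^{-3-\sigma}$: the barrier is a supersolution only for $r\gtrsim C^{1/(3+\sigma)}$. On the other hand, to match the only a~priori information available on the inner boundary $\{r=R_0\}$ (the BDG bound $F-F_0=O(R_0^{3})$ there) you need $C\gtrsim R_0^{3+\sigma}$. These two constraints are incompatible, so the single barrier cannot close the comparison.

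The paper resolves this with a two-step bootstrap. First it uses the barrier $F_0(1+\tilde A r^{-\sigma})$, exploiting the sign $H_2,H_3\le 0$ to get a supersolution on the \emph{fixed} annulus $\{r>a_0\}$; matching BDG at $r=a_0$ then yields $F-F_0\le \tilde A r^{3-\sigma}$ for all $r>a_0$, an improvement over the raw BDG growth. Only then does it bring in the second barrier $F_0+Ar^{-\sigma}$ on $\{r>R_0\}$ with $R_0=a_0A^{1/(3+\sigma)}$: the improved bound at $r=R_0$ now reads $F-F_0\le \tilde A R_0^{3-\sigma}$, and choosing $A$ large (so $A/R_0^\sigma\ge \tilde A R_0^{3-\sigma}$, i.e.\ $A^{\sigma/(3+\sigma)}\ge \tilde A a_0^{3}$) gives the matching. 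The final estimate $F\le F_0+{\tt C}r^{-\sigma}\min\{F_0,1\}$ is the \emph{combination} of the two comparisons, not the output of a single barrier. Your smoothing of $\min\{F_0,1\}$ is therefore unnecessary; the statement is a corollary of two separate smooth supersolutions, not a single nonsmooth one.
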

The rest of this section is devoted to the proof of the theorem. Our approach, which is  based on a comparison principle, relies on a refinement of the supersolution/subsolution  in \cite{bdg}.
We need the following  comparison principle:
\begin{lemma}
Let $\Omega$ be a smooth and open  bounded domain. If $F_1$ and $F_2$ satisfies
\be
H[F_1] \leq H[F_2] \ \mbox{in} \ \Omega, \ F_1 \geq F_2 \ \mbox{on} \ \partial \Omega
\ee
Then
\be
F_1 \leq F_2 \ \mbox{in} \ \Omega.
\ee
\end{lemma}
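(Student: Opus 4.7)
My plan is to reduce the quasi-linear comparison to the classical weak maximum principle for a linear divergence-form elliptic operator, by linearizing $H$ along the straight segment $F_\tau := \tau F_1 + (1-\tau)F_2$, $\tau\in[0,1]$. Writing $w = F_1 - F_2$ and using the identity
\begin{equation*}
\frac{d}{d\tau}\Bigl(\frac{\nabla F_\tau}{\sqrt{1+|\nabla F_\tau|^2}}\Bigr) = A(\nabla F_\tau)\,\nabla w,\qquad A(p) := \frac{(1+|p|^2)I - p\otimes p}{(1+|p|^2)^{3/2}},
\end{equation*}
integration in $\tau$ recasts the nonlinear difference as a linear elliptic expression applied to $w$:
\begin{equation*}
H[F_1] - H[F_2] = \mathcal{M}[w] := \frac{1}{(uv)^3}\,\nabla\cdot\!\Bigl((uv)^3\,a(x)\,\nabla w\Bigr),\qquad a(x) := \int_0^1 A(\nabla F_\tau)\,d\tau.
\end{equation*}
The matrix $a(x)$ is symmetric and pointwise positive-definite. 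Since $F_1, F_2$ are smooth on $\overline\Omega$ and $\Omega$ is bounded, the gradients $\nabla F_\tau$ remain uniformly bounded, so $\mathcal{M}$ is uniformly elliptic with smooth bounded coefficients on $\overline\Omega$.

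With this reformulation, the hypothesis $H[F_1] \leq H[F_2]$ becomes the differential inequality $\mathcal{M}[w] \leq 0$ in $\Omega$, while the boundary hypothesis controls the sign of $w|_{\partial\Omega}$. The weak maximum principle for linear divergence-form elliptic operators (see e.g.\ Gilbarg--Trudinger, Theorem~8.1) then propagates the appropriate pointwise ordering from $\partial\Omega$ into the interior, delivering the asserted inequality $F_1 \leq F_2$ in $\Omega$. Concretely, one multiplies the inequality $\mathcal{M}[w] \leq 0$ by the suitable truncation of $w$, which belongs to $H^1_0(\Omega)$ thanks to the boundary hypothesis, integrates by parts using the positive smooth weight $(uv)^3$, and invokes positive definiteness of $a(x)$ to force the truncation to vanish identically.

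The only point requiring attention, rather than a real obstacle, is the uniform ellipticity of $\mathcal{M}$: setting $M := \max(\|\nabla F_1\|_{L^\infty(\Omega)}, \|\nabla F_2\|_{L^\infty(\Omega)})$ one checks $a^{ij}(x)\xi_i\xi_j \geq (1+M^2)^{-3/2}|\xi|^2$, which is all the linear theory needs. Neither a strong maximum principle nor a Hopf-type boundary refinement is required, since only the weak pointwise comparison is claimed, and nothing beyond boundedness of $\Omega$ and smoothness of $F_1, F_2$ enters. Should $\overline\Omega$ meet the coordinate axes $\{u=0\}\cup\{v=0\}$ where the weight $(uv)^3$ degenerates, one bypasses the weight by writing $H$ directly in the Cartesian variables of $\R^8$, yielding the identical divergence-form structure without any weight and allowing the same test-function argument to run.
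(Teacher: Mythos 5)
Your argument is correct in substance and uses essentially the same underlying idea as the paper: linearize the difference $H[F_1]-H[F_2]$ along a segment joining $F_1$ and $F_2$, obtain a linear uniformly elliptic operator acting on $w=F_1-F_2$, and invoke the maximum principle. The difference is one of form rather than content: the paper linearizes into the \emph{non-divergence} form $\sum_{i,j}a_{ij}\partial^2_{x_ix_j}w$ (plus, implicitly, first-order terms, which the paper omits but which do not affect the argument) and appeals directly to the classical maximum principle; you instead keep the \emph{divergence structure}, writing $H[F_1]-H[F_2]=(uv)^{-3}\nabla\cdot((uv)^3 a(x)\nabla w)$ with $a(x)=\int_0^1 A(\nabla F_\tau)\,d\tau$, and apply the weak maximum principle via a truncation test function. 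Your route is slightly more robust: it never needs to differentiate the coefficients of $H$, it records the uniform ellipticity constant $(1+M^2)^{-3/2}$ explicitly, and it addresses the degeneracy of the weight $(uv)^3$ on the axes, a point the paper glosses over. In a smooth compact setting both arguments are essentially interchangeable.

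One thing to correct: with $\mathcal{M}[w]\le 0$ in $\Omega$ and $w\ge 0$ on $\partial\Omega$, testing against $w^-=\max(-w,0)\in H^1_0(\Omega)$ and using positivity of $a$ forces $w^-\equiv 0$, i.e.\ $w\ge 0$, hence $F_1\ge F_2$ in $\Omega$. Your last step therefore proves $F_1\ge F_2$, not $F_1\le F_2$ as you (and the lemma statement) write; the inequality in the displayed conclusion of the lemma is a sign typo in the paper, as is clear from how it is later applied to sandwich $F_R$ between the sub- and supersolution. You should state the conclusion your own argument actually delivers rather than copy the misprint.
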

\proof{} The proof is simple since
 \[ H[F_1]- H[F_2]= \sum_{i,j} a_{ij} \frac{\partial^2}{\partial x_i x_j} (F_1-F_2)
\]
where the matrix $(a_{ij})$ is uniformly elliptic in $\Omega$. By the usual Maximum Principle, we obtain the desired result.
\qed

Let us observe that  from (\ref{min 4h}) we have
\begin{equation}
\label{mc0}
  \min (\frac{ -\cos (2\theta)}{g(\theta)} )\geq 1, \theta \in (\frac{\pi}{4}, \frac{\pi}{2}).
\end{equation}
Thus for $F_0= r^3 g(\theta)$  it holds \be
\label{mc00}
F_0= r^3 g(\theta) \leq (v^2-u^2) (v^2+u^2)^{\frac{1}{2}}.
\ee
%We follow the BDG's method, but with good super-solution. Let $ \Gamma_R=B_R \cap \{ v > u \}$ and
%\[ H[F]:=\frac{1}{(uv)^3} \nabla\cdot \left ( \frac { (uv)^3 \nabla F} { \sqrt{1+ |\nabla F|^2} }\right ). \]
We will now construct a subsolution to the mean curvature equation.
\begin{lemma}
\label{sub1-n}
Let $H[F]$ denote the  mean curvature operator. We have
\be
\label{sub1}
H[F_0] \geq 0.
\ee
It holds as well:
\begin{align}
H[F_0]=O(r^{-5}).
\label{sub1aa}
\end{align}
\end{lemma}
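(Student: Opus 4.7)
The plan is to exploit the fact that in the $(t,s)$ coordinates of Lemma \ref{lemma ts} the function $F_0$ is simply $t$, so $\partial_t F_0 = 1$ and $\partial_s F_0 = 0$. Consequently the $A^0$ piece of the expansion (\ref{a terms}), namely identity (\ref{a0 term}), reduces to the closed-form expression
\begin{align*}
H[F_0] = \frac{|\nabla F_0|^{-2}\,\partial_t|\nabla F_0|}{(1+|\nabla F_0|^{-2})^{3/2}},
\end{align*}
so both the sign and the decay of $H[F_0]$ are controlled by the single scalar $\partial_t|\nabla F_0|$.

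Next I would compute this derivative explicitly. Writing $|\nabla F_0| = r^2 G(\theta)$ with $G(\theta):=\sqrt{9g^2+g_\theta^2}$, and using (\ref{coord2}) for $\partial_t r$ and $\partial_t\theta$, the chain rule yields
\begin{align*}
\partial_t|\nabla F_0| = \frac{54g^3 + 15 g g_\theta^2 + g_\theta^2 g_{\theta\theta}}{r\,G^3}.
\end{align*}
Setting $\psi = g_\theta/g$, the identity $g_{\theta\theta}/g = \psi^2 + \psi'$ lets me rewrite the numerator as $g^3[54 + 15\psi^2 + \psi^4 + \psi^2\psi']$. Eliminating $\psi'$ by means of the ODE (\ref{min 4a}) collapses the bracket to
\begin{align*}
54 - 6\psi^2 - \tfrac{4}{3}\psi^4 - \tfrac{2}{3}\cot(2\theta)\,\psi^3(9+\psi^2).
\end{align*}

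The positivity claim (\ref{sub1}) then follows from the lower barrier $\psi \geq -2\tan(2\theta)$ supplied by Lemma \ref{lemma min 1}, which is equivalent to $-\cot(2\theta)\psi \geq 2$. Substituting this into the last display converts $-\tfrac{2}{3}\cot(2\theta)\psi^3(9+\psi^2)$ into an expression bounded below by $12\psi^2 + \tfrac{4}{3}\psi^4$, and precisely these two contributions cancel the negative quartic and quadratic terms, leaving the manifest lower bound $54 + 6\psi^2 > 0$. Hence $\partial_t|\nabla F_0| > 0$ throughout the interior of $T$, and $H[F_0] \geq 0$ follows from the formula displayed above.

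For the decay (\ref{sub1aa}), one uses $|\nabla F_0|\sim r^2$ (with $G$ bounded away from zero by (\ref{min 4h}) together with the uniform positive lower bound on $g_\theta$ from the proof of Lemma \ref{lemma min 1}), while $\partial_t|\nabla F_0| = O(1/r)$ by inspection of the formula, so that $|\nabla F_0|^{-2}\partial_t|\nabla F_0| = O(r^{-5})$ and the factor $(1+|\nabla F_0|^{-2})^{-3/2}\to 1$ is harmless. The main technical subtlety is the behaviour near the symmetry line $\theta=\pi/4$, where $g\to 0$ and $\psi\to\infty$ simultaneously; the algebraic cancellation above must be shown to persist in this limit so that the bounds hold uniformly in $\theta$, but this is straightforward once one tracks the matching orders of vanishing of $g$ and blow-up of $\psi$.
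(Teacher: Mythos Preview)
Your proof is correct and ultimately rests on the same key inequality as the paper's, but the packaging is different. The paper works with the angle variable $\phi$ defined in \equ{11} and computes
\[
-\tfrac12\,\partial_t\bigl(|\nabla F_0|^{-2}\bigr)=\frac{2r^2\cos^2\phi}{9t^3}\Bigl[\tfrac{2}{3}\cos^2\phi+\tfrac{1}{3}\sin^2\phi\,(\phi'+3)\Bigr],
\]
so that nonnegativity follows at once from the single fact $\phi'\ge -3$ (established in the lemma preceding \equ{fact1}). Your route stays in the $(g,\psi)$ variables, eliminates $\psi'$ via \equ{min 4a}, and then invokes the barrier $\psi\ge -2\tan(2\theta)$ from \equ{min 4e}. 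These two inputs are equivalent: from \equ{12a} one sees that $\phi'\ge -3$ holds precisely when $-\cot(2\theta)\psi\ge 2$, i.e.\ when $\psi\ge -2\tan(2\theta)$. So the underlying mechanism is identical; the paper's $\phi$-calculus simply compresses your algebraic cancellation $-6\psi^2-\tfrac{4}{3}\psi^4+12\psi^2+\tfrac{4}{3}\psi^4$ into the single bracket $\sin^2\phi\,(\phi'+3)\ge 0$.

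For the $O(r^{-5})$ decay your argument is fine and there is no genuine subtlety at $\theta=\pi/4$: the numerator $54g^3+15gg_\theta^2+g_\theta^2 g_{\theta\theta}$ is a smooth function of $\theta$ on the closed interval, and $G=\sqrt{9g^2+g_\theta^2}$ stays bounded away from zero because $g_\theta(\pi/4)>0$ and $g(\pi/2)>0$. So $\partial_t|\nabla F_0|=O(r^{-1})$ uniformly, and the rest follows exactly as you say.
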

\proof{}
Since $H[F]$ and $G[F]$ (defined in (\ref{defG})) differ only by a nonnegative factor it suffices to show that
\begin{equation}
G[F_0]\geq 0.
\label{sub 1}
\end{equation}
In fact, let $F=F_0=t$, we then have
\begin{align*}
G[F_0]&=- \frac{1}{2} \partial_t Q(\nabla_{t,s}F_0)\\
&=-\frac{1}{2}\partial_t\Big(\frac{1}{|\nabla F_0|^2}\Big),
\end{align*}
where
$$
\frac{1}{|\nabla F_0|^2}= \frac{1}{ r^4 (9 g^2 + g_\theta^2)}=\frac{ r^2 \cos^2 \phi}{ 9 t^2}.
$$
By the formulas (\ref{fact3a}), we have
\begin{align}
\begin{aligned}
-\partial_t \Big(\frac{r^2 \cos^2 \phi}{9t^2}\Big)&= \frac{r^2}{9t^3} \left [{2 \cos^2 \phi} - \frac{2tr_t\cos^2\phi}{r}  + {t\phi'\theta_t\sin (2\phi)} \right ]\\
&=\frac{2 r^2\cos^2\phi}{9t^3}\left[\frac{2}3\cos^2\phi+\frac{1}3\sin^2\phi(\phi'+3)\right]
\\
&\geq 0,
\end{aligned}
\label{92 z}
\end{align}
where we have used the fact that  $\phi'(\theta)\geq -3$. Estimate (\ref{sub1aa}) follows easily from this.
This ends the proof.
\qed

By the standard theory of the mean curvature equation for each fixed $R>0$, there exists a unique solution to the following problem
\begin{equation}
\label{mc11}
\frac{1}{(uv)^3} \nabla\cdot \left ( \frac { (uv)^3 \nabla F} { \sqrt{1+ |\nabla F|^2} }\right ) = 0 \ \mbox{in} \ \Gamma_R, F=F_0 \ \mbox{on} \ \partial \Gamma_R
\end{equation}
where  $ \Gamma_R=B_R \cap T$, $T=\{u,v>0, u<v\}$.
Let us denote the solution to (\ref{mc11}) by $F_R$.

Using  (\ref{mc00}),  the comparison principle and the supersolution found in \cite{bdg}, we have
\be
\label{mc3}
  F_0 \leq F_R \leq   {\mathcal H}\Big( (v^2-u^2) + (v^2-u^2) (u^2 +v^2)^{1/2}  (1+{\tt A} (|\cos (2\theta)|)^{ \lambda -1} )\Big)
\ee
where
$$
{\mathcal H}(t) := \int_0^t \exp\Big( {\tt B} \int_{|w|}^\infty \frac{dt}{ t^{2-\lambda} (1+ t^{2\alpha \lambda -2 \alpha})}\Big) d w,
$$
$\lambda>1$ is a  positive fixed number,  $\alpha= \frac{3}{2}$, and ${\tt A}$, ${\tt B}$ are sufficiently large positive constants.
This inequality, combined with standard elliptic estimates, imply that as $R \to +\infty$, $F_R \to F$ which is a solution to the mean curvature equation $ H[F]=0$  with
\be
\label{mc4}
  F_0 \leq F \leq  {\mathcal H}\Big((v^2-u^2) + (v^2-u^2) (u^2 +v^2)^{1/2}  (1+ {\tt A} (|\cos (2\theta)|)^{ \lambda -1} )\Big).
\ee

Next we need the following key lemma:
%whose proof is given in the appendix
\begin{lemma}\label{sup4}
There exists $\sigma_0\in (0,1)$ such that for each $\sigma\in (0,\sigma_0)$ there exists $a_0>1$ such that for each \blue{sufficiently large} $\tilde{A} \geq 1$, we have
\be
H[F_0+ \frac{\tilde{A} F_0 }{r^\sigma}] \leq 0, \ \ \mbox{for} \ r> a_0.
\label{mc5a}\ee
Moreover, under the same assumptions   for each \blue{sufficiently large} $A\geq 1$ we have
\be
\label{mc5}
H[F_0+ \frac{A}{r^\sigma}] \leq 0, \ \ \mbox{for} \ r> a_0 A^{\frac{1}{3+\sigma}}.
\ee
\end{lemma}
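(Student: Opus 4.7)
The strategy is to apply the polynomial-in-$A$ expansion (\ref{a terms}) to each of the two proposed perturbations: $\varphi=F_0/r^\sigma$ with $A=\tilde A$ for the first estimate, and $\varphi=r^{-\sigma}$ with $A$ for the second; and then verify that the right-hand side of (\ref{a terms}) is nonpositive. By (\ref{a0 term}) and Lemma \ref{sub1-n}, the $A^0$ term equals $|\nabla F_0|^{-1}(1+|\nabla F_0|^{-2})^{3/2}H[F_0]$, which is nonnegative and of size $O(r^{-7})$, so the task reduces to producing a signed negative contribution in $A$ of larger size that also absorbs the $A^2$ and $A^3$ corrections.

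The key input is a homogeneity calculation. The function $r^{3-\sigma}g(\theta)=F_0\cdot r^{-\sigma}$ is a near-solution of the analogue of (\ref{9}) obtained by replacing $9g^2$ with $(3-\sigma)^2g^2$ and the coefficient $21$ with $(3-\sigma)(7-\sigma)=21-10\sigma+O(\sigma^2)$; substituting the actual $g$ (which solves (\ref{9}) at exponent $3$) produces a residual whose $\sigma$-Taylor expansion has a linear term that, after simplification using $g/\sqrt{9g^2+g_\theta^2}=\cos\phi/3$ from (\ref{11}) and the ODE (\ref{9}) itself, is a negative multiple of $\sigma$ with a $\theta$-dependent coefficient that is strictly negative on $(\pi/4,\pi/2)$ by Lemma \ref{lemma min 1}. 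This defect is precisely what governs the $A^1$ contribution (\ref{a1 term}) when $\varphi=F_0/r^\sigma$: direct computation using
\begin{align*}
\partial_t\varphi=r^{-\sigma}\Big(1-\frac{\sigma\cos^2\phi}{3}\Big),\qquad \partial_s\varphi=-\frac{\sigma\,t\sin^2\phi}{7\,s\,r^\sigma},
\end{align*}
together with (\ref{fact2a})--(\ref{fact3a}) and the Leibniz-type identity for $\tilde L_0$ applied to $\varphi=t\cdot r^{-\sigma}$, shows that $|\nabla F_0|^{-1}\tilde L_0[\varphi]$ has a strictly negative leading piece of order $\sigma\,r^{-7-\sigma}$ (with a positive angular weight). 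The remaining three pieces of (\ref{a1 term}) turn out to be of the same order and, for $\sigma_0$ chosen small enough, preserve this sign.

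For the $A^2$ and $A^3$ terms in (\ref{a terms}), one uses (\ref{phis}) together with $\partial_t\varphi=O(r^{-\sigma})$ and $\rho^{-1}\partial_s\varphi/|\nabla F_0|=O(r^{-\sigma})$ to obtain $R_1=O(r^{-2\sigma})$, with derivatives of $R_1$ smaller by a factor of $r^{-3}$ through (\ref{fact2a})--(\ref{fact3a}). In the first estimate, $\tilde A^{k}$ times the $A^k$ term is essentially $(\tilde A/r^\sigma)^k$ times a quantity sharing the sign of the $A^1$ contribution, so choosing $\sigma_0$ small, then any $\sigma\in(0,\sigma_0)$, then $a_0$ large, and finally $\tilde A$ sufficiently large, the full polynomial in $\tilde A$ is nonpositive for $r>a_0$. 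For the second estimate with $\varphi=r^{-\sigma}$, the derivatives $\partial_t\varphi,\partial_s\varphi$ carry an additional factor of $F_0^{-1}$, so the natural small parameter becomes $A/(r^{3+\sigma}g)$, and the condition $r>a_0A^{1/(3+\sigma)}$ is exactly what keeps this bounded---controlling the $A^2,A^3$ pieces relative to the negative $A^1$ contribution. The main obstacle in both parts is the bookkeeping of the signed leading order of the $A^1$ term; it rests on the homogeneity/Taylor-in-$\sigma$ argument above together with Lemma \ref{lemma min 1} and the positivity $H[F_0]\geq 0$ from Lemma \ref{sub1-n}.
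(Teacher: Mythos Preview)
Your overall strategy---expanding $|\nabla F_0|^{-1}R^{3/2}H[F_0+A\varphi]$ via (\ref{a terms}) as a polynomial $H_0+AH_1+A^2H_2+A^3H_3$, identifying $H_0$ with the nonnegative quantity (\ref{a0 term}), and showing the $A^1$ term dominates negatively---is exactly the paper's, and your derivative formulas for $\varphi=tr^{-\sigma}$ match those of Appendix~A. For part~(b) your size-based control of $H_2,H_3$ and the role of the threshold $r>a_0A^{1/(3+\sigma)}$ also coincide with the paper's argument.

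There is, however, a genuine gap in part~(a). You assert that the $A^2$ and $A^3$ coefficients ``share the sign of the $A^1$ contribution'', i.e.\ $H_2\le 0$, $H_3\le 0$; but this is not a size estimate and cannot be extracted from (\ref{phis}) and the scaling $R_1=O(r^{-2\sigma})$ alone. It requires an explicit computation, which the paper carries out in Appendix~A (equations (\ref{apa 4})--(\ref{apa 5})); the sign hinges specifically on the bound $\phi'\ge -3$ from (\ref{fact1}). This point is essential rather than cosmetic: since (\ref{mc5a}) must hold for \emph{all} sufficiently large $\tilde A$, the cubic coefficient $H_3$ cannot be absorbed by size into $\tilde A H_1$, and if $H_3$ were positive anywhere in $\{r>a_0\}$ the polynomial would eventually become positive there as $\tilde A\to\infty$. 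Thus the order of quantifiers you invoke (fix $a_0$, then take $\tilde A$ large) cannot close the argument without first establishing $H_3\le 0$ and $H_2\le 0$.

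Two minor comments. First, the paper's explicit formula gives $H_1=\frac{-7\sigma\cos^2\phi}{9tr^\sigma}\bigl(7+(2\phi'-\sigma)\sin^2\phi\bigr)$ up to lower order, which is of scale $\sigma\cos^2\phi/(tr^\sigma)$, not $\sigma r^{-7-\sigma}$; the relevant comparison is with $H_0\le c_1\cos^2\phi/(tr^4)$, both carrying the common factor $\cos^2\phi/t$. Second, your homogeneity/Taylor-in-$\sigma$ description of $H_1$ is a legitimate alternative heuristic, but to turn it into a proof you must still extract the signed leading expression and verify its negativity via $\phi'\ge -3$---which is precisely where the paper's direct computation arrives.
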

\proof{}
We will consider (\ref{mc5a}) first. We will use formula (\ref{a terms}) to write
$H[F_0+\frac{\tilde{A} F_0 }{r^\sigma}]$ multiplied by a nonnegative factor as a polynomial in
$\tilde A$. Explicit computations (\ref{a terms}) yield
\begin{align*}
|\nabla F_0|^{-1} R^{3/2}H[F_0+\frac{\tilde{A} F_0 }{r^\sigma}]&= H_0+\tilde A H_1+\tilde A^2 H_2+\tilde A^3 H_3,
\end{align*}
where
\begin{align}
\begin{aligned}
H_0&=|\nabla F_0|^{-1}(1+|\nabla F_0|^{-2})^{3/2}H[F_0]= \frac{r^2\cos^2\phi}{9t^3}\left[\frac{2}3\cos^2\phi+\frac{1}3\sin^2\phi(\phi'+3)\right],\\
H_1&=\frac{-7\sigma \cos^2 \phi}{9 t r^\sigma} ( 7 +  (2\phi^{'} -\sigma) \sin^2 \phi ) + \frac{\cos^2\phi}{ tr^{\sigma}}O(r^{-4}).
\end{aligned}
\label{mc 51}
\end{align}
In the Appendix A we show in addition that
\begin{align}
\begin{aligned}
H_2&= \frac{\cos^2\phi}{ tr^{\sigma}}O(r^{-\sigma})\leq 0,\\
H_3&=\frac{\cos^2\phi}{ tr^{\sigma}}O(r^{-2\sigma})\leq 0.
\end{aligned}
\label{mc 52}
\end{align}
Let us observe that \blue{the first} term in (\ref{mc 51}) is bounded by
\begin{align}
H_0&\leq c_1 \frac{r^2\cos^4\phi}{t^3}\leq c_1\frac{\cos^2\phi}{tr^4}.
\label{mc 53}
\end{align}
Estimate (\ref{mc 53}) follows from (\ref{mc 51}) and the fact that $\phi(\pi/4)=\pi/2$, $\phi'((\pi/4)^+)=-3$, $\phi''((\pi/4)^+)=0$. Summarizing, we have
\begin{align}
\begin{aligned}
H[F_0+\frac{\tilde{A} F_0 }{r^\sigma}]&\leq H_0+\tilde AH_1\\
&\leq \frac{-7\tilde A\sigma \cos^2 \phi}{9 t r^\sigma} ( 7 +  (2\phi{'} -\sigma) \sin^2 \phi ) + \frac{\cos^2\phi}{ tr^\sigma}O(r^{-4+\sigma})\\
&\leq 0.
\end{aligned}
\label{mc 54}
\end{align}
To prove (\ref{mc5}) we use a similar argument. Writing  $H[F_0+\frac{A}{r^{\sigma}}]$ as a polynomial in $A$ we get that the $A^0$ term is equal to $H_0$ in (\ref{mc 51}) and:
\begin{align}
H_1&=\frac{-7\sigma \cos^2 \phi}{9 g^2(\theta) r^{6+\sigma}} ( 7 +  (2\phi{'} -\sigma) \sin^2 \phi ) + \frac{1}{ r^{6+\sigma}}O(r^{-1}).
\label{mc 55}
\end{align}
The other terms satisfy:
\begin{align*}
H_2&=\frac{1}{r^{6+\sigma}}O(r^{-3-\sigma}), \quad (A^2\ \mbox{term}),\\
H_3&=\frac{1}{r^{6+\sigma}}O(r^{-6-2\sigma}), \quad (A^3\ \mbox{term}).
\end{align*}
Since $H_0=O(r^{-7})$ the lemma follows by combing the above estimates.
\qed

Now we can prove Theorem \ref{exist fmc}:
 In fact, from (\ref{mc3}),  we have
\be
F_0 \leq F_R \leq F_0 +  \frac{\tilde{A} F_0 }{r^\sigma}, \ \  \mbox{for} \ r= a_0,
\ee
if we choose $\tilde{A} \geq 1 $ such that
\begin{align}
\label{mc09}
\begin{aligned}
&\max_{\theta }  \frac{{\mathcal H}\Big( a_0 (-\cos(2\theta)) + a_0^{3/2} (-\cos (2\theta))  (1+ {\tt A} (|\cos (2\theta)|)^{ \lambda -1} )\Big)}{ (a_0^{3} + \tilde{A} a_0^{3-\sigma}) g(\theta)}\leq 1,
\end{aligned}\end{align}
which is possible since \blue{$\frac{|\cos(2\theta)|}{g(\theta)}<\infty$ (this follows from (\ref{min 4h}) and the fact that $g_\theta(\frac{\pi}{4})>0$).} Note that (\ref{mc09}) holds for any $\tilde{A}$ large.

By comparison principle in the domain $ \Gamma_R \setminus B_{a_0}$, (noting that the function $F_0 +  \frac{\tilde{A} F_0 }{r^\sigma}$ is a super-solution for $ r>a_0$ by Lemma \ref{sup4} and the function $F_0$ is a sub-solution by Lemma \ref{sub1-n}), we deduce that
\be
F_0 \leq F_R \leq F_0 +  \frac{\tilde{A} F_0 }{r^\sigma}, \ \  \mbox{in}\   \Gamma_R \setminus B_{a_0},
\label{mc07}
\ee
and hence
\be
\label{mc7}
F_0 \leq F_R \leq F_0 +   \tilde{A} r^{3-\sigma}, \ \  \mbox{in} \   \Gamma_R \setminus B_{a_0},
\ee
for $\tilde{A}$ large.

Let $A\geq 1$ be a constant to be chosen later and let  us consider the region $\Gamma_R\cap \{ r >R_0 \}$, where $ R_0= a_0 A^{\frac{1}{3+\sigma}}$.  From (\ref{mc7}), we then have
\be
F_0 \leq F_R \leq F_0 +   \tilde{A} R_0^{3-\sigma} \leq F_0+ \frac{A}{R_0^\sigma}, \ \  \mbox{for} \ r =R_0
\ee
if we choose
\be
\tilde{A}\leq \frac{A}{R_0^3}= \frac{ A}{ a_0^3 A^{\frac{3}{3+\sigma}}}= a_0^{-3} A^{\frac{\sigma}{3+\sigma}}.
\label{mc9aa}
\ee
By comparison principle applied now in  $\Gamma_R \cap \{ r >R_0 \}$, using Lemma \ref{sup4}, we then obtain
\be
\label{mc9}
F_0 \leq F_R  \leq F_0+ \frac{A}{r^\sigma}, \ \  \mbox{for} \ r  \geq R_0= a_0 A^{\frac{1}{3+\sigma}}.
\ee
The assertion of the Theorem follows now by  combing (\ref{mc07}) and (\ref{mc9}) and letting $R\to \infty$.

\qed

The second  Theorem of this section  improves the super-solution  and further refines the estimate on $F$.
\begin{teo}\label{teo mc2}
There exists $\sigma_0\in (0,1)$ such that for each $\sigma\in (0,\sigma_0)$ there exists $a_0>1$ such that for each \blue{sufficiently large} $A\geq 1$, we have
\be
\label{mc50}
H[F_0+ \frac{A \tanh ( F_0 r^{-1}) }{r^\sigma}] \leq 0, \ \ \mbox{for} \ r> a_0 A^{\frac{1}{1+\sigma}}
\ee
As a consequence there are  constants ${\tt C}$, $R_0$ such that the solution \blue{to the mean curvature equation described}  in Theorem \ref{exist fmc} satisfies:
\be
\label{mc50-1}
F_0 \leq F \leq F_0+ \frac{{\tt  C} \tanh ( F_0 r^{-1}) }{r^\sigma}, \quad\mbox{for} \ r> R_0.
\ee
%and
%\be
%|\nabla_{t} (F-F_0)| + \rho^{-1} |\nabla_s (F-F_0)| \leq  C r^{-1-\sigma}, \ \mbox{on} \   \{ u=v \}.
%\ee
%\label{sup8}
\end{teo}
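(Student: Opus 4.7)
I would prove (\ref{mc50}) by the same method as Lemma \ref{sup4}, now with the perturbation $\varphi(t,s)=\tanh(F_0/r)\,r^{-\sigma}$ substituted into the polynomial-in-$A$ expansion (\ref{a terms}). The whole point of inserting $\tanh(F_0/r)$ is that it interpolates between the two regimes already established: where $F_0/r\gtrsim 1$ (away from the cone $\theta=\pi/4$) the perturbation behaves like $A/r^\sigma$, which is the setting of (\ref{mc5}); where $F_0/r\ll 1$ (near the cone) it behaves like $A\,F_0/r^{1+\sigma}$, which is the setting of (\ref{mc5a}). This is why the threshold $r>a_0 A^{1/(1+\sigma)}$ lies strictly between the thresholds $r>a_0$ and $r>a_0 A^{1/(3+\sigma)}$ of the two cases of Lemma \ref{sup4}.

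\textbf{The $A^1$ term.} First I would compute $\partial_t\varphi$ and $\partial_s\varphi$ using $F_0=t$ together with $\partial_t(F_0/r)=(3-\cos^2\phi)/(3r)$ and $\partial_s(F_0/r)=-t\sin^2\phi/(7sr)$, which come from (\ref{fact2a})--(\ref{fact3a}). Substituting into (\ref{a1 term}) and retaining the leading orders one verifies
\begin{align*}
H_1\leq -\frac{c_\sigma\cos^2\phi\,\tanh(F_0/r)}{t\,r^\sigma}\bigl(7+(2\phi'-\sigma)\sin^2\phi\bigr)+(\text{lower order}),
\end{align*}
the structure of the leading negative factor being the same as in (\ref{mc 51}) and (\ref{mc 55}), and its sign ensured by $\phi'\geq -3$. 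The coefficients $H_2,H_3$ are controlled along the lines of Appendix A, using $|\varphi_t|+|\rho^{-1}\varphi_s|/|\nabla F_0|=O(r^{-\sigma})$ together with the fact that the extra $\mathrm{sech}^2(F_0/r)$ and $\tanh(F_0/r)$ factors are bounded respectively by $1$ and by $\min\{1,F_0/r\}$. To absorb the unchanged $H_0\leq c_1\cos^2\phi/(tr^4)$ by $-AH_1$ one then needs $A\,\tanh(F_0/r)\geq c_1\,r^{\sigma-3}$; a two-case split using $\tanh x\geq \min(x/2,1/2)$ and the lower bound $F_0/r\geq c\,r(-\cos 2\theta)$ coming from (\ref{min 4h}) gives exactly the threshold $r\geq a_0 A^{1/(1+\sigma)}$.

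\textbf{Deducing (\ref{mc50-1}).} I would imitate the comparison argument at the end of the proof of Theorem \ref{exist fmc}. Let $F_R$ solve (\ref{mc11}) and set $R_0=a_0 A^{1/(1+\sigma)}$. From (\ref{mc07}) together with Lemma \ref{sub1-n} one already has $F_0\leq F_R\leq F_0+\tilde A\,F_0/r^\sigma$ on the sphere $\{r=R_0\}$ with a fixed $\tilde A$. Choosing $A$ large enough, one verifies on $\{r=R_0\}$ that $\tilde A\,F_0\leq A\,\tanh(F_0/R_0)$ by splitting into the region $\{F_0\leq R_0\}$ (where $\tanh(F_0/R_0)\geq F_0/(2R_0)$ combines with the a priori bound $F_0\leq R_0^2$ from (\ref{mc00})) and the region $\{F_0\geq R_0\}$ (where $\tanh\geq 1/2$). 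Applying the comparison principle on $\Gamma_R\cap\{r>R_0\}$ with super-solution provided by (\ref{mc50}) and sub-solution $F_0$, and letting $R\to\infty$, yields (\ref{mc50-1}) with ${\tt C}=A$.

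\textbf{Main obstacle.} The delicate point is preserving the sign of $H_1$ uniformly down to the cone $\theta=\pi/4$, where $F_0\to 0$ and several algebraic factors degenerate. Differentiating $\tanh(F_0/r)$ produces $\mathrm{sech}^2(F_0/r)$ pieces that do \emph{not} carry the overall small factor $\tanh(F_0/r)$, and one must check that these pieces are either already non-positive through the same structural cancellation that made Lemma \ref{sup4} work, or else are quantitatively absorbed by the genuinely negative leading part of $H_1$ identified above. Keeping track of the full (not merely leading) expression coming from (\ref{a1 term}), and exploiting that each $\mathrm{sech}^2$ contribution is multiplied by a derivative of $F_0/r$ that is itself small near the cone, is the crux of the argument.
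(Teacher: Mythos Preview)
Your approach to \eqref{mc50} is essentially the paper's: expand $H[F_0+A\varphi]$ in powers of $A$ via \eqref{a terms}, and show that the negative $A^1$ term (controlled by a computation of $\tilde L_0[r^{-\sigma}\tanh(t/r)]$, the paper's Lemma~\ref{lemma tl01}) dominates $H_0$ and the $A^2,A^3$ terms once $r>a_0A^{1/(1+\sigma)}$. Your ``main obstacle'' paragraph correctly locates the delicate point. The paper handles it by showing that all terms --- including the $\mathrm{sech}^2$ contributions --- carry a common factor $\min\{1,t/r\}$, so that \eqref{C2-6} holds with that factor pulled out; the threshold $r>a_0A^{1/(1+\sigma)}$ then comes from balancing $c_0A/r^{6+\sigma}$ against $c_{18}A^2/r^{7+2\sigma}$, not against $H_0$.

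Your deduction of \eqref{mc50-1}, however, has a genuine gap. On the sphere $\{r=R_0\}$ in the portion where $F_0\geq R_0$, you need $\tilde A\,F_0\leq A\,\tanh(F_0/R_0)\leq A$, but $F_0$ ranges up to order $R_0^{3}$ there (not $R_0^2$; \eqref{mc00} gives only $F_0\leq r^3$). This forces $\tilde A\lesssim A/R_0^{3}=A^{(\sigma-2)/(1+\sigma)}/a_0^3\to 0$ as $A\to\infty$, which is impossible since $\tilde A$ is a fixed constant from \eqref{mc07}. So the boundary inequality you need simply fails on that portion of $\{r=R_0\}$, and comparison on the full annulus $\Gamma_R\cap\{r>R_0\}$ cannot be carried out directly.

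The paper circumvents this by restricting the comparison domain to $\Sigma=\Gamma_R\cap\{r>R_0\}\cap\{0\le F_0/r<1\}$. On $\partial\Sigma$ there are now two pieces: on $\{r=R_0\}\cap\{F_0/r<1\}$ the check $\tilde A F_0\leq A\tanh(F_0/R_0)$ reduces to $\tilde A\leq A/R_0\cdot\inf_{|\eta|<1}\tanh\eta/\eta$, which works; on the new interior boundary $\{F_0/r=1\}$ one invokes the already-established estimate \eqref{mc9} (i.e.\ the supersolution $F_0+A/r^\sigma$ from Lemma~\ref{sup4}) to get $F_R\leq F_0+A\tanh(1)/r^\sigma=F_0+A\tanh(F_0/r)/r^\sigma$. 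After comparison in $\Sigma$, the region $\{F_0/r\geq 1\}\cap\{r>R_0\}$ is handled directly by \eqref{mc9} since $\tanh(F_0/r)\geq\tanh 1$ there. The essential input you are missing is this second use of \eqref{mc9} on the level set $\{F_0/r=1\}$.
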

\proof{}
Let us prove (\ref{mc50}) first.   We will denote
\begin{align*}
F=F_0+\frac{A}{r^\sigma}\varphi(t,s), \quad \varphi(t,s)=\tanh(t/r).
\end{align*}
Note that  the $A^0$ and $A^1$ terms in (\ref{a terms}) are:
\begin{align}
\begin{aligned}
&-\frac{1}{2} \partial_t \Big(\frac{1}{|\nabla F_0|^2}\Big) + \frac{A}{|\nabla F_0|^2} \partial^2_{t} \varphi - \frac{A}{2} \partial_t \Big(\frac{1}{|\nabla F_0|^2}\Big) \partial_t \varphi\\
&\quad
+ A\Big(1+ \frac{1}{|\nabla F_0|^2}\Big) \partial_s\Big(\frac{ \rho^{-2} \varphi_s}{ |\nabla F_0|^2}\Big) -\frac{A}{2} \partial_s\Big(\frac{1}{|\nabla F_0|^2}\Big) \frac{\rho^{-2} \varphi_s}{|\nabla F_0|^2}
\\
& =|\nabla F_0|^{-1} H[F_0]+
A|\nabla F_0|^{-1}\Big[|\nn F_0| \pp_t \Big(\frac{\varphi_t}{|\nn F_0|^2}\Big) + |\nn F_0| \pp_s
\Big (\frac{\rho^{-2} \varphi_s}{|\nn F_0|^2}\Big) \Big]
\\
&\quad
-A\Big[\frac{3}{2} \partial_t\Big(\frac{1}{|\nabla F_0|^2}\Big) \partial_t \varphi  -
\frac{1}{|\nabla F_0|^2} \partial_s\Big (\frac{ \rho^{-2} \varphi_s}{|\nabla F_0|^2}\Big) +\frac{1}{2} \partial_s\Big(\frac{1}{|\nabla F_0|^2}\Big) \frac{\rho^{-2} \varphi_s}{|\nabla F_0|^2}\Big].
\label{mc 50a}
\end{aligned}
\end{align}
We have by (\ref{mc 53})
\begin{align}
H_0=|\nabla F_0|^{-1} H[F_0]& \leq c_1\frac{\cos^2\phi}{tr^4}\leq c_1\frac{\cos\phi}{r^7}.
\label{mc 50b}
\end{align}
Now we will deal with the first $A^1$ term in (\ref{mc 50a}). This term is given explicitly in (\ref{a1 term}). We recall here that in (\ref{L0}) we have defined the following operator:
\begin{align}
\tilde L_0[\varphi]:=|\nn F_0| \pp_t \Big(\frac{\varphi_t}{|\nn F_0|^2}\Big) + |\nn F_0| \pp_s
\Big (\frac{\rho^{-2} \varphi_s}{|\nn F_0|^2}\Big).
\label{mc 50c}
\end{align}
We will prove the following Lemma:
\begin{lemma}\label{lemma tl01}
There exists $\sigma_0>0$ such that for each $\sigma\in (0,\sigma_0)$ there exist $a_0>0$ and $c_0>0$  such that
\begin{align}
\tilde L_0 [ r^{-\sigma}\tanh(t/r) ]&\leq -\frac{c_{0}}{ r^{4+\sigma}}\min\{1, t/r\}, \quad r>a_0.
\label{mc 50d}
\end{align}
\end{lemma}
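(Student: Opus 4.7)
The plan is to carry out an explicit calculation of $\tilde L_0[r^{-\sigma}\tanh(t/r)]$ in the $(t,s)$ coordinates, extract the leading-order contribution, verify its sign via the structural equation for $\phi$, and then bound the remaining corrections.

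Set $\xi=t/r$. Using the chain rule together with the derivatives $r_t,\theta_t,r_s,\theta_s$ supplied by \equ{fact2a}--\equ{fact3a}, I first compute
\begin{align*}
\varphi_t&=-\frac{\sigma\cos^2\phi}{3tr^\sigma}\tanh\xi+\frac{3-\cos^2\phi}{3r^{\sigma+1}}\mathrm{sech}^2\xi,\\
\varphi_s&=-\frac{\sin^2\phi}{7sr^\sigma}\bigl(\sigma\tanh\xi+\xi\,\mathrm{sech}^2\xi\bigr).
\end{align*}
Next, using $|\nabla F_0|^{-2}=r^2\cos^2\phi/(9t^2)$, $\rho^{-2}=r^{12}\sin^6(2\theta)/64$, and the identity $s=r^7\sin^3(2\theta)\sin\phi/56$ (which follows from \equ{s} together with \equ{11}), I form $A_1=\varphi_t/|\nabla F_0|^2$ and $A_2=\rho^{-2}\varphi_s/|\nabla F_0|^2$, apply $\partial_t$ and $\partial_s$ to each, multiply by $|\nabla F_0|=3r^2g/\cos\phi$, and add.

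The resulting expression organizes into three families distinguished by their dependence on $\xi$: one proportional to $\tanh\xi$, one to $\mathrm{sech}^2\xi$, and one to $\xi\,\mathrm{sech}^2\xi\tanh\xi$. Dimensional analysis ($\partial_t\sim r^{-3}$, $\partial_s\sim r^{-7}$, $\rho^{-2}|\nabla F_0|^{-2}\sim r^{8}$) shows that the leading slow-decay contribution comes from the $s$-derivative part acting on the $\tanh\xi$ piece; the $t$-derivative part is of order $r^{-6-\sigma}$ and hence lower order. Collecting and simplifying using the structural relation $\phi'+7+6\cot(2\theta)\tan\phi=0$ from \equ{12} (which collapses all combinations of $\cot(2\theta)\sin\phi$ and $\phi'$) yields the clean form
$$\tilde L_0[\varphi]=-\frac{\sigma\cos\phi}{3g\,r^{4+\sigma}}\bigl[7+\sin^2\phi\,(2\phi'-\sigma)\bigr]\tanh(t/r)+\mathcal{E}(r,\theta,\xi),$$
where $\mathcal E$ collects the $\mathrm{sech}^2\xi$ and $\xi\,\mathrm{sech}^2\xi\tanh\xi$ contributions.

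To conclude, I check the sign and size of both pieces. The ratio $\cos\phi/g=3/\sqrt{9g^2+g_\theta^2}$ is bounded above and below by positive constants on $[\pi/4,\pi/2]$ (using $g_\theta(\pi/4)>0$ from Lemma \ref{lemma min 1}); and $\phi'\geq-3$ combined with $\sin^2\phi\leq1$ (Lemma 3.2, \equ{fact1}) gives $7+\sin^2\phi(2\phi'-\sigma)\geq 1-\sigma>0$ for $\sigma$ small. Thus the leading term is bounded above by $-c\sigma\tanh(t/r)/r^{4+\sigma}$. For the corrections, a careful tracking of the $\mathrm{sech}^2$ terms shows that the dominant such contribution originates from differentiating the $r$-power in the coefficient of $\xi\,\mathrm{sech}^2\xi$ in $A_2$, producing a term of the form $-\mathrm{sech}^2(t/r)\sin^2\phi\cos\phi/r^2$ which is nonpositive and of order $g\,\mathrm{sech}^2(t/r)/r^2$ near $\theta=\pi/4$ and negligible for $\xi\gg1$. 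The remaining $\mathrm{sech}^2$ and $\xi\,\mathrm{sech}^2\tanh$ contributions are bounded by $Cr^{-2}\mathrm{sech}^2(t/r)\min\{1,\cos\phi\}$ and are absorbed by the leading $\tanh$ term in the regime $\xi\gtrsim 1$ (where $\mathrm{sech}^2\xi$ decays exponentially) and by the negative $\mathrm{sech}^2$ contribution in the regime $\xi\lesssim 1$. Combining with $\tanh(t/r)\asymp\min\{1,t/r\}$ yields the claimed bound, provided $\sigma_0$ is small and $a_0$ is large.

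The main obstacle will be the algebraic bookkeeping: the expansion produces many terms without an obvious sign pattern, and the essential cancellation that extracts the factor $7+\sin^2\phi(2\phi'-\sigma)$ relies sensitively on the $\phi$-equation \equ{12}. Handling the transition regime $t/r\sim 1$—where both the $\tanh$-leading term and the $\mathrm{sech}^2$-correction contribute comparable amounts and one must ensure no positive term of size $r^{-4-\sigma}$ survives—will require the most care.
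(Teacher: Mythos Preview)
Your overall strategy—explicit computation in the $(t,s)$ coordinates, splitting by the behavior of $\eta=t/r$—is the paper's. The regime $\eta\gtrsim 1$ is fine: there $\mathrm{sech}^2\eta$ decays exponentially and your $-c\sigma\tanh\eta/r^{4+\sigma}$ term indeed dominates.

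The gap is in the regime $\eta\ll 1$. Your designated ``leading term'' carries an explicit factor $\sigma$, hence is of size $\sigma\eta/r^{4+\sigma}$; but $\mathcal E$ contains pieces of size $\eta/r^{4+\sigma}$ with \emph{no} $\sigma$ factor. Indeed, already in $\varphi_s=-\tfrac{\sin^2\phi}{7sr^\sigma}(\sigma\tanh\eta+\eta\,\mathrm{sech}^2\eta)$ the second summand is $\sigma$-free, and it propagates through the $s$-derivative to give an $O(1)$-in-$\sigma$ contribution at scale $r^{-4-\sigma}$. So for small $\sigma$ your ``corrections'' are \emph{larger} than your ``leading term'', and you must show their sum is strictly negative—this is the heart of the lemma, not a mop-up step. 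Your claimed dominant negative correction, ``$-\mathrm{sech}^2(t/r)\sin^2\phi\cos\phi/r^2$ from differentiating the $r$-power in $A_2$'', is incorrect on all counts: differentiating $r^{-\sigma}$ produces a $\sigma$ factor; the displayed $r^{-2}$ scaling does not match $\tilde L_0\sim r^{-4-\sigma}$; and the form does not match any term in the expansion.

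What the paper actually uses for small $\eta$ comes from the $t$-derivative part $|\nabla F_0|\,\partial_t\bigl(|\nabla F_0|^{-2}\varphi_t\bigr)$. Two pieces matter there. First, when $\partial_t$ hits the prefactor $r\cos^2\phi/t^2$ in $|\nabla F_0|^{-2}\varphi_t$ one obtains a manifestly negative term of size $-c\,\mathrm{sech}^2\eta/(\eta\,r^{4+\sigma})$ (the paper simply drops it, keeping the inequality). Second, when $\partial_t$ hits $\mathrm{sech}^2(t/r)$ one gets a $\beta''$ contribution, and since $\beta''=-2\tanh\eta\,\mathrm{sech}^2\eta\sim -2\eta$ for small $\eta$, this supplies $-c\eta/r^{4+\sigma}$ with no $\sigma$ factor. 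The positive $\eta\,\mathrm{sech}^2\eta$ pieces from the $s$-part carry the coefficient $1+\tfrac{\sin^2\phi}{7}(2\phi'-1)$, which vanishes at $\theta=\pi/4$ (where $\phi'=-3$, $\sin\phi=1$); since large $r$ forces $\eta$ small only when $\theta\to\pi/4^+$, those positive pieces are lower order there and are dominated by the $\beta''$ term. This is the cancellation/sign check missing from your sketch; without it the small-$\eta$ bound does not follow.
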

\proof
Let us denote:
$$
\beta (\eta)=\tanh (\eta), \ \eta= \frac{t}{r},  \quad
 \beta_1(\eta)=\beta (\eta)-\frac{1}{\sigma} \beta{'} \eta,
$$
and
\be
\varphi = \beta (\eta) r^{-\sigma}, \qquad \sigma  >0.
\label{60.03}\ee
Then we compute
$$
\partial_s \varphi= -\frac{\sigma r^{-\sigma} \sin^2 \phi}{7s}\beta_1,
$$
hence
\begin{align*}
\pp_s \Big(\frac{\rho^{-2}\pp_s \varphi}{|\nn F_0|^2}\Big)
&
= -c_1 \sigma\pp_s \left ( \frac{ r^{-\sigma} }{t^2} \cos^2\phi \beta_1 \right )
\end{align*}
where \blue{$c_1>0$.  From now on, by $c_i>0$ we will denote  generic positive constants.} We obtain
\begin{align}
\begin{aligned}
&\pp_s\Big( \frac{\rho^{-2}}{|\nn F_0|^2} \pp_s \varphi\Big)\\
&  =-\frac{c_1 \sigma r^{-6-\sigma}}{9g^2+g_\theta^2}\Big\{\beta_1\Big[1+ \frac{2\sin^2\phi}{7}\Big(\frac {-\sigma}{2}   + \phi' \Big)\Big] - \frac{\eta\sin^2\phi}{7}\beta'_1 \Big\}
\end{aligned}
\label{79}
\end{align}
On the other hand, we have
$$
\partial_t \phi_0= -\frac{ \sigma r^{-\sigma}}{3 t} \beta \cos^2 \phi +  \beta{'} (1-\frac{\cos^2\phi}{3}) r^{-\sigma-1},
$$
and
$$
 \frac{ \partial_t \phi_0}{|\nabla F_0|^2}
=  \frac{ r^{1-\sigma} \cos^2 \phi}{ 9 t^2}
\Big[\frac{-\sigma}{3}
(\frac{\beta}{\eta}) \cos^2 \phi+\Big(1-\frac{\cos^2 \phi}{3}\Big) \beta{'}\Big],
$$
hence
\begin{align}
\begin{aligned}
\partial_t\Big( \frac{1}{|\nabla F_0|^2} \partial_t \phi\Big)
&= \Big[-  \frac{ 2r^{1-\sigma} \cos^2 \phi}{ 9 t^3}+\frac{r^{1-\sigma}\sin^2(2\phi)\phi'}{63 t^3}\Big] \Big(1-\frac{\cos^2 \phi}{3}\Big) \beta{'}
\\
&\quad + \frac{ r^{-\sigma} \cos^2 \phi}{ 9 t^2}\Big [ \frac{-\sigma}{3}
\Big(\frac{\beta}{\eta}\Big)' \cos^2 \phi  +\Big( 1-\frac{\cos^2 \phi}{3}\Big) \beta'' \Big] \Big(1-\frac{\cos^2 \phi}{3}\Big)
\\
&\quad+ O\big( \frac{ \cos \phi}{ r^{8+\sigma}}\big).
\end{aligned}
\label{91}
\end{align}
The first term in (\ref{91})  is negative.  The second term can be estimated as follows
 \begin{align}
 \begin{aligned}
& \frac{ r^{-\sigma} \cos^2 \phi}{ 9 t^2}
\Big[ \frac{-\sigma}{3}
\Big(\frac{\beta}{\eta}\Big)' \cos^2 \phi  + ( 1-\frac{\cos^2 \phi}{3}) \beta''\Big ] \Big (1-\frac{\cos^2 \phi}{3}\Big)
\\
&\quad \leq   \frac{c_2}{ r^{6+\sigma}}  \Big [ \frac{-\sigma}{3}
\Big(\frac{\beta}{\eta}\Big)' \cos^2 \phi  + \frac{2}{3}  \beta{''} \Big].
\end{aligned}
\label{92}
\end{align}
Combining (\ref{79}) and (\ref{92}), we have
\begin{align}
\begin{aligned}
\tilde L_0 [ \varphi ]
&\leq \frac{c_3}{ r^{4+\sigma}} \Big\{-\sigma\beta_1
\Big[1+ \frac{2\sin^2\phi}{7}\Big(\frac {-\sigma}{2}   + \phi' \Big)\Big] + \frac{\eta\sigma\sin^2\phi}{7} \beta_1'
\\
&\qquad\qquad+ \Big [ \frac{-\sigma}{3}
\Big(\frac{\beta}{\eta}\Big)' \cos^2 \phi  + \frac{2}{3}  \beta{''} \Big] \Big\}+O\big( \frac{ \cos \phi}{ r^{6+\sigma}}\big).
\end{aligned}
\label{92 a}
\end{align}
Denoting the term in brackets above by $\tilde a$ we can  estimate as follows:
\begin{align*}
\tilde a & \leq
% -\frac{1}{7}\beta'\eta+\frac{1}{7}(\beta'\eta)'\eta+\frac{2}{3}\beta''\\
%&\quad +\sigma\Big\{\beta\Big[ 1+ \frac{2\sin^2\phi}{7}\Big(\frac {\sigma}{2}
%+ \phi' \Big)\Big]-\frac{2\sin^2\phi}{7}\beta'\eta
%+\frac{\cos^2\phi}{3}\Big(\frac{\beta}{\eta}\Big)'\Big\}\\
%&\leq
\beta''\Big(c_4\eta^2\sin^2\phi+\frac{2}{3}\Big)-{c_5\sigma}\Big[\beta-c_6|\beta'\eta|-c_7
\Big|\Big(\frac{\beta}{\eta}\Big)'\Big|\Big].
\end{align*}
Given small $\ve_0>0$, let  $\eta_0>0$ be such that
$$
\beta-c_6|\beta'\eta|-c_7
\Big|\Big(\frac{\beta}{\eta}\Big)'\Big| \geq \ve_0, \quad \eta\geq \eta_0,
$$
hence for $\eta>\eta_0$ we have:
\begin{align}
\tilde a\leq -c_8\ve_0\sigma, \quad \mbox{for}\ \sigma\in (0,1/2).
\label{92 b}
\end{align}
On the other hand when $0\leq \eta\leq \eta_0$ then we have
\begin{align}
\tilde a\leq -c_9\eta\Big(\frac{1}{7}\eta^2+\frac{2}{3}\Big)-c_{10}\sigma \eta\leq -c_{11}\eta,
\label{92 c}
\end{align}
where $\sigma\in (0,\sigma_0)$ with $\sigma_0>0$ small.
Finally let us consider the last term in (\ref{92 a}).  When $\eta\leq 1$ then
$$
\frac{\cos\phi}{r^{6+\sigma}}\leq \frac{c_{12}\eta}{r^{8+\sigma}},
$$
while when $1\leq \eta$ then
$$
\frac{\cos\phi}{r^{6+\sigma}}\leq \frac{1}{r^{6+\sigma}}.
$$
Summarizing the above and (\ref{92 a})--(\ref{92 c}) we have  that  for each $\sigma\in (0,\sigma_0)$,  where
$\sigma_0$ is small, there exists $r_0>0$, $c_0$ such that :
\begin{align}
\begin{aligned}
\tilde L_0 [\varphi]&\leq   -\Big(\frac{c_{13}}{ r^{4+\sigma}}-\frac{c_{14}}{r^{6+\sigma}}\Big)\min\{1, \eta\}\\
&\leq -\frac{c_{0}}{ r^{4+\sigma}}\min\{1, \eta\}, \quad r>r_0.
\label{92 d}
\end{aligned}
\end{align}
\qed
\begin{remark}\label{remark beta}
We observe that Lemma \ref{lemma tl01} remains true with $\beta(\eta)=\tanh (\eta)$ replaced by  $\beta(\eta)=1-e^{\,-\eta}$, $\eta>0$ with no change in the proof.
\end{remark}
Continuing the proof of Theorem \ref{teo mc2} we notice that
\begin{align}
\begin{aligned}
- \frac{3}{2} \partial_t (\frac{1}{|\nabla F_0|^2}) \partial_t \varphi &
 \leq  \frac{ c_{15} \cos \phi}{ r^{8+\sigma} }\\
 & \leq  \frac{c_{15} \min \{\eta, 1\} }{ r^{8+\sigma} } ,
\end{aligned}
\label{C2-1}
\end{align}
since $\cos \phi \leq  \frac{\eta}{r^2}$, and
\begin{align}
\label{C2-2}
 \frac{1}{|\nabla F_0|^2} \partial_s (\frac{ \rho^{-2} \varphi_s}{ |\nabla F_0|^2}) &\leq  \frac{c_{16}}{ r^{8+\sigma}} \min\{\eta, 1\}\\
\label{C2-3}
 -\frac{1}{2} \partial_s (\frac{1}{|\nabla F_0|^2}) \frac{\rho^{-2} \varphi_s}{|\nabla F_0|^2} &\leq  c_{17}\frac{ |\beta_1 (\eta)| }{ r^{10+\sigma}} \leq  c_{17}  \frac{\min\{\eta, 1\}}{ r^{10+\sigma}}.
\end{align}
We analyze  the $A^2$-term and $A^3$ terms  in the expansion of
$H[F_0+Ar^{-\sigma}\tanh(t/r)]$. A typical term in (\ref{a terms}) is
\begin{align}
\begin{aligned}
-\frac{1}{2} \partial_t (\frac{\rho^{-2} F_s^2}{|\nabla F_0|^2}) &
=-\frac{\sigma^2 \sin^2 \phi \cos^2 \phi}{ 27 t^3 r^{2\sigma}} [ -\sigma  \cos^2 \phi -3+  \cos (2\phi)  \phi^{'} ] \beta_1^2\\
&\quad - \frac{\sigma^2 \sin^2 \phi \cos^2 \phi}{ 9 t^3 r^{2\sigma}} 2\beta_1 \beta_1^{'}  \eta (1-\frac{\cos^2 \phi}{3})\\
%\]
%\[= O( \frac{ C^2 \sin^2 \phi \cos \phi}{ r^{9+2\sigma}} |\beta_1^2|)  - \frac{ C^2 \sigma^2 \sin^2 \phi \cos^2 \phi}{ 9 t^3 r^{2\sigma}} 2\beta_1 \beta_1^{'}  \eta (1-\frac{\cos^2 \phi}{3})
%\]
%\[=
% O( \frac{ \sin^2 \phi \cos^2 \phi}{ t^3 r^{2\sigma}} [ (\beta-\frac{1}{\sigma} \beta^{'} \eta)^2 +\eta |\beta -\frac{1}{\sigma} \beta^{'} \eta|])
%\]
%\[= O( \frac{ \sin^2 \phi}{ r^{7+2\sigma}}  \frac{1}{\eta}  [ (\beta-\frac{1}{\sigma} \beta^{'} \eta)^2 +\eta |\beta -\frac{1}{\sigma} \beta^{'} \eta|]
%\]
&= \sin^2 \phi  \min \{\eta, 1\}O({ r^{-7-2\sigma}}).
\end{aligned}
\label{C2-4}
\end{align}
Other $A^2$ terms are estimated in a similar way.
%For the $C^2-$term in $I_3$, we have
%\[ \frac{2}{ r^{\sigma+1}} [ -\frac{\sigma}{3} \frac{\beta}{\eta} \cos^2 \phi + \beta^{'} (1-\frac{\cos^2 \phi}{3})] \partial_s (\frac{\rho^{-2}}{|\nabla F_0|^2} F_s)
%\]
%\[ = O( \frac{1}{r^{\sigma+1}} \frac{ \cos^2 \phi}{ t^2 r^\sigma} ( \eta |\beta_1^{'} (\eta) |+ |\beta_1 (\eta)|)
%\]
%\[= O( \frac{ \sin^2 \phi}{ r^{7+2\sigma}}  \min (\eta, 1))\]
%Similarly all the other $C^2-$ terms are of the order $O( \frac{ \sin^2 \phi}{ r^{7+2\sigma}}  \min (\eta, 1))$
Direct calculations show that $A^3-$term satisfy
\begin{align}
H_3= \sin^2 \phi  \min\{\eta, 1\} O(r^{-8-3\sigma}).
\label{C2-5}
\end{align}
In conclusion, we have
\begin{align}
\begin{aligned}
H[F_0+Ar^{-\sigma}\tanh(F_0/r)]&\leq \Big(\frac{c_1}{r^7} -\frac{c_{0}A}{ r^{6+\sigma}}
+\frac {c_{18}A^2}{r^{7+2\sigma}}+\frac {c_{19}A^3}{r^{7+3\sigma}}
\Big)\min\{1, \eta\}
\\
&\leq 0,
\end{aligned}
\label{C2-6}
\end{align}
if we choose $a_0$ large and $r \geq a_0 A^{\frac{1}{1+\sigma}}$. This proves (\ref{mc50}).

Now we will show (\ref{mc50-1}). From (\ref{mc7}),  we have
\be
\label{mc7-1}
F_0 \leq F_R \leq F_0 +   \tilde{A} F_0 r^{-\sigma}, \ \  \mbox{for} \ r \geq a_0
\ee
for some  $\tilde{A}\geq 1$.

Let us consider the region
$$\Sigma:= B_R   \cap \{ v>u \} \cap \{ r >R_0 \} \cap \{0 \leq  \frac{F_0}{r} <1 \},$$
 where $ R_0= a_0 A^{\frac{1}{1+\sigma}}$, and $A$ is to be chosen.  From (\ref{mc7}), we have in $\Sigma$:
\be
F_0 \leq F_R \leq F_0 +   \tilde{A} F_0 R_0^{-\sigma} \leq F_0+ \frac{A \tanh (F_0 R_0^{-1}) }{R_0^\sigma}, \ \  \mbox{for} \ r =R_0,
\ee
if we choose
\be
\tilde{A}   \leq     \frac{A}{R_0} \frac{\tanh (F_0 R_0^{-1})}{F_0 R_0^{-1}} =A^{\sigma/(1+\sigma) } a_0^{-1} \sup_{|\eta|<1} \frac{ \tanh\eta}{\eta}.
\label{mc9b}
\ee
%Note that this is possible since  $ F_0 R_0^{-1} \leq 1$ in the region $\Sigma$.
Consider now   the boundary $ \{ \frac{F_0}{r} =1 \}$.   We have by (\ref{mc9}):
\begin{align}
\label{mc9-1}
\begin{aligned}
F_0 \leq F_R  &\leq F_0+ \frac{A\tanh(1)}{r^\sigma} \\
&\leq F_0 + \frac{A\tanh (F_0/r)}{ r^\sigma}, \ \  \mbox{for} \ r  \geq R_0 \geq   a_0 (\tanh(1) A)^{\frac{1}{3+\sigma}},  \ \mbox{and}  \ F_0/r=1,
\end{aligned}
\end{align}
if we chose (c.f. (\ref{mc9aa})):
\begin{align}
\tilde{A}\leq  a_0^{-3} (\tanh(1)A)^{\frac{\sigma}{3+\sigma}}.
\label{mc9c}
\end{align}
Choosing $A$ larger if necessary we can assume that in addition to  (\ref{mc9b}) also (\ref{mc9c}) is satisfied.
By comparison principle applied to $\Sigma$, we then obtain
\be
\label{mc9-2}
F_0 \leq F_R  \leq F_0+ \frac{A \tanh (F_0/r)}{r^\sigma}, \ \  \mbox{for} \ r  \geq R_0.
\ee
Passing to the limit $R\to \infty$ we then get:
\be
\label{mc9-3}
 F_0\leq F  \leq F_0+ \frac{A \tanh (F_0/r)}{r^\sigma}, \ \  \mbox{for} \ r  \geq R_0,
\ee
in $\Sigma$.
Combining this with the statement of Theorem \ref{exist fmc} to estimate $F$ for $r>R_0$ in the complement of $\Sigma$ we complete the proof.
\qed

\setcounter{equation}{0}
\section{Local coordinates for the minimal graph}\label{sec grad est}

The minimal graph of Bombieri, De Giorgi and Giusti,  $\Gamma=\{x_9= F(x')\}$ can also be represented  \blue{locally as}  a graph over its
tangent hyperplane $T_{p_0}\Gamma$ at  $p_0= (x_0, F(x_0))$,
with $|x_0| =R$. In other words, for each fixed $p_0\in \Gamma$ there is   a function
$G(t)$ such that, for some $\rho, a>0$,
\begin{align*}
\Gamma\cap B_\rho(p_0)=p_0+\{(t, G(t))\mid |t|<a\}
\end{align*}
 where \blue{$t=(t_1,\dots,t_8)$ are the Euclidean} coordinates on $T_{p_0}\Gamma$. More precisely,
 $F(x)$ and $G(t)$ are linked through the following relation:
\be
\left [ \begin{matrix} x\\ F(x)    \end{matrix}\right ] = \left [ \begin{matrix} x_0\\ F(x_0)    \end{matrix}\right ] + \Pi t + G(t) n(p_0)
\label{relFG}\ee
Here
$$
\Pi t = \sum_{j=1}^8 t_i \Pi_i, \quad t\in \R^8,
$$
where $\{\Pi_1, \Pi_2, \ldots, \Pi_8\}$ is a choice of an orthonormal
basis for the tangent space to the minimal graph at the point $p_0= (x_0,F(x_0))$, and
$$
n(p_0) =  \frac 1{\sqrt{1+|\nn F(x_0)|^2}} \left [ \begin{matrix} \nn F(x_0) \\ -1    \end{matrix}\right ],
$$
so that
$$
G(t) =   \frac 1{\sqrt{1+|\nn F(x_0)|^2}}\big(F(x) -F(x_0) - \nn F(x_0)\cdot (x-x_0)\big) .
$$
The implicit function theorem implies that  $G$ and $x$, \blue{given in equation
\equ{relFG}}, are smooth functions of $t$ , at least while $|t| < a$ for a sufficiently
small number $a >0$.  \blue{Clearly when $p_0$ is restricted to some fixed compact set than there exists a $\theta>0$ such that
\begin{align*}
a=\theta(1+R), \quad R=|x_0|.
\end{align*}
To show a similar bound for all $p_0\in \Gamma$ we will assume  $|x_0|=R>1$.
The  bound we are seeking amounts to estimating} (from below) the  largest $a$ so that 
$$
\sup_{|t| < a } |D_t G(t)| \ <\ +\infty.
$$
Here and below by \blue{$D_t$, $D^2_t$} etc. we will denote the derivatives with respect to the local variable $t$.
Let $n(z)$ denote unit normal at the point $z=(t,G(t))$ (with some abuse of notation $n(p_0)\equiv n(0)$).
Let us set
$$
\hat t = \frac t{|t|}
$$
and consider \blue{the following} curve  on  the minimal surface:
$$
r\mapsto  \gamma(r) := ( r\hat t , G (r\hat t )), \quad 0<r\le |t| .
$$
Then,
$$
\partial_r n(\gamma(r))  = A_\Gamma(\gamma(r)) [ (\hat t , D_t G(r\hat t)\cdot \hat t )]
$$
where $A_\Gamma$ is the  second fundamental form on $\Gamma$ \blue{and $D_t G(r\hat t)=D_tG(t)\left|_{t=r\hat t}\right.$}.
Thus
$$
|n(\gamma (r) ) - n(0) | \le    \sup_{ 0<s< r }  |A_\Gamma(\gamma(s))|  \int_0^r \, (1 +
|G'(s\hat t)| )\, ds.
$$
We will now make use of Simon's estimate (Theorem 4, p. 673 and Remark 2, p. 674 in  \cite{simon 1}) which yields:
$$
 \sup_{ 0<s< r }  |A_\Gamma(\gamma(s))|  < \frac cR,
$$
\blue{since we can assume that $|t|< \theta R$, with some small $\theta>0$.} 
In addition we have that
$$
|n(\gamma (r) ) - n(0) | \ge   \frac {|D_tG(r\hat t)| }{  1+ |D_tG( r\hat t )| },
$$
hence
$$
 \frac {|D_tG(r\hat t )| }{  1+ |D_tG( r\hat t  )| }\ \le\ \frac cR  \int_0^r \, (1 +
|D_t G(s\hat t)| )\, ds.
$$
Let us write $\ve = \frac cR $ and
$$
\psi(r) :=  \int_0^r \, (1 +
|D_t G(s\hat t)| )\, ds.
$$
The above inequality reads
$$
 1- \frac 1{\psi'(r)} \le  \ve \psi(r),
$$
or 
$$
(1 - \ve\psi(r))\psi'(r)  \le 1,
$$
so that for all sufficiently small \blue{(relative to the size of $\ve$)} $r>0 $ we have that
$$
1- (1 - \ve\psi(r))^2  \le 2\ve r.
$$
Since $\psi(0) =0$ it follows that
$$
(1-2\ve r)^{\frac 12}  \le (1 - \ve\psi(r)),
$$
hence
$$
1- \frac 1{ 1+ |D_t G(r\hat t)|} \le \ve\psi(r) \le 1 - (1-2\ve r)^{\frac 12},
$$
which implies
$$
|D_t G(t)| \le  (1-2\ve |t|)^{-\frac 12} - 1 \le  8\ve |t|,
$$
provided that
$
\ve |t| < \frac 14.
$
Hence we have established that there are positive numbers $\theta$,  $c$, independent of $R$  such that
\be
|D_t G(t)|\, \le \,  \frac cR |t| \foral |t| < \theta R\ .
\label{cotaG}
\ee
In particular, we obtain a uniform bound on $D_tG(t)$ for
$|t| \le \theta R $,
while at the same time
\be
|n(t,G(t)) - n(0) | \, \le\, \frac cR |t| \foral |t| < \theta R\ .
\label{cotan}\ee
This guarantees the fact that our minimal surfaces
indeed defines a graph over the tangent plane at $p_0$, at least for  $|t| \le \theta R $.
The quantities $x(t)$ and $G(t)$ linked by equation \equ{relFG} are thus well-defined, provided that
$|t|< \theta R$. The implicit function theorem yields in addition their differentiability. We have
\be
\left [ \begin{matrix} D_t x(t) \\ \nabla F(x)\cdot D_t x(t)     \end{matrix}\right ]  = \Pi    +    D_t G(t)n(p_0),
\label{derx}\ee
and in particular $|D_t x(t)|$ is uniformly bounded in $|t|< \theta R$. The above relation also
tells us that
\be |D_t^m x(t)|  \, \le\,  | D_t^m G(t)|\, ,\quad   m\ge 2, \quad  |t|< \theta R.
\label{boundxd}\ee

\bigskip
Let us estimate now the derivatives of $G$.
Since $ G(t)$ represents a minimal graph, we have that
\be
H[G] = \nn_t\cdot\Big( \frac{\nn_t G}{\sqrt{1+ |\nn_t G|^2} }\Big) = 0 \quad
\hbox{in}\quad B(0, \theta R)\subset  \R^8.
\label{mc1}\ee
Let us consider now the change of variable
$$
\ttt G (t) = \frac 1R G(R t) ,
$$
and observe that $\ttt G$ is bounded and satisfies
\be
H[\ttt G] = \nn_t\cdot \Big( \frac{\nn_t \ttt G}{\sqrt{1+ |\nn_t\ttt G|^2} }\Big) = 0 \quad
\hbox{in}\quad B(0, \theta ) .\label{mc2}\ee
In fact  from  (\ref{cotaG}) we have
$$
|\ttt G (t)| \le C    \foral |t|\le \theta,
$$
hence, potentially reducing $\theta$,
from  standard estimates for the minimal surface equation \blue{(see for instance \cite{{gilb}})} we find
\be
|D_t\ttt G(t)| \le   C  \foral |t|\le \theta ,
\label{gr}\ee
with a similar estimate  for $D^2_t\ttt G$,
%Then in the considered region the above quasilinear equation is
%uniformly elliptic.  we get the improved estimate
%$$
%| D\ttt G (t)| \le \frac C   \foral |t|\le \theta ,
%$$
and in general the same bound for
$D^m_t\ttt G$, $m\ge 2$ in this region. As a conclusion, using also
\equ{boundxd} we obtain
\be
|D^m_tx(t)| + |D^m_t G(t)| \le \frac C{R^{m-1}}  \foral |t|\le \theta R
\label{estimacionG}\ee
for $m=2, 3, \ldots$.
Summarizing, we have established:
\begin{lemma}\label{lem gr1}
There exists a constant $\theta>0$ such that for each $p_0=(x_0,F(x_0))\in \Gamma$ the surface
$\Gamma\cap B(p_0,\theta(1+R))$, $R=|x_0|$,  can be represented as a graph over
$T_{p_0}\Gamma$ of a smooth function $G(t)$. Moreover, denoting $\Gamma\cap B(p_0,\theta(1+R))=\{(t,G(t))\mid t\in T_{p_0}\Gamma\}$,  we have whenever $|t|\leq \theta(1+R)$:
\begin{align}
|D_tG(t)|\leq \frac{c |t|}{1+R},
\label{graph 2}
\end{align}
and
\begin{align}
|D^m_tG(t)|\leq \frac{c}{1+R^{m-1}},
\label{graph 3}
\end{align}
with some universal constant $c$.
\end{lemma}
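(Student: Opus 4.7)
The proof of Lemma \ref{lem gr1} is essentially already laid out in the discussion preceding the statement, so my plan is to organize those computations into a clean argument in four steps. The case $R=|x_0|\le 1$ is standard (local smoothness of the minimal graph combined with compactness away from infinity), so I will concentrate on $R>1$, where the uniformity in $R$ is the delicate issue.

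\emph{Step 1: Setting up the local representation.} Fix $p_0=(x_0,F(x_0))\in\Gamma$ with $R>1$ and an orthonormal basis $\{\Pi_1,\ldots,\Pi_8\}$ of $T_{p_0}\Gamma$. I would define the function $G$ implicitly through relation \eqref{relFG}. The implicit function theorem provides a maximal $a>0$ on which $G$ is smooth; the task is to show that one may take $a=\theta(1+R)$ for a $\theta>0$ independent of $p_0$.

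\emph{Step 2: Deriving the first-order bound from Simon's theorem.} For $t$ in the domain of definition of $G$, consider the curve $\gamma(r)=(r\hat t,G(r\hat t))$, $\hat t=t/|t|$, which lies on $\Gamma$. Differentiating the unit normal along $\gamma$ produces $\partial_r n(\gamma(r))=A_\Gamma(\gamma(r))[(\hat t,D_tG(r\hat t)\cdot\hat t)]$, so integration gives a comparison between $|n(\gamma(r))-n(0)|$ and $\int_0^r(1+|D_tG(s\hat t)|)\,ds$. Simon's estimate from \cite{simon 1} yields $|A_\Gamma|\le c/R$ on a ball of size $\theta R$ around $p_0$. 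Combined with the lower bound $|n(\gamma(r))-n(0)|\ge |D_tG(r\hat t)|/(1+|D_tG(r\hat t)|)$, this produces a differential inequality for $\psi(r):=\int_0^r(1+|D_tG(s\hat t)|)\,ds$ of the form $(1-\varepsilon\psi)\psi'\le 1$ with $\varepsilon=c/R$. Integrating and solving for $|D_tG|$ gives \eqref{cotaG}, namely $|D_tG(t)|\le c|t|/R$ for $|t|<\theta R$, which is exactly \eqref{graph 2}. Simultaneously this verifies that the graph representation persists on all of $|t|<\theta R$.

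\emph{Step 3: Higher derivative bounds by rescaling.} To obtain \eqref{graph 3}, I would apply the scaling $\tilde G(t):=R^{-1}G(Rt)$, which satisfies the same minimal surface equation $H[\tilde G]=0$ on $B(0,\theta)\subset\R^8$. From Step 2, $\tilde G$ is uniformly bounded on $B(0,\theta)$ (and its gradient too, after possibly shrinking $\theta$). Standard interior regularity theory for the quasilinear, uniformly elliptic equation $H[\tilde G]=0$ (e.g.\ Gilbarg–Trudinger \cite{gilb}) then gives $\|D_t^m\tilde G\|_{L^\infty(B(0,\theta/2))}\le C_m$ for every $m\ge 2$. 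Scaling back, $D_t^m G(t)=R^{1-m}D_t^m\tilde G(t/R)$, which yields $|D_t^mG(t)|\le C_m R^{1-m}$ for $|t|\le\theta R$, precisely \eqref{graph 3}.

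\emph{Main obstacle.} The only nontrivial input is Simon's curvature estimate $|A_\Gamma|\le c/R$, which is what transforms the problem from a local one into a quantitative statement with the correct decay in $R$; without it, the differential inequality for $\psi$ would have no small parameter and would not close. Everything else is bookkeeping: the quasilinear ellipticity of $H[\cdot]$ becomes uniform once the $C^1$ norm of $\tilde G$ is controlled, so the higher-order estimates follow from the standard theory on a fixed ball $B(0,\theta)$ with no further subtlety.
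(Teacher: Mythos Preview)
Your proposal is correct and follows essentially the same approach as the paper: the discussion preceding the lemma statement \emph{is} the proof, and you have organized it into exactly the same steps---the implicit-function setup, the Gronwall-type inequality for $\psi(r)$ driven by Simon's curvature bound $|A_\Gamma|\le c/R$, and the rescaling $\tilde G(t)=R^{-1}G(Rt)$ combined with interior regularity for the minimal surface equation on a fixed ball. There is nothing to add.
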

We want to estimate with higher accuracy derivatives of $G$, in their relation with the approximated minimal graph $\Gamma_0$, $x_9 = F_0(x)$.
We shall establish next that in the situation considered above we also have that $\Gamma_0$ can be represented as the graph of a function $G_0(t)$ over the tangent plane to $\Gamma$ at the point $p_0$, at least in a ball on that plane
of radius $\theta R$ for a sufficiently small, fixed $\theta>0$ and for all large $R$. Below we let
$\nu$ and $n$ denote respective normal vectors to $\Gamma_0$ and $\Gamma$, with the convention  $\nu\cdot n \ge 0$. \blue{For convenience the situation is presented schematically in Figure \ref{fig:one}.
\begin{figure}
\vspace{0.75cm} 
\resizebox{8.0cm}{8.0cm}
{\includegraphics[angle=0]{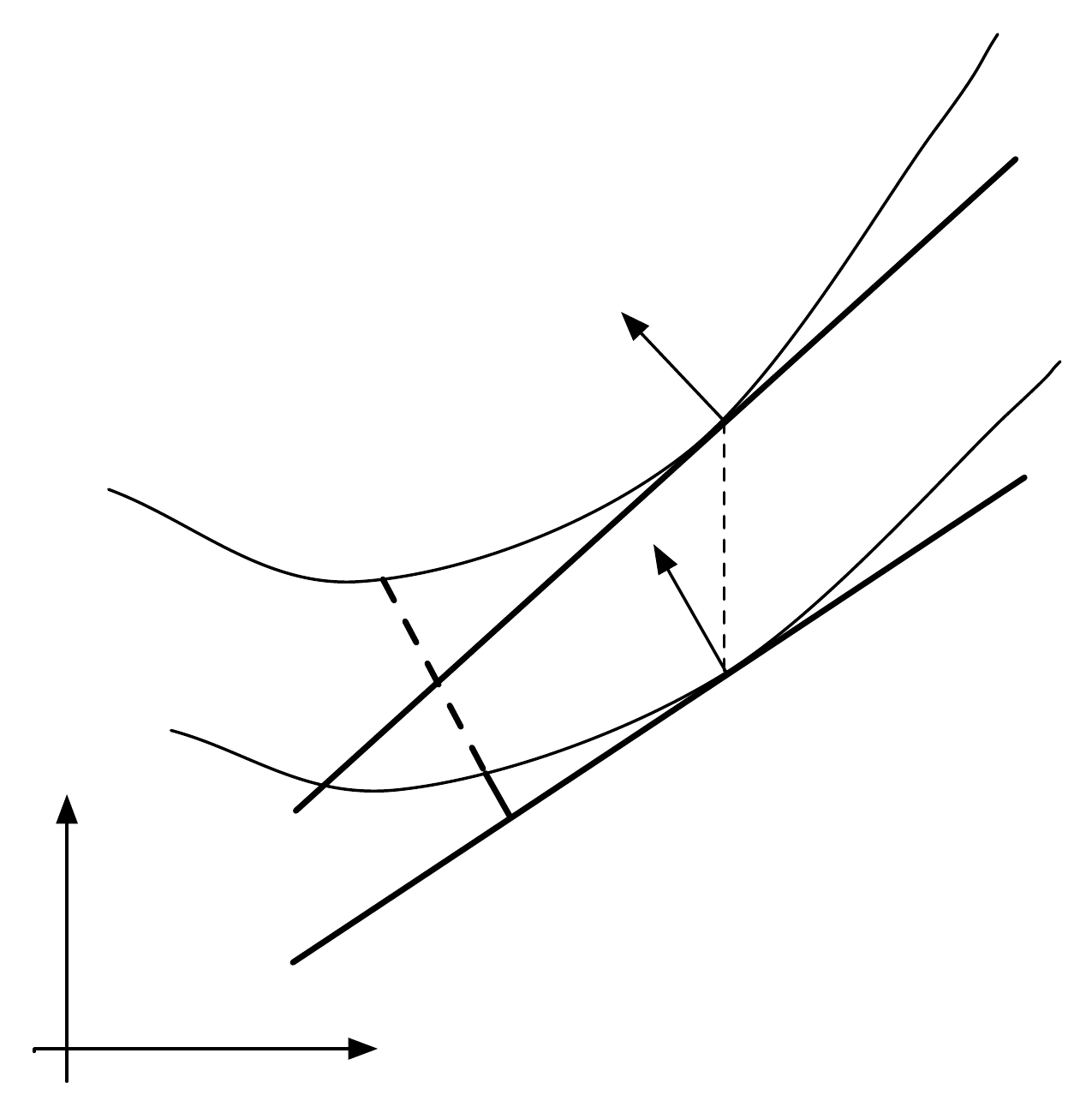}}
\put(-152,3){$\R^8$}
\put(-228,65){$x_9$}
\put(-75,83){$p_0$}
\put(-20,116){$T_{p_0}\Gamma$}
\put(-110,100){$n(p_0)$}
\put(-115,148){$\nu(q_0)$}
\put(-74,135){$q_0$}
\put(-20,180){$T_{q_0}\Gamma_0$}
\put(-8,142){$\Gamma$}
\put(-19,212){$\Gamma_0$}
\put(-148,58){$G(t)$}
\put(-164,88){$G_0(t)$}
\put(-120,52){$t$}
\caption{\small{Local configuration of the two  surfaces $\Gamma$ and $\Gamma_0$.}} \label{fig:one}
\end{figure}}

To prove the above claim \blue{we will  show} that for fixed, sufficiently small $\theta$ we have the estimate
\be
|\nu(q)- n(p_0) |  <  C\theta  \foral    q\in \Gamma_0\cap B(p_0, \theta R) .
\label{n0}\ee

\medskip
Since by Theorem \ref{exist fmc}
$$
F(x) -F_0(x) = O( |x|^{-\sigma}),\quad \mbox{some}\ \sigma\in (0,1),
$$
we have that the points
$p_0 = (x_0, F(x_0))$ and $q_0= (x_0, F_0(x_0))$ satisfy
\be
|p_0 -q_0| \le \frac C{R^\sigma}.
\label{jkl}\ee
Let   $T_{p_0}{\Gamma}$, $T_{ q_0}{\Gamma_0}$ be the corresponding tangent hyperplanes, namely
\begin{align*}
T_ {p_0}{\Gamma}&= \{ z\in \R^9\mid (z- p_0)\cdot n(p_0) = 0\} ,\\
T_{ q_0}{\Gamma_0} &= \{ z\in \R^9\mid  (z- q_0)\cdot \nu(q_0) = 0\}  .
\end{align*}
We assume that  $n(p_0)\cdot \nu(q_0) \ge 0$.  We claim that there is a number $M>0$ such that for all large $R$,
\be
|n(p_0) - \nu(q_0) | \le  \frac {5M} {R} .
\label{jlp}\ee
Let us assume the opposite and let us consider a point $z\in T_{q_0}{\Gamma_0}$ with
$$\theta R>|z- q_0| >  \frac \theta 2  R , $$ with $\theta >0$ as in \equ{cotaG}.
Let us write $\cos\alpha = n(p_0)\cdot \nu(q_0)$ with $0\le\alpha \le \frac \pi 2$. Then, using \equ{jkl} we get
\be
\dist ( z, T_{ p_0}{\Gamma}) \,\ge \, |z-p_0|\sin\alpha\, \ge\,  (\frac \theta 2 R - R^{-\sigma} )|n(p_0) - \nu(q_0) |\ge  {M\theta}   .
\label{inc}\ee
\blue{Let now $\tilde q\in \Gamma_0$ be the point whose projection onto $T_{q_0}\Gamma_0$ is $z$. Point $\tilde q$ is unique by the analog of \equ{cotan} for the surface $\Gamma_0$. Let us denote $\tilde q=(\tilde x, F_0(\tilde x))$. Notice that $|\tilde x|\sim R$. We will also set $\tilde p=(\tilde x, F(\tilde x))\in \Gamma$. 
Since the second fundamental form of the surface $\Gamma_0$ satisfies an estimate similar to the one for $\Gamma$ we may assume, reducing $\theta$ if necessary,
$$
\dist (\tilde q , T_{q_0}\Gamma_0)\le c  \theta.
$$
Now, estimate \equ{cotaG} implies that
$$
\dist (\tilde p, T_{ p_0}{\Gamma})\le c \theta.
$$
 
If $M$ is fixed so that $M\theta$ is sufficiently large, the above two relations and \equ{jkl} are not compatible with  \equ{inc}, indeed we get:
\begin{align*}
M\theta\leq \dist ( z, T_{ p_0}{\Gamma})&\leq \dist(\tilde p,\tilde q)+\dist (\tilde p , T_{p_0}\Gamma_0)+\dist (\tilde q , T_{q_0}\Gamma_0)\\&
\le c  \theta+\dist (\tilde p, \tilde q)\\
& \leq \frac{C}{R^\sigma}+c\theta,
\end{align*}
}
hence \equ{jlp} holds.
Moreover, using estimate \equ{cotan} and the analogous estimate for the variation of $\nu$ we have the validity of  the estimate
$$
|\nu(q) - \nu(q_0) | +  |n(p) - n(p_0) |  <  C\theta\quad   \forall  p\in \Gamma\cap B(p_0, \theta R), \ \ \forall  q\in \Gamma_0\cap B(q_0, \theta R).
$$
\blue{Furthermore, we observe the that  analog of the estimate \equ{cotan} implies  that in the set $\Gamma_0\cap B(q_0, \theta R)$ the distance between  $\Gamma_0$ and its tangent plane at $q_0$  varies by no more that $c\theta$. From this and (\ref{jkl}) and   (\ref{jlp})  the desired conclusion \equ{n0} immediately follows (taking $\theta$ smaller if necessary).  Hence the function $G_0(t)$ is well-defined for $|t| <\theta R$.}

Let  us observe that
 $F_0$ and $G_0$ are linked through the following relation:
\be
\left [ \begin{matrix} \ttt x\\ F_0(\ttt x)    \end{matrix}\right ] = \left [ \begin{matrix} x_0\\ F(x_0)    \end{matrix}\right ] + \Pi t + G_0(t) n(p_0)
\label{relFG0}\ee
 By the implicit function theorem, $\ttt x$ and $G_0(t)$ define differentiable functions of $t$ for $|t|\le \theta R$.
 We shall establish derivative estimates for $G_0$ similar to those found for $G$. We claim that
\be
|D^m_t \ttt x(t)|+  |D^m_t G_0(t)| \le \frac {C}{R^{m-1}}  \foral |t|\le \theta R,
\label{estimacionG0}\ee
for $m=1,2,\ldots$. Differentiation of relation \equ{relFG0} yields
\be
\left [ \begin{matrix} \partial_j \ttt x\\ \nabla F_0(\ttt x) \partial_j \ttt x   \end{matrix}\right ] =   \Pi_j  + \partial_jG_0n(p_0).
\label{relFG1}\ee
Let $q= (\ttt x, F_0(\ttt x))$ and
$$
\nu(q) = \frac 1{\sqrt{1+ |\nn F_0(\ttt x)|^2}} \left [ \begin{matrix}  \nabla F_0(\ttt x)\\ -1    \end{matrix}\right ].
$$
From \equ{relFG1} and the fact that $\nu(q)\cdot n(p_0) \ge c>0$ we then get
$$
|\partial_jG_0(t)| \le C |\Pi_j\cdot \nu(q)| \le C.
$$
Using again relation \equ{relFG1} we also get
$$
|\partial_j\ttt x (t)| \le C .
$$
Let us differentiate again.  Now we get
\be
\left [ \begin{matrix} \partial_{jk} \ttt x\\ \nabla F_0(\ttt x) \partial_{jk} \ttt x   \end{matrix}\right ] +
\left [ \begin{matrix} 0 \\ D^2F_0(\ttt x) [\partial_j \ttt x ,\partial_j \ttt x]  \end{matrix}\right ]
=    \partial_{jk}G_0n(p_0).
\label{relFG2}\ee
Again, taking the  dot product against $\nu(p)$ we  get
$$
|\partial_{jk}G_0(t)| \le C \frac{|D^2F_0(\ttt x)|}{\sqrt{1+ |\nn F_0(\ttt x)|^2}} \le \frac CR
$$
and thus
$$
|\partial_{jk}\ttt x (t)| \le \frac CR  .
$$
Iterating this argument, using that
$$
|D^mF_0(\ttt x)|\le  C R^{3-m}, \quad m=1,2,\ldots
$$
the desired result \equ{estimacionG0} follows.

%Arguing similarly as with $G$ before, we get that the derivative
%$G_0'(t)$ is uniformly bounded in the considered ball, more
%precisely we obtain
%$$
%|G_0'(t)| \le   \frac CR (1+ |t|)  \quad \foral |t|\le \theta R .
%$$

\medskip
Let us write
$$
G(t) = G_0(t) + h(t).
$$
We will estimate first the size of $h(t)$ in the ball $|t|\le
\theta R$.
We claim that we have
\be
|h(t)|\, \le\, CR^{-1-\sigma} \foral |t|\le \theta R. \label{esh}\ee
First observation we make is that when $t=0$ we have:
\begin{align}
|h(0)|=|G_0(0)|\leq \frac{C}{R^{1+\sigma}}.
\label{esh a}
\end{align}
\blue{To show this let $\tilde x$ be such that 
\begin{align*}
\left [ \begin{matrix} \ttt x\\ F_0(\ttt x)    \end{matrix}\right ] = \left [ \begin{matrix} x_0\\ F(x_0)    \end{matrix}\right ] +  G_0(0) n(p_0),
\end{align*}
and let $\tilde t$ be such that 
\begin{align*}
\left [ \begin{matrix} \ttt x\\ F(\ttt x)    \end{matrix}\right ] = \left [ \begin{matrix} x_0\\ F(x_0)    \end{matrix}\right ] + \Pi \tilde t+ G(\tilde t) n(p_0).
\end{align*}
Comparing these two expressions and using $|F(\tilde x)-F_0(\tilde x)|\sim R^{-\sigma}$ we see that $|\tilde t|\sim R^{-\sigma}$ hence, by (\ref{graph 2}) we get that $|G(\tilde t)|\sim R^{-1-2\sigma}$.
Now multiplying the above relations by $n(p_0)$ and subtracting them we infer   (\ref{esh a})} since by Theorem 4 p.673 and Theorem 5 p. 680 \cite{simon 1}, we have that
\begin{align*}
|n_9(p_0)|=\frac{1}{\sqrt{1+|\nabla F(p_0)|^2}}\leq \frac{C}{R}.
\end{align*}
To prove (\ref{esh}) now we let $p_1 = (x_1,F(x_1))\in \Gamma\cap B(p_0,\theta R)$ so that:
$$
p_1 = p_0 + \Pi t  +  G(t)n(p_0),\quad |t|\le \theta R .
$$
Then $|G(t)-G_0(t)|$ corresponds to the length of the segment
with direction $n (p_0)$ starting at $p_1$, with end on the surface
$\Gamma_0$.
Let $p_2= (x_1, F_0(x_1))$. Then
$$
|p_1 -p_2| \le C R^{-\sigma} .
$$
Let us consider the tangent plane $T_{p_2}{\Gamma_0}$ to $\Gamma_0$ at $p_2$,
with normal $\nu(p_2)$. Then,
$\Gamma_0 \cap B(p_2, CR^{-\sigma}) $ lies  within a distance $O( R^{-1-\sigma})$ from
$T_{p_2}{\Gamma_0}$, more precisely,
$$\Gamma_0 \cap B(p_2, CR^{-\sigma})\subset{\mathcal C}_R, $$ where ${\mathcal C}_R$ is the cylinder
$$ {\mathcal C}_R = \{  \bar z +  s \nu(p_2) \mid   \bar z\in T_{p_2}{\Gamma_0},\ |\bar z - p_2| \le CR^{-\sigma}, \   |s| \le
CR^{-1-\sigma}\}. $$
Using (\ref{esh a}) we may assume that $p_1 \in {\mathcal C}_R$.
 In particular, the line starting from
$p_1$ with direction $n(p_1)$ intersects $\Gamma_0$ inside this
cylinder. Since $n(p_1) \cdot\nu(p_2) \ge c>0$, the length of this
segment is of the same order as the height of the cylinder, and we then get
$$ |G(t)-G_0(t)| \le C R^{-1-\sigma},$$
hence \equ{esh} holds.

\medskip
Next we shall  improve the previous estimate. We claim that we have
\be |D^m_t  h(t)| \le \frac c{R^{m+1+\sigma }} \inn |t| <   \theta  R,
\label{eshm}\ee
for $m=0,1,2,\ldots$. Let us set
$$
\ttt G (t) = \frac 1R G(R t) , \quad \ttt G_0 (t) = \frac 1R G_0(R t),\quad  \ttt h (t) = \frac 1R h(R t).
$$
We  compute (for brevity dropping the subscript in the derivatives):
$$
\sqrt{1+|\nn\ttt G|^2} H[\ttt G] = \Delta\ttt G - \frac {D^2\ttt G\,[\nn\ttt G, \nn\ttt G]}{1+ |\nn\ttt G|^2}=0.
$$
Now,
$$
\frac {D^2\ttt G\,[\nn\ttt G, \nn\ttt G]}{1+ |\nn\ttt G|^2} = \frac {D^2h\,[\nn\ttt G, \nn\ttt G]}{1+ |\nn\ttt G|^2} + \frac {D^2\ttt G_0 \,[\nn\ttt G, \nn\ttt G]}{1+ |\nn\ttt G|^2},
$$
and
$$
\frac {D^2h\,[\nn\ttt G, \nn\ttt G]}{1+ |\nn\ttt G|^2} = \frac {D^2\ttt G_0 \,[\nn\ttt G_0, \nn\ttt G_0]}{1+ |\nn\ttt G|^2} +\frac{ D^2\ttt G_0 \,[2 \nn\ttt G_0 + \nn h, \nn h] }{1+ |\nn\ttt G|^2}.
$$
Furthermore,
\begin{align*}
\frac {D^2\ttt G_0 \,[\nn\ttt G_0, \nn\ttt G_0]}{1+ |\nn\ttt G|^2} &=  \frac {D^2\ttt G_0 \,[\nn\ttt G_0, \nn\ttt G_0]}{1+ |\nn\ttt G_0|^2} \\
&\quad-
\frac {D^2\ttt G_0 \,[\nn\ttt G_0, \nn\ttt G_0]\, ( 2 \nn\ttt G_0 + \nn h )\cdot \nn h  }{(1+ |\nn\ttt G_0|^2) (1+ |\nn\ttt G|^2)}.
\end{align*}
Collecting terms we see that
$h$ satisfies the equation
$$
\Delta h -   \frac {D^2h\,[\nn\ttt G, \nn\ttt G]}{1+ |\nn\ttt G|^2}  + b\cdot\nn h + E = 0, \quad\hbox{in } B(0, \theta),
$$
where
$$
E= \Delta\ttt G_0 - \frac {D^2\ttt G_0 \,[\nn\ttt G_0, \nn\ttt G_0]}{1+ |\nn\ttt G_0 |^2} = \sqrt{1+|\nn\ttt G_0|^2} H(\ttt G_0),
$$
and
$$
b \ =\   -\frac {D^2\ttt G_0 \,[\nn\ttt G_0, \nn\ttt G_0]\, ( 2 \nn\ttt G_0 + \nn h ) }{(1+ |\nn\ttt G_0|^2) (1+ |\nn\ttt G|^2)} +
\frac{ D^2\ttt G_0 \,[2 \nn\ttt G_0 + \nn h] }{1+ |\nn\ttt G|^2} .
$$
Notice that:
$$ |\nn\ttt G(t) | \le  C ,\quad   |\ttt h(t) |\le CR^{-2-\sigma}   \inn |t|<  \theta. $$
Also by (\ref{sub1aa})
the mean curvature of $\Gamma_0$ decays like $R^{-5}$. From
\begin{align*}
|E(t)|   &=   R\Big|\Big(\Delta G_0 - \frac {D^2 G_0 \,[\nn G_0, \nn G_0]}{1+ |\nn G_0 |^2}\Big)(Rt)\Big|\\ &=
R\sqrt{1+|\nn G_0|^2} H[G_0] (Rt)\\
&= R \sqrt{1+|\nn G_0 (Rt)|^2}H[F_0] (\ttt x(Rt)),
\end{align*}
(in the notation of \equ{relFG0})
we then find
\begin{align*}
|E(t)|  = O(R^{-4}),
\end{align*}
%and conclude
%$$
%|\nabla_t \ttt h | \ \le\   \frac C{R^{2+\sigma}}\inn B (0,\frac \theta 2) .
%$$
%\medskip
%Next we observe
%that
%$$ |\ttt h(t)| \le \frac c{R^{1+\sigma}} \inn |t| < \theta
%$$
and, as a conclusion, reducing $\theta$ if needed,
$$
|D_t \ttt h(t)| \le \frac c{R^{2+\sigma}} \inn |t| < \theta,
$$
so that for $h$ we get accordingly
$$
|D_t h (t)| \le \frac c{R^{2+\sigma }} \inn |t| <   \theta  R.
$$
On the other hand, using \equ{estimacionG0}
we have for instance that
$$
D_tH[G_0](t) = D_xH[F_0] (\ttt x(t)) D_t\ttt x(t)  = O(R^{-6}),
$$
hence
\begin{align*}
|D_t E(t)|  = O(R^{-4}).
\end{align*}
More generally, since
$$
D_t^mH[F_0] (x) = O( |x|^{-5-m}),
$$
we get
$$
D_t^m E(t)= O( R^ {-4}).
$$
This, estimates
\equ{estimacionG0}, \equ{estimacionG}
and standard higher regularity elliptic  estimates  yield
$$
|D^m_t  \ttt h(t)| \le \frac c{R^{2+\sigma}} \inn |t| <   \theta R .
$$
Hence
$$
|D^m_t  h(t)| \le \frac c{R^{m +1 +\sigma }} \inn |t| <   \theta  R.
$$
for $m\ge 1$, as desired.

\bigskip
Now we will derive some consequences of the above estimates. First we will consider the  Fermi coordinates near $\Gamma$.  Let $x=(x',x_9)\in \R^9$ be a point in a neighborhood of $\Gamma$ and let $p(x)$ be its projection on $\Gamma$ in the direction of $n(p)$. If $\mbox{dist}\,(x,\Gamma)$ is sufficiently small then $p(x)$ is unique and we can write:
\begin{align}
x=p+z n(p),
\label{coord 1}
\end{align}
where $z=z(x)$. These $(p,z)$ are  the Fermi coordinates of $x$. \blue{They are defined as long as  the function $x\mapsto (p,z)$ is invertible. We claim  that this the case, and that the Fermi  coordinates are well defined as long as
\begin{align}
{|z|\leq \theta|A_\Gamma(p)|^{-1}},
%\leq {\theta}({R+1})},
\label{coord 2}
\end{align}
where $R=|\Pi_{\R^8}(p)|$ is the the distance of the projection of $p$ onto $\R^8$ from the origin, and $\theta$ is chosen to be a small number. 
%Indeed, considering for instance the portion of the surface $\Gamma$ in the sector $T$ defined in (\ref{def sector}) we see that if $p$ is such that $|\Pi_{\R^8}(p)|=R$ is large than we can move in the  normal direction  $n(p)$ by the distance of at least $O(R)$ before meeting a boundary   point of $T\times (0,\infty)$ (this is intuitively explained in  figure \ref{fig:two}).
Because of the symmetry of the surface $\Gamma$, it is enough to consider the situation in which, for certain $x=(x',x_9)$ such that $x'\in T$ we have the existence of two different points $p_1, p_2\in \Gamma\cap T\times [0,\infty)$ such that
\begin{align}
x=p_i+ z n(p_i), \quad i=1,2,
\label{coord 2a}
\end{align} 
with $z$ satisfying (\ref{coord 2}). We may assume that $|\Pi_{\R^8}(p_1)|=R_1$ is large. Then it follows:
\begin{align}
|p_1-p_2|\leq |z||n(p_1)-n(p_2)|\leq \theta |A_{\Gamma}(p_1)|^{-1}.
\label{coord 2b}
\end{align}
In the portion  of $\Gamma$ where (\ref{coord 2b}) holds we have in fact:
\begin{align}
\begin{aligned}
|p_1-p_2|&\leq |z||n(p_1)-n(p_2)|\\
&\leq \theta |A_{\Gamma}(p_1)|^{-1}\sup_{|p_1-p|\leq \theta |A_{\Gamma}(p_1)|}|A_\Gamma(p)||p_1-p_2|.
\\
&\leq C\theta\frac{R_1+1}{R_1}|p_1-p_2|.
\end{aligned}
\label{coord 2c}
\end{align}
Now the claim  follows if we take  $\theta>0$ to be sufficiently small. 
From (\ref{coord 2}) we get that the Fermi coordinates are well defined in an expanding neighborhood ${\mathcal U}_{\theta_0}$ of $\Gamma_\alpha$ defined in (\ref{def ndelta1}).}

Second we will compute the derivative of $z(x)$ with respect to $x_9$. Since $p\in \Gamma$ therefore we can write
\begin{align*}
p=(y', F(y')), \quad y'\in \R^8.
\end{align*}
Then, taking the derivative with respect to $x_9$ and using (\ref{coord 1}) we get:
\begin{align}
e_9=(\partial_{x_9}y',\nabla F(y')\cdot\partial_{x_9}y')+\partial_{x_9} z n(p), \quad \mbox{at}\  z=0.
\label{coord 3}
\end{align}
Notice that
\begin{align*}
\tau(p)= (\partial_{x_9}y',\nabla F(y')\cdot\partial_{x_9}y')\in T_p\Gamma.
\end{align*}
Multiplying (\ref{coord 3}) by $\nu(q)$, where $q\in \Gamma_0$ is a point on the segment in the direction of $n(p)$ and $\nu(q)$ is the unit normal at this point  we get
\begin{align}
\frac{-1}{\sqrt{1+|\nabla F_0(q)|^2}}=\tau(p)\cdot \nu(q)+\partial_{x_9} zn(p)\cdot \nu(q).
\label{coord 4}
\end{align}
From (\ref{coord 3}) we get as well:
\begin{align}
|\tau(p)|\leq 1+|\partial_{x_9} z|.
\label{coord 5}
\end{align}
Now let $(t, G(t))$ and $(t, G_0(t))$ be the local coordinates centered respectively at $p\in\Gamma$ and $q\in\Gamma_0$. From the above discussion we conclude that any two unit tangent vectors in the same direction, say $t_i$ differ by a factor proportional to $D_th(t)$. This means
\begin{align*}
|\tau(p)\cdot\nu(q)|\leq C|D_t h(t)|\leq \frac{C}{R^{2+\sigma}},
\end{align*}
hence using (\ref{coord 4}) we get 
\begin{align*}
|\partial_{x_9} z||n(p)\cdot \nu(q)|= \frac{1}{\sqrt{1+|\nabla F_0(q)|^2}}+O\big(\frac{1}{R^{2+\sigma}}\big).
\end{align*}
Since $|n(p)\cdot\nu(q)|>c>0$ uniformly, it follows:
\begin{align}
\frac{C^{-1}}{1+R^2}\leq |\partial_{x_9} z|\leq \frac{C}{1+R^2}.
\label{coord 6}
\end{align}
Notice that as a byproduct we get
\begin{align*}
n_9(p)=\partial_{x_9} z,
\end{align*}
hence
\begin{align}
\frac{C^{-1}}{1+R^2}\leq \frac{1}{\sqrt{1+|\nabla F(p)|^2}}\leq \frac{C}{1+R^2}.
\label{coord 6a}
\end{align}
which is a special case of the estimate in Theorem 5 p. 679 in \cite{simon 1}.

Next we will discuss the expressions for the Laplace-Beltrami operators  on $\Gamma$ and $\Gamma_0$ in terms of the local coordinates associated  with $G(t)$ and $G_0(t)$. Let us observe that the metric tensor  $\tg$ in $\Gamma\cap B(p_0,\theta R)$ satisfies the following relation:
\begin{align}
\tg_{ij}=\delta_{ij}+\partial_i G(t)\partial_j G(t)=\delta_{ij}+O(|t|^2R^{-2}), \quad |t|\leq \theta R,
\label{coord 7}
\end{align}
where $\partial_i=\partial_{t_i}$. Similar estimates hold  for the metric tensor $\tg_0$ on the surface $\Gamma_0$ expressed in the same local coordinates. In fact we have:
\begin{align}
\begin{aligned}
\tg_{0,ij}&=\delta_{ij}+\partial_i G_0(t)\partial_j G_0(t)\\
&=\tg_{ij}-\partial_i G(t)\partial_j h(t)- \partial_j G(t)\partial_i h(t)+\partial_i h(t)\partial_j h(t)
\\
&=\tg_{ij}+|t|O(R^{-3-\sigma}).
\end{aligned}
\label{coord 8}
\end{align}
Let now $f$ be a $C^2$ function defined on $\Gamma$ in a neighborhood $p_0$. We can identify this function with a function $\tilde f$ on $\Gamma_0$ through the change of variables :
\begin{align}
\tilde f(\tilde x)=f(t), \quad \tilde x=\tilde x(t).
\label{coord 9}
\end{align}
Then using (\ref{coord 8}) we have
\begin{align}
|\Delta_\Gamma f-\Delta_{\Gamma_0} \tilde f|\leq \frac{C|\nabla^2_{\Gamma} f|}{R^{2+\sigma}}+\frac{C|\nabla_{\Gamma} f|}{R^{3+\sigma}},
\label{coord 10}
\end{align}
as long as $|t|\leq \theta R$. In the sequel we will denote functions on $\Gamma$  and on
$\Gamma_0$  by the same symbol taking into account  (\ref{coord 9}).

Finally, let us consider the second fundamental form on $\Gamma$,  $A_\Gamma$ and the second fundamental form on $\Gamma_0$ denoted by $A_{\Gamma_0}$. We observe that in the local coordinates associated with the graph $G(t)$ over the tangent space we have:
\begin{align*}
|A_\Gamma(t)|=|D^2_t G(t)|, \quad\mbox{at}\  t=0.
\end{align*}
Thus
\begin{align}
\begin{aligned}
|A_{\Gamma_0}(t)|&=|A_\Gamma(t)|+O(|D^2_t h(t)|)\\
&=
|A_{\Gamma}(t)|+O(R^{-3-\sigma}),
\quad \mbox{at}\  t=0.
\end{aligned}
\label{coord 11}
\end{align}
It follows:
\begin{align}
|A_{\Gamma}-A_{\Gamma_0}|\leq \frac{C}{1+R^{3+\sigma}}.
\label{coord 12}
\end{align}

\setcounter{equation}{0}
 \section{Linear theory }
%\comment{\red{we should use $a$, or something instead of $\alpha$
%which is a parameter later, in reality norm we should take for
%$f''$ is like $e^{\,-\alpha|z|/2}$ !!!!}}
In this section we will consider the basic linearized operator and we will derive a solvability theory for the operator
 \begin{align}
 \cL(\phi)=\Delta\phi+f'(\ww)\phi,
\label{def l2}
\end{align}
already defined in (\ref{def l1}).

\subsection{Nondegeneracy of the approximate solution}
To begin with we will review some basic facts about the one dimensional version  of $\cL$.
By $w$ we will denote the heteroclinic solution to $w'' + w-w^3 =0$ such that $w(0)=0$, $w(\pm\infty)=\pm 1$ namely
$$
w(z) =\tanh\left ( \frac z{\sqrt{2}}\right ).$$
Consider the one-dimensional linear operator
 $$
L_0(\phi)=\phi_{zz}+f'(w)\phi, \quad f'(w)= 1-3w^2.
$$
We recall some well known facts about $L_0$. First notice that $L_0(w_z)=0$.
Second, writing  $\phi = w_z\psi$ we get that
\begin{align*}
L_0(\phi) &=  L_0(\psi w_z)\\
&
= w_z\psi_{zz}+ 2\psi_z w_{zz} +  w_{zzz}\psi + f'(w)w_z \psi \\
&= w_z^{-1}\,(\, w_z^2\psi_z)_z ,
\end{align*}
hence assuming that $\phi(z)$ and its derivative  decay fast enough as $ |z|\to +\infty$, we get the identity
$$
\int_\R L_0(\phi) \phi \, dz \, =\,
\int_\R [|\phi_z|^2 - f'(w)\phi^2]\,dz\, =\,  \int_\R  w_z^2|\psi_z|^2\, dz .
$$
Since $f'(w)\to -2$ as $|z|\to +\infty$ guarantees exponential decay of any bounded solution of
$L_0(\phi) =0$ therefore
any such solution must be of the form $\phi = Cw_z$, $C\in \R$.
%In addition, it is also a standard fact that
%there is a constant $\gamma >0$ such that whenever $\int_\R \phi w_z = 0$ with $\phi\in H^1(\R)$ we have that
%$$
%\int_\R (\, |\phi_z|^2 - f'(w)\phi^2\,)\, dz\, \ge \, \gamma  \int_\R  |\phi|^2\, dz .
%$$
Next we set, for $\phi(y,z)$ defined in $\R^m\times\R$,  (where $m=8$ in our case):
$$
L(\phi) = \Delta_y\phi + \phi_{zz} + f'(w)\phi = \Delta_y\phi + L_0(\phi) .
$$
The
equation $L(\phi)=0$,  has the obvious bounded solution $\phi(y,z) = w_z(z)$. Less obvious, but also true, is the converse:
\begin{lemma}\label{lemma l1}
Let $\phi$ be a bounded, smooth solution of the problem
\begin{equation}
L(\phi) = 0 \quad\hbox{in } \R^m\times \R.
\label{l3}\end{equation}
Then $\phi(y,z) = Cw_z(z) $ for some $C\in \R$.
\end{lemma}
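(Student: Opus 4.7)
The plan is to split $\phi$ into its $L^2(\R_z)$-projection onto the kernel element $w_z$ of $L_0$ and an $L^2_z$-orthogonal remainder, and to close each piece by a Liouville-type argument. Writing $c(y):=\|w_z\|_{L^2(\R)}^{-2}\int_\R\phi(y,z)\,w_z(z)\,dz$ and $\phi^\perp:=\phi-c\,w_z$, the integral defining $c$ converges absolutely thanks to the exponential decay of $w_z$, and standard interior elliptic estimates make $c$ smooth with bounded derivatives. Using $\Delta_y\phi=-L_0\phi$ together with integration by parts in $z$, one gets
\[
\Delta_y c(y)=\|w_z\|_{L^2}^{-2}\int_\R(-L_0\phi)w_z\,dz=-\|w_z\|_{L^2}^{-2}\int_\R\phi\,L_0(w_z)\,dz=0,
\]
so $c$ is bounded and harmonic on $\R^m$ and therefore $c\equiv c_0$ for some $c_0\in\R$ by the classical Liouville theorem.

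The main step is then to show that the remainder $\phi^\perp$, which is bounded, satisfies $L(\phi^\perp)=0$ and the fibrewise orthogonality $\int_\R\phi^\perp(y,z) w_z(z)\,dz=0$, must vanish. To invoke the positive spectral gap $\mu_1>0$ of $-L_0$ on $\{w_z\}^\perp\subset L^2(\R)$ (whose simplicity of the zero eigenvalue is the one-dimensional statement proved just before the lemma), I first need fibrewise decay of $\phi^\perp$ in $z$. I would obtain $\phi^\perp(y,z)\to 0$ as $|z|\to\infty$ uniformly in $y$ by a compactness argument: any sequence $(y_n,z_n)$ with $|z_n|\to\infty$ and $|\phi^\perp(y_n,z_n)|\ge\delta>0$ would, after translating and passing to a $C^k_{\mathrm{loc}}$ subsequential limit, produce a bounded entire solution of the constant-coefficient limit equation $(\Delta_y+\partial_z^2-2)u=0$ on $\R^{m+1}$ with $|u(0,0)|\ge\delta$, contradicting Liouville for this elliptic operator with positive mass. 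A barrier $Ce^{-\beta|z|}$ with $\beta<\sqrt 2$ then upgrades this $o(1)$ decay to exponential decay of $\phi^\perp$ and $\partial_z\phi^\perp$, uniformly in $y$, because $f'(w)+2=O(e^{-\sqrt 2|z|})$ at infinity.

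With this decay in hand, define $E(y):=\int_\R\phi^\perp(y,z)^2\,dz$, which is smooth and bounded on $\R^m$. Differentiating under the integral, using $\Delta_y\phi^\perp=-L_0\phi^\perp$ and integration by parts in $z$ (legitimate by the exponential decay), together with the fibrewise spectral gap applied to $\phi^\perp(y,\cdot)\in H^1(\R)\cap\{w_z\}^\perp$, gives
\[
\tfrac12\Delta_y E(y)=\int_\R|\nabla_y\phi^\perp|^2\,dz+\int_\R\bigl[(\partial_z\phi^\perp)^2-f'(w)(\phi^\perp)^2\bigr]\,dz\ge\mu_1\,E(y).
\]
Thus $E\ge 0$ is bounded and $(\Delta_y-2\mu_1)E\ge 0$. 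Comparing $E$ on each ball $B_R$ with the explicit positive solution $\psi_\varepsilon(y):=\varepsilon\prod_{i=1}^m\cosh(\sqrt{2\mu_1/m}\,y_i)$ of $(\Delta-2\mu_1)\psi=0$, which blows up in every direction, the maximum principle for $\Delta-2\mu_1$ (a negative zeroth order coefficient) forces $E\le\psi_\varepsilon$ throughout $\R^m$; letting $\varepsilon\downarrow 0$ yields $E\equiv 0$, hence $\phi^\perp\equiv 0$ and $\phi(y,z)=c_0\,w_z(z)$ as desired.

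The technical heart of the argument is the passage from pointwise boundedness of $\phi^\perp$ to uniform exponential decay in $z$: without it, neither the integral defining $E$ nor the fibrewise use of the spectral gap of $-L_0$ is meaningful. Everything else reduces to two standard Liouville statements on $\R^m$ — one for the bounded harmonic function $c$, and one for a bounded nonnegative function satisfying $(\Delta-2\mu_1)E\ge 0$.
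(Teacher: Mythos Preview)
Your argument is correct and follows essentially the same route as the paper's: decompose $\phi$ into its $w_z$-component and an orthogonal remainder, obtain uniform exponential decay in $z$, use the spectral gap of $-L_0$ on $\{w_z\}^\perp$ to derive $\Delta_y E\ge 2\mu_1 E$ for the fibrewise $L^2$-energy $E$, and conclude via Liouville-type maximum principles. The only cosmetic differences are the order of the steps (the paper first proves exponential decay of $\phi$ itself via the single barrier $e^{-\sigma(|z|-z_0)}+\delta\sum_i\cosh(\sigma y_i)$ and applies Liouville to the bounded harmonic function $\int\phi\,w_z\,dz$ only at the very end) and your choice of a translation/compactness argument in place of that direct barrier for the decay step.
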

\proof
Let $\phi$ be a bounded solution of equation \equ{l3}. We claim that $\phi$ has exponential decay in $z$, uniform in $y$.
Let $0<\sigma<1$ and let us fix $z_0 >0$ such that for all $|z| > z_0$ we have that
$$
f'(w) <  -2 + \sigma^2
$$
In addition, consider the following function
$$
g_\delta (z,y) =   e^{-\sigma(|z|-z_0)}  + \delta  \sum_{i=1}^m \cosh( \sigma y_i) .
$$
for $\delta >0$.
Then for $|z|>z_0$ we get that
$$
L(g_\delta ) =  \blue{\sigma^2 g_{{\delta}} +  f'(w)g_{{\delta}},}
$$
so that if $\sigma >0$ is fixed small enough,
we get
$$
L(g_\delta ) < 0 \quad \hbox{if } |z| >z_0 .
$$
As a conclusion, using maximum principle, we get
$$
|\phi| \le \|\phi\|_\infty \, g_\delta \quad \hbox{if } |z| >z_0 ,
$$
and letting $\delta\to 0$ we then get
$$
|\phi(y,z)| \ \le\ C\|\phi\|_\infty   e^{-\sigma|z|}  \quad \hbox{if } |z| >z_0 \ .
$$
Let us  observe the following fact: the function
$$
\ttt \phi(y,z) =
\phi(y,z) - \left ( \int_\R w_\zeta (\zeta)\, \phi(y,\zeta)\, d \zeta \right )\, \frac{ w_z(z)} {\int_\R w_\zeta^2}
$$
also satisfies $L(\ttt \phi ) = 0$ and, in addition,
\be
\int_\R   w_z (z)\, \blue{\tilde \phi}(y,z)\, d z  = 0\foral y .
\label{orti}\ee
In view of the above discussion, it turns out that the function
$$
\vp (y) := \int_\R \ttt \phi^2(y,z)\, dz
$$
is well defined. In fact so are its first and second derivatives by elliptic regularity \blue{theory applied to}  $\phi$, and differentiation under the
integral sign is thus justified. Now, observe that
$$
\Delta_y \vp (y) = 2 \int_\R \Delta_y \ttt \phi \cdot \ttt \phi \, dz  \red{+}2\int_\R |\nabla_y \ttt \phi |^2
$$
and hence
\begin{align}
\begin{aligned}
0 &= \int_\R  (L_0\ttt \phi \cdot \ttt \phi )\\
& =
\blue{\frac{1}{2}\Delta_y \vp
- \int_\R  |\nabla_y \ttt \phi |^2
 \, dz  -
\int_\R (\, |\ttt \phi _z|^2 - f'(w)\ttt \phi ^2\,)\, dz\, .}
\end{aligned}
\label{ew}
\end{align}

Let us observe that because of relation \equ{orti}, we have that
$$
\int_\R (\, |\ttt\phi _z|^2 - f'(w)\ttt \phi ^2\,)\, dz\, \ge \gamma \vp .
$$
It follows then that

$$\Delta_y \vp   -\gamma \vp   \ge 0 $$
Since $\vp$ is bounded, from maximum principle we find that
$\vp$ must be identically equal to zero.
But this means
\be
\phi(y,z) = \left ( \int_\R w_\zeta (\zeta)\, \phi(y,\zeta)\, d \zeta \right )\, \frac{ w_z(z)} {\int_\R w_\zeta^2}. \label{uuu}\ee
Then the bounded function
$$
g(y) = \int_\R w_\zeta (\zeta)\, \phi(y,\zeta)\, d \zeta
$$
satisfies the equation
\be
\Delta_y g = 0, \quad \mbox{in}\ \R^m.
\label{f}\ee
Liouville's theorem then implies that $g\equiv$ constant. Relation \equ{uuu}
 then yields
 $\phi = Cw_z(z)$ for some $C$.
This concludes the proof.\qed
%
%Observe that the same proof applies with only minor changes when $\Delta_y$ is  replaced by a uniformly elliptic operator with coefficients depending on the variable $y$ only. Thus we have:
%\begin{corollary}\label{cor l1}
%Let $\phi$ be a bounded, smooth solution of the problem
%\begin{equation}
%a_{ij}(y)\partial_{y_iy_j}\phi+\partial_z^2\phi+f'(w)\phi = 0 \quad\hbox{in } \R^m\times \R,
%\label{l3}\end{equation}
%where, with some $\lambda>0$
%\begin{align*}
%\lambda|y|^2\leq a_{ij}(y)y_iy_j\leq \lambda^{-1}|y|^2, \quad \forall y\in \R^m.
%\end{align*}
%Then $\phi(y,z) = Cw_z(z) $ for some $C\in \R$.
%\end{corollary}

\subsection{Linearized problem near $\Gamma_\alpha$}
Let $\theta_0>0$ be a number such that the Fermi coordinates are well defined for $x\in \R^9$ satisfying  $d(x,\Gamma)<\theta_0$.  \blue{Here $\theta_0>0$ may be taken to be the same as in the definition of the expanding neighborhood ${\mathcal U}_{\theta_0}$, see   (\ref{def ndelta1}).}  We will define the $\delta$-neighborhood of $\Gamma_\alpha$ to be
\begin{align*}
\cN_\delta=\Big\{x\in \R^9\mid d(x,\Gamma_\alpha)<\frac{\delta}{\alpha}\Big\}.
\end{align*}
\blue{Now,  we  let $\delta>0$ be such that $4\delta<\theta_0$ and consider neighborhood of the form $\cN_{4\delta}$. Observe that with this $\delta$ fixed  the approximate solution $\ww$ defined in  (\ref{pre 7}) satisfies $\ww(x)=w(z-h_\alpha(y))$, where $(y,z)$ are the Fermi coordinates of $x\in \cN_{4\delta}$.} 
We will also denote
\begin{align*}
\Gamma_{\alpha,z}= \Big\{x\in \R^9\mid d(x,\Gamma_\alpha)=z\Big\}, \quad
|z|<\frac{4\delta}{\alpha}.
\end{align*}
In $\cN_{4\delta}$ we can write the Laplacian in the local Fermi coordinates:
\begin{align}
\Delta=\Delta_{\Gamma_{\alpha, z}}+\partial_z^2-H_{\Gamma_{\alpha, z}}\partial_z,
\label{l4}
\end{align}
where $H_{\Gamma_{\alpha,z}}$ denotes the mean curvature  of the surface $\Gamma_{\alpha,z}$.
Expression (\ref{l4}) is valid only in $\cN_{4\delta}$ however it is convenient to extend it in such a way that it makes sense for all $z\in \R$. To this end let $\eta(\tau)$ be a smooth cut-off function with $\eta(\tau)=1$, for $|\tau|< 1$ and $\eta(\tau)=0$, $|\tau|>2$, and let us denote:
\begin{align*}
\eta_{\delta}^\alpha=\eta\Big(\frac{\alpha z}{\delta}\Big).\
\end{align*}
Then we define $\A_\delta^\alpha$ to be the following operator
\begin{align}
\A_\delta^\alpha=\eta_{\delta}^\alpha(\Delta_{\Gamma_{\alpha, z}}+\partial_z^2-H_{\Gamma_{\alpha, z}}\partial_z)+(1-\eta_{\delta}^\alpha)(\Delta_{\Gamma_{\alpha}}+\partial_z^2).
\label{l5}
\end{align}
This operator is defined in the set $\Gamma_\alpha\times\R$ and not just in the set $\cN_{4\delta}$.  We \blue{notice that  $\Gamma_\alpha\times\R$ can be parametrized by  $\R^9$ and it is equipped with the natural product metric.} In the sequel we will write
\begin{align*}
\A^\alpha_\delta(\phi)&=\Delta_{\Gamma_{\alpha}}\phi+\partial_z^2\phi+\cB^\alpha_\delta(\phi), \quad\mbox{where} \\
\cB^\alpha_\delta(\phi)&=\eta_{\delta}^\alpha(\Delta_{\Gamma_{\alpha, z}}-\Delta_{\Gamma_\alpha}-H_{\Gamma_{\alpha, z}}\partial_z)(\phi),
\end{align*}
We will introduce now some norms  for functions defined in $\blue{\Gamma_\alpha\times\R}$. By $dV_{\Gamma_\alpha}$ we will denote the volume element of $\Gamma_\alpha$ and we say that $f\in L^p_{loc}(\Gamma_\alpha)$, $1<p<\infty$, if
\begin{align*}
\int_{\Gamma_\alpha\cap K} |f|^p\,dV_{\Gamma_\alpha} <\infty,
\quad \forall K\subset \R^9,\  {\mbox compact}.
\end{align*}
Similarly we define $L^\infty_{loc}(\Gamma_\alpha)$ to be the set of locally bounded functions on $\Gamma_\alpha$.  In $L^p_{loc}(\Gamma_\alpha\times\R)$, $1<p< \infty$ we will introduce the following weighted norms:
\begin{align*}
\blue{\|g\|_{p,\sigma} :=\ \sup_{(y,z)\in \Gamma_\alpha\times\R} e^{\sigma |z|} \,\| g\|_{L^p\big(\Gamma_\alpha\cap B(y,1)\times (z-1,z+1)\big)}},
\end{align*}
where $\sigma$ is such that  $0<\sigma <\sqrt{2}$. In the sequel  $\sigma$ will be  taken \blue{to be smaller as necessary but always $\alpha$ independent}.  Similarly, we define
\begin{align*}
\|g\|_{\infty,\sigma} :=\ \sup_{(y,z)\in \Gamma_\alpha\times\R} e^{\sigma |z|} |g (y,z)| , \quad g\in L^\infty_{loc}(\Gamma_\alpha\times\R).
\end{align*}
\blue{Observe that these definitions are consistent in the sense that if we take formally  $p=\infty$ in the definition of the norm $\|\cdot\|_{p,\sigma}$ then the resulting norm will be equivalent to the $\|\cdot\|_{\infty,\sigma}$ we have actually defined.}

Our next goal is to  establish a solvability theory  for the following problem:
\begin{align}
\begin{aligned}
\Delta_{\Gamma_\alpha}\phi+B_\alpha(\phi)+\partial_z^2\phi+f'(w)\phi&=g+c_\alpha w_z, \quad \mbox{in}\ \Gamma_\alpha\times\R,\\
\int_{\R} \phi(y,z) w_z(z)\,dz&=0, \quad \forall y\in \Gamma_\alpha.
\end{aligned}
\label{l6}
\end{align}
where $B_\alpha(\phi)$ is a second order differential operator of the form:
\begin{align}
B_\alpha(\phi)=\nabla_{\Gamma_\alpha}\phi_z\cdot {\bf b}_{\alpha 1}+\phi_{zz}b_{\alpha 2}.
\label{l6a}
\end{align}
We assume additionally that ${\bf b}_{\alpha 1}:\Gamma_\alpha\to T\Gamma_\alpha$ and $b_{\alpha 2}:\Gamma_\alpha\to \R$ are continuous functions such that
\begin{align}
\|{\bf b}_{\alpha 1}\|_{L^\infty(\Gamma_\alpha)}+\|{b}_{\alpha 2}\|_{L^\infty(\Gamma_\alpha)}\leq C\alpha.
\label{l6b}
\end{align}
\blue{These conditions arise in a natural way as can be seen from  (\ref{l17}) and the argument that follows. } 
We observe that function $c_\alpha:\Gamma_\alpha\to \R$ in (\ref{l6}) is a parameter to be determined using the orthogonality condition.
We will show the following  a priori estimate:

\begin{lemma} \label{ap1}
There exists  $C>0$ such that for all sufficiently small $\alpha$ and all  $g\in L^p_{loc}(\Gamma_\alpha\times\R)$, $9<p\leq \infty$  any solution
 $\phi$ of problem $\equ{l6}$ with $\|\phi\|_{\infty,\sigma }<+\infty$ satisfies
\begin{align}
\|D^2\phi\|_{p,\sigma} + \|D\phi\|_{\infty,\sigma } + \|\phi\|_{\infty,\sigma } \ \le\ C\, \|g\|_{p,\sigma},
\label{l7}
\end{align}
\end{lemma}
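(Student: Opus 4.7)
The natural approach is a contradiction/blow-up argument in the spirit of del Pino--Kowalczyk--Wei. Assume the estimate fails: there exist sequences $\alpha_n\to 0$, functions $g_n$ with $\|g_n\|_{p,\sigma}\to 0$, parameters $c_n(y)$, and solutions $\phi_n$ of \equ{l6} with $\|\phi_n\|_{\infty,\sigma}=1$. The first step is to eliminate $c_n$. Multiplying the equation by $w_z$ and integrating over $z$, the term $\int_\R \partial_z^2\phi_n\, w_z + f'(w)\phi_n w_z\, dz$ vanishes after integration by parts, using $L_0 w_z=0$ and the orthogonality. One obtains a pointwise formula
\begin{equation*}
c_n(y)\int_\R w_z^2\,dz = -\int_\R g_n w_z\,dz + \int_\R \bigl[\Delta_{\Gamma_{\alpha_n}}\phi_n+B_{\alpha_n}(\phi_n)\bigr]w_z\,dz,
\end{equation*}
from which, together with the bound \equ{l6b} on $B_{\alpha_n}$ and standard interior $W^{2,p}$ estimates for $\phi_n$ (giving $\|D^2\phi_n\|_{p,\sigma}\le C(\|g_n\|_{p,\sigma}+\|\phi_n\|_{\infty,\sigma})$), one controls $c_n$ in a weighted norm and reduces the problem to showing $\|\phi_n\|_{\infty,\sigma}\to 0$.

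Next, pick $(y_n,z_n)\in\Gamma_{\alpha_n}\times\R$ where $e^{\sigma|z_n|}|\phi_n(y_n,z_n)|\ge 1/2$. I would split into two cases. \emph{Case 1: $z_n$ stays bounded.} Using the local graph description of $\Gamma_{\alpha_n}$ around $y_n$ developed in \S 4, translate so that $y_n$ becomes the origin of a tangent plane; because the second fundamental form and all higher derivatives of the defining graph decay like $|A_{\Gamma_{\alpha_n}}|\lesssim \alpha_n^2 r_{\alpha_n}^{-2}$ (cf.\ \equ{blue 2} and Lemma \ref{lem gr1}), on compact sets the manifold flattens: in local coordinates $\Delta_{\Gamma_{\alpha_n}}\to \Delta_{\R^8}$, while $B_{\alpha_n}\to 0$ by \equ{l6b}. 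Passing to a subsequential limit of $\phi_n$ via elliptic regularity, the limit $\phi_\infty$ is a bounded solution of
\begin{equation*}
\Delta_y\phi_\infty+\partial_z^2\phi_\infty+f'(w)\phi_\infty=0\quad\hbox{in }\R^8\times\R,
\end{equation*}
satisfying $\int_\R\phi_\infty(y,z)w_z(z)\,dz=0$ and $|\phi_\infty(0,z_\infty)|\ge 1/2$. By Lemma \ref{lemma l1}, $\phi_\infty=Cw_z$, and the orthogonality forces $C=0$, a contradiction.

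\emph{Case 2: $|z_n|\to\infty$.} Consider $\tilde\phi_n(y,z):=e^{\sigma|z_n|}\phi_n(y_n+y,z+z_n)$, which is bounded by the definition of the weighted norm and satisfies $|\tilde\phi_n(0,0)|\ge 1/2$. The potential $f'(w(z+z_n))\to -2$ locally uniformly, and the right-hand side in the shifted equation tends to $0$ in $L^p_{\mathrm{loc}}$ since it carries a factor $e^{-\sigma|z+z_n|+\sigma|z_n|}\le e^{\sigma|z|}$ times $\|g_n\|_{p,\sigma}\to 0$. In the limit one gets a bounded solution of $(\Delta-2)\tilde\phi_\infty=0$ on $\R^9$, which must vanish by Liouville-type arguments (as in the exponential-decay step of Lemma \ref{lemma l1}, using a barrier of the form $\sum_i\cosh(\sigma y_i)+\cosh(\sigma z)$ with $\sigma<\sqrt{2}$). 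This contradicts $|\tilde\phi_\infty(0,0)|\ge 1/2$.

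The main obstacle is the careful bookkeeping in Case 1: one must verify that after shifting and taking the limit the $\alpha_n$-dependent ingredients truly disappear. Specifically, one needs that the metric coefficients, Christoffel symbols and mean curvature of $\Gamma_{\alpha_n,z}$ (for $z$ in a fixed compact set and $y$ near $y_n$) converge, in $C^1_{\mathrm{loc}}$, to those of $\R^8\times\{z\}$, which relies on the uniform local graph representation and derivative bounds \equ{cotaG}--\equ{estimacionG} together with \equ{l6b}. Once this convergence is established, the remainder of the argument (Case 2 plus the $W^{2,p}$ bootstrap that upgrades $\|\phi\|_{\infty,\sigma}\to 0$ into the full estimate \equ{l7}) is routine.
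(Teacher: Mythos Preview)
Your approach is essentially the paper's: eliminate $c_\alpha$ by testing against $w_z$ (note that the term $\int\Delta_{\Gamma_{\alpha_n}}\phi_n\,w_z\,dz$ in your formula actually vanishes, since it equals $\Delta_{\Gamma_{\alpha_n}}\!\int\phi_n w_z\,dz=0$ by orthogonality), then argue by contradiction with a blow-up at points $(y_n,z_n)$, splitting into $z_n$ bounded versus $|z_n|\to\infty$, and invoking Lemma~\ref{lemma l1} in the first case. The only substantive difference is in Case~2: your rescaling $\tilde\phi_n=e^{\sigma|z_n|}\phi_n(\cdot,\cdot+z_n)$ is \emph{not} globally bounded (only $|\tilde\phi_n|\le e^{\sigma|z|}$), so you need a Liouville theorem for $(\Delta-2)u=0$ in the class of functions with $e^{\sigma|z|}$ growth, $\sigma<\sqrt{2}$, which indeed follows from the barrier you indicate. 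The paper instead sets $\tilde\phi_n(t,z)=\cosh\!\bigl(\sigma(z_n+z)\bigr)\phi_n(y,z_n+z)$; this conjugation yields a uniformly bounded sequence whose limit satisfies $\Delta_t\tilde\phi+\partial_z^2\tilde\phi+\sigma a_1(z)\partial_z\tilde\phi-(2-\sigma^2 a_2(z))\tilde\phi=0$ with bounded $a_j$, and the conclusion $\tilde\phi\equiv0$ follows directly from the maximum principle for bounded solutions. Both routes work; the paper's is slightly cleaner because it avoids the growth-class Liouville step.
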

\proof
A  remark we make is that multiplying the equation by
$w_z(z)$, integrating by parts, and using the orthogonality assumption we readily get
\begin{align*}
 c_\alpha(y) \int_\R w_z^2 \ =\ \int_\R g(y,z)w_z(z)\, dz-\int_\R B_\alpha(\phi)w_z\,dz\ , \quad \forall y\in \Gamma_\alpha.
\end{align*}
In particular, we easily check that the function $c_\alpha(y)w_z(z)$ satisfies
\begin{align*}
\|c_\alpha w_z\|_{p,\sigma} \le C\|g\|_{p,\sigma}+O(\alpha)\|D^2\phi\|_{p,\sigma},
\end{align*}
hence for the purpose of the proof we do not lose generality in assuming simply that
$c_\alpha\equiv 0$.
Next, we will prove  the existence of $C$ for which
\be
\|\phi\|_{\infty,\sigma } \ \le\ C\, \|g\|_{p,\sigma}.
\label{ass1}\ee
To establish this assertion we argue by contradiction.
Let us assume  that we have  sequences $\{\alpha_n\}$, $\{g_n\}$, $\{\phi_n\}$ for which problem \equ{l6} is satisfied (now with $c_\alpha\equiv0$),  and
\begin{align}
\|\phi_n \|_{\infty,\sigma }= 1, \quad \|g_n\|_{p,\sigma} \to 0 ,\quad \alpha_n \to 0 .
\label{l8}
\end{align}
This means that there exists a sequence
$(y_n, z_n) \in \Gamma_{\alpha_n}\times \R$  such that
\begin{align*}
 e^{\sigma |z_n|}|\phi_n (y_n, z_n) |\to 1.
\end{align*}
We consider two cases:
\begin{enumerate}
\item Sequence $|z_n|$ is bounded.
\item $\lim_{n\to \infty}|z_n|=\infty$.
\end{enumerate}

\noindent
{\it Case 1.} From Lemma \ref{lem gr1} we know that there exists a $\rho>0$ such that for each $n$ the surface $\Gamma_{\alpha_n}\cap B(y_n, \rho\alpha_n^{-1})$ can be represented as a graph of a smooth function $G_n: T_{y_n}\Gamma_{\alpha_n}\to \R$ and that moreover
\begin{align}
|G_n(t)|\leq c\alpha_n |t|, \quad |t|\leq \rho\alpha_n^{-1}, t\in T_{y_n}\Gamma_{\alpha_n}.
\label{l9}
\end{align}
In local coordinates of $\Gamma_{\alpha_n}$ given  by the graph of $G_n$ we can write
\begin{align*}
\phi_n(y, z)=\tilde \phi_n(t, z), \quad y\in \Gamma_{\alpha_n}\cap B(y_n, \rho\alpha_n^{-1}),\quad  y=(t, G_n(t)).
\end{align*}
Using estimate (\ref{graph 2}) we get that the metric tensor on $\Gamma_{\alpha_n}\cap B(y_n, \rho\alpha_n^{-1})$ expressed in terms of the coordinates $t$ satisfies:
\begin{align*}
g_n(t)=I+O(\alpha^2_n),
\end{align*}
where $I$ is the identity matrix. Therefore, over compacts of $\R^8\equiv T_{y_n}\Gamma_{\alpha_n}$, we have that
\begin{align*}
\sqrt{\det g_n(t)}\to 1, \quad g^{ij}_n(t)\to \delta_{ij},
\end{align*}
uniformly, together with its derivatives.  Writing now the equation satisfied by $\tilde\phi_n$ in the local coordinates we get that
\begin{align}
\begin{aligned}
&\frac{1}{\sqrt{\det g_n}}\partial_{t_i}(g_n^{ij}\sqrt{\det g_n}\partial_{t_j}\tilde \phi_n)+g_n^{ij}{\bf b}_{n 1, j}\partial^2_{t_i z}\tilde\phi_{n}+b_{n 2}\partial^2_z\tilde\phi_n+\partial_z^2\tilde\phi_n+f'(w)\tilde\phi_n=\tilde g_n, \\
&\qquad\qquad\qquad \mbox{in}\  B(0, \rho\alpha_n^{-1})\times \R,\\
&\int_\R\tilde\phi_n(t,z)w_z\,dz=0, \quad \mbox{in}\ B(0, \rho\alpha_n^{-1}).
\end{aligned}
\label{l10}
\end{align}
We have that
\begin{align*}
\tilde g_n&\to 0, \quad \mbox{in}\  L^p_{loc}(\R^8\times \R),\\
|{\bf b}_{n1}|+|b_{n2}|&\to 0,\ \mbox{uniformly over compacts},\\
0<c&\leq |\tilde\phi_n(t,z)|\leq C,\quad  \mbox{in}\  B(0, \rho\alpha_n^{-1})\times \R,
\end{align*}
and by standard elliptic regularity we get that
\begin{align*}
\tilde\phi_n\to \tilde \phi\neq 0,
\end{align*}
over compacts of $\R^8\times \R$. Moreover, $\tilde\phi$ is a bounded, non-zero  solution of
\begin{align*}
\Delta_t\tilde\phi+\partial_z^2\tilde\phi+f'(w)\tilde\phi&=0, \quad\mbox{in}\ \R^8\times \R,
\\
\int_\R\tilde\phi(t,z)w_z\,dz&=0, \quad \mbox{in}\  \R^8,
\end{align*}
which, by Lemma \ref{lemma l1}, implies that $\tilde\phi\equiv 0$, a contradiction.

\noindent
{\it Case 2.}
In this case the proof is similar, except that we define
\begin{align*}
\tilde\phi_n(t,z)=\blue{\cosh\big(\sigma (z_n+z)\big)\phi_n(y,z_n+z)}.
\end{align*}
Then  a similar limiting argument can be used to show that $\tilde\phi$ satisfies
\begin{align*}
\Delta_t\tilde\phi+\partial_z^2\tilde\phi+\sigma a_1(z)\partial_z\tilde\phi-(2-\sigma^2 a_2(z))\tilde\phi&=0, \quad\mbox{in}\ \R^8\times \R,
\end{align*}
where $a_j(z)$ are bounded functions. Then, if $\sigma$ is sufficiently small, by maximum principle, we get that $\tilde\phi\equiv 0$.
We have reached a contradiction again and the proof of estimate
\equ{ass1}is concluded.

It only remains the estimate for first and second derivatives.
This is  immediate from local elliptic $L^p$-theory and the already obtained weighted $L^\infty$ estimate for $\phi$.
The proof is concluded.
\qed

%\green{i got to this point}

The next lemma establishes existence of a unique solution of problem \equ{l6} as in Lemma \ref{ap1}.
\begin{lemma}\label{ex1}
For all sufficiently small $\alpha$ and all
 $g$  with $\|g\|_{p,\sigma} <+\infty$, $9<p\leq \infty$  there exists a unique solution
 $\phi$ of problem $\equ{l6}$ with $\|\phi\|_{\infty,\sigma }<+\infty$. This solution satisfies
$$
\|D^2\phi\|_{p,\sigma} + \|D\phi\|_{\infty,\sigma } + \|\phi\|_{\infty,\sigma } \ \le\ C\, \|g\|_{p,\sigma}.
$$
\end{lemma}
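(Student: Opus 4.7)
\textbf{Proof plan for Lemma \ref{ex1}.} Uniqueness is immediate: if $\phi_1,\phi_2$ are two solutions in the weighted $L^\infty$ class, then $\phi=\phi_1-\phi_2$ solves the homogeneous version of \equ{l6} with $g\equiv 0$, so Lemma \ref{ap1} gives $\phi\equiv 0$. The heart of the matter is existence. My plan is to split the problem in two: first solve the \emph{unperturbed} problem in which the lower-order term $B_\alpha$ is dropped, and then recover the full problem by treating $B_\alpha(\phi)$ as a small perturbation, using crucially the bound $\|\mathbf b_{\alpha 1}\|_\infty+\|b_{\alpha 2}\|_\infty=O(\alpha)$ from \equ{l6b}.

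\textbf{Step 1 (unperturbed problem).} I first construct, for every $g$ with $\|g\|_{p,\sigma}<\infty$, a solution $\phi_0$ of
\begin{equation*}
\Delta_{\Gamma_\alpha}\phi_0+\partial_z^2\phi_0+f'(w)\phi_0=g+c_\alpha(y)w_z,\qquad \int_\R \phi_0(y,z)w_z(z)\,dz=0,
\end{equation*}
satisfying the a priori bound of Lemma \ref{ap1}. I approximate by restricting to the expanding bounded domains $\Omega_R=(\Gamma_\alpha\cap B_R)\times(-R,R)$ with Neumann boundary conditions and truncated data $g_R=g\,\chi_{\Omega_R}$. On the closed Hilbert subspace
\begin{equation*}
H_R=\Bigl\{\phi\in H^1(\Omega_R)\,\Bigm|\, \int_{-R}^R\phi(y,z)w_z(z)\,dz=0\text{ for a.e. }y\Bigr\},
\end{equation*}
the bilinear form $a(\phi,\psi)=\int_{\Omega_R}\bigl(\nabla\phi\cdot\nabla\psi-f'(w)\phi\psi\bigr)$ is coercive, because the one-dimensional operator $-\partial_z^2-f'(w)$ has a strictly positive spectral gap on $\{w_z\}^\perp$ (this follows from Section~4.1, since $w_z$ spans the kernel of $L_0$ and $-L_0$ is nonnegative up to the projection onto $w_z$). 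Lax--Milgram gives a unique weak solution $\phi_R\in H_R$; the Lagrange multiplier produced by the constraint is precisely $c_R(y)w_z(z)$ with $c_R$ determined by testing against $w_z$. Interior elliptic regularity promotes $\phi_R$ to $W^{2,p}_{\rm loc}$. The a priori estimate \equ{l7} applies locally to $\phi_R$ (the proof of Lemma \ref{ap1} is purely local/contradiction based, unaffected by the Neumann boundary at distance $R$), yielding $\|\phi_R\|_{\infty,\sigma}+\|D\phi_R\|_{\infty,\sigma}+\|D^2\phi_R\|_{p,\sigma}\le C\|g\|_{p,\sigma}$ uniformly in $R$. Passing to a subsequence via Arzel\`a--Ascoli on compact sets and the weighted bound, $\phi_R\to\phi_0$ locally, and $\phi_0$ inherits the bound and solves the unperturbed problem on all of $\Gamma_\alpha\times\R$.

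\textbf{Step 2 (perturbation).} With $T\colon g\mapsto \phi_0$ the linear solution operator from Step~1, I rewrite \equ{l6} as the fixed-point equation
\begin{equation*}
\phi=T\bigl(g-B_\alpha(\phi)\bigr)
\end{equation*}
in the Banach space $X=\{\phi:\ \|\phi\|_X:=\|D^2\phi\|_{p,\sigma}+\|D\phi\|_{\infty,\sigma}+\|\phi\|_{\infty,\sigma}<\infty,\ \int\phi w_z\,dz=0\}$. Since $B_\alpha$ is a first-plus-second-order operator with $L^\infty$ coefficients of size $O(\alpha)$, one has $\|B_\alpha(\phi)\|_{p,\sigma}\le C\alpha\|\phi\|_X$, and Step~1 gives $\|T(\cdot)\|_X\le C\|\cdot\|_{p,\sigma}$. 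Hence the map $\phi\mapsto T(g-B_\alpha(\phi))$ is a contraction on $X$ for $\alpha$ small enough, producing the unique solution $\phi$ and the announced estimate. The main technical point is the uniform coercivity used in Step~1, which rests on the exponential decay of $w_z$ ensuring that the constraint $\int\phi w_z=0$ genuinely removes the kernel in the weighted space; the fact that $B_\alpha$ contains second derivatives causes no trouble because it is inverted through $T$ rather than absorbed into a bilinear form.
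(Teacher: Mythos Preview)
Your overall architecture---solve the unperturbed problem first, then recover $B_\alpha$ by a contraction using $\|\mathbf b_{\alpha 1}\|_\infty+\|b_{\alpha 2}\|_\infty=O(\alpha)$---is exactly the paper's, and your Step~2 is fine. The gap is in Step~1, in the sentence ``the a~priori estimate \equ{l7} applies locally to $\phi_R$ \dots\ unaffected by the Neumann boundary at distance $R$''. Lemma~\ref{ap1} is stated for solutions on all of $\Gamma_\alpha\times\R$ with $\|\phi\|_{\infty,\sigma}<\infty$; your $\phi_R$ solves a Neumann problem on the truncated cylinder $\Omega_R$, and you are asserting a \emph{uniform-in-$R$} bound. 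To run the contradiction argument you must rule out that the blowup point $(y_n,z_n)$ drifts toward $\partial\Omega_{R_n}$; if it does, the limit problem is posed on a half-space with Neumann data, and you would need a Liouville theorem there (both for the analogue of Lemma~\ref{lemma l1} on $\R^8_+\times\R$ and for the $|z_n|\to\infty$ case). This is probably true by reflection, but it is not what Lemma~\ref{ap1} gives you, and you do not check it.

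The paper avoids this issue by a different approximation and one extra idea. It truncates the \emph{data}, setting $g_R=g\chi_{B_R}$, but minimizes the energy on the \emph{full} space $H=\{\phi\in H^1(\Gamma_\alpha\times\R):\int\phi\,w_z=0\}$, so that $\phi_R$ is already a genuine solution on all of $\Gamma_\alpha\times\R$. What remains is to verify the hypothesis $\|\phi_R\|_{\infty,\sigma}<\infty$ of Lemma~\ref{ap1}. This is done by a barrier argument you do not have: the Jacobi field $\bar h_\alpha=(1+|\nabla F_\alpha|^2)^{-1/2}$ satisfies $-\Delta_{\Gamma_\alpha}\bar h_\alpha=|A_\alpha|^2\bar h_\alpha>0$, so $\psi_\alpha=e^{-\sigma(|z|-|z_0|)}\bar h_\alpha$ is a positive supersolution of $\Delta_{\Gamma_\alpha}+\partial_z^2+f'(w)$ for $|z|>|z_0|$ large. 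Outside the support of $g_R$ the equation is homogeneous, and comparison with $\psi_\alpha$ (run on a further compactly supported approximant $\phi_{R,n}$) gives $|\phi_R|\le C_R\psi_\alpha$, hence $\|\phi_R\|_{\infty,\sigma}<\infty$. \emph{Now} Lemma~\ref{ap1} applies verbatim and yields the $R$-independent bound $\|\phi_R\|_{\infty,\sigma}\le C\|g_R\|_{p,\sigma}\le C\|g\|_{p,\sigma}$, after which one lets $R\to\infty$. The barrier step is the missing ingredient in your argument.
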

\proof
\blue{We assume initially that $\|g\|_{\infty,\sigma}<\infty$. The general case of $g$ with $\|g\|_{p,\sigma}<\infty$,  $9<p<\infty$ will follow by taking a suitable sequence of functions approximating $g$.} 
First we will show that there the assertion of the Lemma holds for the following problem:
\begin{align}
\begin{aligned}
\Delta_{\Gamma_{\alpha}}\phi+\partial_z^2\phi+f'(w)\phi&=g+c_\alpha w_z, \quad \mbox{in}\
\Gamma_\alpha\times\R,\\
\int_\R \phi(y,z)\blue{w_z(z)}\,dz&=0, \quad\mbox{in}\ \Gamma_\alpha.
\end{aligned}
\label{l11}
\end{align}
We shall argue by approximations. Let us replace $g$ by
$$
g_R(y,z) = \left\{\begin{matrix}
g(y,z) &\hbox{in}&  B_R(0)\cap (\Gamma_\alpha\times\R) \\
0 &\hbox{in}&  (\Gamma_\alpha\times\R)\setminus B_R(0) \\
\end{matrix} \right.
$$
Then   problem (\ref{l11}) corresponds to
finding a minimizer of the functional
$$
J(\blue{\phi}) = \frac 12 \int_{\Gamma_\alpha\times\R} [|\nabla_{\Gamma_\alpha}\phi|^2+ |\phi_z|^2 - f'(w)\phi^2]\,dV_\alpha dz -
 \int_{\Gamma_\alpha\times\R} g_R \phi\,dV_\alpha dz.
$$
in the space  $H$ of all functions $\phi\in H^1_{loc}(\Gamma_\alpha\times\R)$ for which
$$
\|\phi\|_H^2:= \int_{\Gamma_\alpha\times\R} (|\nabla_{\Gamma_\alpha}\phi|^2+|\phi_z|^2 + \phi^2)\,dV_\alpha dz <+\infty
$$
and
$$
\int_\R \phi(y,z) w_z(z)\, dz\, = 0, \quad \forall y\in \Gamma_\alpha.
$$
Since  the orthogonality assumption implies that
$$
\int_{\Gamma_\alpha\times\R} [(\phi_z^2 - f'(w)\phi^2)]\,dV_\alpha dz \ge \gamma \int_{\Gamma_\alpha\times\R} \phi^2\,
dV_\alpha dz, \quad \mbox{with some}\ \gamma>0,
$$
therefore $J$ is coercive in $H$ by a standard argument. Consequently, the functional has a minimizer  $\phi_R$ in $H$.
Since the truncation $g_R$ has compact support,
$\phi_R$ can be approximated by minimizing $J$ on the set of functions in $H$ which vanish outside a ball $B_n(0)$ with $n\gg R$.
Calling $\phi_{R,n}$ this minimizer, we see that $\phi_{R,n}$  approaches $\phi_R$ in the sense of the $H$-norm.
%In particular
%the numbers $\phi_{R,n}$ are uniformly bounded in $n$.
Applying elliptic estimates to the equation
satisfied by $\phi_{R,n}$, we obtain that $\phi_{R,n}$ is in fact locally bounded, uniformly in $n$.
Outside the support of $g_R$ the equation
$$
\Delta_{\Gamma_\alpha}\phi_{R,n}+ \partial_z^2\phi_{R,n} + f'(w)\phi_{R,n} = 0
$$
is satisfied. Now, we recall the fact that
$$
\bar h_\alpha (x) = \frac 1{\sqrt{1+|\nn F_\alpha|^2}}, \quad F_\alpha(x)=\frac{1}{\alpha}F(\alpha x), \quad x\in \R^8,
$$
is a positive supersolution for the Laplace-Beltrami operator, indeed
$$
-\Delta_{\Gamma_\alpha} \bar h_\alpha = |A_\alpha|^2\bar h_{\alpha} >0 .$$
Observe  that $\psi_\alpha = \blue{e^{\,-\sigma(|z|-|z_0|)}}\bar h_\alpha(x)$
satisfies
$$
\Delta_{\Gamma_\alpha}\psi_\alpha+\partial^2_z\psi_{\alpha} + f'(w)\psi_\alpha
 \le \blue {[\sigma^2+f'(w)]\psi_\alpha}-|A_{\Gamma_\alpha}|^2\psi_\alpha<0
$$
\blue{for $|z|>|z_0|$ and $|z_0|$ large.} 
Using comparison  principle we then obtain that
$$ |\phi_{R,n}| \le C_R {\blue{\psi_\alpha},}
\quad \mbox{in}\  B_n(0)\subset \Gamma_\alpha\times\R,
$$ where  constant $C_R$ depends on  $\|g\|_{\blue{\infty},\sigma}$, \blue{the uniform bound on $\|\phi_{R,n}\|_{\infty, 0}$,}  and $R$ only. Letting $n\to\infty$ we obtain that the same estimate is valid for
$\phi_R$. We conclude that,
$\|\phi_R\|_{\infty,\sigma} < +\infty$. It follows from Lemma \ref{ap1} that we have the uniform control
$$
\blue{\|D^2\phi_R\|_{p,\sigma}}+\|D \phi_R\|_{\infty,\sigma} + \|\phi_R\|_{\infty,\sigma} \le C\|g_R\|_{p,\sigma}\le C\|g\|_{p,\sigma}
$$
and thus we can pass  to the limit $R\to \infty$, obtaining a function  $\phi$, which solves problem \equ{l11} with   $\|\phi\|_{\infty,\sigma}< +\infty $.
Then it  follows from Lemma \ref{ap1} that
\begin{align}
\|D^2\phi\|_{p,\sigma}+\|D \phi\|_{\infty,\sigma} + \|\phi\|_{\infty,\sigma} \le C\|g\|_{p,\sigma}
\label{l12}
\end{align}
The general result for (\ref{l6}) follows now from a straightforward \blue{perturbation} argument.
\qed

We need to introduce now a norm that involves decay  with respect  $y\in \Gamma_\alpha$.  We recall the definition of $r_\alpha$:
$$
r_\alpha(y)= \sqrt{1+\alpha^2|\Pi_{\R^8}(y)|^2}, \quad y\in \Gamma_\alpha,
$$
where $\Pi_{\R^8}:\Gamma_\alpha\to \R^8$ is the projection onto  $\R^8$ of the (embedded) graph
$\Gamma_\alpha\subset \R^9$.
Let us consider the new norms for $g$,
\begin{align*}
\|g\|_{p,\sigma,\nu} :=\ \sup_{(y,z)\in \Gamma_\alpha\times\R}  e^{\sigma |z|} \,\| r_\alpha^\nu g\|_{L^p\big(\Gamma_\alpha\cap B(y,1)\blue{\times(z,z+1)}\big)},\quad 1<p<\infty, \nu\geq 1,
\end{align*}
%$$
%\|g\|_{p,\sigma,\nu} :=
%\| r_\ve^\nu g\|_{p,\sigma}
%$$
and
$$
\|g\|_{\infty,\sigma,\nu} :=
\sup_{(y,z)\in \Gamma_\alpha\times\R} r^\nu_\alpha(y) e^{\sigma |z|} |g (y,z)|, \quad \nu \geq 1.
$$
Then we have the  following a priori estimate.
\begin{lemma} \label{ap2}
There exists a number $C>0$ such that for all sufficiently small $\alpha$ the following holds:
Given $g$  with $\|g\|_{p,\sigma,\nu} <+\infty$, $9<p\leq \infty$, $0\leq \nu$  there exists a unique solution
 $\phi$
 %:= T_\alpha(g)$
 of $\equ{l6}$ with  $\|\phi\|_{\infty,\sigma,\nu }< +\infty$. This solution satisfies
$$
\|D^2\phi\|_{p,\sigma,\nu } + \|D\phi\|_{\infty,\sigma ,\nu } + \|\phi\|_{\infty,\sigma,\nu } \ \le\ C\, \|g\|_{p,\sigma,\nu}.
$$
\end{lemma}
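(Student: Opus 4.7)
Since $r_\alpha\geq 1$ one has $\|g\|_{p,\sigma}\leq \|g\|_{p,\sigma,\nu}$, so Lemma \ref{ex1} already produces a unique solution $\phi$ of (\ref{l6}) with $\|\phi\|_{\infty,\sigma}<+\infty$, and uniqueness in the smaller class $\|\phi\|_{\infty,\sigma,\nu}<+\infty$ is automatic. As in the proof of Lemma \ref{ap1}, multiplication of the equation by $w_z$ together with the orthogonality condition shows that $\|c_\alpha w_z\|_{p,\sigma,\nu}\leq C\|g\|_{p,\sigma,\nu}+O(\alpha)\|D^2\phi\|_{p,\sigma,\nu}$, so the Lagrange multiplier $c_\alpha w_z$ can be absorbed and the whole problem reduces to proving the weighted sup bound $\|\phi\|_{\infty,\sigma,\nu}\leq C\|g\|_{p,\sigma,\nu}$ (the derivative bounds then follow from interior elliptic $L^p$ theory on unit geodesic balls of $\Gamma_\alpha\times\R$, on each of which the weight $r_\alpha^\nu e^{\sigma|z|}$ varies by a bounded factor).

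I plan to argue by contradiction in the spirit of Lemma \ref{ap1}. Suppose the weighted estimate fails; then there exist $\alpha_n\downarrow 0$, $g_n$ with $\|g_n\|_{p,\sigma,\nu}\to 0$, and $\phi_n$ solving (\ref{l6}) (with $c_\alpha\equiv 0$) such that $\|\phi_n\|_{\infty,\sigma,\nu}=1$. Pick $(y_n,z_n)\in\Gamma_{\alpha_n}\times\R$ with $r_{\alpha_n}^\nu(y_n)e^{\sigma|z_n|}|\phi_n(y_n,z_n)|\to 1$, and set $R_n:=r_{\alpha_n}(y_n)$. If $R_n\leq M$ along a subsequence, then Lemma \ref{ap1} already gives $\|\phi_n\|_{\infty,\sigma}\leq C\|g_n\|_{p,\sigma}\leq C\|g_n\|_{p,\sigma,\nu}\to 0$, whence $r_{\alpha_n}^\nu(y_n)e^{\sigma|z_n|}|\phi_n(y_n,z_n)|\to 0$, a contradiction. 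The substantive case is therefore $R_n\to\infty$.

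In that case I rescale by
\begin{align*}
\psi_n(y,z):=R_n^\nu\cosh\bigl(\sigma(z+z_n)\bigr)\phi_n(y,z+z_n),\qquad \tilde g_n(y,z):=R_n^\nu\cosh\bigl(\sigma(z+z_n)\bigr)g_n(y,z+z_n).
\end{align*}
From the definition of $r_\alpha$ and $R_n\to\infty$ we get $r_{\alpha_n}(y)/R_n\to 1$ uniformly on the geodesic unit ball of $\Gamma_{\alpha_n}$ centred at $y_n$, so $\|\phi_n\|_{\infty,\sigma,\nu}=1$ translates into $|\psi_n|\leq C$ locally, with $|\psi_n(y_n,0)|\to c>0$, while $\|\tilde g_n\|_{p,0}\to 0$ on this ball. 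Using Lemma \ref{lem gr1} (transferred to $\Gamma_{\alpha_n}$), I write $\Gamma_{\alpha_n}\cap B(y_n,1)$ as the graph of $G_n$ over $T_{y_n}\Gamma_{\alpha_n}$. The crucial input is $|A_{\Gamma_{\alpha_n}}(y_n)|\sim R_n^{-1}$, which follows from (\ref{blue 2}); this forces $|D^mG_n|\to 0$ locally uniformly for $m\geq 2$. Consequently the rescaled metric tensors converge to $\delta_{ij}$ in $C^k_{\mathrm{loc}}$, $\Delta_{\Gamma_{\alpha_n}}\to \Delta_t$ on $\R^8$, and the perturbation $B_{\alpha_n}$ vanishes in the limit thanks to the uniform smallness (\ref{l6b}). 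Elliptic regularity plus a diagonal extraction yields, along a subsequence, $\psi_n\to\psi$ locally uniformly on $\R^8\times\R$, where $\psi$ is bounded, satisfies $\int_\R\psi(t,z)w_z(z)\,dz=0$, and solves either $\Delta_t\psi+\partial_z^2\psi+f'(w)\psi=0$ (if $z_n$ remains bounded, the $\cosh$-factor being harmless since it has a positive limit) or a constant-coefficient equation with a strictly negative zero-order coefficient (if $|z_n|\to\infty$, because $f'(w(\pm\infty))=-2$ and $-\sigma^2$ is a small perturbation). In the first case Lemma \ref{lemma l1} gives $\psi\equiv 0$; in the second case the maximum principle gives the same conclusion. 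Both contradict $|\psi(0,0)|\geq c>0$.

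The delicate part of this scheme is the passage to the flat limit on $\R^8\times\R$: one must rely on the decay $|A_{\Gamma_{\alpha_n}}|\sim r_{\alpha_n}^{-1}$ from Section \ref{sec grad est} to ensure that the Laplace-Beltrami operator truly flattens, and on the smallness of $B_\alpha$ in (\ref{l6b}) to kill the first-order perturbation; with those two ingredients the blow-up/Liouville scheme of Lemma \ref{ap1} closes the argument.
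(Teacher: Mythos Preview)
Your blow-up argument is correct in spirit and essentially equivalent to the paper's, though the paper packages it more economically: it substitutes $\phi = r_\alpha^{-\nu}\psi$ and observes that the equation for $\psi$ differs from (\ref{l6}) only by lower-order terms with coefficients $r_\alpha^\nu\nabla_{\Gamma_\alpha} r_\alpha^{-\nu}$ and $r_\alpha^\nu\Delta_{\Gamma_\alpha} r_\alpha^{-\nu}$, both uniformly $O(\alpha)$ because $|\nabla_{\Gamma_\alpha} r_\alpha|\leq C\alpha$ and $|\Delta_{\Gamma_\alpha} r_\alpha|\leq C\alpha^2$. Hence the contradiction argument of Lemma~\ref{ap1} applies to $\psi$ with no change, the extra $O(\alpha)$ coefficients vanishing in the blow-up limit. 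Your rescaling by $R_n^\nu$ at the contradiction point is the pointwise version of this conjugation and succeeds for precisely the same reason ($r_{\alpha_n}(y)/R_n\to 1$ on unit balls since $|\nabla r_\alpha|=O(\alpha)$); what the paper's route buys is that the passage to the flat limit is \emph{literally} the one already carried out in Lemma~\ref{ap1}, so nothing has to be rechecked.

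One point in your write-up is not correct as stated. If $z_n$ stays bounded and you retain the factor $\cosh(\sigma(z+z_n))$ in the definition of $\psi_n$, the limiting equation for $\psi$ acquires extra first- and zero-order terms in $z$ from differentiating $\cosh$, so it is \emph{not} $\Delta_t\psi+\partial_z^2\psi+f'(w)\psi=0$; likewise the orthogonality $\int_\R\psi\,w_z\,dz=0$ does not survive in that form (the limit of $\int\phi_n w_z=0$ picks up the $\cosh^{-1}$ weight). The easy fix is to drop the $\cosh$ factor in the bounded-$z_n$ case, exactly as in Case~1 of Lemma~\ref{ap1}, or equivalently to divide the limit $\psi$ by the fixed positive function $\cosh(\sigma(z+z_*))$ before invoking Lemma~\ref{lemma l1}. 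A smaller slip: the precise curvature bound on $\Gamma_\alpha$ is $|A_{\Gamma_\alpha}|\leq C\alpha/r_\alpha$ (see (\ref{l24})), not $\sim R_n^{-1}$; this only helps your flattening step, and indeed the metric convergence to $\delta_{ij}$ already follows from $\alpha_n\to 0$ as in (\ref{l9}), independently of $R_n\to\infty$.
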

\proof
This result is in fact a direct corollary (with obvious modifications for the respective norms) of the previous lemma.
We will set
$$
\phi = r_\alpha ^{-\nu}\psi,\quad
$$
and use (\ref{l6}) to  find the equation satisfied by $\psi$:
\begin{align}
\begin{aligned}
\Delta_{\Gamma_\alpha}\psi+r^\nu_\alpha\tilde B_\alpha(r_\alpha^{-\nu}\psi)+\partial_z^2\psi+f'(w)\psi&=r^\nu_\alpha g+r^\nu_\alpha c_\alpha w_z, \quad \mbox{in}\ \Gamma_\alpha\times\R,\\
\int_{\R} \psi(y,z) w_z(z)\,dz&=0, \quad \forall y\in \Gamma_\alpha,
\end{aligned}
\label{l13}
\end{align}
where $\tilde B_\alpha$ is a second order differential operator. Let us observe that
\begin{align*}
\|\Delta_{\Gamma_\alpha} r_\alpha\|_{L^\infty(\Gamma_\alpha)}+
\alpha\|\nabla_{\Gamma_\alpha} r_\alpha\|_{L^\infty(\Gamma_\alpha)}\leq C\alpha^2.
\end{align*}
This means that essentially the same argument as in the proof of Lemma \ref{ap1} applies to yield the $L^\infty$ estimate for $\psi$ in terms of  $\|g\|_{p,\sigma,\nu}$ and then local elliptic estimates give the estimate for the derivatives. We omit the details.

\qed

The theory developed in this section allows us to define an operator $T_\alpha(g):=\phi$ where $\phi$ is a solution of (\ref{l6}). In particular, with this definition,  we have
\begin{align}
\|D^2T_\alpha(g)\|_{p,\sigma,\nu } + \|DT_\alpha(g)\|_{\infty,\sigma ,\nu } + \|T_\alpha(g)\|_{\infty,\sigma,\nu } \ \le\ C\, \|g\|_{p,\sigma,\nu}.
\label{l14}
\end{align}
\subsection{Full linearized operator}\label{full linear}
We will now use the solvability theory for the problem (\ref{l6})  to treat the full linearized operator $\cL(\phi)=\Delta\phi+f'(\ww)\phi$, defined  in (\ref{def l2}). Thus we will consider the following problem:
\begin{align}
\cL(\phi)=g, \quad \mbox{in}\ \cN_{4\delta}.
\label{l15}
\end{align}

%It is clear first of all that we need an orthogonality condition for the inverse of $\cL$ to be bounded. Thus we modify  (\ref{l15}) to:
%\begin{align}
%\begin{aligned}
%\cL(\phi)&=g+c_\alpha W'(z-h_\alpha)\eta^\alpha_\delta(z), \quad \mbox{in}\ \cN_{3\delta},\\
%\int_\R \phi(x) W'(z-h_\alpha)\eta^\alpha_\delta(z)\, dz&=0, \mbox{where}\ x=y+zn(y), \quad y\in \Gamma_\alpha.
%\end{aligned}
%\label{l15}
%\end{align}
%Above, functions $c_\alpha$  and
We recall that 
\begin{align*}
{\tt w}(x)=\begin{cases} \chi\big(\frac{4\alpha z}{\theta_0r_\alpha})\big(w(z-h_\alpha)+1\big)-1, &\quad z<0,\\
\chi\big(\frac{4\alpha z}{\theta_0r_\alpha})\big(w(z-h_\alpha)-1\big)+1, &\quad 0\leq z,
\end{cases},
\end{align*}
where $h_\alpha$ is a function defined  on $\Gamma_\alpha$. In addition we will assume that, with some $\mu>8/p$ and $\nu\geq 2$:
\begin{align}
\alpha^2\|h_\alpha\|_{\infty, \nu-2}+\alpha\|\nabla_{\Gamma_\alpha}h_\alpha\|_{\infty,\nu-1}+\alpha^{8/p}\|\nabla^2_{\Gamma_\alpha}h_\alpha\|_{p, \nu}:=\|h_{\alpha}\|_{*,p,\nu}\leq C\alpha^{2+\mu},
\label{l16}
\end{align}
where $p\in (9,\infty)$, and
\begin{align*}
\|h_\alpha\|_{p,\nu} &:=\ \sup_{y\in \Gamma_\alpha}  \,\|r_\alpha^\nu h_\alpha\|_{L^p(\Gamma_\alpha\cap B(y,\theta_0\alpha^{-1}))},\quad 9<p<\infty,\quad\nu\geq 2,\\
\|h_\alpha\|_{\infty,\nu} &:=
\sup_{y\in \Gamma_\alpha}r_\alpha(y)^\nu |h_\alpha(y)|, \quad \nu \geq 2.
\end{align*}
This means that we can assume that in $\cN_{4\delta}$ we have $\ww(x)=w(z-h_\alpha)$.
We recall that we can identify  functions defined on $\Gamma_\alpha$ and those defined  on $\Gamma$ through the relation:
\begin{align*}
h_\alpha(y)=h(\alpha y), \quad y\in \Gamma,
\end{align*}
where $h:\Gamma\to \R$. This justifies the definition of the norms and the assumption (\ref{l16}).

%We look for a solution of (\ref{l15}) in the form:
Using the operator defined in  (\ref{l5}) we can write
\begin{align*}
\cL(\phi)=\Delta_{\Gamma_\alpha}\phi+\partial^2_z\phi+\cB^\alpha_\delta(\phi)+f'(\tt w)\phi.
\end{align*}
Now we need to make a change of variables in $\cN_{4\delta}$:
\begin{align}
\bar z=z-h_\alpha.
\label{l17}
\end{align}
We will denote $\phi(y, z)=\bar\phi\big(y, z-h_\alpha(y)\big)$. Then we have
\begin{align*}
\Delta_{\Gamma_\alpha}\phi(y,z)=\Delta_{\Gamma_\alpha}\bar \phi(y,\bar z)+\bar B_1(\bar \phi),
\end{align*}
where $\bar B_{1}(\bar \phi)$ is a linear second order differential operator (in variables $(y,\bar z)$):
\begin{align*}
\bar B_1(\bar\phi)=-2\nabla_{\Gamma_\alpha}\bar\phi_{\bar z}\cdot \nabla_{\Gamma_\alpha} h_{\alpha}+\bar\phi_{\bar z\bar z}|\nabla_{\Gamma_\alpha} h_{\alpha}|^2-\bar\phi_{\bar z}\Delta_{\Gamma_\alpha} h_{\alpha}.
\end{align*}
We  will separate the term whose coefficients depend on the first derivatives of $h_\alpha$ from the rest. Thus we will denote:
\begin{align}
{\bf B}_{1\alpha}(\bar\phi)=-2\nabla_{\Gamma_\alpha}\bar\phi_{\bar z}\cdot \nabla_{\Gamma_\alpha} h_{\alpha}+\bar\phi_{\bar z\bar z}|\nabla_{\Gamma_\alpha} h_{\alpha}|^2.
\label{l17a}
\end{align}
Notice that ${\bf B}_{1\alpha}$ is an operator of the  same  form as $B_\alpha$ in (\ref{l6a}), and whose coefficients satisfy the analog of (\ref{l6b}) due to the assumption (\ref{l16}).

With the same change of variables we now analyze:
\begin{align*}
%\bar B_2(\bar\phi)=
\cB^\alpha_\delta(\phi)=\eta_{\delta}^\alpha(\Delta_{\Gamma_{\alpha, z}}-\Delta_{\Gamma_\alpha}-H_{\Gamma_{\alpha, z}}\partial_z)(\phi).
\end{align*}
Let us fix a $y\in\Gamma_\alpha$ and let $\tg_{\alpha}$ be the metric tensor in  local coordinates around $y$. Let  $\tg_{\alpha,z}$ be the corresponding metric tensor on $\Gamma_{\alpha, z}$ (i.e. around the point $y_z=y+z n_\alpha(y)\in \Gamma_{\alpha,z}$, with $n_\alpha(y)$ denoting the normal vector).
Then we can  write, keeping in mind that $z=\bar z+h_\alpha$, $\tg_{\alpha,z}=\bar\tg_{\alpha,\bar z}$ and:
\begin{align}
\begin{aligned}
\Delta_{\Gamma_{\alpha,z}}\phi&=\frac{1}{\sqrt{\det(\bar\tg_{\alpha, \bar z})}}\partial_{i}
\Big(\bar \tg_{\alpha, \bar z}^{ij}{\sqrt{\det(\bar \tg_{\alpha, \bar z})}}\partial_{j}\bar\phi\Big)\\
&\quad -\frac{1}{\sqrt{\det(\bar \tg_{\alpha, \bar z})}}\partial_{i}
\Big(\bar \tg_{\alpha, \bar z}^{ij}{\sqrt{\det(\bar \tg_{\alpha, \bar z})}}\partial_{j}h_\alpha\Big)\partial_{\bar z}\bar\phi\\
&\quad+\frac{1}{\sqrt{\det(\bar \tg_{\alpha, \bar z})}}\partial_{\bar z}\Big(\bar \tg_{\alpha,\bar z}^{ij}{\sqrt{\det(\bar \tg_{\alpha,\bar z})}}\Big)\partial_{i}h_\alpha\partial_{j}h_\alpha\partial_{\bar z}
\bar\phi\\
&\quad +\bar\tg_{\alpha,\bar z}^{ij}\partial_{j}h_\alpha\partial_{i}h_\alpha\partial_{\bar z}^2\bar\phi
-\bar \tg_{\alpha,\bar z}^{ij}\partial_{j}h_\alpha\partial_{i {\bar z}}\bar\phi-\bar \tg_{\alpha,\bar z}^{ij}\partial_{i}h_\alpha\partial_{j \bar z}\bar \phi\\
&\quad-\frac{1}{\sqrt{\det(\bar\tg_{\alpha,\bar z})}}
\partial_{\bar z}\Big(\bar \tg_{\alpha,\bar z}^{ij}{\sqrt{\det(\bar\tg_{\alpha,\bar z})}}\Big)\partial_{i}h_\alpha\partial_{j}\bar\phi,
\end{aligned}
\label{l18}
\end{align}
We notice if in  $\tg_{\alpha,z}$ we set $z=0$ then the above operator is equal to $\Delta_{\Gamma_\alpha}\phi(y,z)$. Thus we need to "expand" (\ref{l18}) in powers of $z$. To this end let us use the  local system of coordinates around $y$   given by $(t, G_\alpha(t))$, where $G_\alpha(t)=\alpha^{-1}G(\alpha t)$, and $G$ given in Lemma \ref{lem gr1} and $|t|\leq \rho\alpha^{-1}$. Since we are interested in the size of the local norms defined above we only need to consider $|t|\leq C$, where $C>0$ is a constant independent on $\alpha$. By direct calculation, using Lemma \ref{lem gr1} and (\ref{coord 7}) we get that
\begin{align}
\begin{aligned}
\tg_{\alpha,z}&=I+\frac{1}{r^2_\alpha(y)}O(\alpha^2|t|^2)+\frac{z}{r_\alpha(y)}
O(\alpha)\\
&=\tg_{\alpha, 0}+\frac{z}{r_\alpha(y)}
O(\alpha)\\
&=\tg_{\alpha, 0}+\frac{\bar z+h_\alpha}{r_\alpha(y)}
O(\alpha)\\
&=\bar\tg_{\alpha,\bar z},
\end{aligned}
\label{l19}
\end{align}
with similar estimates for the derivatives. It follows:
\begin{align}
\bar \tg^{ij}_{\alpha,\bar z}=\tg^{ij}_{\alpha,0}+\frac{\bar z+h_\alpha}{r_\alpha(y)}
O(\alpha).
\label{l20}
\end{align}
Setting, with some abuse of notation,
\begin{align}
\Delta_{\Gamma_{\alpha, \bar z}}h_\alpha=\frac{1}{\sqrt{\det(\bar \tg_{\alpha, \bar z})}}\partial_{i}
\Big(\bar \tg_{\alpha, \bar z}^{ij}{\sqrt{\det(\bar \tg_{\alpha, \bar z})}}\partial_{j}h_\alpha\Big)
\label{l20a}
\end{align}
we see that
\begin{align}
(\Delta_{\Gamma_{\alpha, z}}-\Delta_{\Gamma_\alpha})\phi={\bf B}_{2\alpha}(\bar \phi)-[(\Delta_{\Gamma_{\alpha, \bar z}}-\Delta_{\Gamma_\alpha})h_\alpha]\partial_{\bar z}\bar\phi,
\label{l21}
\end{align}
where ${\bf B}_{2\alpha}(\bar \phi)$ is a second order linear differential operator in $\bar \phi$ whose coefficients depend on $h_\alpha, \nabla_{\Gamma_\alpha}h_\alpha, \bar z$ and that can be estimated as follows:
\begin{align}
|{\bf B}_{2\alpha}(\bar \phi)|\leq \Big[\frac{O(\alpha)(|\bar z|+|h_\alpha|)}{1+r_\alpha(y)}+|\nabla_{\Gamma_\alpha}h_\alpha|\Big](|\nabla\bar\phi|+|D^2\bar\phi|).
\label{l22}
\end{align}
We will consider now the term:
\begin{align}
\begin{aligned}
H_{\Gamma_{\alpha, z}}\partial_z\phi&=z|A_{\blue{\Gamma_{\alpha}}}|^2\partial_{\bar z}\bar\phi+
z^2{\mathcal R}_{\alpha}\partial_{\bar z}\bar\phi\\
&=(\bar z+h_\alpha)|A_{\Gamma_{\alpha}}|^2\partial_{\bar z}\bar\phi+(\bar z+h_\alpha)^2{\mathcal R}_{\alpha}\partial_{\bar z}\bar\phi,
\end{aligned}
\label{l23}
\end{align}
where $|A_{\Gamma_{\alpha}}|^2$ is the norm of the second fundamental form on $\Gamma_\alpha$, which satisfies:
\begin{align}
|A_{\Gamma_{\alpha}}|^2= \alpha^2|A_{\Gamma}|^2\leq \frac{C\alpha^2}{r_\alpha^2(y)}.
\label{l24}
\end{align}
(Above $|A_{\Gamma}|$ is the norm of the second fundamental form on $\Gamma$).
Term  ${\mathcal R}_{\alpha}$ comes from the Taylor expansion of $H_{\Gamma_{\alpha, z}}$ (see (\ref{blue 1})--(\ref{hgammaz})) and it has the form\blue{:
\begin{align}
\begin{aligned}
{\mathcal R}_\alpha&=\frac{1}{z^2}[H_{\Gamma_{\alpha,z}}-z|A_{\Gamma_\alpha}|^2]\\
&=\frac{1}{z^2}\sum_{i=1}^8\Big(\frac{\kappa_i}{1-z\kappa_i}-z\kappa_i^2\Big)\\
&=\sum_{i=1}^8 \big[\kappa_i^3+O(|z|\kappa_i^4)],
\end{aligned}
\label{restimate}
\end{align}
and can be  bounded as follows:
\begin{align}
|{\mathcal R}_{\alpha}|\leq \frac{C\alpha^3}{r^3_\alpha(y)}.
\label{l25}
\end{align}}
%Finally we will set
%\begin{align*}
%f'({\tt w})\phi=f'\big(w(\bar z)\big)\bar\phi+{\bf C}_{\alpha}(\bar\phi).
%\end{align*}
From this we see that  (\ref{l15}) can be equivalently written in the form:
\begin{align}
\begin{aligned}
\Delta_{\Gamma_{\alpha}}\bar\phi+\partial_{\bar z}^2\bar\phi+{\bf B}_{1\alpha}(\bar\phi)+f'(w)\bar\phi&=\bar g+\partial_{\bar z}\bar \phi\Delta_{\Gamma_{\alpha}} h_\alpha
-\eta_\delta^\alpha{\bf B}_{2\alpha}(\bar \phi)
\\
&\quad+ \eta_\delta^{\alpha}\big[(\Delta_{\Gamma_{\alpha, \bar z}}-\Delta_{\Gamma_{\alpha}}) h_\alpha\big]\\
&\quad +\eta_\delta^{\alpha}(\bar z+h_\alpha)|A_{\Gamma_{\alpha}}|^2\partial_{\bar z}\bar \phi
\\
&\quad + \eta_\delta^{\alpha}(\bar z+h_\alpha)^2{\mathcal R}_{\alpha}\partial_{\bar z}\bar \phi,
\end{aligned}
\label{l26}
\end{align}
in $\cN_{4\delta}$. It follows that it is natural to extend $\cL$ outside of $\cN_{4\delta}$ by letting
\begin{align*}
{\cL}(\phi)=\Delta_{\Gamma_{\alpha}}\bar\phi+\eta_{4\delta}^\alpha{\bf B}_{1\alpha}(\bar\phi)+\partial_{\bar z}^2\bar\phi+f'(w)\bar\phi, \quad \mbox{in}\ \Gamma_\alpha\times \R \setminus\cN_{4\delta},
\end{align*}
since $\eta_{4\delta}^\alpha\equiv 1$ in $\cN_{4\delta}$.
We will denote the extended operator ${\cL}$ by $\bar{\cL}$. With this extension we can consider (\ref{l26}) as an equivalent problem to $\bar{\cL}(\bar\phi)=\bar g$ in $\Gamma_\alpha\times \R$. Now we can use the results of the previous   subsection to show the following analog of Lemma \ref{ap2}:
\begin{proposition} \label{prop ap2}
There exists a number $C>0$ such that for all sufficiently small $\alpha$ and $\delta$ the following holds:
Given $\bar  g$  with $\|\bar g\|_{p,\sigma,\nu} <+\infty$, $9<p\leq \infty$, $2\leq \nu$  there exists a unique solution
$\bar \phi$
 %:= T_\alpha(g)$
of
\begin{align}
\begin{aligned}
\bar{\cL}(\bar \phi)&=\bar g+\bar c_\alpha w_{\bar z}, \quad \mbox{in}\ \cN_\infty,\\
\int_\R\bar\phi(y,\bar z)w_{\bar z}(\bar z)\,d\bar z&=0, \quad \mbox{in}\ \Gamma_\alpha,
\end{aligned}
\label{l27}
\end{align}
with  $\|\bar\phi\|_{\infty,\sigma,\nu }< +\infty$. This solution satisfies
$$
\|D^2\bar\phi\|_{p,\sigma,\nu } + \|D\bar\phi\|_{\infty,\sigma ,\nu } + \|\bar\phi\|_{\infty,\sigma,\nu } \ \le\ C\, \|\bar g\|_{p,\sigma,\nu}.
$$
\end{proposition}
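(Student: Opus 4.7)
The plan is to view \eqref{l27} as a small perturbation of the problem already solved in Lemma \ref{ap2}. Writing the equation in the form \eqref{l26}, the left-hand side
$$\cL^0_\alpha(\bar\phi) := \Delta_{\Gamma_\alpha}\bar\phi + \partial_{\bar z}^2\bar\phi + \mathbf{B}_{1\alpha}(\bar\phi) + f'(w)\bar\phi$$
has exactly the structure for which Lemma \ref{ap2} applies: the coefficients of $\mathbf{B}_{1\alpha}$, being $-2\nabla_{\Gamma_\alpha}h_\alpha$ and $|\nabla_{\Gamma_\alpha}h_\alpha|^2$, obey the smallness bound \eqref{l6b} by virtue of assumption \eqref{l16}. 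All remaining terms on the right-hand side of \eqref{l26}, whose sum I will denote by $\mathcal{P}_\alpha(\bar\phi)$, are linear in $\bar\phi$ (through $\partial_{\bar z}\bar\phi$ and second derivatives) with coefficients carrying extra smallness -- either a positive power of $\alpha$, additional decay in $r_\alpha$, or polynomial growth in $\bar z$ that can be absorbed by slightly decreasing the exponential weight. Denoting by $T_\alpha$ the solution operator from Lemma \ref{ap2}, I would rewrite \eqref{l27} as the fixed-point equation
$$\bar\phi = T_\alpha\bigl(\bar g + \mathcal{P}_\alpha(\bar\phi)\bigr),$$
with the scalar $\bar c_\alpha$ and the orthogonality condition built automatically into the definition of $T_\alpha$.

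The key step is then to establish the perturbation estimate
$$\|\mathcal{P}_\alpha(\bar\phi)\|_{p,\sigma,\nu} \le C\alpha^\mu\bigl(\|\bar\phi\|_{\infty,\sigma,\nu} + \|D\bar\phi\|_{\infty,\sigma,\nu} + \|D^2\bar\phi\|_{p,\sigma,\nu}\bigr),$$
for some positive $\mu$ and all sufficiently small $\alpha$. The five terms in $\mathcal{P}_\alpha$ would be treated individually: (i) $\partial_{\bar z}\bar\phi\,\Delta_{\Gamma_\alpha}h_\alpha$ is controlled via $\|\Delta_{\Gamma_\alpha}h_\alpha\|_{p,\nu} \le C\alpha^{2+\mu-8/p}$, which is small since $\mu>8/p$; (ii) $\eta_\delta^\alpha \mathbf{B}_{2\alpha}(\bar\phi)$ is controlled via the pointwise bound \eqref{l22}; (iii) the term involving $(\Delta_{\Gamma_{\alpha,\bar z}} - \Delta_{\Gamma_\alpha})h_\alpha$ is handled using the metric expansion \eqref{l19}--\eqref{l20}; (iv) the curvature term $\eta_\delta^\alpha(\bar z + h_\alpha)|A_{\Gamma_\alpha}|^2 \partial_{\bar z}\bar\phi$ gains the factor $\alpha^2/r_\alpha^2$ from \eqref{l24}; (v) the remainder term with $\mathcal{R}_\alpha$ gains a further factor of $\alpha/r_\alpha$ by \eqref{l25}. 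In (ii), (iv), and (v), polynomial-in-$\bar z$ factors are absorbed by the exponential weight $e^{\sigma|\bar z|}$ at the price of an infinitesimal reduction of $\sigma$ (still well below $\sqrt{2}$).

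Once the above estimate is in place, the Banach contraction principle delivers the conclusion: for $\alpha$ and $\delta$ sufficiently small, the map $\bar\phi \mapsto T_\alpha(\bar g + \mathcal{P}_\alpha(\bar\phi))$ is a contraction on the Banach space of functions with finite norm $\|D^2\bar\phi\|_{p,\sigma,\nu} + \|D\bar\phi\|_{\infty,\sigma,\nu} + \|\bar\phi\|_{\infty,\sigma,\nu}$ satisfying the orthogonality constraint, which yields existence, uniqueness, and the asserted estimate by one final application of the bound in Lemma \ref{ap2}. The main technical obstacle will be the bookkeeping in step (iii): one must verify that the difference $(\Delta_{\Gamma_{\alpha,\bar z}} - \Delta_{\Gamma_\alpha})h_\alpha$ really produces coefficients of the right form -- that is, with sufficient decay in $r_\alpha$ and at least one extra power of $\alpha$ -- by combining the $\bar z$-dependent correction to the metric tensor with the smallness of $h_\alpha$ and of its first and second covariant derivatives given by \eqref{l16}.
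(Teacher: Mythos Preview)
Your approach is the same as the paper's: rewrite the equation in the form \eqref{l26}, treat the extra right-hand-side terms as a small perturbation of the operator already inverted in Lemma~\ref{ap2}, and close by contraction. The paper's proof is in fact a single line --- it records the bound
\[
\|\bar g\|_{p,\sigma,\nu}+C(\delta+\alpha)\bigl(\|D^2\bar\phi\|_{p,\sigma,\nu}+\|D\bar\phi\|_{\infty,\sigma,\nu}+\|\bar\phi\|_{\infty,\sigma,\nu}\bigr)
\]
for the right-hand side of \eqref{l26} and appeals to a ``straightforward fixed point argument'' --- so your write-up is considerably more detailed than what the authors provide.

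One small correction: the smallness constant should be $C(\delta+\alpha)$, not $C\alpha^\mu$. The $\delta$ enters precisely through the polynomial-in-$\bar z$ factors in (ii), (iv), (v), and the clean way to handle them is via the cutoff $\eta_\delta^\alpha$: in its support one has $|\bar z|\le 2\delta/\alpha$, so for instance $\eta_\delta^\alpha\,|\bar z|\,|A_{\Gamma_\alpha}|^2\le C\delta\alpha\, r_\alpha^{-2}$. Your proposed device of ``reducing $\sigma$'' to absorb $|\bar z|^k$ is awkward in a fixed-point scheme, since the input and output norms must coincide; using the cutoff bound instead is both simpler and is what produces the explicit $\delta$-dependence that the statement of the proposition requires.
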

\proof
Using estimates (\ref{l20a})--(\ref{l26}) we see that the right hand side of (\ref{l26}) can be estimated by:
\begin{align*}
\|\bar g\|_{p,\sigma,\nu}+C(\delta+\alpha)(\|D^2\bar\phi\|_{p,\sigma,\nu } + \|D\bar\phi\|_{\infty,\sigma ,\nu } + \|\bar\phi\|_{\infty,\sigma,\nu }).
\end{align*}
Using this and a straightforward fixed point argument we obtain our result. The details are left to the reader.

\qed

\setcounter{equation}{0}
\section{The nonlinear problem}

\subsection{The gluing}
Let us recall (see section \ref{sec preliminaries}) that we are looking for the solution of (\ref{ac}) in the form:
\begin{align}
u_\alpha=\ww(x)+\phi(x).
\label{glue 1}
\end{align}
Substituting in (\ref{ac}) we get for  the function $\phi$
\begin{align}
\cL(\phi)=S[\ww]+N(\phi),
\label{glue 2}
\end{align}
where
\begin{align}
S[\ww]=-\Delta\ww-f(\ww), \quad N(\phi)=-[f(\ww+\phi)-f(\ww)-f'(\ww)\phi], \quad f(\ww)=\ww(1-\ww^2).
\label{glue 3}
\end{align}

Let us us also recall (see section \ref{sec imp}) that we introduced an improvement of the initial approximation, namely the function $\ww_1$ defined in (\ref{imp 5})--(\ref{imp 6}).
We look for a solution $\ttt\phi$ of equation \equ{glue 2} in the  form
\begin{align}
\ttt\phi =\tt w_1 +\eta^\alpha_{2\delta} \phi + \psi,
\label{ualpha2}
\end{align}
so that the following  must be satisfied:
\begin{align}
\begin{aligned}
\eta^{{\alpha}}_{2\delta}(\Delta \phi + f'(\ww )\phi)  + 2\nabla\eta^\alpha_{2\delta} \nabla\phi    +  \phi \Delta \eta^\alpha_{2\delta}  + \Delta\psi + f'(\ww) \psi &=S[\ww]+\tL(\tt w_1)\\
&\quad+
 \NNN (\ww_1+\eta^\alpha_{2\delta} \phi + \psi ),
\end{aligned}
\label{non 2}
\end{align}
where
\begin{align*}
\tL({\tt w}_1)=-\Delta\ww_1 - f'(\ww )\ww_1 .
\end{align*}
For brevity in what follows we will write:
\begin{align*}
\NN(\phi)&=\NNN(\ww_1+\phi)=-[f(\ww+\ww_1+\phi)-f(\ww)-f'(\ww)(\ww_1+\phi)],\\
\tS(\ww,\ww_1)&=S[\ww]+\tL(\ww_1).
\end{align*}
This equation is satisfied provided that $(\phi,\psi)$ satisfies a coupled system of  nonlinear elliptic equations:
\begin{align}
\Delta \phi + f'(\ww )\phi   = \eta_\delta^\alpha \big[\tS(\ww,{\tt w}_1)+\NN (\phi +\psi )-\big(2 + f'(\ww)\big)\psi\big] ,\inn \cN_{4\delta},
\label{sis1}
\\
\begin{aligned}
\Delta \psi \, -\, 2\psi  \, &= \,
(1- \eta^\alpha_\delta)\big[\tS(\ww,{\ww}_1) + \NN(\eta^\alpha_{2\delta}\phi+\psi)-\big (2+ f'(\ww)\, \big)\psi\big] \\ &\quad -2\nabla\eta^\alpha_{2\delta} \nabla\phi   \, - \, \phi \Delta \eta^\alpha_{2\delta}, \inn \R^9\ .
\end{aligned}
\label{sis2}
\end{align}
Let us consider the extension $\tilde \cL$ of the linear operator $\cL$ introduced in the previous section. Then   equation \equ{sis1} is equivalent to:
\begin{align}
\tilde \cL(\bar \phi)= \eta_\delta^\alpha \big[\bar \tS(w,w_1)+\NN (\bar\phi +\bar \psi )-\big(2 + f'(w)\big)\bar\psi\big], \inn \blue{\Gamma_\alpha\times \R},
\label{non 3}
\end{align}
where
\begin{align*}
\bar\phi(y,\bar z)=\phi(y,\bar z+h_\alpha), \quad \bar\psi=\psi(y,\bar z+h_\alpha),
\end{align*}
according with the change of variables $\bar z=z-h_\alpha$ introduced in the previous section. The error term $\tS(\ww,\ww_1)$ expressed in these variables is denoted above by $\bar \tS(w,w_1)$ \blue{(observe that in the support of $\eta_\delta^\alpha$ we have  $\ww=w$, $\ww_1=w_1$).}

Now we will recast the system (\ref{sis1})--(\ref{sis2}) as a fixed point problem for $\bar\phi$.
Let us consider a given function
$\bar \phi$ such that $\|\nabla\bar \phi\|_{\infty,\sigma, \nu}+ \|\bar \phi\|_{\infty,\sigma, \nu}$ is small. We set $\phi=\bar\phi(y,z-h_\alpha)$ in $\cN_{4\delta}$ and $\phi\equiv 0$ in $\R^9\setminus \cN_{4\delta}$ and write \equ{sis2} as
\be
\Delta \psi-2\psi =  M(\psi,\phi)+ P(\phi), \inn \R^9,
\label{sis22}
\ee
where
\begin{align*}
P(\phi)&= (1- \eta^\alpha_\delta)\big( \tS(\ww,\ww_1)+ \NN (\eta^\alpha_{2\delta}\phi)\big)+ 2\nabla\eta^\alpha_{2\delta} \nabla\phi +\phi \Delta \eta^\alpha_{2\delta}, \\
M(\psi,\phi)&=
(1- \eta^\alpha_\delta)[(2+ f'(\www)\, )\psi+\NN (\eta^\alpha_{2\delta}\phi +\psi ) -\NN (\eta^\alpha_{2\delta}\phi)\,].
\end{align*}
{For functions in $\R^9$ we define  the following norms:
$$
 \|Q\|_{p,\nu} :=\sup_{x\in \R^9}\|r_\alpha^\nu Q\|_{L^p(B(x,1))}  < +\infty, \quad
 r_\alpha(x)=\sqrt{1+\alpha^2|\Pi_{\R^8}(x)|^2}.
$$
We notice   that the function $P$ above satisfies the following estimate:
\begin{align}
\begin{aligned}
 \|P(\phi)\|_{p,\nu}\,& \le\,  C e^{\,-{\sigma\delta}/{\alpha} }\, \big[\|\tS(\ww,\ww_1)\|_{p,\sigma, \nu}+e^{\,-{\sigma\delta}/{\alpha} }( \|\nabla\phi\|_{\infty,\sigma, \nu}+\|\phi\|_{\infty,\sigma, \nu})\big] \\
 &\leq C e^{\,-{\sigma\delta}/{\alpha} }\, \big[\|\tS(\ww,\ww_1)\|_{p,\sigma, \nu}+ e^{\,-{\sigma\delta}/{\alpha} }(\|\nabla\bar\phi\|_{\infty,\sigma, \nu}+\|\bar\phi\|_{\infty,\sigma, \nu})\big].
\end{aligned}
\label{p}
\end{align}
Motivated by (\ref{p}) we will establish our next result:
\begin{lemma}\label{lemma psi1}
Let us consider the linear problem
\be
\Delta \psi \, -\, 2\psi  = Q(x) \, \inn \R^9.
\label{sis222}
\ee
There exists a number $C>0$ such that if $Q(x)$ satisfies
$$
\|Q\|_{p,\nu}  < +\infty,
$$
with $9<p< \infty$, then equation $\equ{sis222}$ has a unique bounded solution $\psi$, which defines a linear operator in $Q$ and satisfies
$$
  \|\nabla \psi \|_{\infty,\nu} + \|\psi \|_{\infty,\nu} \le
C \|Q\|_{p,\nu} .
 $$
\end{lemma}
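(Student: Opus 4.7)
The plan is to derive the a priori estimate directly from the Green's function of $-\Delta+2$ on $\R^9$, and then obtain existence by a standard approximation argument.

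First, I would recall that the operator $-\Delta + 2$ on $\R^9$ has a positive, radially symmetric fundamental solution $G$, explicit in terms of a modified Bessel function, which satisfies the pointwise bounds
\begin{equation*}
|G(x)|\le C|x|^{-7} \text{ for } |x|\le 1, \qquad |G(x)| + |\nabla G(x)|\le Ce^{-\sqrt{2}|x|} \text{ for } |x|\ge 1,
\end{equation*}
with an extra factor $|x|^{-1}$ in the singular bound for $\nabla G$. In particular $G\in L^{p'}_{\mathrm{loc}}(\R^9)$ for every $p'<9/7$; since the assumption $9<p<\infty$ gives $p'<9/8<9/7$, this applies. Any bounded solution of \equ{sis222} admits the representation $\psi = -G\ast Q$, and one checks by the Lax--Milgram theorem on $H^1(\R^9)$ (using coercivity of the form $\int |\nabla u|^2 + 2u^2$) that this convolution indeed produces a solution whenever $Q$ is, say, compactly supported.

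The weighted estimate is the main substantive step. The key geometric fact is that $r_\alpha$ is Lipschitz with constant $\alpha\le 1$, and $r_\alpha\ge 1$, so
\begin{equation*}
r_\alpha^\nu(x)\le C_\nu\, r_\alpha^\nu(y)\,(1+|x-y|)^\nu,\qquad \forall\, x,y\in\R^9.
\end{equation*}
Covering $\R^9$ by the unit balls $B(x_k,1)$ of a lattice, I would estimate
\begin{equation*}
r_\alpha^\nu(x)|\psi(x)|\le \sum_k r_\alpha^\nu(x)\int_{B(x_k,1)}|G(x-y)||Q(y)|\,dy,
\end{equation*}
use the weight comparison and Hölder's inequality on each ball to bound the $k$-th term by $C(1+|x-x_k|)^\nu\,\|G\|_{L^{p'}(B(x_k-x,1))}\,\|Q\|_{p,\nu}$, and then observe that the exponential decay of $G$ dominates the polynomial factor, so that the series $\sum_k(1+|x-x_k|)^\nu\|G\|_{L^{p'}(B(x_k-x,1))}$ converges uniformly in $x$. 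This yields $\|\psi\|_{\infty,\nu}\le C\|Q\|_{p,\nu}$.

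For the gradient estimate I would use interior $L^p$ theory on unit balls: since $p>9=N$, Sobolev embedding gives $W^{2,p}(B)\hookrightarrow C^1(\overline B)$ and hence
\begin{equation*}
\|\nabla\psi\|_{L^\infty(B(x,1/2))}\le C\bigl(\|\psi\|_{L^\infty(B(x,1))}+\|Q\|_{L^p(B(x,1))}\bigr),
\end{equation*}
and multiplying by $r_\alpha^\nu(x)$, using that $r_\alpha$ varies by only an $O(\alpha)$ multiplicative factor across any unit ball, completes the bound $\|\nabla\psi\|_{\infty,\nu}\le C\|Q\|_{p,\nu}$. Uniqueness of bounded solutions follows from the maximum principle applied to $\Delta\psi-2\psi=0$, and existence for general $Q$ with $\|Q\|_{p,\nu}<\infty$ is obtained by truncating $Q_n=Q\chi_{B(0,n)}$, solving via Lax--Milgram, and extracting a $C^1_{\mathrm{loc}}$ limit using the uniform a priori bound just proved. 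The main obstacle is purely technical and lies in the weighted estimate, but it is handled cleanly because the polynomial weight $r_\alpha^\nu$ grows only polynomially while the Green's function decays exponentially.
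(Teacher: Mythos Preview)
Your proof is correct and takes a genuinely different route from the paper's. The paper argues by conjugation: it sets $\psi=r_\alpha^{-\nu}\tilde\psi$, observes that the resulting equation for $\tilde\psi$ is $\Delta\tilde\psi-2\tilde\psi+a_1^\alpha\cdot\nabla\tilde\psi+a_2^\alpha\tilde\psi=r_\alpha^\nu Q$ with coefficients $a_i^\alpha=o(1)$ as $\alpha\to 0$, and then runs a blow-up/contradiction argument in the \emph{unweighted} norms $\|\tilde\psi\|_\infty$, $\|r_\alpha^\nu Q\|_{p,0}$ to get the a priori bound; existence follows by truncating $\tilde Q$ to bounded functions. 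Your approach instead keeps the weight on the outside and exploits the explicit Green's function of $-\Delta+2$: the exponential decay of $G$ beats the polynomial transfer factor $(1+|x-y|)^\nu$ coming from the Lipschitz bound on $r_\alpha$, and the $L^{p'}$-integrability of the singularity (using $p'<9/8<9/7$) lets H\"older close the sum over a lattice of unit balls. Your argument is more elementary and yields a constant valid for all $\alpha\in(0,1]$, not just small $\alpha$; the paper's conjugation-plus-compactness scheme, on the other hand, is the template it reuses for the more delicate linear problems on $\Gamma_\alpha\times\R$ where no explicit Green's function is available, so in context it is the more portable technique.
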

\begin{proof}
Let us write
$$ \mu(x) = r_\alpha^{-\nu}(x),\quad\psi(x) = \mu(x) \ttt \psi(x). $$
Then the equation in terms of $\ttt\psi$ reads
$$
 \Delta \ttt \psi - 2\ttt\psi +\mu^{-1} (\ttt \psi\Delta \mu + \nabla\mu\cdot \nabla\ttt \psi)  = \mu^{-1}Q.
$$
We notice that:
$$
|\mu^{-1} \Delta\mu| \leq C\alpha^2 r_\alpha^{-2}, \quad
|\mu^{-1}\nabla\mu| \leq C\alpha r_\alpha^{-1}.
$$
Thus the equation becomes,
$$
\Delta\ttt \psi - 2\ttt\psi +  a^\alpha_1\cdot\nn \ttt \psi + a^\alpha_2\ttt\psi   =  \ttt Q(y), \quad
\ttt Q= \mu^{-1}Q,
$$
where $a^\alpha_i=o(1)$, as $\alpha\to 0$, uniformly.
Let us approximate
$\ttt Q$ by  bounded functions,
$$
\ttt Q_n =  \min\{ |\ttt Q| , n\}\, \hbox{sign}\, {\ttt Q},
$$
and consider the unique bounded solution $\ttt\psi_n$ of
$$
\Delta \ttt \psi_n - 2\ttt\psi_n +  a^\alpha_1\cdot \nn \psi_n + a^\alpha_2\ttt\psi_n   =  \ttt Q_n (y).
$$
We claim that there exists a $C>0$, independent of $\ttt Q$ and $\alpha$  such that
\be
\|\ttt \psi_n\|_\infty + \|\nabla\ttt \psi_n \|_\infty \, \le\, C\|\ttt Q_n\|_{p,0} \le  C\|\ttt Q\|_{p,0}
\label{bdd}\ee
Assuming the opposite, we have sequences $\ttt\psi_n$, $\ttt Q_n$ such that
$$
\|\ttt\psi_n\|_\infty + \|\nabla\ttt \psi_n \|_\infty =1, \quad \|\ttt Q_n\|_{p,0}\to 0, \quad n\to \infty.
$$
Let $x_n$ be such that
$$
(|\ttt \psi_n| + |\nabla\ttt \psi_n |)(x_n) \ge \frac 12
$$
Then, by local elliptic estimates, we get that the function $x\mapsto \psi_n(x_n + x)$ converges locally over compacts
 to a nontrivial, bounded  solution $\ttt\psi$ of the equation
$$ \Delta_y \ttt \psi - 2\ttt\psi  = 0.$$
We have reached a contradiction that proves the  estimate \equ{bdd}.
Passing to the limit, the lemma readily follows.
\end{proof}

Using the above lemma, we can apply contraction mapping principle to conclude that there is a unique  solution $\psi =\psi (\phi)$
of equation \equ{sis22}
which in addition satisfies
\be
\|\nabla \psi \|_{\infty,\nu} + \|\psi \|_{\infty,\nu}  \le \, C e^{\,-{\sigma\delta}/ {\alpha} }\, \big[ \|\tS(\ww,\ww_1)\|_{p,\sigma, \nu} +e^{\,-{\sigma\delta}/{\alpha} }( \|\phi\|_{\infty,\sigma, \nu}+ \|\nabla \phi\|_{\infty,\sigma, \nu})\big].
\label{oper}
\ee
In addition, we will  check that $\psi$ is  a Lipschitz function
in the considered norms, both in $\phi$ and in $h_\alpha$.
Substituting in equation \equ{non 3} we get that our full problem has been reduced to solving the nonlinear, nonlocal problem
\begin{align}
\tilde \cL(\bar \phi)= \eta_\delta^\alpha \big[\bar \tS(w,w_1)+\NN (\bar\phi +\bar \psi(\bar \phi) )-\big(2 + f'(w)\big)\bar\psi(\bar\phi)\big], \inn \Gamma_\alpha\times \R.
\label{non 4}
\end{align}
}

\subsection{The projected nonlinear problem}
We consider the projected version of problem \equ{non 4}
\begin{align}
\begin{aligned}
\tilde \cL(\bar \phi)&= \bar c_\alpha w_{\bar z}+\eta_\delta^\alpha\bar S(w,w_1)+\bar\NNN (\bar\phi), \inn \Gamma_\alpha\times \R,
\\
\int_\R  \bar\phi(y,\bar z) w_{\bar z}(\bar z)\, d\bar z\, &=0, \inn \Gamma_\alpha,
\end{aligned}
\label{proj}
\end{align}
where for convenience we have denoted:
\begin{align*}
\bar \NNN(\bar \phi)=\eta_\delta^\alpha\big[\NN (\bar\phi +\bar \psi(\bar \phi) )-\big(2 + f'(w)\big)\bar\psi(\bar\phi)\big].
\end{align*}
In the sequel we will use notation:
\begin{align}
\|\bar\phi\|_{*,p,\sigma, \nu}=\|D^2\bar \phi\|_{p,\sigma, \nu}+\|\nabla\bar \phi\|_{\infty,\sigma, \nu}+\|\bar \phi\|_{\infty,\sigma, \nu}, \quad 9<p<\infty.
\label{non 5}
\end{align}
\begin{lemma}\label{lemma non 1}
Let $\bar \phi$ be a given function such that
\begin{align}
\|\bar\phi\|_{*,p,\sigma, \nu}<\infty.
\label{non 5a}
\end{align}
Then mapping  $\bar\NNN(\bar\phi)$ satisfies:
\begin{align}
\|\bar \NNN(\bar\phi)\|_{p,\sigma, \nu}\leq C(\|\bar\phi\|^2_{*,p,\sigma, \nu} +\blue{e^{\,-\sigma\delta/\alpha}\|\bar\phi\|_{*,p,\sigma, \nu}}+
\|\bar \tS(\ww,\ww_1)\|_{p,\sigma, \nu}).
\label{non 6}
\end{align}
In addition for any  functions $\bar\phi_k$, $k=1,2$, satisfying (\ref{non 5a}) we have:
\begin{align}
\|\bar \NNN(\bar\phi_1)-\bar \NNN(\bar\phi_2)\|_{p,\sigma, \nu}\leq C[\|\bar\phi_1\|_{*,p,\sigma, \nu}
+\|\bar\phi_2\|_{*,p,\sigma, \nu}+e^{\,-\sigma\delta/\alpha}]\|\bar\phi_1-\bar\phi_2\|_{*,p,\sigma, \nu}.
\label{non 7}
\end{align}
\end{lemma}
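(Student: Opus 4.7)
The plan rests on the fact that $f(u)=u-u^3$ is a cubic polynomial, so the Taylor remainder entering the definition of $\NN$ terminates exactly: using $f''(\ww)=-6\ww$ and $f'''\equiv -6$ one has the explicit identity
$$\NN(\phi) = 3\ww(\ww_1+\phi)^2 + (\ww_1+\phi)^3.$$
Substituting $\phi=\bar\phi+\bar\psi(\bar\phi)$ and expanding the binomials, $\bar\NNN(\bar\phi)$ becomes a finite sum of monomials $\ww_1^a\bar\phi^b\bar\psi^c$ with $a+b+c\in\{2,3\}$ and bounded coefficients, plus the extra linear term $-\eta_\delta^\alpha(2+f'(w))\bar\psi(\bar\phi)$ produced by the gluing.

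To prove (\ref{non 6}) I would estimate each such monomial in the $\|\cdot\|_{p,\sigma,\nu}$ norm by placing all but one factor in an $L^\infty$-type norm and the remaining factor in $L^p$, matching the exponential and polynomial weights so the total is $e^{\sigma|\bar z|}r_\alpha^\nu$. Three ingredients close the bookkeeping. First, the explicit formula (\ref{imp 5}) together with (\ref{l24}) gives the pointwise bound $|\ww_1|\lesssim r_\alpha^{-2}$, so every monomial containing at least one $\ww_1$ factor absorbs into $\|\bar\tS(\ww,\ww_1)\|_{p,\sigma,\nu}$ -- this is exactly the sense in which $\ww_1$ improves the initial approximation, as sketched around (\ref{blue 3}). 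Second, the $*$-norm directly controls $\bar\phi$ in $L^\infty$, so $\|\bar\phi^2\|_{p,\sigma,\nu}$ is bounded by $\|\bar\phi\|_{*,p,\sigma,\nu}^2$, and the cubic $\bar\phi^3$ is absorbed into the same quadratic quantity in the small-norm regime where the lemma will be used. Third, the gluing estimate (\ref{oper}) produces a prefactor $e^{-\sigma\delta/\alpha}$ whenever $\bar\psi$ appears. For the linear leftover $\eta_\delta^\alpha(2+f'(w))\bar\psi$ one exploits that $2+f'(w)=3(1-w^2)=O(e^{-\sqrt 2|\bar z|})$: for any $\sigma<\sqrt 2$ this converts the $\|\cdot\|_{\infty,\nu}$ control of $\bar\psi$ (which has no exponential weight in $\bar z$) into $\|\cdot\|_{p,\sigma,\nu}$ control, producing exactly the summands $\|\bar\tS\|_{p,\sigma,\nu}$ and $e^{-\sigma\delta/\alpha}\|\bar\phi\|_{*,p,\sigma,\nu}$ on the right of (\ref{non 6}).

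The Lipschitz estimate (\ref{non 7}) follows from the same expansion: the difference $\NN(\bar\phi_1+\bar\psi(\bar\phi_1))-\NN(\bar\phi_2+\bar\psi(\bar\phi_2))$ splits into a sum of terms each factoring as $(\bar\phi_1-\bar\phi_2)$ or $(\bar\psi(\bar\phi_1)-\bar\psi(\bar\phi_2))$ times a polynomial of total degree $\le 2$ in $(\bar\phi_1,\bar\phi_2,\ww_1)$. The same product estimates then yield the factor $(\|\bar\phi_1\|_{*,p,\sigma,\nu}+\|\bar\phi_2\|_{*,p,\sigma,\nu})\|\bar\phi_1-\bar\phi_2\|_{*,p,\sigma,\nu}$ from the genuinely nonlinear pieces and, via (\ref{oper}), an extra $e^{-\sigma\delta/\alpha}\|\bar\phi_1-\bar\phi_2\|_{*,p,\sigma,\nu}$ whenever $\bar\psi$ appears, provided $\bar\phi\mapsto\bar\psi(\bar\phi)$ is itself Lipschitz with constant $O(e^{-\sigma\delta/\alpha})$ in the $\|\cdot\|_{\infty,\nu}$ norm. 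This last fact, announced right after (\ref{oper}), follows by subtracting the fixed point equations for $\bar\psi(\bar\phi_1)$ and $\bar\psi(\bar\phi_2)$ and applying Lemma \ref{lemma psi1} to the difference. The main obstacle is pure weight bookkeeping: ensuring that for any admissible $\nu\ge 2$ no product of factors drawn from $\ww_1$, $\bar\phi$, $\bar\psi$ loses powers of $r_\alpha$ beyond the target $r_\alpha^{-\nu}$. The constraint $\nu\ge 2$ is dictated exactly by the $r_\alpha^{-2}$ decay of $\ww_1$, since a smaller $\nu$ would fail to absorb the $\ww_1^2$ monomial into $\|\bar\tS\|_{p,\sigma,\nu}$.
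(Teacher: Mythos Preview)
Your proposal is correct and follows essentially the same route as the paper: both arguments rest on the quadratic--cubic structure of $\NN$, the bound \equ{oper} on $\bar\psi(\bar\phi)$, and the Lipschitz estimate \equ{oper1} for $\bar\psi(\bar\phi_1)-\bar\psi(\bar\phi_2)$, with the paper compressing the monomial bookkeeping into a single line while you write it out explicitly. Your treatment of the linear leftover $\eta_\delta^\alpha(2+f'(w))\bar\psi$ via the exponential decay $2+f'(w)=O(e^{-\sqrt{2}|\bar z|})$ is in fact slightly cleaner than the paper's cruder use of the support of $\eta_\delta^\alpha$, but the two are interchangeable here.
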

\proof
Let us consider the solution of  \equ{sis22} denoted, after the change of variables, by
$\bar \psi(\bar \phi)$. Using (\ref{oper}) we get
\begin{align}
\begin{aligned}
\|\bar\NNN(\bar\phi)\|_{p, \sigma,\nu}&\leq C[\|\bar\phi\|_{\infty,\sigma, \nu}^2+e^{\,2\sigma\delta/\alpha}\|\bar\psi\|_{\infty,\nu}^2+\blue{e^{\,\sigma\delta/\alpha}}\|\bar\psi\|_{\infty,\nu}]\\
&\leq C[\|\bar\phi\|_{*,p,\sigma, \nu}^2+\blue{e^{\,-\sigma\delta/\alpha}\|\bar\phi\|_{*,p,\sigma, \nu}}+\|\bar \tS(\ww,\ww_1)]\|_{p,\sigma, \nu}].
\end{aligned}
\label{non 8}
\end{align}
(Above we have used the fact that $\|\bar \tS(\ww,\ww_1)\|_{p,\sigma, \nu}$, is small, see Proposition \ref{prop2} to follow). This shows (\ref{non 6}).

To prove (\ref{non 7}) we denote $\bar\psi_k=\bar\psi(\bar\phi_k)$, $k=1,2$ and use (\ref{oper}) again to get
\be
\|\nabla \bar\psi_1-\nabla\bar\psi_2 \|_{\infty,\nu} + \|\bar\psi_1-\bar\psi_2\|_{\infty,\nu}  \le \, C e^{\,-{2\sigma\delta}/ {\alpha} }\, \big[\|\bar\phi_1-\bar\phi_2\|_{\infty,\sigma, \nu}+ \|\nabla\bar \phi_1-\nabla\bar\phi_2\|_{\infty,\sigma, \nu})\big].
\label{oper1}
\ee
Estimate (\ref{non 7}) follows readily from this.

\qed

% $\|E\|_{{p,\sigma,\nu}} \le C\ve^2$  and an application of contraction mapping principle, we find then that the following key fact holds true.
We will now show the main result of this section.
\begin{proposition}\label{prop2}
Under the assumption $2\leq \nu\leq 3$ we have that
\begin{align}
\|\bar \tS(\ww,\ww_1)]\|_{{p,\sigma,\nu}} \le C\alpha^2, \quad 9<p<\infty.
\label{non 9}
\end{align}
As a consequence, for all $\blue{\alpha}$\comment{typo} sufficiently small, problem $\equ{proj}$ has a unique solution
$\bar \phi$ with
\begin{align}
 \|\bar\phi\|_{*,p,\sigma,\nu}  \le\ C\alpha^2.
 \label{non 10}
 \end{align}
In addition $\bar\phi$ depends in a Lipschitz way on $h_\alpha$ in natural norms, namely we have:
\begin{align}
\|\bar\phi^{(1)}-\bar\phi^{(2)}\|_{*,p,\sigma,\nu}  \le\ C\alpha^{-8/p}\|h_\alpha^{(1)}-h_\alpha^{(2)}\|_{*,p,\nu},
\label{non 10a}
\end{align}
where $\bar\phi^{(k)}$, $k=1,2$ are solutions of \equ{proj} with $h_\alpha=h_\alpha^{(k)}$.
\end{proposition}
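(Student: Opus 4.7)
The argument splits into three stages: bounding the residual $\bar\tS(w, w_1)$ in the $\|\cdot\|_{p,\sigma,\nu}$ norm, solving a fixed-point problem for $\bar\phi$ by a contraction mapping argument, and propagating the argument to obtain Lipschitz continuity with respect to $h_\alpha$.

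For the size estimate (\ref{non 9}), I would unpack $\bar\tS(w, w_1)$ in the support of $\eta_\delta^\alpha$, using the Fermi-coordinate expression (\ref{prelim 60}) and the defining ODEs for $w$ and $w_1$. The $\partial_z^2\ww + f(\ww)$ term vanishes identically, and by construction of $w_1$ (equation (\ref{imp 4})) the leading $\bar z|A_{\Gamma_\alpha}|^2 w_{\bar z}$ contribution from the Taylor expansion (\ref{blue 1}) of $H_{\Gamma_{\alpha,z}}$ is cancelled against $\tilde L(\ww_1)$. What remains is of three types: the Taylor remainder $(\bar z + h_\alpha)^2 \mathcal{R}_\alpha\, w_{\bar z}$, of pointwise size $O(\alpha^3/r_\alpha^3)$ by (\ref{l25}), contributing $O(\alpha^3)$ to the weighted norm since $\nu\leq 3$; terms of the form $\Delta_{\Gamma_\alpha}h_\alpha\, w_{\bar z}$ and $|\nabla_{\Gamma_\alpha}h_\alpha|^2 w_{\bar z\bar z}$, controlled by $\|h_\alpha\|_{*,p,\nu}\leq C\alpha^{2+\mu}$; and residual contributions from $\tilde L(\ww_1)$ involving $\nabla_{\Gamma_\alpha}|A_{\Gamma_\alpha}|^2$ together with the discrepancy $(\Delta_{\Gamma_{\alpha,z}}-\Delta_{\Gamma_\alpha})$ applied to $w_1$. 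Using (\ref{l24}) and the refined fundamental-form decay derived in Section~4, each of these is dominated pointwise by $C\alpha^2/r_\alpha^\nu$, with exponential decay in $\bar z$ matched by $e^{\sigma|\bar z|}$ for $\sigma<\sqrt{2}$. Outside the support of $\eta_\delta^\alpha$ the error decays like $e^{-c\delta/\alpha}$ and is absorbed, yielding the bound $\|\bar\tS(w,w_1)\|_{p,\sigma,\nu}\leq C\alpha^2$.

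For the second stage, Proposition \ref{prop ap2} provides a bounded right inverse $T_\alpha$ of $\tilde\cL$ acting on the orthogonal complement of $w_{\bar z}$, with operator norm uniform in $\alpha$. This recasts (\ref{proj}) as the fixed-point equation
\begin{align*}
\bar\phi \,=\, \mathcal{T}(\bar\phi) \,:=\, T_\alpha\!\bigl(\eta_\delta^\alpha\,\bar\tS(w,w_1) + \bar\NNN(\bar\phi)\bigr)
\end{align*}
on the closed ball $\mathcal{B} = \{\bar\phi : \|\bar\phi\|_{*,p,\sigma,\nu}\leq K\alpha^2\}$ for a large constant $K$ to be chosen. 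Applying Lemma \ref{lemma non 1} and the previous $\alpha^2$ bound on the error gives
\begin{align*}
\|\mathcal{T}(\bar\phi)\|_{*,p,\sigma,\nu} \,\leq\, C\bigl(K^2\alpha^4 + e^{-\sigma\delta/\alpha}K\alpha^2 + \alpha^2\bigr) \,\leq\, K\alpha^2
\end{align*}
for $K$ large and $\alpha$ small, so $\mathcal{T}$ preserves $\mathcal{B}$. Similarly (\ref{non 7}) yields
\begin{align*}
\|\mathcal{T}(\bar\phi_1)-\mathcal{T}(\bar\phi_2)\|_{*,p,\sigma,\nu} \,\leq\, C\bigl(K\alpha^2 + e^{-\sigma\delta/\alpha}\bigr)\|\bar\phi_1-\bar\phi_2\|_{*,p,\sigma,\nu},
\end{align*}
which for small $\alpha$ makes $\mathcal{T}$ a contraction. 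Banach's theorem then produces the unique $\bar\phi\in\mathcal{B}$ with the bound (\ref{non 10}).

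For the Lipschitz dependence (\ref{non 10a}) I would compare the two fixed-point setups corresponding to $h_\alpha^{(1)}$ and $h_\alpha^{(2)}$. The ansatz $\ww$, the correction $\ww_1$, and the coefficients of ${\bf B}_{1\alpha}, {\bf B}_{2\alpha}$ in (\ref{l17a})--(\ref{l22}) all depend on $h_\alpha$ through the change of variables $\bar z = z - h_\alpha$ and through $\nabla_{\Gamma_\alpha}h_\alpha$; each of these dependences is Lipschitz with constant controlled by $\|h^{(1)}_\alpha - h^{(2)}_\alpha\|_{*,p,\nu}$. The factor $\alpha^{-8/p}$ originates from passing from $L^p$ control of $\nabla^2 h_\alpha$ to the pointwise bounds needed to compare the coefficients: the Sobolev embedding $W^{2,p}\hookrightarrow L^\infty$ applied on unit balls of $\Gamma_\alpha$, combined with the scaling $y\mapsto\alpha y$ that identifies $h_\alpha$ with $h$ on $\Gamma$, introduces this volume factor. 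Subtracting the two fixed-point equations and inverting $\tilde\cL$ via $T_\alpha$ yields (\ref{non 10a}) through the same contraction-type estimate. The principal technical obstacle is the careful bookkeeping of the interactions between the slowly decaying $|A_{\Gamma_\alpha}|^2\sim \alpha^2/r_\alpha^2$, the derivatives of $h_\alpha$ which decay only like $r_\alpha^{-(\nu-1)}$ or $r_\alpha^{-\nu}$, and the operators ${\bf B}_{1\alpha}, {\bf B}_{2\alpha}$ whose coefficients carry powers of $\bar z + h_\alpha$; each must be matched against the exponential weight $e^{\sigma|\bar z|}$ and the spatial weight $r_\alpha^\nu$ uniformly in $\alpha$ in order to secure the $C\alpha^2$ bound on the error and the contraction property of $\mathcal{T}$.
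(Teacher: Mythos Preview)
Your proposal follows essentially the same three-stage structure as the paper: decompose $\bar\tS(w,w_1)$ into an inner piece (where $\ww=w$) and an exponentially small outer piece, estimate each inner contribution using the curvature decay (\ref{l24})--(\ref{l25}) and the assumption (\ref{l16}) on $h_\alpha$, then invoke Proposition~\ref{prop ap2} and Lemma~\ref{lemma non 1} for the contraction argument, and finally subtract the two fixed-point equations for the Lipschitz bound. One minor correction: the factor $\alpha^{-8/p}$ in (\ref{non 10a}) does not come from a Sobolev embedding but directly from the definition of $\|\cdot\|_{*,p,\nu}$ in (\ref{l16}), which carries the weight $\alpha^{8/p}$ on the second-derivative term, so that $\|\nabla^2_{\Gamma_\alpha}(h^{(1)}_\alpha-h^{(2)}_\alpha)\|_{p,\nu}\le \alpha^{-8/p}\|h^{(1)}_\alpha-h^{(2)}_\alpha\|_{*,p,\nu}$; this is what the paper uses in (\ref{non 20}).
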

\proof
We begin by proving (\ref{non 9}). Let us write:
\begin{align*}
\bar \tS(\ww,\ww_1)&=\eta^\alpha_{\delta} \bar \tS(\ww,\ww_1)+(1-\eta^\alpha_{\delta}) \bar \tS(\ww,\ww_1)
\\
&:=-E_1-E_2.
\end{align*}

Notice that in terms of the original Fermi coordinates of $\Gamma_\alpha$ we have in
$\cN_{\delta}$:
\begin{align*}
E_1&=\Delta w+f(w)+\Delta w_1+f'(w)w_1\\
&=\Delta_{\Gamma_{\alpha, z}}(w+w_1)-H_{\Gamma_{\alpha, z}}\partial_z(w+w_1)+\partial_z^2(w+w_1)+f(w)+f'(w)w_1,
\end{align*}
where $w=w(z-h_\alpha)$. We will decompose:
\begin{align*}
E_1&=\Delta_{\Gamma_{\alpha,z}}(w+w_1)+[-H_{\Gamma_{\alpha, z}}\partial_zw+\bar z|A_{\Gamma_\alpha}|^2w_{\bar z}]-H_{\Gamma_{\alpha, z}}\partial_zw_1\\
&:=E_{11}+E_{12}+E_{13}.
\end{align*}
To estimate $E_{11}$ we use the expression of the Laplace-Beltrami operator in local coordinates as in (\ref{l18})--(\ref{l19}). Thus we get, changing to $\bar z=z-h_\alpha$, denoting $\tilde w=w+w_1$  and remembering that $w+w_1=w(\bar z)+w_1(\bar z)$:
\begin{align}
\begin{aligned}
\Delta_{\Gamma_{\alpha,z}}\tilde w&=-\frac{1}{\sqrt{\det(\bar \tg_{\alpha, \bar z})}}\partial_{i}
\Big(\bar \tg_{\alpha, \bar z}^{ij}{\sqrt{\det(\bar \tg_{\alpha, \bar z})}}\partial_{j}h_\alpha\Big)\partial_{\bar z}\tilde w\\
&\quad+\frac{1}{\sqrt{\det(\bar \tg_{\alpha, \bar z})}}\partial_{\bar z}\Big(\bar \tg_{\alpha,\bar z}^{ij}{\sqrt{\det(\bar \tg_{\alpha,\bar z})}}\Big)\partial_{i}h_\alpha\partial_{j}h_\alpha\partial_{\bar z}\tilde w\\
&\quad +\bar\tg_{\alpha,\bar z}^{ij}\partial_{j}h_\alpha\partial_{i}h_\alpha\partial_{\bar z}^2\tilde w,
\end{aligned}
\label{non 11}
\end{align}
We observe that, fixing a $y\in \Gamma_\alpha$ and considering the norms in the local variables $(t, G_\alpha(t))$ around $y$, we get :
\begin{align}
e^{\sigma |z|}\|r^\nu_\alpha\partial_{ij}h_\alpha \tilde w_z\|_{L^p(B(0,1))}\leq C\alpha^{2+\mu-8/p}\leq C\alpha^2,
\label{non 12}
\end{align}
because  the assumption (\ref{l16}) we have made about $h_\alpha$ and the fact that $\mu>p/8$.
Likewise we get
\begin{align}
e^{\sigma |z|}\|r^\nu_\alpha\partial_{i}h_\alpha\partial_jh_\alpha \tilde w_z\|_{L^p(B(0,1))}\leq C \alpha^{2+2\mu}.
\label{non 13}
\end{align}
This and (\ref{l19}) gives:
\begin{align}
\|E_{11}\|_{p,\sigma, \nu}\leq C\alpha^2.
\label{non 14}
\end{align}
Now we turn our attention to $E_{12}$. Using (\ref{l23})--(\ref{l25}) we get locally   in $\cN_{\delta}$:
\begin{align}
\begin{aligned}
H_{\Gamma_{\alpha, z}}\partial_z w-\bar z|A_{\blue{\Gamma_\alpha}}|^2w_{\bar z}
&=h_\alpha|A_{\Gamma_\alpha}|^2\partial_{\bar z}w+(\bar z+h_\alpha)^2{\blue{\mathcal R}}_{\alpha}\partial_{\bar z}w\\
&\leq \frac{C\alpha^2|h_\alpha|+\alpha r_\alpha^{-1}}{r^2_\alpha(y)}e^{\,-\sigma|\bar z|},
\end{aligned}
\label{non 15}
\end{align}
hence using the assumption $2\leq \nu\leq 3$ we get:
\begin{align}
\|E_{12}\|_{p,\sigma,\nu}\leq C\alpha^2.
\label{non 16}
\end{align}
Since
\begin{align*}
|w_1|\leq C\frac{\alpha^2}{r_\alpha^2}e^{\,-\sigma|\bar z|},
\end{align*}
therefore we get immediately
\begin{align}
\|E_{13}\|_{p,\sigma,\nu}\leq C\alpha^2.
\label{non 16a}
\end{align}
In order to estimate $E_2$ we will assume that $\bar z+h_\alpha>0$ and write:
\begin{align*}
\ww=\chi\big(\frac{4\alpha (\bar z+h_\alpha)}{\theta_0r_\alpha}\big) w(\bar z)+1-\chi\big(\frac{4\alpha (\bar z+h_\alpha)}{\theta_0r_\alpha}\big).
\end{align*}
Using the Fermi coordinates we can write:
\begin{align}
\begin{aligned}
(1-\eta_\delta^\alpha)\big(\Delta \ww+f(\ww)\big)&=(1-\eta^\alpha_\delta)[\partial_z^2\ww+f(\ww)]+(1-\eta^\alpha_\delta)\Delta_{\Gamma_{\alpha, z}}\ww\\
&\qquad-(1-\eta^\alpha_\delta)H_{\Gamma_{\alpha, z}}\partial_z\ww\\
&=(1-\eta^\alpha_\delta)(E_{21}+E_{22}+E_{23}).
\end{aligned}
\label{non 17}
\end{align}
Let us consider the  term denoted by $E_{21}$.  We have
\begin{align*}
E_{21}&=f\big(\chi\big(\frac{4\alpha (\bar z+h_\alpha)}{\theta_0r_\alpha}\big) w(\bar z)+1-\chi\big(\frac{4\alpha (\bar z+h_\alpha)}{\theta_0r_\alpha}\big)\big)-\chi\big(\frac{4\alpha (\bar z+h_\alpha)}
{\theta_0r_\alpha}\big)f(w(\bar z))
\\&\quad+2\partial_{\bar z}\chi\big(\frac{4\alpha (\bar z+h_\alpha)}{\theta_0r_\alpha}\big)\partial_{\bar z}w(\bar z)\\
&\quad +\partial^2_{\bar z}\chi\big(\frac{4\alpha (\bar z+h_\alpha)}{\theta_0r_\alpha}\big)\big[w(\bar z)-1\big]\\
&=A_1+A_2+A_3.
\end{align*}
To estimate $A_1$ we write
\begin{align*}
&\chi\big(\frac{4\alpha (\bar z+h_\alpha)}{\theta_0r_\alpha}\big) w(\bar z)+1-\chi\big(\frac{4\alpha (\bar z+h_\alpha)}{\theta_0r_\alpha}\big)\\
&\qquad=w(\bar z)+\big[1-\chi\big(\frac{4\alpha (\bar z+h_\alpha)}
{\theta_0r_\alpha}\big)\big](1-w(\bar z)\big]\\
&\qquad=w(\bar z)+ \big[1-\chi\big(\frac{4\alpha (\bar z+h_\alpha)}{\theta_0r_\alpha}\big)\big](1-w(\bar z)\big]O(e^{\,-\sqrt{2}|\bar z|})\\
&\qquad=w(\bar z)+O(e^{\,-\sigma|\bar z|})e^{\,-(\sqrt{2}-\sigma)\theta_0r_\alpha/\alpha}.
\end{align*}
Notice that, with some $\tilde\sigma>0$ we have
\begin{align*}
 e^{\,-(\sqrt{2}-\sigma)\theta_0r_\alpha/\alpha}\leq C e^{\,-\tilde \sigma\theta_0 r_\alpha/\alpha}e^{\,-\tilde \sigma\theta_0/\alpha},
\end{align*}
hence, using the fact that $f(w)$ is exponentially small as well in the support  of the function $1-\chi\big(\frac{4\alpha (\bar z+h_\alpha)}{\theta_0r_\alpha}\big)$, we get:
\begin{align*}
|A_1|\leq C e^{\,-\sigma|\bar z|}e^{\,-\tilde \sigma\theta_0 r_\alpha/\alpha}e^{\,-\tilde\sigma\theta_0 /\alpha},
\end{align*}
form which it follows
\begin{align}
\|A_1\|_{p,\sigma, \nu}\leq e^{\,-\tilde \sigma\theta_0/4\alpha}\leq C\alpha^2.
\label{non 18}
\end{align}
Terms $A_2$, and $A_3$ above are estimated in a similar way. To estimate the remaining term in $E_2$, namely $E_{22}$ and $E_{23}$, we use the same general approach.  The key point here is the fact that, with $k\geq 1$:
\begin{align}
\begin{aligned}
\|\nabla_{\Gamma_{\alpha}} r^{-k}_\alpha\|_{\infty,2}&\leq C\alpha, \\
\|\nabla^2_{\Gamma_{\alpha}} r^{-k}_\alpha\|_{p,2}&\leq C \alpha^{2-8/p},
\end{aligned}
\label{non 18a}
\end{align}
and the exponential smallness of $w(\bar z)\pm 1$ in the support of  $\chi'\big(\frac{4\alpha (\bar z+h_\alpha)}{\theta_0r_\alpha}\big)$.
Finally, the remaining terms in $E_2$ are handled similarly since $w_1(\bar z)\sim e^{\,-\sigma|\bar z|}$. The details are omitted.

Now, using (\ref{non 9}),  Lemma \ref{lemma non 1} and Proposition \ref{prop ap2} we show the existence of a unique solution $\bar \phi$  of (\ref{proj}) by a fixed point argument. The estimate (\ref{non 10}) is deduced from this as well.

Next we will prove that $\bar \phi$ is  Lipschitz  as a function of $h_\alpha$. To apply the general theory developed and in particular Proposition  \ref{prop ap2} let us fix functions $h^{(k)}_\alpha$, $k=1,2$ satisfying (\ref{l16}) and denote by $\bar \phi^{(k)}$ solutions of the respective nonlinear projected problems (\ref{proj}).  We notice that the functions $\bar\phi^{(k)}$ are defined in the same domain $\Gamma_\alpha\times \R$ however the linear parts of the  equations they solve are different, since  the coefficients of the differential operators involved expressed in local coordinates depend on
$h^{(k)}_\alpha$ as well. Thus we will denote the respective linear operators by $\tilde \cL^{(k)}$. We will also write $\tilde\phi=\bar\phi^{(1)}-\bar\phi^{(2)}$. With these notations we have that $\tilde\phi$ is a solution of:
\begin{align}
\begin{aligned}
&\tilde\cL^{(1)}\tilde\phi=\big\{\eta_\delta^\alpha(\bar z+h^{(1)}_\alpha)\bar \tS(w^{(1)}+w_1^{(1)})-\eta_\delta^\alpha(\bar z+h^{(2)}_\alpha)\bar \tS(w^{(2)}+w_1^{(2)})\big\}\\
&\qquad\qquad+(\tilde\cL^{(2)}-\tilde \cL^{(1)})\bar\phi^{(2)}+ (\bar c^{(1)}_\alpha -\bar c^{(2)}_\alpha)w_{\bar z}\\
&\qquad\qquad +\bar\NNN (\bar\phi^{(1)})-\bar\NNN(\bar\phi^{(2)}), \inn \blue{\Gamma_\alpha\times \R},
\\
&\int_\R  \tilde\phi(y,\bar z) w_{\bar z}(\bar z)\, d\bar z\, =0, \inn \Gamma_\alpha.
\end{aligned}
\label{non 19}
\end{align}
We will begin with estimating the following term:
\begin{align*}
\tilde E=\bar \tS(\ww^{(1)}+\ww_1^{(1)})-\bar \tS(\ww^{(2)}+\ww_1^{(2)}).
\end{align*}
This term is particularly important because the Lipschitz character of $\bar\psi$ follows from the Lipschitz property of $\tilde E$.  Let us further decompose:
\begin{align*}
\tilde E&=\chi\big(\frac{8\alpha\bar z}{\theta_0r_\alpha}\big)\{\bar \tS(\ww^{(1)}+\ww_1^{(1)})-\bar\tS(\ww^{(2)}+\ww_1^{(2)})\}\\
&\quad+\big[1-
\chi\big(\frac{8\alpha\bar z}{\theta_0r_\alpha}\big)\big]\{\bar \tS(\ww^{(1)}+\ww_1^{(1)})-\bar\tS(\ww^{(2)}+\ww_1^{(2)})\}\\
&=\tilde E_1+\tilde E_2.
\end{align*}
Notice that in the support of $\chi\big(\frac{8\alpha\bar z}{\theta_0r_\alpha}\big)$ we can assume (since $h^{(k)}_\alpha$ is small) that $\ww=w(\bar z)$, $\ww_1=w_1$.  Then we get:
\begin{align*}
\tilde E_1&=\chi\big(\frac{\alpha\bar z}{8\theta_0r_\alpha}\big)\{\Delta_{\Gamma^{(1)}_{\alpha,\bar z}}-\Delta_{\Gamma^{(2)}_{\alpha,\bar z}}\}(w+w_1)\\
&\quad  -\chi\big(\frac{\alpha\bar z}{8\theta_0r_\alpha}\big)\{H_{\Gamma^{(1)}_{\alpha, \bar z}}-
H_{\Gamma^{(2)}_{\alpha, \bar z}}\}\partial_{\bar z}w
\\
&=\tilde E_{11}+\tilde E_{12}.
\end{align*}
Using formula (\ref{non 11}) we get:
\begin{align}
\begin{aligned}
\|\tilde E_{11}\|_{p,\sigma,\nu}&\leq C\|\nabla^2_{\Gamma_{\alpha}}(h_\alpha^{(1)}-h_\alpha^{(2)})\|_{p,\nu}\\
&\quad+C (\|\nabla_{\Gamma_\alpha} h_\alpha^{(1)}\|_{\infty, \nu-1}+\|\nabla_{\Gamma_\alpha} h_\alpha^{(2)}\|_{\infty, \nu-1})\|\nabla_{\Gamma_\alpha}(h_\alpha^{(1)}-h_\alpha^{(2)})\|_{\infty, \nu-1}\\
&\quad +C (\|\nabla^2_{\Gamma_\alpha} h_\alpha^{(1)}\|_{p, \nu}+\|\nabla^2_{\Gamma_\alpha} h_\alpha^{(2)}\|_{p, \nu})\|h_\alpha^{(1)}-h_\alpha^{(2)}\|_{\infty, \nu-2}\\
&\leq C\|\nabla^2_{\Gamma_{\alpha}}(h_\alpha^{(1)}-h_\alpha^{(2)})\|_{p,\nu}\\
&\quad+C\alpha^{1+\mu}\|\nabla_{\Gamma_\alpha}(h_\alpha^{(1)}-h_\alpha^{(2)})\|_{\infty, \nu-1}+C\alpha^{2-8/p+\mu}\|h_\alpha^{(1)}-h_\alpha^{(2)}\|_{\infty, \nu-2}\\
&\leq C\alpha^{-8/p}\|h_\alpha^{(1)}-h_\alpha^{(2)}\|_{*,p,\nu},
\end{aligned}
\label{non 20}
\end{align}
(see (\ref{l16}) for the definition of $\|\cdot\|_{*,p,\nu}$).
Using similar argument as in (\ref{non 15}) we get as well:
\begin{align}
\|\tilde E_{12}\|_{p,\sigma, \nu}\leq C\alpha^2\|h_\alpha^{(1)}-h_\alpha^{(2)}\|_{\infty,\nu-2}.
\end{align}
To estimate $\tilde E_2$ we follow the same approach, again using (\ref{non 18a}) and the exponential smallness of $w(\bar z)\pm 1$ in the support of $1-
\chi\big(\frac{8\alpha\bar z}{\theta_0r_\alpha}\big)$.
As a consequence we get that
\begin{align}
\|\bar \tS(\ww^{(1)}+\ww_1^{(1)})-\bar \tS(\ww^{(2)}+\ww_1^{(2)})\|_{p,\sigma,\nu}\leq C\alpha^{-8/p}\|h_\alpha^{(1)}-h_\alpha^{(2)}\|_{*,p,\nu}.
\label{non 21}
\end{align}
From this and (\ref{oper}), denoting $\tilde \psi=\psi^{(1)}-\psi^{(2)}$, we get
\begin{align}
\begin{aligned}
\|\nabla\tilde \psi \|_{\infty,\nu} + \|\tilde\psi \|_{\infty,\nu} & \leq \,
C e^{\,-{\sigma\delta}/ {\alpha} }\, \big\|S^{(1)}[\ww]-S^{(2)}[\ww]\|_{p,\sigma, \nu}
\\&\quad +Ce^{\,-{2\sigma\delta}/{\alpha} }\big[\|\tilde\phi\|_{\infty,\sigma, \nu}+ \|\nabla\tilde \phi\|_{\infty,\sigma, \nu}\big]\\
&\leq C e^{\,-{\sigma\delta}/ {\alpha} }\alpha^{-8/p}\|h_\alpha^{(1)}-h_\alpha^{(2)}\|_{*,p,\nu}\\
&\quad +Ce^{\,-{2\sigma\delta}/{\alpha} }\big[\|\tilde\phi\|_{\infty,\sigma, \nu}+ \|\nabla\tilde \phi\|_{\infty,\sigma, \nu}\big].
\end{aligned}
\label{non 22}
\end{align}
Another important term to estimate in (\ref{non 19}) is
\begin{align}
\tilde E_3=(\tilde\cL^{(2)}-\tilde \cL^{(1)})\bar\phi^{(2)}.
\label{non 23a}
\end{align}
It is a matter of   rather tedious but  standard calculations to show that:
\begin{align}
\begin{aligned}
\|\tilde E_3\|_{p,\sigma,\nu}&\leq C\alpha^{-8/p}
\|h_\alpha^{(1)}-h_\alpha^{(2)}\|_{*,p,\nu}\|\bar\phi^{(2)}\|_{*,p,\sigma,\nu}\\
&\leq \alpha^{2-8/p}
\|h_\alpha^{(1)}-h_\alpha^{(2)}\|_{*,p,\nu}.
\end{aligned}
\label{non 23}
\end{align}
Here we use the fact that the coefficients of the derivatives  in the expressions in local coordinates  for $\tilde \cL^{(k})$ are smooth functions of $h_\alpha^{(k)}$ and that all terms involved have a total of at most 3 derivatives summing up both derivatives of  $h_\alpha^{(k)}$ and $\bar\phi^{(2)}$.

Using the a priori estimate for $\tilde \cL^{(1)}$ to estimate $\tilde\phi$ in (\ref{non 19}) we obtain the required estimate from (\ref{non 21})--(\ref{non 23}) and the Lipschitz character of the nonlinear term $\bar\NNN(\bar\phi^{(1)})-\bar\NNN(\bar\phi^{(2)})$. This ends the proof.

\qed

\medskip
This results of Proposition \ref{prop2} allow us to reduce the full nonlinear problem  to one dependent on $h_\alpha$. Indeed,  using the definition of $\phi, \psi$ and the fact that $u_\alpha=\ww+\eta_{2\delta}^\alpha\phi+\psi$ (see (\ref{glue 1})--(\ref{ualpha2})) we see that instead of the nonlinear problem
(\ref{ac}) we have found, for given $h_\alpha$ functions $u_\alpha$, $c_\alpha$ such that
\begin{align}
\Delta u_\alpha+f(u_\alpha)=\eta_{2\delta}^\alpha \bar c_\alpha w_{\bar z}(\bar z), \quad \bar z=z-h_\alpha, \inn \R^9.
\label{non 24a}
\end{align}
If we can adjust $h_\alpha$ in such a way that
\begin{align}
\bar c_\alpha\equiv 0,
\label{non 24}
\end{align}
then $u_\alpha$ in (\ref{non 24a}) is a solution we are looking for.
The theory we have already derived allows to derive a  relatively simple form of the {\it reduced problem} (\ref{non 24}). In the next section we will see that  it amounts to a
nonlocal PDE  for $h_\alpha$ which involves the Jacobi operator on $\Gamma_\alpha$ applied to $h_\alpha$ as its leading term.

\setcounter{equation}{0}
\section{Derivation of the reduced problem}\label{section red}

%\subsection{}
To derive the reduced problem we will go back to (\ref{proj}). Multiplying the equation by $w_{\bar z}(\bar z)$ and integrating over $\R$ with respect to $\bar z$  we get the following identity:
\begin{align*}
\int_{\R} \tilde \cL(\bar \phi) w_{\bar z}\, d\bar z=\bar c_\alpha\int_\R w^2_{\bar z}+\int_\R\eta_{\delta}^\alpha \bar \tS(w+w_1)w_{\bar z}\,d\bar z+\int_\R\bar \NNN(\bar \phi)w_{\bar z}\,d\bar z,
\end{align*}
hence (\ref{non 24}) is equivalent to:
\begin{align}
\int_\R\eta_{\delta}^\alpha \bar \tS(w+w_1)w_{\bar z}\,d\bar z=-\int_{\R} \tilde \cL(\bar \phi) w_{\bar z}\, d\bar z+\int_\R\bar \NNN(\bar \phi)w_{\bar z}\,d\bar z.
\label{red 1}
\end{align}
We will now calculate more explicitly all terms involved in (\ref{red 1}).

We will begin with
\begin{align*}
\int_\R\eta_{\delta}^\alpha \bar \tS(w+w_1)w_{\bar z}\,d\bar z&=\int_\R \eta_\delta^\alpha\Delta_{\Gamma_{\alpha, z}}w w_{\bar z}\,d\bar z -\int_R \eta_\delta^\alpha (H_{\Gamma_{\alpha, z}}-\bar z|A_{\Gamma_\alpha}|^2)w_{\bar z}^2\\
&\quad+\int_\R \eta_\delta^\alpha\Delta_{\Gamma_{\alpha, z}}w_1 w_{\bar z}\,d\bar z\\
&=M_1+M_2+M_3.
\end{align*}
Using the  local representation for $\Delta_{\Gamma_{\bar z}}w$ given  in (\ref{non 11}) we get
\begin{align*}
M_1&=-\int_\R\frac{1}{\sqrt{\det(\bar \tg_{\alpha, \bar z})}}\partial_{i}
\Big(\bar \tg_{\alpha, \bar z}^{ij}{\sqrt{\det(\bar \tg_{\alpha, \bar z})}}\partial_{j}h_\alpha\Big)w_{\bar z}^2\,d{\bar z}\\
&\quad+\int_\R\frac{1}{\sqrt{\det(\bar \tg_{\alpha, \bar z})}}\partial_{\bar z}\Big(\bar \tg_{\alpha,\bar z}^{ij}{\sqrt{\det(\bar \tg_{\alpha,\bar z})}}\Big)\partial_{i}h_\alpha\partial_{j}h_\alpha w^2_{\bar z}\,d\bar z\\
&\quad +\int_\R\bar\tg_{\alpha,\bar z}^{ij}\partial_{j}h_\alpha\partial_{i}h_\alpha w_{\bar z}^3\,d\bar z\\
&= M_{11}+M_{12}+M_{13}.
\end{align*}
We will start with:
\begin{align}
M_{11}=-c_0\Delta_{\Gamma_\alpha} h_\alpha+\tB_{\alpha 1}(h_\alpha),\quad  c_0=\int_\R w_z^2\,dz.
\label{red 2}
\end{align}
Let us fix $y_0\in \Gamma_\alpha$. The local norm of the the second order differential operator
$\tB_{\alpha 1}$ can be estimated as follows:
\begin{align}
\begin{aligned}
&\|r^{\nu+1}_\alpha\tB_{\alpha 1}(h_\alpha)\|^p_{L^p(\Gamma_\alpha\cap B(y,\theta_0\alpha^{-1}))}
\\
&\quad\leq
C\alpha \int_{B(0,2\theta_0\alpha^{-1})}r^{p(\nu+1)}_\alpha(y(t))|\nabla_{\Gamma_\alpha} h_\alpha|^{2p}\frac{dt}{[1+r_\alpha(y_0)]^p}\\
&\qquad+C\alpha \int_{B(0,2\theta_0\alpha^{-1})}r^{p(\nu+1)}_\alpha(y(t))|\nabla^2_{\Gamma_\alpha} h_\alpha|^{p}\frac{|h_\alpha|^p\, dt}{[1+r_\alpha(y_0)]^p}\\
&\qquad+C\alpha \int_{B(0,2\theta_0\alpha^{-1})}r^{p(\nu+1)}_\alpha(y(t))|\nabla_{\Gamma_\alpha} h_\alpha|^{p}\frac{dt}{[1+r_\alpha(y_0)]^{2p}}.
\end{aligned}
\label{red 3}
\end{align}
Notice that in the ball $B(0,2\theta_0\alpha^{-1})$ we have:
\begin{align}
\frac{1+r_\alpha(y(t))}{1+r_\alpha(y_0)}\leq C,
\label{red 3a}
\end{align}
by Lemma \ref{lem gr1}.
Hence, from the definition of the $\|\cdot\|_{*,p,\nu}$-norm and the assumption we have made on
$h_\alpha$, see (\ref{l16}), we get that:
\begin{align}
\|\tB_{\alpha 1}(h_\alpha)\|_{p, \nu+1}\leq  C\alpha^{1-8/p}\|h_\alpha\|_{*,p,\nu}.
\label{red 4}
\end{align}
Similarly we have, setting $\tB_{\alpha 2}(h_\alpha)=M_{12}+M_{13}$:
\begin{align}
\|\tB_{\alpha 2}(h_\alpha)\|_{p, \nu}\leq C\alpha^{1-8/p}\|h_\alpha\|_{*,p,\nu}.
\label{red 5}
\end{align}
%Notice that in the ball $B(0,2\theta_0\alpha^{-1})$ we have:
%\begin{align*}
%\frac{1+r_\alpha(y(t))}{1+r_\alpha(y_0)}\leq C,
%\end{align*}
%by (\ref{grad 3}). Then it follows from (\ref{red 3}):
To estimate $M_2$ we first use the expansion (\ref{l23}) to find:
\begin{align*}
M_2&=-h_\alpha|A_{\Gamma_\alpha}|^2\int_\R w_{\bar z}^2\,d\bar z-\int_\R(\bar z+h_\alpha)^2{\mathcal R}_{\alpha}w_{\bar z}^2\,d\bar z\\
&=-c_0h_\alpha|A_{\Gamma_\alpha}|^2-\int_\R\bar z^2{\mathcal R}_{\alpha}w_{\bar z}^2\,d\bar z+ \int_\R(2\bar z h_\alpha+h_\alpha^2){\mathcal R}_{\alpha}w_{\bar z}^2\,d\bar z
%&=
%\tB_{\alpha 3}(h_\alpha).
\end{align*}
Observe that here \blue{${\mathcal R}_{\alpha}={\mathcal R}_{\alpha}(y, \bar z+h_\alpha)$. We can further Taylor expand this function in terms of $(\bar z+h_\alpha)$ to get
\begin{align}
{\mathcal R}_{\alpha}(y,\bar z + h_\alpha)={\mathcal R}_{1,\alpha}(y)+(\bar z+h_\alpha){\mathcal R}_{2,\alpha}(y, \bar z+h_\alpha),
\label{red 5a}
\end{align}}
where
\begin{align}
{\mathcal R}_{1,\alpha}\sim\frac{ \alpha^3}{(1+r_\alpha^3)}, \quad
{\mathcal R}_{2,\alpha}\sim \frac{\alpha^4}{1+r_\alpha^4},
\label{red 5b}
\end{align}
by \blue{formula (\ref{hgammaz})} and Lemma \ref{lem gr1}. Then we can write, denoting $c_1=\int_\R \bar z^2 w_{\bar z}^2$,
\begin{align}
\begin{aligned}
M_2&=-c_0h_\alpha|A_{\Gamma_\alpha}|^2-c_1{\mathcal R}_{1,\alpha}\\
&\quad -\int_\R[\bar z^2(\bar z-h_\alpha){\mathcal R}_{2,\alpha}+(2\bar z h_\alpha+h_\alpha^2){\mathcal R}_{\alpha}]
w_{\bar z}^2\,d\bar z\\
&=-c_0h_\alpha|A_{\Gamma_\alpha}|^2-c_1{\mathcal R}_{1,\alpha}+\tB_{\alpha 3}(h_\alpha)
\end{aligned}
\label{red 5c}
\end{align}
We observe that since $\nu\in [2,3]$ therefore, from (\ref{red 5b}), we have:
\begin{align}
\|{\mathcal R}_{1,\alpha}\|_{p,\nu}\leq C\alpha^{3-8/p}.
\label{red 5d}
\end{align}
We notice that this is the only term that is of  order in $r^{\nu}_\alpha$, since  the rest of the terms computed so far (and those evaluated below) have weights  $r_\alpha^{\nu+1}$ in their norms.

From (\ref{red 5b})  we get:
\begin{align}
\|\tB_{\alpha 3}(h_\alpha)\|_{p, \nu+1}\leq C\alpha^{4-8/p}+C\alpha^{1-8/p}\|h_\alpha\|_{*,p,\nu},
\label{red 6}
\end{align}
%whenever
%\begin{align}
%\nu\in [0,1].
%\label{red 6a}
%\end{align}
%This is a new restriction on the weight in the norms is  as previously we were  assuming that $\nu\in[0,2]$.
Now we will estimate the terms involved in the projection of $\bar \cL(\bar\phi)$ onto $w_{\bar z}$. Using   the same notation as in in (\ref{l26}) we get, after integration by parts and also using the orthogonality condition
\begin{align}
\begin{aligned}
\int_\R\bar \cL(\bar\phi)w_{\bar z}\,d\bar z&=-\int_\R {\bf B}_{1\alpha}(\bar\phi)w_{\bar z}\,d\bar z\blue{+\int_\R \partial_{\bar z}\bar \phi\Delta_{\Gamma_{\alpha}} h_\alpha w_{\bar z}\,d\bar z}\\
&\quad
+\int_\R\eta^\delta_\alpha{\bf B}_{2\alpha}(\bar \phi)w_{\bar z}\,d\bar z\\
&\quad-\int_\R\eta^\delta_{\alpha}(\Delta_{\Gamma_{\alpha, \bar z}} h_\alpha+(\bar z+h_\alpha)|A_{\Gamma_\alpha}|^2)\partial_{\bar z}\bar \phi w_{\bar z}\,d\bar z
\\
&\quad - \int_\R\eta^\delta_{\alpha}(\bar z+h_\alpha)^2{\mathcal R}_{\alpha}\partial_{\bar z}\bar \phi w_{\bar z}\,d\bar z
\\
&=I_1+I_2+I_3+I_4.
\end{aligned}
\label{red 7}
\end{align}
Using the explicit formula for ${\bf B}_{1\alpha}(\bar\phi)$ given in (\ref{l17a}) we get after integrating by parts once with respect to $\bar z$:
\begin{align*}
\Big|\int_\R{\bf B}_{1\alpha}(\bar\phi)w_{\bar z}\,d\bar z\Big|\leq C|\nabla_{\Gamma_\alpha} h_\alpha|\int_\R  \big|D\bar\phi\big| \big|w_{\bar z\bar z}\big|\,d\bar z,
\end{align*}
 It follows that if $\nu\in [2,3]$ then:
\begin{align}
\begin{aligned}
&\|r^{\nu+1}_\alpha \int_\R{\bf B}_{1\alpha}(\bar\phi)w_{\bar z}\,d\bar z \|_{L^p(\Gamma_\alpha\cap B(y_0,\theta_0\alpha^{-1}))}\\
&\quad\leq C\|r_\alpha^{\nu-1}\nabla_{\Gamma_\alpha} h_\alpha\|_{\infty}\Big\{\int_{B(0,\theta_0\alpha^{-1})}r_\alpha^{p\nu}(y(t))\sup_{\bar z}\big[e^{\,p\sigma|\bar z|}|D\bar \phi| ^p\big]\,dt\Big\}^{1/p}
\\
&\quad\leq C\alpha^{-1-8/p}\|h_\alpha\|_{*,p,\nu}\|\bar \phi\|_{*,p, \sigma,\nu},
\end{aligned}
\label{red 8}
\end{align}
\blue{Similarly, we get:
\begin{align}
\|r^{\nu+1}_\alpha \int_\R \partial_{\bar z}\bar \phi\Delta_{\Gamma_{\alpha}} h_\alpha w_{\bar z}\,d\bar z\|_{L^p(\Gamma_\alpha\cap B(y_0,\theta_0\alpha^{-1}))}\leq C\|h_\alpha\|_{*,p,\nu}\|\bar \phi\|_{*,p, \sigma,\nu},
\label{red 8 a}
\end{align}
hence,
\begin{align}
\|r^{\nu+1}_\alpha I_1\|_{L^p(\Gamma_\alpha\cap B(y_0,\theta_0\alpha^{-1}))}\leq C\|h_\alpha\|_{*,p,\nu}\|\bar \phi\|_{*,p, \sigma,\nu}\leq C \alpha^{-1-8/p}\|h_\alpha\|_{*,p,\nu}\|\bar \phi\|_{*,p, \sigma,\nu}.
\label{red 8 b}
\end{align}}
Using (\ref{l22}) we get as well
\begin{align}
\|r^{\nu+1}_\alpha I_2\|_{L^p(\Gamma_\alpha\cap B(y_0,\theta_0\alpha^{-1}))}\leq C\alpha^{1-8/p}(1+\alpha^{-2}\|h_\alpha\|_{*,p,\nu})\|\bar \phi\|_{*,p, \sigma,\nu}.
\label{red 9}
\end{align}
We can also estimate jointly:
\begin{align}
\begin{aligned}
&\|r^{\nu+1}_\alpha I_3\|_{L^p(\Gamma_\alpha\cap B(y_0,\theta_0\alpha^{-1}))}+\|r^{\nu+1}_\alpha I_4\|_{L^p(\Gamma_\alpha\cap B(y_0,\theta_0\alpha^{-1}))}\\
&\quad\leq C\alpha^{2-8/p}\|\bar\phi\|_{*,p,\sigma,\nu}+C\alpha^{-8/p}\|\bar\phi\|_{*,p,\sigma,\nu}\|h_\alpha\|_{*,p,\nu}.
\end{aligned}
\label{red 11}
\end{align}
Finally, denoting
\begin{align*}
\int_\R\bar \NNN(\bar \phi)w_{\bar z}\,d\bar z=I_5,
\end{align*}
we get that
\begin{align}
\|r^{\nu+1}_\alpha I_5\|_{L^p(\Gamma_\alpha\cap B(y_0,\theta_0\alpha^{-1}))}
\leq C\|\bar\phi\|^2_{*,p,\sigma,\nu}.
\label{red 12}
\end{align}
Summarizing (\ref{red 1})--(\ref{red 12}) we get that $h_\alpha$ must be a solution of the following problem:
\begin{align}
\Delta_{\Gamma_\alpha}h_\alpha+|A_{\Gamma_\alpha}|^2 h_\alpha=c_1{\mathcal R}_{1,\alpha}+
{\cF}_\alpha(h_\alpha, \nabla_{\Gamma_\alpha}h_\alpha, \nabla^2_{\Gamma_\alpha} h_\alpha),
\label{red 13}
\end{align}
where the first term on the right hand side of (\ref{red 13}) satisfies (\ref{red 5d}) and \blue{formula (\ref{hgammaz}) and Lemma \ref{lem gr1} it is explicitly given by:
\begin{align}
{\mathcal R}_{1,\alpha}(y)=\sum_i^8\kappa^3_i(y),
\label{def r1alpha}
\end{align}
and} ${\cF}_\alpha$ is a nonlinear and nonlocal function of $h_\alpha$ and its first and second  derivatives that satisfies:
\begin{align}
\|\cF_\alpha\|_{p,\nu+1}\leq C\alpha^{1-8/p}\|h_\alpha\|_{*,p,\nu}+C\alpha^{3-8/p}.
\label{red 14}
\end{align}
The rest of this paper is devoted to solving the reduced problem (\ref{red 13}).
A natural way to do this is to argue by approximations on expanding balls $B_R$, as we have done  before in this paper. However an extra difficulty  in the case of the reduced problem is to derive a priori estimates (independent on $R$) for the Jacobi opertator in (\ref{red 13}). To deal with this problem we will consider an approximate Jacobi operator, which is the mean curvature linearized around $\Gamma_{0,\alpha}=\{x_9=\frac{1}{\alpha}F_0(\alpha x')\}$, rather than $\Gamma_\alpha$.

At this point we will also use the symmetry of the minimal graph. Let us recall that in reality $\Gamma_\alpha$ is a graph of a function $F_\alpha$ that satisfies
\begin{align*}
F_\alpha(u,v)=-F_\alpha(v,u),
\quad u^2=x_1^2+\cdots+x_4^2, v^2=x_5^2+\cdots+x_8^2.
\end{align*}
It is therefore natural to make the following assumption on $h_\alpha$:
\begin{align}
h_\alpha(u,v)=-h_\alpha(v,u).
\label{red 15}
\end{align}
\blue{Observe that  in particular the principal curvatures of $\Gamma$ satisfy
\begin{align*}
\kappa_i(u,v)=-\kappa_i(v,u),
\end{align*}
hence:
\begin{align}
{\mathcal R}_{1,\alpha}(u,v)=-{\mathcal R}_{1,\alpha}(v,u).
\label{red 17 0}
\end{align}}
Notice that the Fermi coordinate $z$ depends on $x$ only through $(u,v,x_9)$ and:
\begin{align*}
z(u,v,x_9)=-z(v,u,-x_9).
\end{align*}
From this it follows:
\begin{align*}
\ww(u,v,x_9)=-\ww(v,u,-x_9), \quad \ww_1(u,v,x_9)=-\ww_1(v,u,-x_9),
\end{align*}
and
\begin{align}
c_\alpha(u,v)=-c_\alpha(u,v), \quad \phi(u,v,x_9)=-\phi(v,u,-x_9).
\label{red 16}
\end{align}
In all, the right hand side of (\ref{red 13}) has the same type of symmetry as $h_\alpha$:
\begin{align}
{\cF}_\alpha(u,v)=-{\cF}_\alpha(v,u).
\label{red 17}
\end{align}
\blue{To put it differently: the procedure  that leads to determining $u_\alpha$ for a given $h_\alpha$  can be done in the sector $T\{u>0, v>0, v>u\}$ first and then the resulting solution can be extended to the whole space by using the natural symmetries of the minimal graph.}
%\green{I got to this point}

\setcounter{equation}{0}
\section{Solvability theory for the Jacobi operator}\label{sec superl}

\subsection{The approximate Jacobi operator}

In this and the following section we will consider the Jacobi operator associated to a fixed  minimal surface setting the scaling parameter $\alpha=1$. We will denote:
\begin{align}
\Gamma=\{x_9=F(x')\}.
\label{superl 0}
\end{align}
By $\blue{A_\Gamma}$ we will denote the second fundamental form on $\Gamma$.
The Jacobi operator $\cJ$ is given by:
\begin{align}
\cJ(h)=\Delta_{\Gamma} h+|A_\Gamma|^2 h.
\label{superl 1}
\end{align}
\blue{A convenient form of the Jacobi operator is obtained using the natural parametrization of $\Gamma$ given by $\Gamma=\{(x',F(x'))\mid x'\in \R^8\}$. In these coordinates we get 
\begin{align}
\cJ(h)=H'[F](h\sqrt{1+|\nabla F|^2}),
\label{superl 1 a}
\end{align}}
where $H[F]$ is the mean curvature operator and $H'[F]$ is its linearization around $F$, namely:
\begin{align}
H'[F](\varphi)=\nabla\cdot\Big(\frac{\nabla\varphi}
{\sqrt{1+|\nabla F|^2}}-\frac{\nabla F(\nabla F\cdot\nabla\varphi)}
{(1+|\nabla F|^2)^{3/2}}\Big).
\label{superl 2}
\end{align}
We will  define the norms:
\begin{align}
\begin{aligned}
\|f\|_{p,\nu} &:=\ \sup_{y\in \Gamma}  \,\|r^\nu f\|_{L^p(\Gamma\cap B(y,\theta_0))},\quad 9<p<\infty,\quad\nu\geq 2,\\
\|f\|_{\infty,\nu} &:=
\sup_{y\in \Gamma}r(y)^\nu |f(y)|, \quad \nu \geq 2,
\end{aligned}
\label{superl 2a}
\end{align}
which are analogous to the weighted norms defined above,  and will be useful to treat question of invertibility  of the Jacobi operator.

We will go back now to the expression of $H$ in terms of the variables $(t,s)$ introduced in section
\ref{st}. We recall that:
\begin{align*}
H[F]=|\nabla F_0|\partial_t  \Big( \frac{ |\nabla F_0|\partial_t F} {\sqrt{1+|\nabla F|^2}}\Big)\,
+
|\nabla F_0|\partial_s\Big( \frac{ \rho^{-2} \partial_s  F} { |\nabla F_0| \sqrt{1+|\nabla F|^2}}\Big),
\end{align*}
where
\begin{align*}
\nabla F=F_t \nn F_0 + \rho^{-1} F_s
\frac{\nabla F_0^\perp }{|\nn F_0|}, \quad \rho=\frac{1}{(uv)^3},
\end{align*}
\blue{(see Lemma \ref{lemma ts} for the definition of the coordinates $u=u(t,s), v=v(t,s)$).}
The linearized mean curvature operator expressed in these variables takes  form:
\begin{align}
\begin{aligned}
H'[F](\varphi)&=|\nabla F_0|\partial_t  \Big(\frac{\partial_t\varphi(1+|\nabla F|^2)-\partial_t F(\nabla F\cdot\nabla\varphi)}{(1+|\nabla F|^2)^{3/2}}\Big)\\
&\quad+|\nabla F_0|\partial_s \Big(\frac{\rho^{-2}}{|\nabla F_0|}\frac{\partial_s\varphi(1+|\nabla F|^2)-\partial_s F(\nabla F\cdot\nabla\varphi)}{(1+|\nabla F|^2)^{3/2}}\Big).
\end{aligned}
\label{superl 3}
\end{align}
Let us now consider the linearized  of  mean curvature operator obtained by linearizing around the surface $\Gamma_0=\{(x',F_0(x'))\mid x'\in \R^8\}$, namely:
\begin{align}
H'[F_0](\varphi)=|\nabla F_0|\partial_t  \Big(\frac{|\nabla F_0|\partial_t\varphi}{(1+|\nabla F_0|^2)^{3/2}}\Big)+|\nabla F_0|\partial_s  \Big(\frac{\rho^{-2}}{|\nabla F_0|}\frac{\partial_s\varphi}{\sqrt{1+|\nabla F_0|^2}}\Big),
\label{superl 4}
\end{align}
where we have used the fact that
\begin{align*}
\partial_t F_0=1, \quad \partial_s F_0=0, \quad \nabla\varphi\cdot\nabla F_0=\partial_t\varphi|\nabla F_0|^2.
\end{align*}
We notice that for  $\Gamma_0$ 
%we denote the graph:
%\begin{align*}
%\tilde\Gamma_0=\{x_9=F_0(x')\},
%\end{align*}
%then 
the relation analogous to (\ref{superl 1 a}) holds, namely:
\begin{align}
\Delta_{\Gamma_0}h+|A_{\Gamma_0}|^2h=H'[F_0](h\sqrt{1+|\nabla F_0|^2}),
\label{superl 5}
\end{align}
where $\blue{A_{\Gamma_0}}$ is the second fundamental form on $\Gamma_0$. We will refer to the operator
defined above as the approximate Jacobi operator and denote it by $\blue{\tJ}$. The  reader should keep in mind that $\tJ$ as  the Jacobi operator associated to $\Gamma_0$ "approximates" $\cJ$.

\subsection{Supersolutions  for the operator $\tJ$}
In this section, we obtain supersolutions for the operator $\tJ(h)$
which is equivalent to finding supersolutions for
$H'[F_0](\varphi)$,
where $\varphi = h\sqrt{1+|\nn F_0|^2}$. Let us recall  the  definition of the set $T$:
\begin{align*}
T=\{(u,v)\mid u>0, v>0, u<v\},
\end{align*}
(see (\ref{def sector}) ) and the fact that in Lemma \ref{lemma ts} we have associated $T$ with the set $Q=\{t>0,s>0\}$.
\begin{lemma}
\label{lemma2}
For $ \sigma \in (-1, 0)$ and $ \sigma_1 \in [0, 1]$, there exist $r_0$ and $C>0$ such that  in the set
$T\cap\{R >r_0\}$ we have:
\begin{equation}
\label{mc19}
H'[F_0](r^\sigma t^{\sigma_1}) + \frac{C(g(\theta))^{\sigma_1}  }{r^{4-\sigma-3 \sigma_1
 }} \leq 0.
\end{equation}
Likewise  for $ \sigma \in (-1, 0)$ and $ \sigma_1 \in (0, 1)$, there holds in  $T\cap \{r >R_0\}$
\begin{equation}
\label{mc20}
H'[F_0](r^\sigma t^{\sigma_1}) + \frac{C}{r^{4-\sigma- \sigma_1 }} \leq 0.
\end{equation}

\end{lemma}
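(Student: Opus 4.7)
The strategy is to compute $H'[F_0](r^\sigma t^{\sigma_1})$ in the $(t,s)$ coordinates and extract its leading-order behaviour as $r\to\infty$. From the expansion leading to \eqref{a1 term} one has
$H'[F_0](\varphi) = \tilde L_0[\varphi]\cdot(1 + O(|\nabla F_0|^{-2}))$,
with $\tilde L_0$ defined in \eqref{L0} and $|\nabla F_0|^{-2} = O(r^{-4})$; the correction between $H'[F_0]$ and $\tilde L_0$ is therefore of relative size $O(r^{-4})$ and can be absorbed once a strictly negative bound of the claimed order is established for $\tilde L_0[r^\sigma t^{\sigma_1}]$.

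Using \eqref{fact2a} and \eqref{fact3a} one computes
\begin{align*}
\partial_t(r^\sigma t^{\sigma_1}) &= r^\sigma t^{\sigma_1-1}\Bigl(\sigma_1 + \tfrac{\sigma\cos^2\phi}{3}\Bigr),\\
\partial_s(r^\sigma t^{\sigma_1}) &= \tfrac{\sigma\, r^\sigma t^{\sigma_1}\sin^2\phi}{7s},
\end{align*}
and then, using $|\nabla F_0|^{-2} = r^2\cos^2\phi/(9t^2)$ together with $\rho^{-2}/|\nabla F_0|^2 = 49 s^2\cos^2\phi/(9 t^2 \sin^2\phi)$, a lengthy but routine differentiation yields the factorisation
\begin{align*}
\tilde L_0\bigl[r^\sigma t^{\sigma_1}\bigr] = \frac{r^{\sigma-1}\cos\phi\, t^{\sigma_1-1}}{3}\left[B_2(\theta) + \frac{B_1(\theta)}{9\, r^4 g^2(\theta)}\right],
\end{align*}
where $B_2(\theta) = \sigma\bigl[7 + \sin^2\phi(\sigma + 2\phi')\bigr]$ and $B_1(\theta)$ is an explicit polynomial in $\cos^2\phi$, $\sin^2\phi$, $\phi'$, $\sigma$ and $\sigma_1$.

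The heart of the proof is the sign analysis of this bracket. Since $\phi'\geq -3$ by \eqref{fact1} and $\sigma>-1$, the quantity $7 + \sin^2\phi(\sigma + 2\phi')$ is continuous on $[\tfrac{\pi}{4},\tfrac{\pi}{2}]$ and attains its minimum $1+\sigma > 0$ at $\theta=\tfrac{\pi}{4}$; hence $B_2\leq -c_0<0$ uniformly. Direct evaluation at $\theta=\tfrac{\pi}{4}$ (where $\cos\phi = 0$, $\sin^2\phi = 1$, $\phi' = -3$) yields the decisive algebraic identity
\begin{align*}
B_1(\tfrac{\pi}{4}) = 9\,\sigma_1(\sigma_1 - 1),
\end{align*}
which is $\leq 0$ on $[0,1]$. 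At the endpoints $\sigma_1 \in \{0, 1\}$ one checks that $B_1 = O(\cos^2\phi) = O(g^2)$ near $\theta=\tfrac{\pi}{4}$, using the Taylor expansion $\phi' = -3 + 18(\theta - \tfrac{\pi}{4})^2 + O((\theta - \tfrac{\pi}{4})^3)$ derived from \eqref{12}; so the cross term $B_1/(9 r^4 g^2)$ is uniformly bounded there. For interior $\sigma_1 \in (0,1)$, $B_1(\tfrac{\pi}{4}) < 0$ strictly, and the cross term reinforces the negativity of $B_2$ near the cone $\theta=\tfrac{\pi}{4}$. In every case the bracket is bounded above by a strictly negative quantity for $r\geq r_0$ large enough.

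To conclude, one uses that $\cos\phi/g = 3/\sqrt{9g^2 + g_\theta^2}$ is uniformly bounded above and below on $[\tfrac{\pi}{4},\tfrac{\pi}{2}]$ (by Lemma~\ref{lemma min 1} and $g_\theta(\tfrac{\pi}{4}) > 0$, $g(\tfrac{\pi}{2}) > 0$), together with $t^{\sigma_1 - 1} = r^{3(\sigma_1 - 1)} g^{\sigma_1 - 1}$. The $B_2$-contribution of the bracket then immediately yields $\tilde L_0[r^\sigma t^{\sigma_1}] \leq -C\, g^{\sigma_1}/r^{4-\sigma-3\sigma_1}$, which is \eqref{mc19}. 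For \eqref{mc20} I use a dichotomy: when $g(\theta)\geq r^{-2}$, \eqref{mc19} already implies \eqref{mc20} since $(r^2 g)^{\sigma_1} \geq 1$; when $g(\theta) < r^{-2}$ (only possible near $\theta=\tfrac{\pi}{4}$), the $B_1$-term dominates and, using $B_1(\tfrac{\pi}{4}) < 0$ for $\sigma_1 \in (0,1)$, a short computation gives $|\tilde L_0| \gtrsim r^{\sigma + 3\sigma_1 - 8} g^{\sigma_1 - 2} \geq r^{\sigma + \sigma_1 - 4}$, where the last inequality uses $\sigma_1 < 1$ together with $g < r^{-2}$. The main technical obstacle is the coordinate algebra behind the explicit form of $B_1$ and the matching of the two regimes at the transition $g \sim r^{-2}$; both rest decisively on the identity $B_1(\tfrac{\pi}{4}) = 9\sigma_1(\sigma_1 - 1)$.
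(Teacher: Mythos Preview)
Your approach is essentially the paper's: split $H'[F_0]=\tilde L_0+(\text{lower order})$, compute the $t$- and $s$-contributions to $\tilde L_0[r^\sigma t^{\sigma_1}]$ (your $B_1$- and $B_2$-terms), and use $\sigma_1(\sigma_1-1)\le 0$ together with $\sigma<0$ and $\phi'\ge -3$ to obtain \eqref{mc19}; for \eqref{mc20} the paper bounds the sum $(\cos\phi)^{\sigma_1-2}r^{-4}+(\cos\phi)^{\sigma_1}$ from below by $c\,r^{-2\sigma_1}$, and your dichotomy on $g\gtrless r^{-2}$ is precisely that minimization read at the crossover point. A couple of your explicit constants (the $49$ in $\rho^{-2}/|\nabla F_0|^2$ and the value $B_1(\tfrac{\pi}{4})=9\sigma_1(\sigma_1-1)$) are off by harmless factors involving $g_\theta(\tfrac{\pi}{4})$, but the structure and the sign analysis are identical to the paper's.
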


\noindent
\begin{remark}\label{rem superl 1} Note that $ 4-\sigma-3 \sigma_1 \in (1,5)$ with the choice of $ \sigma, \sigma_1$, while $ 4-\sigma-\sigma_1 \in (3, 5)$.
\end{remark}
\proof{}

Let us write
\begin{align*}
H'[F_0](\varphi)=\tilde L_0(\varphi)+\tilde L_1(\varphi),
\end{align*}
where
\begin{align}
\begin{aligned}
\tilde L_0(\varphi)&=|\nabla F_0|\partial_t  \Big(\frac{\partial_t\varphi}{|\nabla F_0|^2}\Big)+|\nabla F_0|\partial_s  \Big(\frac{\rho^{-2}\partial_s\varphi}{{|\nabla F_0|^2}}\Big),
\\
\tilde L_1(\varphi)&=\tilde L_0(\varphi)-H'[F_0](\varphi).
\end{aligned}
\label{superl 6}
\end{align}
We have
\begin{align*}
\pp_t ( r^{\sigma} t^{\sigma_1} ) & =
\sigma r^{\sigma-1}t^{\sigma_1} \frac{\pp r}{\pp t}  +\sigma_1 r^{\sigma }  t^{\sigma_1-1}\\
& =
\sigma r^{\sigma -1} t^{\sigma_1}\frac{ r}{ 3 t} \cos^2\phi  + \sigma_1r^{\sigma}  t^{\sigma_1-1}
\\
&=\sigma r^{\sigma} t^{\sigma_1}\frac{ 1}{ 3 t} \cos^2\phi  + \sigma_1 r^{\sigma}  t^{\sigma_1-1}
\end{align*}
and
\begin{align*}
\partial^2_{t} ( r^{\sigma} t^{\sigma_1} ) =r^{\sigma} \sigma_1 (\sigma_1-1) t^{\sigma_1-2} + O( r^{\sigma-6} t^{\sigma_1})
\end{align*}
Hence
\begin{align}
\begin{aligned}
|\nn F_0|  \pp_t \left ( \frac 1{|\nn F_0|^2} \partial^2_t ( r^{\sigma} t^{\sigma_1} \right ) &= \frac{1}{|\nn F_0|} \pp_{tt} ( r^{\sigma} t^{\sigma_1}) + |\nn F_0|  \partial_t  ( \frac 1{|\nn F_0|^2}) \pp_t ( r^{\sigma} t^{\sigma_1})\\
&
=  r^{-2 +\sigma} \sigma_1 (\sigma_1-1) t^{\sigma_1-2} + O( r^{\sigma-8} t^{\sigma_1}),
\end{aligned}
\label{mc23-2}
\end{align}
where we have used the fact that $|\nn F_0| \partial_t (\frac{1}{|\nn F_0|^2}) \sim -\frac{\cos \phi}{r^5}$.
Then we compute
$$
|\nn F_0| \pp_s \Big( \frac{\rho^{-2}}{|\nn F_0|^2} \pp_s r^\sigma \Big) =
|\nn F_0| \pp_s \Big( \frac{\rho^{-2}}{|\nn F_0|^2} \sigma  r^{\sigma -1} \pp_s r \Big) .
$$
Using formula \equ{s}  the above quantity thus equals
$$
C_0\sigma |\nn F_0|   \pp_s \left ( \frac{ r^{12}\sin^6 2\theta }{ r^4 (9g^2 + g_\theta^2 )} r^{\sigma -1} \frac r{s}\sin^2 \phi \right )
= C_1 \sigma |\nn F_0|\pp_s \left ( \frac{ s r^\sigma }{t^2} \cos^2 \phi \right ),
$$
where $C_i>0$ are generic positive constants, from now on. Now, using (\ref{fact1}), we obtain
\begin{align*}
&\pp_s \left ( \frac{ s r^\sigma }{t^2} \cos^2 \phi \right )\\
 &\quad=
\frac 1{t^2} \left ( r^\sigma \cos^2\phi + s\sigma  r^{\sigma  -1} \frac r{7s} \sin^2\phi \cos^2\phi
+ sr^\sigma  (-\sin 2\phi ) ( -\frac{\sin 2\phi} {14 s } ) \phi'\right )
\\
&=\quad\frac{ r^\sigma }{t^2} \left ( \cos^2\phi + \frac \sigma 7 \sin^2\phi \cos^2\phi + \frac{\sin^22\phi}{14}\phi' \right )
\\
&=\quad\frac{ r^\sigma }{t^{2}} \cos^2\phi \, \left ( 1 + \frac \sigma 7 \sin^2\phi  + \frac{2 \sin^2\phi}{7}\phi' \right ),
\end{align*}
hence
\begin{align*}
&t^{\sigma_1}|\nn F_0| \pp_s ( \frac{\rho^{-2}}{|\nn F_0|^2} \pp_s r^\sigma  )\\
&\qquad\qquad = \sigma C_3 \frac{r^{2+\sigma} }{t^{2-\sigma_1} } \cos^2\phi \sqrt{9g_\theta^2+g^2}\left ( 1 + \frac \sigma 7 \sin^2\phi  + \frac{2 \sin^2\phi}{7}\phi' \right ),
\end{align*}
and finally
\begin{align}
\label{78}
\begin{aligned}
|\nn F_0| \pp_s ( \frac{\rho^{-2}}{|\nn F_0|^2} \pp_s r^\sigma  )
&=\sigma C_3 \frac{t^{\sigma_1}}{ r^{4-\sigma}}\frac{1}{\sqrt{9g_\theta^2+g^2}} \left ( 1 + \frac \sigma 7 \sin^2\phi  + \frac{2 \sin^2\phi}{7}\phi' \right )
\\
&=\sigma C_3 \frac{t^{\sigma_1}}{ r^{4-\sigma}} a_1(\theta),
\end{aligned}
\end{align}
where $a_1(\theta)>0$. Then we obtain:
\begin{align}
\begin{aligned}
\tilde{L}_0 [ r^\sigma t^{\sigma_1}] &=
 r^{-4+\sigma+3\sigma_1}g^{\sigma_1} \Biggl(\frac{\sigma_1(1-\sigma_1)}{r^{-4} g^2}+\sigma C_3 a_1(\theta)\Biggr)+O( r^{-8+\sigma+3\sigma_1}) g^{\sigma_1}
\\
& \leq - \frac{Cg^{\sigma_1}}{ r^{4-3\sigma_1 -\sigma} },
\end{aligned}
\label{superl 7}
\end{align}
which proves (\ref{mc19}) with $H'[F_0]$, replaced by $\tilde L_0$.

For $-1 <\sigma <0$ we have $ \phi^{'} \geq -3$, hence
\be
\label{mc23-1}
|\nn F_0| \pp_s \Big( \frac{\rho^{-2}}{|\nn F_0|^2} \pp_s r^\sigma \Big)
 \leq  - \frac{C_4}{ r^{4-\sigma}}.
\ee
Combining  this and (\ref{mc23-2}), we obtain
\begin{align}
\begin{aligned}
\tilde{L}_0 [ r^\sigma t^\sigma_1]
&\leq -C \Biggl(  r^{-2 +\sigma}  t^{\sigma_1-2} + \frac{t^{\sigma_1} }{ r^{4-\sigma}} \Biggr)
\\
& \leq - \frac{C}{ r^{4-3\sigma_1 -\sigma} } \Biggl( \frac{ (\cos \phi)^{\sigma_1-2}}{r^4} + (\cos \phi)^{\sigma_1} \Biggr)
\\
&\leq - \frac{C_1  }{r^{4-\sigma- \sigma_1 }}.
\end{aligned}
\label{superl 8}
\end{align}
This proves (\ref{mc20}) with $H'[F_0]$ replaced by $\tilde L_0$.

To finish the proof one needs to estimate $\tilde L_1(r^\sigma t^{\sigma_1})$ and show that this term is of smaller order than $\tilde L_0(r^\sigma t^{\sigma_1})$. This is straightforward since
\begin{align*}
\frac{1}{|\nabla F_0|}-\frac{1}{\sqrt{1+|\nabla F_0|^2}}\sim \frac{1}{|\nabla F_0|^3}, \quad r\gg 1.
\end{align*}
We leave the details to the reader.
\qed

By $\tilde T$ we denote the following sector:
\begin{align}
\tilde T=\{(u,v)\mid v>0, |u|<v\}\subset \R^8.
\label{superl 9}
\end{align}
Since all functions involved in the proof of Lemma \ref{lemma2} are even with respect to $u$ in the set $\tilde T$ we immediately obtain:
\begin{corollary}\label{cor superl 1}
For $ \sigma \in (-1, 0)$ and $ \sigma_1 \in [0, 1]$, there exist $r_0$ and $C>0$ such that  in the set
$\tilde T\cap\{r >r_0\}$ we have:
\begin{equation}
\label{superl 10}
\tJ\big((1+|\nabla F_0|^2)^{-1/2}r^\sigma t^{\sigma_1}\big) + \frac{C_1 (g(\theta))^{\sigma_1}  }{r^{4-\sigma-3 \sigma_1
 }} \leq 0.
\end{equation}
Likewise  for $ \sigma \in (-1, 0)$ and $ \sigma_1 \in (0, 1)$, there holds in  $\tilde T\cap \{r >R_0\}$
\begin{equation}
\label{superl 11}
\blue{\tJ}\big((1+|\nabla F_0|^2)^{-1/2}r^\sigma t^{\sigma_1}\big) + \frac{C_1  }{r^{4-\sigma- \sigma_1 }} \leq 0.
\end{equation}
\end{corollary}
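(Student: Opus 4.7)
The plan is to deduce the corollary from Lemma~\ref{lemma2} by exploiting the $u\leftrightarrow -u$ symmetry of every ingredient that appears in the computation. Concretely, if $h=(1+|\nabla F_0|^2)^{-1/2}r^\sigma t^{\sigma_1}$, then by formula \equ{superl 5} we have $\tJ(h)=H'[F_0](\varphi)$ with $\varphi=r^\sigma t^{\sigma_1}$, so it suffices to establish the desired upper bounds for $H'[F_0](r^\sigma t^{\sigma_1})$ in all of $\tilde T\cap\{r>r_0\}$.

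First I would observe that $r=\sqrt{u^2+v^2}$ is even in $u$, and $g(\theta)$ extends to an even function across $u=0$: indeed, $g$ was defined on $\theta\in(\pi/4,\pi/2]$ via formula \equ{min 4f} with $g_\theta(\pi/2)=0$, so reflecting through $\theta=\pi/2$ (which corresponds to reflecting $u\mapsto -u$) produces a $C^2$ extension. Consequently $t=r^3g(\theta)$ and the auxiliary function $\phi$ defined by \equ{11} extend evenly as well. The coefficients of $H'[F_0]$ in the $(t,s)$-expression \equ{superl 4} depend only on $|\nabla F_0|$, $\rho=(uv)^{-3}$ and $F_0$, each of which is invariant under $u\mapsto -u$ (noting that $\rho^{-2}$ sits in the operator paired with $\partial_s$, and $\partial_s$ itself is even under the reflection because $s$, as given by \equ{s}, is an even function of $u$). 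Hence $H'[F_0]$ commutes with $u\mapsto -u$, and the full identity
\[
H'[F_0](r^\sigma t^{\sigma_1})(u,v)=H'[F_0](r^\sigma t^{\sigma_1})(|u|,v)
\]
holds throughout $\tilde T$.

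Given this symmetry, for $(u,v)\in \tilde T$ with $u>0$ one is in $T$ and Lemma~\ref{lemma2} yields \equ{superl 10} and \equ{superl 11} directly. For $(u,v)\in \tilde T$ with $u\le 0$, one applies the identity above to transfer the bound from the reflected point $(|u|,v)\in T$; the weights on the right-hand sides, $r^{-(4-\sigma-3\sigma_1)}g(\theta)^{\sigma_1}$ and $r^{-(4-\sigma-\sigma_1)}$, are themselves invariant under the reflection, so the inequality is preserved. Finally, dividing both sides by $\sqrt{1+|\nabla F_0|^2}$ converts the statement about $H'[F_0]$ into the claimed statement about $\tJ$, modulo absorbing the bounded factor into the constant $C_1$.

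The only genuine subtlety is the smoothness of $\varphi=r^\sigma t^{\sigma_1}$ across the hyperplane $u=0$ in $\R^8$, since $t^{\sigma_1}$ could lose regularity if $t$ is not smooth at $u=0$. But $t=r^3g(\theta)$ with $g(\pi/2)>0$ and $g_\theta(\pi/2)=0$, so $t$ stays strictly positive and smooth on $\{u=0,v>0\}$ for $r>r_0$, and the chain of identities and inequalities goes through without issue. I do not expect this step to present any real obstacle beyond bookkeeping; the substance of the corollary is entirely contained in Lemma~\ref{lemma2}, and this short symmetry argument is what the author's remark ``immediately obtain'' refers to.
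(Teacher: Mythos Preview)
Your proposal is correct and is exactly the paper's approach: the paper says only that ``all functions involved in the proof of Lemma~\ref{lemma2} are even with respect to $u$ in the set $\tilde T$,'' and you have spelled out precisely that symmetry argument together with the identification $\tJ(h)=H'[F_0](r^\sigma t^{\sigma_1})$ coming from \equ{superl 5}. One small slip: your final sentence about ``dividing both sides by $\sqrt{1+|\nabla F_0|^2}$'' is unnecessary and slightly misleading, since \equ{superl 5} gives $\tJ(h)=H'[F_0](\varphi)$ directly, so the inequalities of Lemma~\ref{lemma2} transfer to $\tJ$ with no further manipulation.
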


\subsection{The outer problem for $\tJ$}

We will use the supersolutions derived above to treat the following problem:
\begin{align}
\begin{aligned}
\tJ(h)&=f, \quad \mbox{in}\ \tilde T\cap \{r>R_0+1\},\\
h&=0, \quad \mbox{on}\ \partial(\tilde T\cap \{r>R_0+1\}),
\end{aligned}
\label{superl 11a}
\end{align}
where $R_0>r_0$ is fixed.
We will solve this problem by an  approximation scheme in extending domains:
\begin{align}
\begin{aligned}
\tJ(h_R)&=f, \quad \mbox{in}\ \tilde T\cap \{R>r>R_0+1\},\\
h_R&=0, \quad \mbox{on}\ \partial(\tilde T\cap \{R>r>R_0+1\}).
\end{aligned}
\label{superl 11b}
\end{align}
In this section we will consider the weighted norms defined in (\ref{superl 2a}) with $\Gamma$ replaced by $\Gamma_0$.
As for the right hand side of (\ref{superl 11a})  we assume that  one of the following holds:
\begin{enumerate}
\item Either $\nu=3$
\begin{align}
\|f\|_{\infty, \nu}<\infty, \quad \mbox{and}\ |f|\leq C\frac{g(\theta)^{\sigma_1}}{r^3}, \quad r>R_0,
\label{superl 11c}
\end{align}
with some $\sigma_1\in (1/3, 2/3)$;
\item or $\nu\geq 3+\mu$, $\mu\in (\frac{2}{3},1)$ and
\begin{align}
\|f\|_{\infty, \nu}<\infty.
\label{superl 11d}
\end{align}
\end{enumerate}
\begin{lemma}\label{lemma superl 2a}
Let $f$ be such that at least one of the two conditions (\ref{superl 11c}) or (\ref{superl 11d}) is satisfied. Then there exists a solution $h$ of (\ref{superl 11a}) such that:
\begin{align}
\|h\|_{\infty, \nu'-2}+\|\nabla_{\Gamma_0} h\|_{\infty, \nu'-1}+\|\nabla_{\Gamma_0}^2 h\|_{p,\nu'}\leq C\|f\|_{\infty,\nu},
\label{superl 11e}
\end{align}
where $\nu'\leq \nu$ satisfies:
\begin{align}
\nu'&=
\begin{cases}
3, \quad \mbox{if (\ref{superl 11c}) holds},\\
3+\mu',   \quad 0<\mu'<3\mu-2, \quad \mbox{if (\ref{superl 11d}) holds}.
\end{cases}
\label{superl 11f}
\end{align}
\end{lemma}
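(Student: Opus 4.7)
The plan is to approximate \equ{superl 11a} by the Dirichlet problems \equ{superl 11b} on the truncated sectors $\Omega_R := \tilde T \cap \{R_0+1 < r < R\}$, derive $L^\infty$-weighted bounds uniform in $R$ via comparison with the supersolutions of Corollary \ref{cor superl 1}, and then pass to the limit $R\to\infty$ using local elliptic regularity.

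A key preliminary fact is that $\bar h := (1+|\nabla F_0|^2)^{-1/2}$, viewed as a function on $\Gamma_0$, is strictly positive and lies in the kernel of $\tJ$: since $H[F_0+c]=H[F_0]$ for every constant $c$, formula \equ{superl 5} gives
\begin{align*}
\tJ(\bar h) = H'[F_0]\bigl(\bar h\sqrt{1+|\nabla F_0|^2}\bigr) = H'[F_0](1) = 0.
\end{align*}
The substitution $h=\bar h\,\psi$ then converts $\tJ(h)=f$ into the divergence-form equation
\begin{align*}
\bar h^{-1}\nabla_{\Gamma_0}\!\cdot\!\bigl(\bar h^2\,\nabla_{\Gamma_0}\psi\bigr) = f,
\end{align*}
with strictly positive weight $\bar h^2$ and \emph{no} zero-order term. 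On the bounded domain $\Omega_R$ this equation is uniquely solvable by Lax--Milgram applied to $\int_{\Omega_R}\bar h^2\nabla\psi\cdot\nabla\varphi\,dV_{\Gamma_0}$, and the resulting $\psi_R$ obeys the weak maximum principle.

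The barriers come directly from Corollary \ref{cor superl 1}. In case \equ{superl 11c} with $\sigma_1\in(1/3,2/3)$, I set $\sigma:=1-3\sigma_1\in(-1,0)$; then \equ{superl 10} yields $\tJ(\bar h\,r^\sigma t^{\sigma_1})\leq -c_0\,g(\theta)^{\sigma_1}/r^3$. Choosing a constant $K$ proportional to $\|f\|_{\infty,3}$ (plus the constant from the anisotropic bound), the function $\pm K\bar h\,r^\sigma t^{\sigma_1}$ dominates $h_R$ on $\partial\Omega_R$, and applying the weak maximum principle to $\psi_R\mp K r^\sigma t^{\sigma_1}$ gives
\begin{align*}
|h_R(y)| \leq K\bar h(y)\,r(y)^\sigma t(y)^{\sigma_1} \leq C\|f\|_{\infty,3}\,r(y)^{-1},
\end{align*}
since $\bar h\sim r^{-2}$ and $t^{\sigma_1}\sim r^{3\sigma_1}$; this is the bound with $\nu'=3$. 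In case \equ{superl 11d} with $\mu\in(2/3,1)$, I choose $\sigma_1\in(1-\mu,\mu/2)$ (a nonempty interval precisely because $\mu>2/3$) and $\sigma:=1-\mu-\sigma_1\in(-1,0)$; the analogous use of \equ{superl 11} produces $|h_R(y)|\leq C\|f\|_{\infty,3+\mu}\,r(y)^{-1-\mu'}$ with $\mu':=\mu-2\sigma_1\in(0,3\mu-2)$, as required by \equ{superl 11f}.

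With the $R$-uniform weighted $L^\infty$-bound on $h_R$ in hand, standard interior $W^{2,p}$ estimates for \equ{superl 11b}, carried out in the local graph coordinates on $\Gamma_0$ from Section \ref{sec grad est} (which ensure uniform ellipticity on balls of radius $\theta_0$), promote this to the full $R$-uniform bound \equ{superl 11e}. A diagonal extraction then delivers the desired solution $h$ of \equ{superl 11a}. The main technical point is the sharp matching of the supersolution parameters to the right-hand side: in case (i) the relation $\sigma+3\sigma_1=1$ is forced so as to absorb the anisotropic factor $g(\theta)^{\sigma_1}/r^3$, while in case (ii) the assumption $\mu>2/3$ is exactly what keeps the window $\sigma_1\in(1-\mu,\mu/2)$ nonempty and still produces decay of $h$ strictly better than $r^{-1}$.
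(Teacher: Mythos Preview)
Your argument is correct and takes a genuinely different route from the paper's. The paper proceeds by contradiction and compactness: it first shows the truncated problems \equ{superl 11b} are uniquely solvable (via the positive supersolution $h_{\sigma,\sigma_1}$ with $\sigma+3\sigma_1=2$), then establishes non-degeneracy of the full outer problem, and finally proves the a priori bound $\|h_R\|_{\infty,\nu'-2}\le C\|f\|_{\infty,\nu}$ by a blow-up argument --- assuming a normalized sequence $\|h_{R_n}\|_{\infty,\nu'-2}=1$, $\|f_n\|_{\infty,\nu}\to 0$, a barrier with $\sigma+3\sigma_1=1$ traps the supremum of $r^{\nu'-2}|h_{R_n}|$ in a fixed compact set, and passing to the limit yields a nontrivial homogeneous solution, contradicting non-degeneracy.

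Your method is more direct: the observation that $\bar h=(1+|\nabla F_0|^2)^{-1/2}$ satisfies $\tJ(\bar h)=H'[F_0](1)=0$ (an immediate consequence of translation invariance via \equ{superl 5}) reduces the equation for $\psi=h/\bar h$ to pure divergence form, so Lax--Milgram gives existence on $\Omega_R$ and the classical maximum principle applies without any sign condition on a potential. You then compare $\psi_R$ directly with $Kr^\sigma t^{\sigma_1}$, matching parameters exactly ($\sigma+3\sigma_1=1$ in case \equ{superl 11c}, $4-\sigma-\sigma_1=3+\mu$ in case \equ{superl 11d}) so that the right-hand side of Corollary \ref{cor superl 1} dominates $|f|$ pointwise. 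This bypasses the compactness and non-degeneracy steps entirely. The paper's argument has the advantage of requiring only a qualitative barrier (it need not absorb $f$ pointwise), which makes it more portable; yours is cleaner here because the explicit anisotropic structure of $f$ matches the barriers exactly, and it handles both cases \equ{superl 11c} and \equ{superl 11d} in one stroke (the paper only writes out case \equ{superl 11c}).
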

\proof{} We will solve (\ref{superl 11b}) and then take the limit $R\to \infty$. To fix attention we will consider $f$ such that (\ref{superl 11c}) holds, the other case being  similar.

We observe that an easy consequence of Lemma \ref{lemma2} is that (\ref{superl 11b}) has a unique solution for all $R>R_0+1$. Indeed taking $\sigma$, $\sigma_1$ such that $\sigma+3\sigma_1=2$ (say $\sigma=-\frac{1}{2}$, $\sigma_1=\frac{5}{6}$) we see that there is a bounded, positive supersolution of (\ref{superl 11b}) of the form:
\begin{align}
h_{\sigma, \sigma_1}=\frac{r^\sigma t^{\sigma_1}}{\sqrt{1+|\nabla F_0|^2}}.
\label{superl 11f1}
\end{align}
This mean that the homogeneous version of (\ref{superl 11b}) has only a trivial solution. By a similar argument we can prove that the operator $\tJ$ is non-degenerate for the outer problem (\ref{superl 11a}).  This means that the only \blue{vanishing at $\infty$} solution of the homogeneous version of (\ref{superl 11a}) is necessarily equal to $0$.

Now let $h_R$ be a solution of (\ref{superl 11b}). We claim that there exists $C>0$, independent on $R$ such that
\begin{align}
\|h_R\|_{\infty, 1}\leq C\|f\|_{\infty, 3}.
\label{superl 11g}
\end{align}
We will argue by contradiction. If (\ref{superl 11g}) does not hold then there exist sequences $R_n$,  $h_{R_n}$ and $f_n$ such that:
\begin{align}
\|f_n\|_{\infty, 3}\to 0, \quad \mbox{while}\ \|h_{R_n}\|_{\infty, 1}=1.
\label{superl 11h}
\end{align}
Taking function
\begin{align*}
h^+=C(1+\|f\|_{\infty, 3})h_{\sigma, \sigma_1}, \quad \sigma+3\sigma_1=1,
\end{align*}
with  a suitable constant $C$ (dependent on $R_0$ only) as a supersolution we see that for all $R_n$ sufficiently large  the supremum of $r h_{R_n}$  must be attained in a fixed compact set. Passing now to the limit we obtain a nontrivial solution of (\ref{superl 11a}) which contradicts   the non-degeneracy of $\tJ$. This proves  estimate (\ref{superl 11g}). The assertion of the Lemma follows now by elliptic estimates applied to the function $\tilde h_{R}=rh_R$. The proof is complete.
\qed

\subsection{An approximation scheme  for  the Jacobi operator}

We will consider the following problem
\begin{align}
\begin{aligned}
\cJ(h)&=f, \quad \inn \tilde T,\\
h&=0, \quad \mbox{on}\ \partial \tilde T.
\end{aligned}
\label{superl 12}
\end{align}
In this section we will in general assume that:
\begin{align}
\|f\|_{\infty,\nu}<\infty, \quad \mbox{with some}\ \nu> 4.
\label{superl 13}
\end{align}
We will solve (\ref{superl 12}) by approximations. For each sufficiently  large  $R$ we will consider:
\begin{align}
\begin{aligned}
\cJ(h_R)&=f, \inn \tilde T\cap B_R(0), \\
h_R&=0, \quad \mbox{on}\ \partial(\tilde T\cap B_R(0)).
\end{aligned}
\label{superl 14}
\end{align}
Our goal is to show the following:
\begin{proposition}\label{prop superl 1}
Consider a family of solutions of (\ref{superl 14}), $\{h_R\}$ with $f$ satisfying (\ref{superl 13}). As $R\to \infty$,  $h_R$ converges along a subsequence to a solution of (\ref{superl 12}). Moreover, denoting this solution by $h$,   there holds:
\begin{align}
\|h\|_{\infty,\nu'-2}+\|\nabla_{\Gamma} h\|_{\infty,\nu'-1}+\|\nabla^2_\Gamma h\|_{p,\nu'}\leq C\|f\|_{\infty,  \nu},
\label{superl 15}
\end{align}
where $4<\nu'<\nu$ and $p\in(9,\infty)$.
\end{proposition}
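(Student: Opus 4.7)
The core of the proof is an $R$–independent a priori estimate for the approximate solutions $h_R$; convergence as $R\to\infty$ then follows from standard elliptic compactness. My plan is to transfer the supersolutions constructed for $\tJ$ in Lemma \ref{lemma2} and Corollary \ref{cor superl 1} into supersolutions for the true Jacobi operator $\cJ$, and then run a blow--up/contradiction argument combining these barriers in the ``outer'' region with the non-degeneracy of $\cJ$ in the ``inner'' region.

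\textbf{Step 1: from $\tJ$ to $\cJ$.} Write $\cJ = \tJ + (\cJ - \tJ)$. The difference operator acts on a function $h$ by involving the second fundamental forms and the Laplace--Beltrami operators of $\Gamma$ and $\Gamma_0$. From (\ref{coord 10}) and (\ref{coord 12}), together with Theorem \ref{teo mc2} (so that $F-F_0=O(r^{-\sigma})$), this difference has coefficients of size $O(r^{-2-\sigma})$ on second derivatives, $O(r^{-3-\sigma})$ on first derivatives, and $O(r^{-3-\sigma})$ on the zero-order term. Applied to the model supersolution
\[
h_{\sigma_0,\sigma_1}(y)=\frac{r^{\sigma_0} t^{\sigma_1}}{\sqrt{1+|\nabla F_0|^2}},\qquad \sigma_0\in(-1,0),\ \sigma_1\in[0,1],
\]
from (\ref{superl 11f1}), the extra contribution is of order $r^{-\sigma} $ times the principal term in (\ref{superl 10})--(\ref{superl 11}), hence for $r$ larger than some $R_0$ one still has
\[
\cJ(h_{\sigma_0,\sigma_1}) \le -\tfrac{c}{2}\,r^{-(4-\sigma_0-3\sigma_1)}\,g(\theta)^{\sigma_1}\inn \tilde T\cap\{r>R_0\}.
\]
A completely analogous computation in the range $\sigma_1\in(0,1)$ yields the refined supersolution with decay $r^{-(4-\sigma_0-\sigma_1)}$, cf.\ (\ref{superl 11}).

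\textbf{Step 2: non-degeneracy of $\cJ$ on $\tilde T$.} The argument of Lemma \ref{lemma superl 2a} for the outer problem goes through with $\tJ$ replaced by $\cJ$ because of Step 1. Moreover, on the interior region $\tilde T\cap\{r<R_0+2\}$ any bounded solution of $\cJ(h)=0$ vanishing on $\partial\tilde T$ and decaying at infinity must vanish: use the supersolution $h_{\sigma_0,\sigma_1}$ with $\sigma_0+3\sigma_1=1$ (say $\sigma_0=-\tfrac12$, $\sigma_1=\tfrac12$), scaled so that it dominates $|h|$ on $\{r=R_0+2\}$ and on $\partial\tilde T\cap\{r>R_0+2\}$, together with the strong maximum principle inside $\{r<R_0+2\}$.

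\textbf{Step 3: a priori bound by contradiction.} Assume (\ref{superl 15}) fails. Then there exist $R_n\to\infty$, right-hand sides $f_n$ with $\|f_n\|_{\infty,\nu}\to 0$, and solutions $h_n:=h_{R_n}$ with $\|h_n\|_{\infty,\nu'-2}=1$. Pick $y_n\in\tilde T\cap B_{R_n}$ with $r(y_n)^{\nu'-2}|h_n(y_n)|\ge \tfrac12$.

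Case A: $r(y_n)\le C$. Using local $W^{2,p}$ estimates together with the trivial global bound $|h_n|\le r^{-(\nu'-2)}$, extract a subsequential limit $h_\infty$ in $C^2_{\mathrm{loc}}(\tilde T)$ solving $\cJ(h_\infty)=0$ in $\tilde T$, vanishing on $\partial\tilde T$, and with $|h_\infty(y)|\le r(y)^{-(\nu'-2)}$. By the non-degeneracy established in Step 2, $h_\infty\equiv 0$, contradicting $|h_\infty(y_\infty)|\ge \tfrac12\, r(y_\infty)^{-(\nu'-2)}>0$.

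Case B: $r(y_n)\to\infty$. Choose $\sigma_0\in(-1,0)$, $\sigma_1\in[0,1]$ so that $4-\sigma_0-3\sigma_1=\nu$ (this is possible since $\nu\in(4,5)$ after possibly reducing to $\nu'\in(4,\nu)$, using Remark \ref{rem superl 1}; in the borderline regime one uses instead the sharper barrier from (\ref{superl 11})). Fix a large constant $M$ and define
\[
\bar h_n := M \|f_n\|_{\infty,\nu}\, h_{\sigma_0,\sigma_1} + \|h_n\|_{L^\infty(\tilde T\cap\{r\le R_0\})}\, h_{-1/2,5/6}.
\]
By Step 1, $\cJ(\bar h_n)\le -|f_n|\le \cJ(\pm h_n)$ in $\tilde T\cap\{R_0<r<R_n\}$, and $\bar h_n\ge |h_n|$ on the boundary of this annular region. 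Maximum principle gives $|h_n|\le \bar h_n$ there, hence
\[
r(y_n)^{\nu'-2}|h_n(y_n)| \le C\|f_n\|_{\infty,\nu} r(y_n)^{\nu'-\nu}\,g(\theta_n)^{\sigma_1} + o(1)\longrightarrow 0,
\]
because the inner sup $\|h_n\|_{L^\infty(\{r\le R_0\})}\to 0$ by Case A applied on the fixed compact $\tilde T\cap\{r\le R_0\}$. This contradiction establishes $\|h_R\|_{\infty,\nu'-2}\le C\|f\|_{\infty,\nu}$.

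\textbf{Step 4: derivative bounds and convergence.} For $y_0\in\Gamma$ with $r(y_0)=:\rho$ large, use the local graph representation of Lemma \ref{lem gr1} and the rescaling $\tilde h(t)=\rho^{\nu'-2}h_R(y_0+\Pi t)$ to reduce $\cJ$ to a uniformly elliptic operator on a ball of fixed size with coefficients bounded uniformly in $\rho$. Standard interior $W^{2,p}$ estimates give the weighted bounds on $\nabla_\Gamma h_R$ and $\nabla_\Gamma^2 h_R$ appearing in (\ref{superl 15}). Finally, the uniform bound (\ref{superl 15}) and elliptic regularity provide $C^2_{\mathrm{loc}}$–compactness of $\{h_R\}$; extracting a diagonal subsequence yields a limit $h$ solving (\ref{superl 12}) with the required estimates.

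\textbf{Main obstacle.} The delicate point is Case B of Step 3: the supersolution $h_{\sigma_0,\sigma_1}$ decays at the rate $r^{-(4-\sigma_0-3\sigma_1)}g(\theta)^{\sigma_1}$, which involves the factor $g(\theta)$ vanishing on $\partial\tilde T$. This forces the decay exponent $\nu'$ to be strictly less than $\nu$ (one loses a fraction to accommodate the constraint $\sigma_1\le 1$ and to absorb the perturbation $\cJ-\tJ$ uniformly), which is precisely why the proposition states $4<\nu'<\nu$ rather than $\nu'=\nu$.
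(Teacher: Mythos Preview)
Your overall strategy matches the paper's: transfer the $\tJ$-supersolutions to $\cJ$ (your Step~1 is exactly Lemma~\ref{lemma superl  1}), establish non-degeneracy of $\cJ$ on $\tilde T$, then run a contradiction argument in which a barrier forces the weighted sup to be attained on a fixed compact and the limit contradicts non-degeneracy. Step~4 is routine.

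There is, however, a real gap in Step~2. Your non-degeneracy argument appeals to ``the strong maximum principle inside $\{r<R_0+2\}$,'' but $\cJ=\Delta_\Gamma+|A_\Gamma|^2$ has a strictly \emph{positive} zero-order coefficient, so the classical maximum principle does not apply there; nothing you have written rules out a nontrivial kernel for the Dirichlet problem on a bounded piece of $\tilde T$. The barriers $h_{\sigma_0,\sigma_1}$ help only for $r>R_0$ and vanish on $\partial\tilde T$, so they give no control in the inner region. Without Step~2, Case~A of your Step~3 collapses, and with it the whole a~priori estimate.

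The paper supplies the missing global ingredient: the function
\[
h_\ve(y)=\frac{\ve}{\sqrt{1+|\nabla F(y)|^2}}
\]
is a strictly positive \emph{exact} solution of $\cJ(h_\ve)=0$ on all of $\Gamma$ (it is the Jacobi field coming from vertical translation of the graph). This single observation delivers both the comparison principle for $\cJ$ on every bounded domain and the non-degeneracy statement: by (\ref{coord 6a}) one has $h_\ve\ge c\,\ve(1+r^2)^{-1}$, while any homogeneous solution with $\|h\|_{\infty,2+\mu}<\infty$ is $O(r^{-(2+\mu)})$, so $-h_\ve\le h\le h_\ve$ on $\partial(\tilde T\cap B_R)$ for $R$ large, hence everywhere, and letting $\ve\to 0$ gives $h\equiv 0$ (Lemma~\ref{lemma superl 2}). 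With this in hand the paper's contradiction argument is also simpler than yours: a single barrier $h^+=C(1+\|f_n\|_{\infty,\nu})h_{\sigma_0,\sigma_1}$ with $\sigma_0+3\sigma_1=-\mu'$ already shows the weighted sup must be achieved on a fixed compact, so the two-case split and the two-term barrier in your Step~3 are unnecessary.
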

The proof of this Proposition follows from a series of Lemmas.
First  we will use Corollary \ref{cor superl 1}, and in particular (\ref{superl 11}) to show:
\begin{lemma}\label{lemma superl  1}
There exists $r_1>0$ such that   for $ \sigma \in (-1, 0)$ and $ \sigma_1 \in (0, 1)$, we have  in  $\tilde T\cap \{r >r_1\}$
\begin{equation}
\label{superl 16}
\blue{\cJ}\big((1+|\nabla F|^2)^{-1/2}r^\sigma t^{\sigma_1}\big) + \frac{C_1  }{r^{4-\sigma- \sigma_1 }} \leq 0.
\end{equation}
\end{lemma}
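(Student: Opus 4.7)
The plan is to deduce this lemma from Corollary~\ref{cor superl 1} by a perturbation argument, exploiting the fact that $\Gamma$ is close to $\Gamma_0$ at infinity (Theorem~\ref{teo mc2}). Using the identity~(\ref{superl 1 a}), evaluating $\cJ$ on the test function $(1+|\nabla F|^2)^{-1/2}r^\sigma t^{\sigma_1}$ is equivalent to computing $H'[F](r^\sigma t^{\sigma_1})$. Similarly, from~(\ref{superl 5}) the bound in Corollary~\ref{cor superl 1} reads $H'[F_0](r^\sigma t^{\sigma_1}) \leq -C_1'/r^{4-\sigma-\sigma_1}$ for some $C_1'>0$ and $r>R_0$. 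Hence it suffices to prove that the difference
$$\mathcal{E}:=H'[F](r^\sigma t^{\sigma_1}) - H'[F_0](r^\sigma t^{\sigma_1})$$
satisfies $\mathcal{E}=o(r^{\sigma+\sigma_1-4})$ as $r\to\infty$; then taking $C_1<C_1'$ and $r_1$ sufficiently large gives the conclusion.

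For the estimate of $\mathcal{E}$, I would use the explicit form~(\ref{superl 2}), which expresses both $H'[F]$ and $H'[F_0]$ as divergences of $\mathcal{A}(\nabla F)\nabla\varphi$ resp.\ $\mathcal{A}(\nabla F_0)\nabla\varphi$, where $\mathcal{A}(p)=(1+|p|^2)^{-1/2}\bigl(I - p\otimes p/(1+|p|^2)\bigr)$. A Taylor expansion in the matrix argument, using that $|\nabla F_0|\gtrsim r^2$ in the relevant region, gives pointwise bounds of the form
$$|\mathcal{A}(\nabla F)-\mathcal{A}(\nabla F_0)|\lesssim \frac{|\nabla(F-F_0)|}{|\nabla F_0|^3},\qquad |\nabla_x(\mathcal{A}(\nabla F)-\mathcal{A}(\nabla F_0))|\lesssim \frac{|D^2(F-F_0)|}{|\nabla F_0|^3}+\text{l.o.t.}$$
Combined with the explicit sizes $|\nabla\varphi|\sim r^{\sigma+3\sigma_1-1}$ and $|D^2\varphi|\sim r^{\sigma+3\sigma_1-2}$ (following the same computation used in the proof of Lemma~\ref{lemma2}), and with Theorem~\ref{teo mc2} plus the derivative estimates on $h=G-G_0$ from section~\ref{sec grad est} (equation~(\ref{eshm})), one obtains $\mathcal{E}=O(r^{\sigma+\sigma_1-4-\sigma_0})$ for the $\sigma_0\in(0,1)$ of Theorem~\ref{teo mc2}.

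As an alternative (and perhaps cleaner) route, one can work with the representation $\cJ = \Delta_\Gamma + |A_\Gamma|^2$ and $\tJ=\Delta_{\Gamma_0}+|A_{\Gamma_0}|^2$, identify the test function as a function on $\Gamma$ with size $\sim r^{\sigma+\sigma_1}$, tangential gradient $\sim r^{\sigma+\sigma_1-1}$, and tangential Hessian $\sim r^{\sigma+\sigma_1-2}$, and apply directly the comparison estimates~(\ref{coord 10}) and~(\ref{coord 12}) between the Laplace--Beltrami operators and second fundamental forms of $\Gamma$ and $\Gamma_0$. This immediately yields $|\cJ(h)-\tJ(h)|\leq C r^{-\sigma_0}\cdot r^{\sigma+\sigma_1-4}$, which is again of lower order than the main bound provided by Corollary~\ref{cor superl 1}.

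The main obstacle is the careful bookkeeping when transporting the derivative estimates on $F-F_0$ from the local tangent-plane coordinates $(t,G(t))$ of section~\ref{sec grad est}, where~(\ref{eshm}) provides the sharpest control, into the ambient $\R^8$ coordinates in which $H'[F]$ is expressed. Because $|\nabla F_0|\sim r^2$, a naive pointwise bound in $\R^8$ coordinates is insufficient; one must exploit the fact that the differences $F-F_0$, $\nabla(F-F_0)$, $D^2(F-F_0)$ are "tangential" in the appropriate sense, so that the coefficient $|\nabla F_0|^{-3}$ in the linearization is strong enough to absorb the unfavorable scaling and produce the final gain $r^{-\sigma_0}$ over the leading order of the Corollary.
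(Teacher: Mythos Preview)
Your proposal is correct, and in fact your ``alternative (and perhaps cleaner) route'' is precisely the paper's proof: the paper compares $\cJ$ and $\tJ$ directly via the local-coordinate estimates~(\ref{coord 10}) and~(\ref{coord 12}) and omits the details. Your primary approach through $H'[F](r^\sigma t^{\sigma_1})-H'[F_0](r^\sigma t^{\sigma_1})$ is a legitimate alternative that has the conceptual advantage of comparing linearizations applied to the \emph{same} function $\varphi=r^\sigma t^{\sigma_1}$, but as you correctly identify, it forces you to transport the sharp bounds~(\ref{eshm}) from tangent-plane coordinates into ambient $\R^8$ coordinates where $|\nabla F_0|\sim r^2$ makes naive estimates fail; the paper's route via~(\ref{coord 10})--(\ref{coord 12}) sidesteps this entirely because those formulas are already stated in the well-adapted local graph coordinates. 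One small point to keep in mind for the second route: the test functions $(1+|\nabla F|^2)^{-1/2}r^\sigma t^{\sigma_1}$ and $(1+|\nabla F_0|^2)^{-1/2}r^\sigma t^{\sigma_1}$ are not literally the same, but under the identification~(\ref{coord 9}) their difference, first and second covariant derivatives are again controlled by~(\ref{eshm}) and contribute only lower-order terms.
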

\proof{}
The proof is based on comparing the expressions for $\cJ$ and $\tJ$ in local coordinates and using formulas (\ref{coord 10}) and (\ref{coord 12}). We omit the details.
\qed

Next we will show that the operator $\cJ$ is non-degenerate:
\begin{lemma}\label{lemma superl 2}
Let $h$, such that
\begin{align}
\|h\|_{\infty, 2+\mu}<\infty, \quad \mbox{for some}\ \mu>0,
\label{superl 17}
\end{align}
be a solution of the following problem:
\begin{align}
\begin{aligned}
\cJ(h)&=0, \inn \tilde T, \\
h&=0, \quad \mbox{on}\ \partial\tilde T.
\end{aligned}
\label{superl 18}
\end{align}
Then we have $h\equiv 0$.
\end{lemma}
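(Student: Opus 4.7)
\medskip
\noindent\emph{Proof plan.} The plan is to exploit the existence of a strictly positive bounded Jacobi field on $\Gamma$ arising from the translation invariance of the minimal graph equation. Observe that adding a constant to $F$ leaves the minimal graph equation satisfied, so $H'[F](1)=0$, and therefore by the identification (\ref{superl 1 a})
\begin{equation*}
\Phi(y):=\frac{1}{\sqrt{1+|\nabla F(y')|^{2}}},\qquad y=(y',F(y'))\in\Gamma,
\end{equation*}
satisfies $\cJ(\Phi)=H'[F]\bigl(\Phi\sqrt{1+|\nabla F|^{2}}\bigr)=H'[F](1)=0$. Moreover $\Phi>0$ everywhere on $\Gamma$, and by (\ref{coord 6a}) (equivalently, Theorem 5 of \cite{simon 1}), $\Phi(y)\sim (1+R(y)^{2})^{-1}$ with $R(y)=|\Pi_{\R^{8}}(y)|$.

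\medskip

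The strategy is then to pass to the quotient $\psi:=h/\Phi$, which is smooth on $\Gamma$ over $\tilde T$ since $\Phi$ is smooth and strictly positive. A direct computation using $\cJ(\Phi)=0$ and the Leibniz rule gives
\begin{equation*}
0=\cJ(h)=\cJ(\Phi\psi)=\Phi\,\Delta_{\Gamma}\psi+2\,\nabla_{\Gamma}\Phi\cdot\nabla_{\Gamma}\psi+\psi\,\cJ(\Phi)=\Phi\,\Delta_{\Gamma}\psi+2\,\nabla_{\Gamma}\Phi\cdot\nabla_{\Gamma}\psi,
\end{equation*}
which, after multiplication by $\Phi$, is the divergence-form equation
\begin{equation*}
\mathrm{div}_{\Gamma}\!\bigl(\Phi^{2}\,\nabla_{\Gamma}\psi\bigr)=0\quad\inn\tilde T.
\end{equation*}
This equation has smooth coefficients (locally uniformly elliptic by Lemma \ref{lem gr1}) and, crucially, no zeroth-order term, so the strong maximum principle applies to $\psi$.

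\medskip

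It remains to check the boundary behavior of $\psi$. On $\partial\tilde T$ we have $h=0$ and $\Phi>0$, hence $\psi=0$. At infinity the hypothesis $\|h\|_{\infty,2+\mu}<+\infty$ combined with $\Phi(y)\ge c\,(1+R(y)^{2})^{-1}$ gives
\begin{equation*}
|\psi(y)|\le \frac{C}{(1+R(y))^{\mu}}\longrightarrow 0\ass R(y)\to+\infty.
\end{equation*}
A standard Phragmén--Lindel\"of argument now concludes: if $\sup_{\tilde T}\psi>0$, then a maximizing sequence cannot escape to $\infty$ (where $\psi\to 0$) nor lie on $\partial\tilde T$ (where $\psi=0$), so it must accumulate at an interior point, producing a positive interior maximum of $\psi$; but the strong maximum principle for $\mathrm{div}_{\Gamma}(\Phi^{2}\nabla_{\Gamma}\cdot)$ forbids this. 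Hence $\sup_{\tilde T}\psi\le 0$, and applying the same argument to $-h$ yields $\inf_{\tilde T}\psi\ge 0$. Therefore $\psi\equiv 0$ and consequently $h\equiv 0$.

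\medskip

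The main obstacle I anticipate is purely verificational rather than conceptual: ensuring that the two inputs to the argument hold with the decay rate of $h$ given in (\ref{superl 17}). The lower bound $\Phi\gtrsim (1+R^{2})^{-1}$ has to be invoked from the Simon-type estimates of Section \ref{sec grad est} (equation (\ref{coord 6a})), and the boundary $\partial\tilde T$ — a cone through the origin in $\R^{8}$ — has a vertex at $0$ where one must be slightly careful when applying the maximum principle up to the boundary; this is handled by a trivial exhaustion since $h$ and $\psi$ are continuous across the entire (closed) cone, the symmetry $F(u,v)=-F(v,u)$ forcing $h$ to vanish smoothly on the flat part of $\partial\tilde T$.
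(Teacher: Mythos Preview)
Your proof is correct and is essentially the same as the paper's: both hinge on the positive Jacobi field $\Phi=(1+|\nabla F|^{2})^{-1/2}$ coming from vertical translation, the lower bound $\Phi\gtrsim(1+R^{2})^{-1}$ from (\ref{coord 6a}), and the extra decay $|h|\lesssim r^{-2-\mu}$. The only cosmetic difference is packaging: the paper compares $h$ with the barrier $h_{\ve}=\ve\Phi$ on $\tilde T\cap B_{R}$ and sends $R\to\infty$, then $\ve\to 0$, whereas you pass directly to the quotient $\psi=h/\Phi$ and apply the maximum principle to $\mathrm{div}_{\Gamma}(\Phi^{2}\nabla_{\Gamma}\psi)=0$ --- your version in fact spells out why the ``comparison principle'' invoked in the paper is legitimate despite the wrong-sign zeroth-order term in $\cJ$.
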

\proof{}
Let $\ve>0$ be a small number and let
\begin{align}
h_\ve(x)=\frac{\ve}{\sqrt{1+|\nabla F(x)|^2}}.
\label{superl 19}
\end{align}
Observe that $h_\ve$ satisfies:
\begin{align*}
\cJ(h_\ve)=0,
\end{align*}
and also, by (\ref{coord 6a}),
\begin{align*}
h_\ve\geq \frac{C\ve}{1+r^2}.
\end{align*}
It follows that for all  sufficiently large $R$ we have
\begin{align*}
h_\ve\geq h, \quad \mbox{on}\ \partial(\tilde T\cap B_R(0)),
\end{align*}
hence by comparison principle we have
\begin{align*}
h_\ve\geq h,\quad  \mbox{in}\ \tilde T.
\end{align*}
Taking $\ve\to 0$ the assertion of the Lemma follows.
\qed
\proof{(of the Proposition \ref{prop superl 1})}
Since for $R>0$ the function $h_\ve$ defined (\ref{superl 19}) is a positive supersolution of (\ref{superl 14}) therefore for each $R>0$ there exists a unique solution $h_R$ of (\ref{superl 14}).

We claim that there exists $C>0$ independent of $R$ such that for all $f$ we have:
\begin{align}
\|h_R\|_{\infty, 2+\mu'}\leq C\|f\|_{\infty, 4+\mu},
\label{superl 20}
\end{align}
where $\nu=4+\mu$  and $0<\mu'<\mu$.
To prove the claim we will argue by contradiction. Assume then that there exist $f_n$, $R_n$ and $h_{R_n}$ such that $h_{R_n}$ is a solution of (\ref{superl 14}) in $\tilde T\cap B_{R_n}(0)$ with $f=f_n$ and that
\begin{align}
\|f_n\|_{\infty, 4+\mu}\to 0, \quad \mbox{while}\ \|h_{R_n}\|_{\infty, 2+\mu'}=1.
\label{superl 21}
\end{align}
Using Lemma \ref{lemma superl  1} we will  construct a supersolution of (\ref{superl 14}) in the set $\tilde T\cap\{R_0<r<R\}$. Let $0<\mu'<\mu$ be given and let :
\begin{align*}
0<\ve=\frac{\mu-\mu'}{2},
\end{align*}
be fixed. Further let:
\begin{align*}
\sigma_1=\frac{\ve}{2}, \quad \sigma=-\mu+\frac{\ve}{2}.
\end{align*}
Then we have
$0>\sigma>-1$,  $1>\sigma_1>0$, and
\begin{align}
\sigma+3\sigma_1=-\mu+2\ve=\mu'.
\label{superl 21a}
\end{align}
With $\sigma$, $\sigma_1$ as above  function
\begin{align}
h^+_{R_n}=C(1+\|f_n\|_{\infty, 4+\mu})\frac{r^\sigma t^{\sigma_1}}{\sqrt{1+|\nabla F_0|^2}},
\label{superl 22}
\end{align}
with some constant $C>0$ is a positive supersolution of (\ref{superl 14}) in $\tilde T\cap\{R_0<r<R_n\}$. This means that  there exists $R'>R_0$ such that for all sufficiently large $R_n$ we have
\begin{align*}
|h_{R_n}|\leq  \frac{1}{2 r^{2+\mu'}}, \quad \blue{R'<r<R_n},
\end{align*}
which means that the supremum of $|h_{R_n} r^{2+\mu'}|$ is taken on a compact set contained in $B_{R'+1}(0)$. This allows us to pass to the limit $n\to \infty$ and conclude that the limiting function $\tilde h$ satisfies the assumptions of Lemma \ref{lemma superl 2} and hence $\tilde h\equiv 0$. This is a contradiction with the fact that $\|\tilde h\|_{\infty, 2+\mu'}=1$. The proof of the claim complete.

The assertion of the Proposition follows now by a standard argument. We omit the details.
\qed

\subsection{A gluing procedure for the reduced problem}
Given $f$ such that either (\ref{superl 11c}) or (\ref{superl 11d}) is satisfied we consider
\begin{align}
\begin{aligned}
\cJ(h)&=f, \quad \inn \tilde T, \\
h&=0, \quad \mbox{on}\ \partial\tilde T.
\end{aligned}
\label{superl 23}
\end{align}
Let us notice that the theory of the previous section allows us to invert the Jacobi operator (and solve (\ref{superl 23})   in a space of functions whose decay is faster than $r^{-4}$. However we expect that the right hand side of the reduced problem decays only like $r^{-3}$.
In order to deal with this difficulty we  will use a gluing procedure that will describe in what follows.
\begin{proposition}\label{prop suprl 3}
There exists a solution of problem (\ref{superl 23}) such that
\begin{align}
\|h\|_{\infty, \nu'-2}+\|\nabla_{\Gamma} h\|_{\infty, \nu'-1}+\|\nabla_{\Gamma}^2 h\|_{p, \nu'}\leq C\|f\|_{\infty, \nu},
\label{superl 24}
\end{align}
where
\begin{align}\nu'&=
\begin{cases}
3, \quad \mbox{if (\ref{superl 11c}) holds},\\
3+\mu',   \quad 0<\mu'<3\mu-2, \quad \mbox{if (\ref{superl 11d}) holds}.
\end{cases}
\label{superl 25}
\end{align}
\end{proposition}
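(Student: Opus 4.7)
\proof{}
The plan is a gluing construction that marries the outer theory for the approximate operator $\tJ$ from Lemma \ref{lemma superl 2a} with the global solvability theory for $\cJ$ from Proposition \ref{prop superl 1}. The reason a single-step argument does not suffice is that Proposition \ref{prop superl 1} inverts $\cJ$ only when the right-hand side decays faster than $r^{-4}$, whereas assumptions \equ{superl 11c}--\equ{superl 11d} allow $f$ to decay only like $r^{-3}$ or $r^{-(3+\mu)}$. Conversely Lemma \ref{lemma superl 2a} handles precisely this slow decay, but only for the model operator $\tJ$ on $\Gamma_0$. The gap is bridged by the observation that the discrepancy $\cJ-\tJ$ gains an extra factor $r^{-\sigma}$ at infinity, a consequence of the sharp comparison between $\Gamma$ and $\Gamma_0$ proved in Section \ref{sec grad est}.

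First we fix $R_0 > r_1$ and apply Lemma \ref{lemma superl 2a} to produce $h_*$ solving
\begin{align*}
\tJ(h_*) = f \ \inn \tilde T \cap \{r > R_0+1\}, \qquad h_* = 0 \ \onn \partial(\tilde T\cap \{r > R_0+1\}),
\end{align*}
with the bound \equ{superl 11e} and exponent $\nu'$ as in \equ{superl 11f}. Next we choose a smooth cutoff $\chi(r)$ with $\chi\equiv 0$ for $r\le R_0+1$ and $\chi\equiv 1$ for $r\ge R_0+2$, identify $\Gamma_0$ with $\Gamma$ via the common projection onto $\R^8$, and define $h_0:=\chi\, h_*$, extended by zero. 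Then $h_0$ vanishes on $\partial\tilde T$ and satisfies an estimate of the form \equ{superl 24}, and the problem reduces to finding a correction $h_1$ solving $\cJ(h_1)=f_1$ in $\tilde T$ with $h_1=0$ on $\partial\tilde T$, where
\begin{align*}
f_1 \,:=\, f - \cJ(h_0) \,=\, f - \chi\cJ(h_*) - 2\nabla_\Gamma\chi\cdot\nabla_\Gamma h_* - h_*\Delta_\Gamma\chi.
\end{align*}

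The central step is to check that $f_1$ has enough decay for Proposition \ref{prop superl 1}. In the transition annulus $\{R_0+1 \le r \le R_0+2\}$ the cutoff-generated terms are compactly supported and bounded, and in $\{r \le R_0+1\}$ we have $f_1=f$, again bounded. The outer region $\{r\ge R_0+2\}$ is the only interesting one; there $\chi\equiv 1$ and $\tJ(h_*)=f$, so $f_1=-(\cJ-\tJ)(h_*)$. Combining \equ{coord 10} and \equ{coord 12} with the decay $|A_\Gamma|\sim r^{-1}$, we expect the local $L^p$ bound
\begin{align*}
|(\cJ-\tJ)(h_*)| \,\le\, \frac{C}{r^{2+\sigma}}|\nabla_{\Gamma_0}^2 h_*| + \frac{C}{r^{3+\sigma}}|\nabla_{\Gamma_0} h_*| + \frac{C}{r^{4+\sigma}}|h_*|,
\end{align*}
which with \equ{superl 11e} yields $\|f_1\|_{p,\nu_1}\le C\|f\|_{\infty,\nu}$ for $\nu_1 := \nu'+2+\sigma > 4$. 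Proposition \ref{prop superl 1} then provides $h_1$ with
\begin{align*}
\|h_1\|_{\infty,\nu_1'-2} + \|\nabla_\Gamma h_1\|_{\infty,\nu_1'-1} + \|\nabla_\Gamma^2 h_1\|_{p,\nu_1'} \,\le\, C\|f\|_{\infty,\nu}
\end{align*}
for any $4<\nu_1'<\nu_1$; taking $\nu_1' > \nu'$ guarantees that $h_1$ has strictly better decay than $h_0$, so $h:=h_0+h_1$ solves \equ{superl 23} and satisfies \equ{superl 24} with the exponent $\nu'$ of \equ{superl 25}. The symmetry \equ{red 15} is preserved throughout since $\cJ$, $\tJ$, the cutoff and $f$ are all compatible with the reflection $(u,v)\mapsto(v,u)$. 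The main technical obstacle is the verification of the decay gain in $f_1$, which ultimately boils down to the quantitative comparison between $\Gamma$ and $\Gamma_0$ of Section \ref{sec grad est}.
\qed
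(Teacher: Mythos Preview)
Your argument is correct and follows the paper's proof essentially step for step: solve the outer problem for $\tJ$ via Lemma \ref{lemma superl 2a}, cut it off, and use \equ{coord 10}--\equ{coord 12} to see that the discrepancy $(\cJ-\tJ)(h_*)$ gains an extra $r^{-(2+\sigma)}$, so the residual right-hand side decays faster than $r^{-4}$ and falls into the range of Proposition \ref{prop superl 1}. One small point to tidy up: as written you only conclude $\|f_1\|_{p,\nu_1}<\infty$, whereas Proposition \ref{prop superl 1} in its stated form asks for the $L^\infty$ bound \equ{superl 13}; the paper asserts the $L^\infty$ version \equ{superl 28} directly, and this is what you should claim as well (it is justified because in the application $f$ is smooth, so $h_*\in C^{2}$ locally with the expected weighted decay on second derivatives).
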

\proof{} Let $\tilde h$  be the solution of the outer problem (\ref{superl 11a}). We will look for a solution of (\ref{superl 23}) in the form:
\begin{align}
h=\eta_R\tilde h+\tth,
\label{superl 26}
\end{align}
where $\eta_R$ is a cut of function such that $\eta_R(r)=0$ if $r<R$ and $\eta_R(r)=1$ for $r>R+1$, for some fixed $R>R_0+2$. Notice that in principle  function $\tilde h$ is defined on $\Gamma_0$ rather than on $\Gamma$ but  using the $(u,v)$ coordinates we can assume that $\tilde h$ is a function on $\Gamma$. Then we have:
\begin{align}
\begin{aligned}
\cJ(\tth)&=f-\cJ(\eta_R\tilde h), \quad \inn \tilde T, \\
\tth&=0, \quad \partial \tilde T.
\end{aligned}
\label{superl 26a}
\end{align}
 We have:
\begin{align}
\begin{aligned}
f-\cJ(\eta_R\tilde h)&=\eta_R\big(\tJ(\tilde h)-\cJ(\tilde h)\big)-\tilde h\Delta_\Gamma \eta_R-2\nabla_\Gamma\tilde h\cdot \nabla_\Gamma\eta_R+(1-\eta_R)f\\
&=\tilde f.
\end{aligned}
\label{superl 27}\end{align}
Observe that the last three terms   in (\ref{superl 27}) are compactly supported. On the other hand, using (\ref{coord 10}) and (\ref{coord 12}), we get that  if $\|\tilde h\|_{\infty, \nu'-2}<\infty$ then
\begin{align}
\|\eta_R\big(\tJ(\tilde h)-\cJ(\tilde h)\big)\|_{\infty, \nu'+1+\varsigma}<\infty,
\label{superl 28}
\end{align}
with some $\varsigma>0$. This means that
\begin{align}
\|\tilde f\|_{\infty, \nu}<\infty,
\label{superl 29}
\end{align}
with some $\nu>4$. This allows to  use Proposition \ref{prop superl 1} to solve (\ref{superl 26a}). Combining this with  the results of Proposition \ref{lemma superl 2a} we end the proof.
\qed

{

\subsection{The inverse of the Jacobi operator in $L^{p,\nu}$}
Notice that so far we have assumed that the right hand sides of the problems involving the operators $\cJ, \tJ$ are bounded in $L^{\infty, \nu}$. However in the case $\nu\geq 4$ we have  to deal with the right hand sides in $L^{p,\nu}$, where $p>9$. Now we will show how to overcome this technical difficulty.
We will prove first:
\begin{lemma}\label{lemma  superl 3}
Let us consider problem (\ref{superl 11a}) but now assuming that
with some  $\nu\geq 3+\mu$, $\mu\in (\frac{2}{3},1)$ we have
\begin{align}
\|f\|_{p, \nu}<\infty, \quad p>9.
\label{superl 30}
\end{align}
There exists a number $C>0$ such that for each $f$ with $\|f\|_{p,\nu}<\infty$ there is a solution
$h$ to problem (\ref{superl 11a})
with $\|h\|_{p,\nu'-2}<+\infty$, where $\nu'\leq \nu$ satisfies:
\begin{align}
\nu'=3+\mu',   \quad 0<\mu'<3\mu-2,
\label{superl 31}
\end{align}
This solution  satisfies the estimate
\begin{align}
\|\nabla^2_{\Gamma_0} h\|_{p,\nu'} + \|\nabla_{\Gamma_0} h\|_{\infty,\nu'-1} + \|h\|_{\infty,\nu'-2} \ \le \ C\, \|\red{f}\|_{p,\nu}.
\label{superl 32}
\end{align}
\end{lemma}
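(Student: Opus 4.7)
The plan is to reduce the $L^{p,\nu}$ case to the $L^{\infty,\nu}$ case already treated in Lemma \ref{lemma superl 2a} via an approximation scheme, together with a uniform a priori estimate proved by contradiction.

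First I would approximate $f$ by the truncations
\begin{align*}
f_n(y) = \mathrm{sign}(f(y))\,\min\{|f(y)|, n\}\,\chi_{B_n(0)}(y),
\end{align*}
which are bounded, compactly supported, satisfy $\|f_n\|_{p,\nu} \le \|f\|_{p,\nu}$, and converge to $f$ in $L^{p,\nu}_{\mathrm{loc}}$. Since each $f_n \in L^{\infty,\nu}$ (in fact with $\nu$ replaced by something larger, trivially), Lemma \ref{lemma superl 2a} produces solutions $h_n$ of (\ref{superl 11a}) with right-hand side $f_n$. The goal is then to obtain a uniform bound $\|h_n\|_{\infty,\nu'-2} \le C\|f_n\|_{p,\nu}$ with $C$ independent of $n$, and pass to the limit.

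The heart of the matter is the uniform a priori estimate. I would prove it by contradiction, following the blueprint of Proposition \ref{prop superl 1}: assume sequences $h_{n_k}$, $f_{n_k}$ with $\|h_{n_k}\|_{\infty,\nu'-2} = 1$ and $\|f_{n_k}\|_{p,\nu} \to 0$. Taking the supersolution $h^+ = C(1+\|f_{n_k}\|_{p,\nu})\,r^{\sigma}t^{\sigma_1}/\sqrt{1+|\nabla F_0|^2}$ with $\sigma+3\sigma_1 = \mu'+2$ as in (\ref{superl 21a}), Corollary \ref{cor superl 1} in the form (\ref{superl 10}) together with the maximum principle forces $r^{\nu'-2}|h_{n_k}|$ to attain its supremum in a fixed compact set (the key point is that (\ref{superl 10}) yields a pointwise supersolution bound, which dominates the $L^{p,\nu}$ norm of $f_{n_k}$ after local averaging since $p>9 = \dim\Gamma_0 + 1$). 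On this compact set, local Calder\'on--Zygmund estimates applied to $\tJ(h_{n_k}) = f_{n_k}$ in the coordinates $(t,s)$, combined with Morrey embedding $W^{2,p} \hookrightarrow C^{1,\alpha}$ (valid since $p>9>8$), give $C^{1,\alpha}$ precompactness. A subsequence converges to a bounded $\tilde h$ satisfying $\tJ(\tilde h) = 0$ in the outer region with $\tilde h = 0$ on the inner boundary and $\|\tilde h\|_{\infty,\nu'-2} = 1$. The nondegeneracy argument using the barrier $h_\varepsilon = \varepsilon/\sqrt{1+|\nabla F_0|^2}$ (as in Lemma \ref{lemma superl 2}, adapted to the outer problem since this barrier vanishes at infinity and satisfies $\tJ(h_\varepsilon)=0$) forces $\tilde h \equiv 0$, a contradiction.

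With the uniform weighted $L^\infty$ bound established, passing to the limit $n\to\infty$ is standard: $h_n$ is locally uniformly bounded, and applying interior $W^{2,p}$ theory to $\tJ(h_n)=f_n$ in local charts gives local $W^{2,p}$ compactness, hence $C^{1,\alpha}_{\mathrm{loc}}$ convergence to a limit $h$ solving (\ref{superl 11a}). The full estimate (\ref{superl 32}) then follows by writing the bound for $\|\nabla^2_{\Gamma_0}h\|_{p,\nu'}$ via local Calder\'on--Zygmund estimates on unit balls (absorbing the weight $r^{\nu'}_\alpha$, which is essentially constant on each such ball by (\ref{red 3a})) and deducing $\|\nabla_{\Gamma_0} h\|_{\infty,\nu'-1}$ from Morrey embedding.

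The step I expect to be most delicate is ensuring that in the contradiction argument, the compactness from $L^p$ bounds on $f_{n_k}$ is strong enough to rule out loss of $h_{n_k}$ to infinity in the weight direction: while the pointwise bound $|h_{n_k}| \le r^{-(\nu'-2)}$ is automatic from the normalization, showing that the maximum of the weighted quantity is not escaping to the boundary at $r\to\infty$ requires the pointwise supersolution comparison argument to absorb the $L^p$ data---this is where the inequality $p>9$ enters crucially, since it allows the local $L^p$ norm to control the local $L^\infty$ norm via Morrey embedding, converting the integral smallness of $f_{n_k}$ into the pointwise smallness needed to compare against $h^+$.
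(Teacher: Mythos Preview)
Your approach has a genuine gap at exactly the step you flag as ``most delicate.'' The supersolution comparison you invoke requires a \emph{pointwise} inequality $|f_{n_k}(y)| \le \tJ(h^+)(y)$ (or at least a barrier-type argument that eventually rests on such a bound), and this is simply unavailable from $\|f_{n_k}\|_{p,\nu}\to 0$. Morrey embedding converts $W^{2,p}$ bounds on the \emph{solution} into $C^{1,\alpha}$ bounds on the solution; it cannot convert $L^p$ bounds on the \emph{data} $f$ into $L^\infty$ bounds on $f$. So the mechanism you propose to ``absorb the $L^p$ data'' into the pointwise supersolution comparison does not exist. Without it, you cannot confine the maximum of $r^{\nu'-2}|h_{n_k}|$ to a compact set, and if the maxima escape to infinity, the blow-up limit is a bounded solution of an equation of the form $\Delta\tilde\psi + a^*\tilde\psi = 0$ with $a^*\ge 0$, which can be nontrivial (e.g.\ constants when $a^*\equiv 0$), so no contradiction follows.

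The paper circumvents this by a regularization trick rather than by direct approximation of $f$. One first solves $\lambda^2\tJ(h)-Mh=f$ with $M>2\sup|A_{\Gamma_0}|^2$: for this operator the blow-up argument \emph{does} work with $L^p$ data, because the rescaled limit equation is $\Delta\tilde\psi+\tilde a^*\tilde\psi-M\tilde\psi=0$ with $\tilde a^*\le M/2$, and now maximum principle kills $\tilde\psi$. This yields a bounded linear map $\tau:L^{p,\nu}\to L^{\infty,\nu}$. Writing $h=\lambda^2\tau(f)+\tilde h$ reduces the original problem to $\tJ(\tilde h)=-M\tau(f)$, whose right-hand side is now in $L^{\infty,\nu}$, and Lemma~\ref{lemma superl 2a} applies directly. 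The key idea you are missing is this $-M$ perturbation, which is what makes the compactness argument robust enough to handle merely $L^p$ forcing.
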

\proof{}
Let us set
$$
h =  r^{-\nu} \psi, \ttt f=r^\nu f,
$$
so that Problem (\ref{superl 11a})  reads
\begin{align}
\begin{aligned}
{\tJ} (\psi ) +r^{\nu}\psi\Delta_{\Gamma_0} r^{-\nu}+2 r^\nu\nabla_{\Gamma_0}\psi\cdot
\nabla_{\Gamma_0} r^{-\nu}&=\ttt f \inn \tilde T\cap\{r>R_0+1\},
\\
\psi &= 0\onn \partial (\tilde T\cap\{r>R_0+1\}).
\end{aligned}
\label{super 33}
\end{align}
%Then $\bar \psi := \mu^{-1}\bar h$ satisfies
%$$
%-\ttt {\mathcal J} (\bar \psi ) \ge 1 \inn H .
%$$
We will denote:
\begin{align*}
\ttt\J(\psi)={\tJ} (\psi ) +r^{\nu}\psi\Delta_{\Gamma_0} r^{-\nu}+2 r^\nu\nabla_{\Gamma_0}\psi\cdot
\nabla_{\Gamma_0} r^{-\nu}.
\end{align*}
Let us consider now the following problem
for $\ttt f\in L^\infty$.
\begin{align}
\begin{aligned}
\la^2 \ttt {\J} ( \psi ) - M \psi &=  \ttt f  \inn \tilde T\cap\{r>R_0+1\}, \\
\psi &=0 \onn \partial (\tilde T\cap\{r>R_0+1\}),
\end{aligned}
\label{yy}
\end{align}
where $\la>0$, and $M>0$ is such that
\begin{align*}
\sup_{\Gamma_0}|A_{\Gamma_0}|^2<\frac{M}{2}.
\end{align*}
We easily check that there is a $\la_0>0$ such that whenever $\la<\la_0$
$$
\la^2 \ttt {\J} ( 1) -M \blue{<-} \frac {M}{2}  \inn \tilde T\cap\{r>R_0+1\},
$$
and hence problem \equ{yy} has a unique, bounded solution.
Let us scale out $\la$ by setting $\psi_\lambda(y)=\psi(\lambda y)$, where $y\in \Gamma_{0,\lambda}$. Then equation \equ{yy} takes the form,
\begin{align}
\begin{aligned}
\ttt {\J}_\la  ( \psi_\lambda ) -M \psi_\lambda &=  \ttt f_\lambda  \inn \tilde T_\lambda\cap\{r>\lambda^{-1}(R_0+1)\} ,
\\
\psi &=0 \onn \partial (\tilde T_\lambda\cap\{r>\lambda^{-1}(R_0+1)\}),
\end{aligned}
\label{yyy}
\end{align}
where
\begin{align*}
\ttt {\J}_\la  (\psi_\lambda) =
\Delta_{\Gamma_{0,\la}}\psi_\la+|A_{\Gamma_{0,\la}}|^2 \psi_\la+r^{\nu}\psi\Delta_{\Gamma_{0,\la}} r^{-\nu}+2 r^\nu\nabla_{\Gamma_{0,\la}}\psi\cdot
\nabla_{\Gamma_{0,\la}} r^{-\nu}.
\end{align*}
We claim that
there exists a number $C>0$ such that for all sufficiently small $\la$ the following holds:
any  bounded solution
 $\psi_\lambda $ of problem $\equ{yyy}$ satisfies
the a priori estimate
\be
\lambda^{-{8/p}}\|\nabla_{\Gamma_{0,\lambda}}^2\psi_\lambda \|_{p,0} +
\lambda\|\nabla_{\Gamma_{0,\la}}\psi_\lambda\|_{\infty,0} + \lambda^2\|\psi_\lambda\|_{\infty,0} \
\le\ C\la^{2{-8/p}}\|\ttt f_\la\|_{p,0}.
\label{ccc}\ee
We will prove  the existence of $C$ for which
\be
 \|\psi_\la \|_{\infty,0} \ \le\ C\,\la^{-{8/p}} \|\ttt f_\la\|_{p,0}.
\label{asss1}\ee
Assuming the opposite,  we have  sequences $\la  =\la _n$, $\ttt f_n$, $\psi _n$ for which problem \equ{yyy} is satisfied and
$$
   \|\psi _n \|_{\infty,0 }= 1, \quad \lambda^{-{8/p}}_n\|\ttt f_n\|_{p,0} \to 0 ,\quad \la _n \to 0 .
$$
Let us assume that
$y_n\in \Gamma_{0,\lambda_n}\cap\{r>\lambda_n^{-1}(R_0+1)\}$ is such that
$$ |\psi _n (y_n) |\to 1.
$$
Let us consider the local system of coordinates around $y_n$ given by the graph of the function $G_{0,n}(t)$, i.e
\begin{align*}
\tilde \Gamma_{\la_n}\cap B(y_n,\theta_0|y_n|)=\big\{\big(t, G_{0,n}(t)\big)\mid |t|< \theta_0|y_n|\big\}.
\end{align*}
and
define
 $$
\tilde
\psi _n (t) =\psi_n\big(y_n+(t,G_{0,n}(t)\big).
$$
Let us observe that  the components of the metric tensor associated with these local coordinates satisfy:
$$
\tg_{\la_n}=I+\frac{1}{r^2(\la_ny_n)}O(\la_n^2|t|^2),
$$
hence, locally over compacts:
$$
\tg_{\la_n}^{ij}\to \delta_{ij}.
$$
uniformly.
Let us observe further that:
$$
|A_{\Gamma_{0,\la_n}}|^2\leq \frac{M}{2},
$$
because of the definition of $M$.  Thus we can assume that, as $\la_n\to 0$ we have uniformly over compact sets:
\begin{align*}
\lim_{n\to \infty}|A_{\Gamma_{0,\la_n}}|^2\to \ttt a^*(t),
\end{align*}
where
\begin{align*}
|\ttt a^*(t)|\leq \frac{M}{2}.
\end{align*}
Furthermore, expressing the other coefficients in the definition of $\ttt {\J}_\la$ in local coordinates we get that:
\begin{align*}
|r^\nu\Delta_{\Gamma_{0,\la n}} r^{-\nu}|+|r^\nu\nabla_{\Gamma_{0,\la_n}} r^{-\nu}|\leq \frac{C\la_n}{r(\la_n y_n)},
\end{align*}
uniformly over  compact sets.

Standard elliptic estimates  give local uniform $C^1$
bound for $\tilde \psi _n$, which implies that
we may assume
$$
\tilde \psi _n \to \tilde \psi \ne 0,
$$
locally in $C^1$-sense over compacts.
We get in the limit
the equation
\begin{align}
\Delta \tilde\psi+\ttt a^*(t)\tilde\psi-M\tilde\psi=0, \quad \inn \R^8.
\label{superl 34}
\end{align}
Since by our assumption  $\tilde\psi$ is bounded, maximum principle yields  $\ttt\psi  =0$.
We have reached a contradiction, hence estimate
\equ{asss1} holds true.
The estimates for first and second derivatives
follow from local elliptic $L^p$-theory,
and the proof of the a priori estimate \equ{ccc} is concluded.
%Going back to the original   variables we get an priori estimate of the form (\ref{superl 32}).

\medskip
Now, given $\blue{\ttt f}$ with $\|\blue{\ttt f}\|_{p,\nu}<+\infty$, existence of a solution to problem
\equ{yy} which satisfies estimate
\equ{ccc} follows  by approximating $\ttt f$ by a sequence of bounded functions whose $\|\ \|_{p,0} $-norm is controlled by that of $\ttt f$. The a priori
estimate itself yields uniqueness of such a solution.
Let us translate the result obtained in terms of $\blue{\psi=r^{\nu} h}$ in original variable. We have found that, fixed $\la>0$ sufficiently small,
there is a $C=C_\la>0$ such that
given $f$ with $\|f\|_{p,\nu}<+\infty$,  there is a unique solution $h := \tau(f)$ with $\|h\|_{\infty,\nu}<+\infty$ to the problem
\begin{align}
\begin{aligned}
\la^2{\tJ} (h) -M h &=  f,  \inn \tilde T\cap\{r>R_0+1\},\\
h &=0, \onn \partial(\tilde T\cap\{r>R_0+1\}),
\end{aligned}
\label{yyyy}
\end{align}
that satisfies the estimate
\be
\|\nabla_{\Gamma_\lambda}^2h\|_{p,\nu} + \|\nabla_{\Gamma_0} h\|_{\infty,\nu} + \|h\|_{\infty,\nu} \ \le \ C_\lambda
\|f\|_{p,\nu}.
\label{ap5}\ee
Let us consider now our original problem \equ{superl 11a}, and let us decompose
$$
h= \la^2\tau(f) + \ttt h .
$$
Then:
\begin{align*}
\tJ(h)&=\la^2\tJ\big(\tau(f)\big)+\tJ(\ttt h)\\
&=\tJ(\ttt h)+M\tau(f)+f.
\end{align*}
The equation in terms of $\ttt h$ reads
\begin{align}
\begin{aligned}
{\mathcal J} ( \ttt h ) &= -M\tau(f)    \inn \tilde T\cap\{r>R_0+1\},
\\
h &=0 \onn \partial(\tilde T\cap\{r>R_0+1\}).
\end{aligned}
\label{yyyyy}
\end{align}
Since $\|\tau(g)\|_{\infty,\nu}< +\infty$, Lemma \ref{lemma superl 2a}
yield the existence of a unique solution $\ttt h$ to problem $\equ{yyyyy}$ such that
$$
\|\nabla_{\Gamma_0}^2 \ttt h\|_{p,\nu'}+\|\nabla_{\Gamma_0} \ttt h\|_{\infty, \nu'-1}+\|\ttt h\|_{\infty,\nu'-2}  \le CM\|\tau(f)\|_{\infty,\nu} .
$$
This and  the a priori estimate \equ{ap5} concludes the proof of the proposition.
\qed

\begin{proposition}\label{prop  superl 4}
Let us consider problem (\ref{superl 23}) but now assuming that
with some  $\nu\geq 3+\mu$, $\mu\in (\frac{2}{3},1)$ we have
\begin{align}
\|f\|_{p, \nu}<\infty, \quad p>9.
\label{superl 35}
\end{align}
There exists a solution of problem (\ref{superl 23}) such that
\begin{align}
\|h\|_{\infty, \nu'-2}+\|\nabla_{\Gamma} h\|_{\infty, \nu'-1}+\|\nabla_{\Gamma}^2 h\|_{p, \nu'}\leq C\|f\|_{p, \nu},
\label{superl 36}
\end{align}
where
\begin{align}\nu'&=
3+\mu',   \quad 0<\mu'<3\mu-2.
\label{superl 37}
\end{align}
\end{proposition}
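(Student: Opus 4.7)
\proof{(Proposal)}
The plan is to mimic the gluing argument of Proposition \ref{prop suprl 3}, combined with the trick used in Lemma \ref{lemma  superl 3} to upgrade the inner inversion of the Jacobi operator from $L^{\infty}$ to $L^{p}$ data. Concretely, given $f$ with $\|f\|_{p,\nu}<\infty$, let $\tilde h$ be the solution to the outer problem \equ{superl 11a} provided by Lemma \ref{lemma  superl 3}, so that
\begin{align*}
\|\nabla_{\Gamma_0}^2\tilde h\|_{p,\nu'} + \|\nabla_{\Gamma_0}\tilde h\|_{\infty,\nu'-1} + \|\tilde h\|_{\infty,\nu'-2}\ \le\ C\|f\|_{p,\nu}.
\end{align*}
Writing, exactly as in the proof of Proposition \ref{prop suprl 3}, $h=\eta_R\tilde h + \tth$, the function $\tth$ solves $\cJ(\tth)=\tilde f$ in $\tilde T$ with vanishing boundary data, where
\begin{align*}
\tilde f = \eta_R\big(\tJ(\tilde h)-\cJ(\tilde h)\big) - \tilde h \Delta_\Gamma \eta_R - 2\nabla_\Gamma\tilde h\cdot\nabla_\Gamma\eta_R +(1-\eta_R)f.
\end{align*}

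Next I would estimate each piece of $\tilde f$ in the appropriate norm. The two cutoff terms and the piece $(1-\eta_R)f$ are compactly supported; the first two are in $L^{\infty}$ by the bounds on $\tilde h$, while $(1-\eta_R)f$ is only in $L^{p}$ but still compactly supported. The comparison term $\eta_R(\tJ(\tilde h)-\cJ(\tilde h))$ is controlled, thanks to \equ{coord 10} and \equ{coord 12}, in $\|\cdot\|_{\infty,\nu'+1+\varsigma}$ for some $\varsigma>0$. Thus $\tilde f$ belongs to a mixed $L^{p}+L^{\infty}$ class with weight $r^{\tilde\nu}$ for some $\tilde\nu>4$; in particular, the only obstruction to applying Proposition \ref{prop superl 1} directly is the compactly supported $L^{p}$ piece from $(1-\eta_R)f$.

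The main technical step, and the one I expect to be the real obstacle, is therefore producing an $L^{p}$-version of Proposition \ref{prop superl 1}: one has to invert $\cJ$ in $\tilde T$ with right-hand side $F\in L^{p,\nu}$, $\nu>4$. I would reproduce the scheme of Lemma \ref{lemma  superl 3} in the global setting: decompose $\tth = \la^2\tau(F) + \tth_\ast$, where $\tau(F)$ solves the perturbed, coercive equation $\la^2\cJ(\tau)-M\tau=F$ in $\tilde T$ for some large $M$ with $\sup_\Gamma|A_\Gamma|^2<M/2$ and small $\la$. A rescaling $\psi_\la(y)=\tau(\la y)$ reduces $\la^2\cJ-M$ to $\ttt{\J}_\la-M$ on the blown-up surface $\Gamma_{1/\la}$; a contradiction-and-blow-up argument exactly analogous to the one in the proof of Lemma \ref{lemma  superl 3}, using that the limiting equation $\Delta\psi+\ttt a^\ast(t)\psi-M\psi=0$ on $\R^{8}$ admits only the trivial bounded solution by maximum principle, gives the a priori bound
\begin{align*}
\|\nabla_\Gamma^2\tau(F)\|_{p,\nu} + \|\nabla_\Gamma\tau(F)\|_{\infty,\nu} + \|\tau(F)\|_{\infty,\nu}\le C\|F\|_{p,\nu}.
\end{align*}
Existence for this perturbed problem follows by approximating $F$ by bounded functions. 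Then $\tth_\ast$ satisfies $\cJ(\tth_\ast)=-M\tau(F)$ with $L^{\infty,\nu}$ right-hand side, so Proposition \ref{prop superl 1} applies and delivers the desired estimates on $\tth_\ast$, hence on $\tth$ and finally on $h$ in the norms \equ{superl 36}--\equ{superl 37}. The delicate point in the contradiction argument is to ensure that the coefficients $r^{\nu}\Delta_\Gamma r^{-\nu}$ and $r^{\nu}\nabla_\Gamma r^{-\nu}$ arising after the substitution $h=r^{-\nu}\psi$ vanish as $\la\to 0$ uniformly on compacts of the rescaled surface, which is exactly the content of the estimates used to derive \equ{superl 34}; these carry over to $\cJ$ via \equ{coord 10} and \equ{coord 12} with an error that also vanishes in the limit. \qed
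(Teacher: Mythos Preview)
Your approach is correct and matches the paper's own proof essentially line for line: solve the outer problem via Lemma \ref{lemma  superl 3}, glue as in Proposition \ref{prop suprl 3}, then for the resulting inner problem $\cJ(\tth)=\tilde f$ with $L^{p}$ data of decay $>4$ run the regularization $\la^{2}\cJ-M$ (blow-up/contradiction as in Lemma \ref{lemma  superl 3}, now on $\Gamma$) to produce $\tau(\tilde f)\in L^{\infty,\tilde\nu}$ and finish with Proposition \ref{prop superl 1}. One small imprecision: the comparison term $\eta_{R}(\tJ(\tilde h)-\cJ(\tilde h))$ involves the full Hessian of $\tilde h$, which Lemma \ref{lemma  superl 3} only controls in $L^{p}$, so this piece is in $\|\cdot\|_{p,\nu'+2+\varsigma}$ rather than $\|\cdot\|_{\infty,\nu'+1+\varsigma}$; but since your inner inversion is designed for $L^{p}$ data anyway, this does not affect the argument.
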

\proof{}
In order to establish (\ref{superl 36}) we first solve the outer problem (\ref{superl 11a}) using the regularization procedure  described in the Lemma \ref{lemma superl 3}. Second, we have to deal with the problem:
\begin{align}
\begin{aligned}
\cJ(h)&=f, \quad \inn \tilde T, \\
h&=0, \quad \mbox{on}\ \partial\tilde T.
\end{aligned}
\label{superl 38}
\end{align}
where now $f$ satisfies:
\begin{align*}
\|f\|_{p, \nu+1+\sigma}<\infty,
\end{align*}
with some $\sigma>0$. Notice that  this is basically the same problem as (\ref{superl 23}) except that the right hand side is only an $L^{p,\nu+1+\sigma}$ function. At this point we need to use the regularization procedure similar to the one described in Lemma \ref{lemma superl 3}. Namely we solve:
\begin{align}
\begin{aligned}
\lambda^2\cJ(h)-Mh&=f, \quad \inn \tilde T, \\
h&=0, \quad \mbox{on}\ \partial\tilde T.
\end{aligned}
\label{superl 39}
\end{align}
 The existence of a solution of this problem, such that:
\begin{align*}
\|\nabla_{\Gamma}^2 h\|_{p, \nu+1+\sigma}+\|\nabla_\Gamma h\|_{\infty,\nu+1+\sigma}+\|h\|_{\infty, \nu+1+\sigma}\leq  C_\lambda\|f\|_{p,\nu+1+\sigma}
\end{align*}
can be shown using essentially the same argument as the one in the proof of Lemma \ref{lemma superl 3}.  We omit the details. After this step  we conclude the proof of the proposition.

\qed

We will now summarize our results stating them in the form more suitable for the reduced problem on $\Gamma_\alpha$. Let us recall (see (\ref{red 13}))  that the basic problem we need to solve is:
\begin{align}
\begin{aligned}
\cJ_\alpha&=f_\alpha,\inn \tilde T,\\
h_\alpha&=0, \onn \partial\tilde T,
\end{aligned}
\label{superl 40}
\end{align}
where
\begin{align}
\cJ_\alpha(h_\alpha)=\Delta_{\Gamma_\alpha}h_\alpha+|A_{\Gamma_\alpha}|^2 h_\alpha.
\label{superl 41}
\end{align}
As for the right hand side of (\ref{superl 40})  we assume that  one of the following holds:
\begin{enumerate}
\item Either $\nu=3$
\begin{align}
\|f_\alpha\|_{\infty, \nu}<C\alpha^3, \quad \mbox{and}\ |f_\alpha|\leq C\frac{\alpha^3g(\theta)^{\sigma_1}}{r_\alpha^3}, \quad r>R_0,
\label{superl 42}
\end{align}
with some $\sigma_1\in (1/3, 2/3)$;
\item or $\nu=3$,  and
\begin{align}
\|f_\alpha\|_{p, \nu+1}<C\alpha^{3-8/p},
\label{superl 43}
\end{align}
which is consistent with (\ref{red 14}).
\end{enumerate}
First we state a counterpart of Proposition:
\begin{proposition}\label{prop suprl 5}
Let us assume that (\ref{superl 42}) holds.
There exists a solution of problem (\ref{superl 40}) such that
\begin{align}
\alpha^2\|h_\alpha\|_{\infty, 1}+\alpha\|\nabla_{\Gamma_\blue{\alpha}} h_\alpha\|_{\infty, 2}+\alpha^{-8/p}\|\nabla_{\Gamma_\blue{\alpha}}^2 h\|_{p, 3}\leq C\|f_\alpha\|_{\infty, 3}.
\label{superl 44}
\end{align}
\end{proposition}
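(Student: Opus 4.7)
My plan is to reduce Proposition~\ref{prop suprl 5} to Proposition~\ref{prop suprl 3} (the unscaled version, $\alpha=1$) by a direct rescaling argument, since $\Gamma_\alpha = \alpha^{-1}\Gamma$ by construction. Given $h_\alpha$ on $\Gamma_\alpha$ and $f_\alpha$ satisfying (\ref{superl 42}), I would set
\begin{align*}
h(x) := h_\alpha(\alpha^{-1}x), \qquad f(x) := \alpha^{-2} f_\alpha(\alpha^{-1}x), \qquad x \in \Gamma.
\end{align*}
Since tangent vectors, gradients, Laplacians and the second fundamental form on $\Gamma_\alpha$ scale like $\alpha^{-1}$, $\alpha$, $\alpha^2$, and $\alpha$ respectively (cf.~(\ref{l24})), a direct computation gives the identity $\cJ_\alpha(h_\alpha)(y) = \alpha^2 \cJ(h)(\alpha y)$. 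Thus $\cJ_\alpha(h_\alpha)=f_\alpha$ in $\tilde T$ with homogeneous Dirichlet data is equivalent to $\cJ(h)=f$ in the corresponding sector of $\Gamma$ with the same boundary condition; moreover the odd symmetry $(u,v)\mapsto -(v,u)$ built into $f_\alpha$ via (\ref{red 17})--(\ref{red 17 0}) is preserved under dilation and produces a solution that vanishes on $\{u=v\}$.

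The second step is to verify that the rescaled datum $f$ satisfies hypothesis (\ref{superl 11c}) of Proposition~\ref{prop suprl 3}. The polar angle $\theta$ (and hence $g(\theta)$ and the exponent $\sigma_1$) is invariant under dilations in $\R^8$, and for $|\Pi_{\R^8}(y)|$ large we have $r_\alpha(y)\sim \alpha|\Pi_{\R^8}(y)|=r(\alpha y)$. The pointwise bound in (\ref{superl 42}) therefore becomes
\begin{align*}
|f(x)| \leq C\alpha\, \frac{g(\theta)^{\sigma_1}}{r(x)^3}, \quad r(x)>\alpha R_0,
\end{align*}
and $\|f\|_{\infty,3}\leq C\alpha^{-2}\|f_\alpha\|_{\infty,3}$. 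Hence Proposition~\ref{prop suprl 3} applies and delivers a solution $h$ on $\Gamma$ with
\begin{align*}
\|h\|_{\infty,1}+\|\nabla_\Gamma h\|_{\infty,2}+\|\nabla^2_\Gamma h\|_{p,3}\ \leq\ C\|f\|_{\infty,3}\ \leq\ C\alpha^{-2}\|f_\alpha\|_{\infty,3}.
\end{align*}

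Finally I would translate these estimates back to $\Gamma_\alpha$. The chain rule gives $\nabla_{\Gamma_\alpha}h_\alpha(y)=\alpha\nabla_\Gamma h(\alpha y)$ and $\nabla^2_{\Gamma_\alpha}h_\alpha(y)=\alpha^2\nabla^2_\Gamma h(\alpha y)$, while the change of variables $y=\alpha^{-1}x$ carries $B(y,\theta_0\alpha^{-1})\cap\Gamma_\alpha$ onto $B(\alpha y,\theta_0)\cap\Gamma$ with $dV_{\Gamma_\alpha}(y)=\alpha^{-8}dV_\Gamma(\alpha y)$. This yields the scaling identities
\begin{align*}
\|h_\alpha\|_{\infty,1}=\|h\|_{\infty,1}, \quad \|\nabla_{\Gamma_\alpha}h_\alpha\|_{\infty,2}=\alpha\|\nabla_\Gamma h\|_{\infty,2}, \quad \|\nabla^2_{\Gamma_\alpha}h_\alpha\|_{p,3}=\alpha^{2-8/p}\|\nabla^2_\Gamma h\|_{p,3},
\end{align*}
and multiplying by $\alpha^2$, $\alpha$, and $\alpha^{8/p}$ respectively reproduces exactly the three terms of (\ref{superl 44}) (matching the $\|\cdot\|_{*,p,\nu}$-norm from (\ref{l16})). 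The proof is therefore a bookkeeping exercise in dilation, with no new analysis needed beyond what was developed on $\Gamma$. The only mildly subtle point — and the main place to be careful — is the volume-element factor $\alpha^{-8/p}$ in the $L^p$ norm together with the discrepancy between $r_\alpha=\sqrt{1+\alpha^2|\Pi_{\R^8}(\cdot)|^2}$ and $r\circ(\alpha\,\cdot)$ near the vertex of the sector; since all three norms are governed by their behavior at infinity (where the two weights are comparable) this causes no real trouble.
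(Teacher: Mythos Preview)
Your proof is correct and is precisely the approach the paper has in mind: Proposition~\ref{prop suprl 5} is stated without proof as the $\alpha$-rescaled counterpart of Proposition~\ref{prop suprl 3}, and your dilation argument (including the careful bookkeeping of the $\alpha^{-8}$ volume factor and the comparison $r_\alpha(y)\sim r(\alpha y)$) fills in exactly that omitted step. The only additional remark worth making is that the coefficient $\alpha^{-8/p}$ in \eqref{superl 44} should read $\alpha^{8/p}$ to match both \eqref{l16} and your scaling identities, which you in effect already observed.
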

Next we state a suitable  modification of Proposition \ref{prop  superl 4}.
\begin{proposition}\label{prop  superl 6}
Let us consider problem (\ref{superl 40}) but now assuming that (\ref{superl 43}) holds.
There exists a solution of problem (\ref{superl 40}) such that
\begin{align}
\|h_\alpha\|_{*,p,3}\leq C\alpha^{-8/p}\|f_\alpha\|_{p, 4},
\label{superl 45}
\end{align}
where
\begin{align*}
\|h_\alpha\|_{*,p,3}=\alpha^2\|h_\alpha\|_{\infty, 1}+\alpha\|\nabla_{\Gamma_\alpha} h\|_{\infty, 2}+\alpha^{-8/p}\|\nabla_{\Gamma_\alpha}^2 h\|_{p, 3},
\end{align*}
(c.f.  definition of $\|\cdot\|_{*,p,\nu}$ in  (\ref{l16})).
\end{proposition}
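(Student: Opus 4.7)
The plan is to reduce Proposition \ref{prop  superl 6} to Proposition \ref{prop  superl 4} via the natural scaling that sends $\Gamma_\alpha = \alpha^{-1}\Gamma$ back to $\Gamma$. First I would introduce the rescaled functions
$$\tilde h_\alpha(x) := h_\alpha(\alpha^{-1}x), \qquad \tilde f_\alpha(x) := f_\alpha(\alpha^{-1}x), \quad x \in \Gamma,$$
and observe that, because the pulled–back metrics on $\Gamma_\alpha$ and $\Gamma$ coincide at corresponding points $y \leftrightarrow \alpha y$, both $\Delta_{\Gamma_\alpha}$ and $|A_{\Gamma_\alpha}|^2$ scale by the factor $\alpha^2$. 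Hence $\cJ_\alpha(h_\alpha)(y) = \alpha^2\, \cJ(\tilde h_\alpha)(\alpha y)$, and, since the sector $\tilde T$ is invariant under this scaling, problem (\ref{superl 40}) becomes
$$\cJ(\tilde h_\alpha) = \alpha^{-2}\tilde f_\alpha \inn \tilde T, \qquad \tilde h_\alpha = 0 \onn \partial \tilde T,$$
to which Proposition \ref{prop  superl 4} applies directly.

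The second step is to transfer the weighted norms. Using $dV_{\Gamma_\alpha} = \alpha^{-8}\, dV_\Gamma$, the fact that balls $B(y,\theta_0\alpha^{-1})$ on $\Gamma_\alpha$ correspond to $B(\alpha y,\theta_0)$ on $\Gamma$, the invariance $r_\alpha(y) = r(\alpha y)$, and the derivative rescaling $|\nabla^k_{\Gamma_\alpha} h_\alpha|(y) = \alpha^k|\nabla^k_\Gamma \tilde h_\alpha|(\alpha y)$, one finds
$$\|\tilde f_\alpha\|_{p,\nu,\Gamma} = \alpha^{8/p}\|f_\alpha\|_{p,\nu,\Gamma_\alpha}, \qquad \|\nabla^k_\Gamma \tilde h_\alpha\|_{p,\nu,\Gamma} = \alpha^{-k+8/p}\|\nabla^k_{\Gamma_\alpha}h_\alpha\|_{p,\nu,\Gamma_\alpha},$$
together with the corresponding identities for the $L^\infty$-based norms (without the $\alpha^{8/p}$ factor). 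Applying Proposition \ref{prop  superl 4} to the rescaled equation with $\nu = 4$ and $\mu \in (2/3,1)$ chosen close enough to $1$ that $\mu' \in (0,3\mu-2)$ may be taken positive then yields
$$\|\tilde h_\alpha\|_{\infty,\nu'-2} + \|\nabla_\Gamma \tilde h_\alpha\|_{\infty,\nu'-1} + \|\nabla^2_\Gamma \tilde h_\alpha\|_{p,\nu'} \leq C\|\alpha^{-2}\tilde f_\alpha\|_{p,4} = C\alpha^{-2+8/p}\|f_\alpha\|_{p,4},$$
with $\nu' = 3+\mu' > 3$.

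Finally I would undo the scaling and use the elementary monotonicity $\|\cdot\|_{\infty,\lambda}\leq \|\cdot\|_{\infty,\lambda'}$ and $\|\cdot\|_{p,\lambda}\leq \|\cdot\|_{p,\lambda'}$ for $\lambda \leq \lambda'$ (valid because $r_\alpha \geq 1$) to reduce the weights from $\nu'-2,\nu'-1,\nu'$ down to the target $1,2,3$. The factor $\alpha^{-2+8/p}$ on the right exactly cancels the same factor produced by the Hessian conversion, yielding $\|\nabla^2_{\Gamma_\alpha}h_\alpha\|_{p,3} \leq C\|f_\alpha\|_{p,4}$ and hence $\alpha^{-8/p}\|\nabla^2_{\Gamma_\alpha} h_\alpha\|_{p,3} \leq C\alpha^{-8/p}\|f_\alpha\|_{p,4}$; similarly $\alpha^2\|h_\alpha\|_{\infty,1}$ and $\alpha\|\nabla_{\Gamma_\alpha}h_\alpha\|_{\infty,2}$ inherit the strictly smaller bound $C\alpha^{8/p}\|f_\alpha\|_{p,4}$. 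Summing these three inequalities and noting that the dominant prefactor is $\alpha^{-8/p}$ produces (\ref{superl 45}). No new PDE input is required here: the entire analytic content sits in Proposition \ref{prop  superl 4}, and the only real work is the careful bookkeeping of the $\alpha$-exponents, which is precisely the main (and essentially the only) obstacle.
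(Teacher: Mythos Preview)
Your proposal is correct and is precisely the approach the paper has in mind: the paper presents Propositions \ref{prop suprl 5} and \ref{prop  superl 6} explicitly as a summary of the $\alpha=1$ results ``stated in the form more suitable for the reduced problem on $\Gamma_\alpha$,'' i.e.\ as the $\alpha$-rescaled versions of Propositions \ref{prop suprl 3} and \ref{prop  superl 4}. Your scaling of $\cJ_\alpha$, the volume element, and the weighted norms is accurate, and the final bookkeeping leading to \eqref{superl 45} is exactly the intended computation.
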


\setcounter{equation}{0}
\section{Resolution of  the reduced problem}\label{sec sred}

\subsection{Improvement of the initial approximation of $h_\alpha$}
Let us recall the reduced problem derived in Section \ref{section red} (see (\ref{red 13}), (\ref{red 15})):
\begin{align}
\begin{aligned}
\cJ(h_\alpha)&=c_1{\mathcal R}_{1,\alpha}+
{\cF}_\alpha(h_\alpha, \nabla_{\Gamma_\alpha}h_\alpha, \nabla^2_{\Gamma_\alpha} h_\alpha),\inn \tilde T,
\\
h_\alpha&=0, \onn  \partial\tilde T.
\end{aligned}
\label{sred 1}
\end{align}
Notice that  the  boundary conditions imposed above allow to solve (\ref{red 15}) with the symmetry condition (\ref{red 15}) simply by extending the solution of (\ref{sred 1}) to the whole space.

Our plan is to solve (\ref{sred 1}) in two steps:
\begin{enumerate}
\item  We  find the leading order term in the expansion of $h_\alpha$.
\item We use a fixed   point argument to determine $h_\alpha$.
\end{enumerate}
In this section we will perform Step 1.  To begin with let $\Gamma_{0,\alpha}$ be  the surface:
\begin{align*}
\Gamma_{0,\alpha}=\Big\{x_9=\frac{1}{\alpha}F_0(\alpha x')\Big\},
\end{align*}
and
\begin{align*}
\Gamma_{0,\alpha, z}=\{x\in \R^9\mid \mbox{dist}\,(x,\Gamma_{0,\alpha}=z\}.
\end{align*}
The mean curvature of $\Gamma_{0,\alpha, z}$ can be expanded as follows:
\begin{align}
H_{\Gamma_{0,\alpha, z}}&=H_{\Gamma_{0,\alpha}}+z|\tilde A_{\Gamma_{0,\alpha}}|^2+\frac{1}{2} z^2 {\mathcal R}_{01,\alpha}+O\big(\frac{\alpha^4|z|^3}{1+r_\alpha^4}\big).
\label{sred 2}
\end{align}
Let us consider  ${\mathcal R}_{01,\alpha}$. This term is given by
\begin{align*}
\sum_{i=0}^8 \kappa^3_{0i,\alpha}, 
\end{align*}
where 
 principal $\kappa^3_{0i,\alpha}$ are the principal curvatures of $\Gamma_{0,\alpha}$. It follows that it has the same symmetry as the function $F_0$, in other words:
\begin{align*}
{\mathcal R}_{01,\alpha}(u,v)=-{\mathcal R}_{01,\alpha}(v,u),
\end{align*}
hence ${\mathcal R}_{01,\alpha}(u,u)=0$ and  there exists a  $\sigma_1\in (\frac{1}{3},\frac{2}{3})$ such that
\begin{align}
|{\mathcal R}_{01,\alpha}|\leq \frac{C\alpha^3 g(\theta)^\sigma_{1}}{1+r_\alpha^3}.
\label{sred 3}
\end{align}
Also we notice that a similar term in the expansion of $H_{\Gamma_{\alpha,z}}$, denoted above by ${\mathcal R}_{1,\alpha}$ is also equal to the sum of the cubes the principal curvatures of $\Gamma_{\alpha}$. It follows that (see (\ref{def r1alpha}), (\ref{red 17 0})):
\begin{align}
|{\mathcal R}_{1,\alpha}-{\mathcal R}_{01,\alpha}|\leq \frac{C\alpha^{4+\sigma}}{1+r_\alpha^{4+\sigma}},
\label{sred 4}
\end{align}
with some $\sigma>0$, using (\ref{eshm}).

Now going back to (\ref{sred 1}) we let:
\begin{align}
h_{\alpha}=\tilde h_{\alpha}+\tt h_{\alpha},
\label{sred5}
\end{align}
where
\begin{align}
\begin{aligned}
\cJ(\tilde h_\alpha)&=c_1{\mathcal R}_{1,\alpha},\inn \tilde T,
\\
\tilde h_\alpha&=0, \onn  \partial\tilde T.
\end{aligned}
\label{sred 6}
\end{align}
\begin{lemma}\label{lemma sred 1}
For each sufficiently small $\alpha$ there exists a solution of (\ref{sred 6}) such that
\begin{align}
\|\tilde h_\alpha\|_{*,p,3}\leq C\alpha^{3},
\label{sred 7}
\end{align}
provided that $p>9$ is taken sufficiently large.
\end{lemma}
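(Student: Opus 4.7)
The plan is to solve (\ref{sred 6}) by splitting the forcing $c_1 \mathcal{R}_{1,\alpha}$ according to the two solvability theorems for the Jacobi operator $\cJ_\alpha$ developed in Section \ref{sec superl}. Concretely, I would write
\begin{equation*}
c_1 \mathcal{R}_{1,\alpha} = c_1 \mathcal{R}_{01,\alpha} + c_1(\mathcal{R}_{1,\alpha} - \mathcal{R}_{01,\alpha}),
\end{equation*}
and look for $\tilde h_\alpha = \tilde h_\alpha^{(1)} + \tilde h_\alpha^{(2)}$, where each $\tilde h_\alpha^{(i)}$ solves the corresponding linear Dirichlet problem on $\tilde T$ supplied by Section \ref{sec superl}.

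For the principal part, estimate (\ref{sred 3}) gives
\begin{equation*}
|c_1 \mathcal{R}_{01,\alpha}(y)| \leq \frac{C \alpha^3 g(\theta)^{\sigma_1}}{1+r_\alpha^3}, \qquad \sigma_1 \in (1/3,\, 2/3),
\end{equation*}
which is exactly the structure of hypothesis (\ref{superl 42}); in particular $\|c_1 \mathcal{R}_{01,\alpha}\|_{\infty, 3} \leq C \alpha^3$. Proposition \ref{prop suprl 5} then produces $\tilde h_\alpha^{(1)}$ with
\begin{equation*}
\|\tilde h_\alpha^{(1)}\|_{*,p,3} \leq C \alpha^3
\end{equation*}
for every $p \in (9, \infty)$.

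For the remainder, the refinement (\ref{sred 4}), which rests on the Bombieri--De Giorgi--Giusti construction together with the improved closeness estimate (\ref{eshm}), yields
\begin{equation*}
|\mathcal{R}_{1,\alpha} - \mathcal{R}_{01,\alpha}|(y) \leq \frac{C \alpha^{4+\sigma}}{1+r_\alpha^{4+\sigma}}, \qquad \sigma > 0,
\end{equation*}
so that $r_\alpha^{4} |\mathcal{R}_{1,\alpha} - \mathcal{R}_{01,\alpha}|$ is uniformly bounded by $C \alpha^{4+\sigma}$ on $\Gamma_\alpha$. Since the $\Gamma_\alpha$-volume of a ball of radius $\theta_0/\alpha$ is of order $\alpha^{-8}$, integrating over such balls and taking the supremum in the center gives
\begin{equation*}
\|c_1 (\mathcal{R}_{1,\alpha} - \mathcal{R}_{01,\alpha})\|_{p,4} \leq C \alpha^{4+\sigma - 8/p} \leq C \alpha^{3-8/p},
\end{equation*}
which is (\ref{superl 43}). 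Proposition \ref{prop  superl 6} then furnishes $\tilde h_\alpha^{(2)}$ with
\begin{equation*}
\|\tilde h_\alpha^{(2)}\|_{*,p,3} \leq C \alpha^{-8/p} \|c_1(\mathcal{R}_{1,\alpha} - \mathcal{R}_{01,\alpha})\|_{p,4} \leq C \alpha^{4+\sigma - 16/p}.
\end{equation*}
Choosing $p > 9$ large enough that $16/p \leq 1+\sigma$ makes this bounded by $C\alpha^3$, and summing the two contributions gives $\tilde h_\alpha$ with $\|\tilde h_\alpha\|_{*,p,3} \leq C\alpha^3$, as required. The antisymmetry $\mathcal{R}_{1,\alpha}(u,v) = -\mathcal{R}_{1,\alpha}(v,u)$ is respected by the construction and permits $\tilde h_\alpha$ to be extended to the full graph by odd reflection, in agreement with (\ref{red 15}).

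The structural point is the split of $\mathcal{R}_{1,\alpha}$ along the approximation of $\Gamma$ by $\Gamma_0$: the refined closeness provided by Theorem \ref{teo mc2} is what makes (\ref{sred 4}) available, and this in turn lets the sharp pointwise theory of Proposition \ref{prop suprl 5} (together with its helpful $g(\theta)^{\sigma_1}$ factor at the boundary of $\tilde T$) absorb the leading piece, while the coarser $L^{p,4}$ theory disposes of the faster decaying remainder. Beyond this observation, which has already been built into the preceding sections, the only real work is the routine $L^p$-estimate for the remainder and the calibration of $p$ large enough to absorb the factor $\alpha^{-16/p}$.
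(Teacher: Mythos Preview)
Your proposal is correct and follows essentially the same route as the paper: the same decomposition $\mathcal{R}_{1,\alpha}=\mathcal{R}_{01,\alpha}+(\mathcal{R}_{1,\alpha}-\mathcal{R}_{01,\alpha})$, the same invocation of Proposition~\ref{prop suprl 5} for the principal part via (\ref{sred 3}), and of Proposition~\ref{prop  superl 6} for the remainder via (\ref{sred 4}). Your exponent bookkeeping $\alpha^{4+\sigma-16/p}$ is in fact slightly more careful than the paper's displayed $\alpha^{4+\sigma-8/p}$, but the conclusion and the choice of large $p$ coincide.
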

\proof{}
We will write $\tilde h_\alpha=\tilde h_{1,\alpha}+\tilde h_{2,\alpha}$, where
\begin{align}
\begin{aligned}
\cJ(\tilde h_{1,\alpha})&=c_1{\mathcal R}_{01,\alpha},\inn \tilde T,\\
h_{1,\alpha}&=0,\onn \tilde T,
\end{aligned}
\label{sred 8}
\end{align}
and
\begin{align}
\begin{aligned}
\cJ(\tilde h_{2,\alpha})&=c_1 ({\mathcal R}_{1,\alpha}-{\mathcal R}_{01,\alpha}),\inn \tilde T,\\
h_{2,\alpha}&=0,\onn \tilde T.
\end{aligned}
\label{sred 9}
\end{align}
We notice that problem (\ref{sred 8}) has a solution satisfying (\ref{sred 7}) because of Proposition \ref{prop suprl 5} and estimate (\ref{sred 3}). In addition problem (\ref{sred 9}) has a solution satisfying:
\begin{align*}
\|\tilde h_{2,\alpha}\|_{*,p, 3}\leq C \alpha^{4+\sigma-8/p}\leq C\alpha^4,
\end{align*}
if  $p$ is  large, by Proposition \ref{prop superl 6}.
This completes the proof of the Lemma.
\qed

\subsection{The fixed point argument}

With $\tilde h_\alpha$ given by  Lemma \ref{lemma sred 1} we will use the theory of solvability for the Jacobi operator to define a map $\cT_\alpha$ on a subset of  a space of function whose
$\|\cdot\|_{*,p,3}$ norm is bounded into itself. Let $\mu>0$ be a fixed small number and let $p>9$ be large so that
\begin{align*}
\mu<1-32/p.
\end{align*}
Let us set:
\begin{align}
\cB_{\alpha^{2+\mu}}=\{\tf_\alpha\mid \|\tf_\alpha\|_{*,p,3}\leq \alpha^{2+\mu}\}.
\label{sred 10}
\end{align}
Given an $\tf\in B_{\alpha^{2+\mu}}$ we let $\tth_\alpha$ to be a solution of:
\begin{align}
\begin{aligned}
\cJ(\tth_\alpha)&=
\tilde {\cF}_\alpha(\tf_\alpha, \nabla_{\Gamma_\alpha}\tf_\alpha, \nabla^2_{\Gamma_\alpha}\tf_\alpha),\inn \tilde T,
\\
\tth_\alpha&=0, \onn  \partial\tilde T,
\end{aligned}
\label{sred 11}
\end{align}
where:
\begin{align*}
\tilde {\cF}_\alpha(\tf_\alpha, \nabla_{\Gamma_\alpha}\tf_\alpha, \nabla^2_{\Gamma_\alpha}\tf_\alpha)= {\cF}_\alpha(\tf_\alpha+\tilde h_\alpha, \nabla_{\Gamma_\alpha}(\tf_\alpha+\tilde h_\alpha), \nabla^2_{\Gamma_\alpha}(\tf_\alpha+\tilde h_\alpha)).
\end{align*}
Now we define:
\begin{align*}
\cT_\alpha(\tf_\alpha)=\tth_\alpha.
\end{align*}
We observe that by (\ref{red 14}) we have:
\begin{align}
\begin{aligned}
\|\tilde \cF_\alpha\|_{p,4}&\leq C\alpha^{1-8/p}\|\tf_\alpha\|_{*,p,3}+C\alpha^{1-8/p}\|\tilde h_\alpha\|_{*,p,3}+C\alpha^{3-8/p}\\
&\leq C\alpha^{3+\mu-8/p}+C\alpha^{4-8/p}+C\alpha^{3-8/p}\\
&\leq \alpha^{2+\mu+16/p},
\end{aligned}
\label{sred 12}
\end{align}
provided that $\alpha$ is taken sufficiently small, since $3-32/p>2+\mu$.
Then Proposition \ref{prop superl 6} implies that $\cT_\alpha$ is well defined, indeed since by (\ref{superl 45}) we have:
\begin{align*}
\|\tth_\alpha\|_{*,p,3}&\leq C\alpha^{-8/p}\|\tilde \cF_\alpha\|_{p,4}\\
&\leq C\alpha^{2+\mu+8/p}\\
&<\frac{1}{2}\alpha^{2+\mu},
\end{align*}
again by the choice of $p$, taking $\alpha$ small enough.
We will now prove:
\begin{lemma}\label{lemma sred 2}
Mapping $\cT_\alpha$ has a unique fixed point in $\cB_{\alpha^{2+\mu}}$.
\end{lemma}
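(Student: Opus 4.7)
\proof{}
The existence of a fixed point will follow from the Banach contraction principle, once we verify that $\cT_\alpha$ is a contraction on $\cB_{\alpha^{2+\mu}}$. We have already seen that $\cT_\alpha$ maps $\cB_{\alpha^{2+\mu}}$ into itself, so it remains only to establish the contraction estimate.

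Let $\tf^{(1)}_\alpha, \tf^{(2)}_\alpha \in \cB_{\alpha^{2+\mu}}$ and denote $\tth^{(k)}_\alpha = \cT_\alpha(\tf^{(k)}_\alpha)$, $k=1,2$. By linearity of $\cJ$ and the definition of $\cT_\alpha$,
\begin{align*}
\cJ(\tth^{(1)}_\alpha - \tth^{(2)}_\alpha) = \tilde\cF_\alpha(\tf^{(1)}_\alpha + \tilde h_\alpha, \nabla_{\Gamma_\alpha}(\tf^{(1)}_\alpha + \tilde h_\alpha), \nabla^2_{\Gamma_\alpha}(\tf^{(1)}_\alpha + \tilde h_\alpha)) - \tilde\cF_\alpha(\tf^{(2)}_\alpha + \tilde h_\alpha, \dots).
\end{align*}
Applying the linear solvability estimate of Proposition \ref{prop  superl 6} yields
\begin{align*}
\|\tth^{(1)}_\alpha - \tth^{(2)}_\alpha\|_{*,p,3} \leq C\alpha^{-8/p}\,\|\tilde\cF_\alpha(\tf^{(1)}_\alpha + \tilde h_\alpha) - \tilde\cF_\alpha(\tf^{(2)}_\alpha + \tilde h_\alpha)\|_{p,4}.
\end{align*}

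The main task is therefore to establish a Lipschitz estimate for $\cF_\alpha$, namely
\begin{align*}
\|\cF_\alpha(h^{(1)}_\alpha) - \cF_\alpha(h^{(2)}_\alpha)\|_{p,4} \leq C\alpha^{1-8/p}\,\|h^{(1)}_\alpha - h^{(2)}_\alpha\|_{*,p,3}.
\end{align*}
This is the analogue of the bound (\ref{red 14}) and can be derived by inspecting the explicit expressions for the terms $M_{11}$, $\tB_{\alpha 1}$, $\tB_{\alpha 2}$, $\tB_{\alpha 3}$ and the integrals $I_1,\dots,I_5$ collected in Section \ref{section red}. Each of these is either a differential expression in $h_\alpha$ of the type handled in (\ref{red 3})--(\ref{red 6}), which is linear or higher order in $h_\alpha$ and whose Lipschitz dependence is immediate, or it is a term involving $\bar\phi$, for which the Lipschitz dependence on $h_\alpha$ is precisely the content of the bound (\ref{non 10a}) in Proposition \ref{prop2}. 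Applying the same reasoning as in the proof of Proposition \ref{prop2}, with (\ref{non 21}), (\ref{non 22}), (\ref{non 23}) as a guide, and exploiting the smallness of $\alpha$ and of $\|\tilde h_\alpha\|_{*,p,3}+\|\tf^{(k)}_\alpha\|_{*,p,3}=O(\alpha^{2+\mu})$ in the quadratic contributions $\bar\NNN$, gives the desired Lipschitz estimate.

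Combining the two estimates above we obtain
\begin{align*}
\|\cT_\alpha(\tf^{(1)}_\alpha) - \cT_\alpha(\tf^{(2)}_\alpha)\|_{*,p,3} \leq C\alpha^{1-16/p}\,\|\tf^{(1)}_\alpha - \tf^{(2)}_\alpha\|_{*,p,3}.
\end{align*}
Since $p>9$ was chosen so that $\mu < 1 - 32/p$, in particular $1 - 16/p > 0$, and so for all sufficiently small $\alpha$ the constant $C\alpha^{1-16/p} < \tfrac{1}{2}$. Hence $\cT_\alpha$ is a contraction on the closed ball $\cB_{\alpha^{2+\mu}}$ (equipped with the norm $\|\cdot\|_{*,p,3}$), and the Banach fixed point theorem produces a unique fixed point. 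The main obstacle in this argument is the careful verification of the Lipschitz estimate for $\cF_\alpha$, since the map involves the nonlocal dependence of $\bar\phi$ and $\bar\psi$ on $h_\alpha$; however this is already handled by Proposition \ref{prop2} together with the term-by-term analysis of Section \ref{section red}.
\qed
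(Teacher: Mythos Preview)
Your proof is correct and follows essentially the same route as the paper: Banach fixed point, with the key input being the Lipschitz estimate for $\cF_\alpha$ obtained by revisiting the term-by-term analysis of Section~\ref{section red} together with the Lipschitz dependence of $\bar\phi$ on $h_\alpha$ from Proposition~\ref{prop2} (in particular \eqref{non 10a} and \eqref{non 23a}--\eqref{non 23}). In fact your bookkeeping is slightly more careful: the paper states a Lipschitz constant of order $\alpha^{1-8/p}$, whereas combining the $\alpha^{-8/p}$ loss from Proposition~\ref{prop  superl 6} with the $\alpha^{1-8/p}$ Lipschitz bound on $\cF_\alpha$ gives your $\alpha^{1-16/p}$; also the paper's displayed estimate \eqref{sred 13} has $\tth^{(j)}_\alpha$ on the right where $\tf^{(j)}_\alpha$ is clearly intended.
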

\proof{}
In view of (\ref{sred 12}) to use Banach fixed point theorem we need to show that $\cT_\alpha$ is a contraction map.  Let $\tf^{(j)}_\alpha\in \cB_{\alpha^{2+\mu}}$ be fixed and $\tth^{(j)}_\alpha=\cT_\alpha(\tf_\alpha^{(j)})$, $j=1,2$. We claim that
\begin{align}
\||\tilde \cF_\alpha(\tf^{(1)}_\alpha)-\tilde \cF_\alpha(\tf^{(2)}_\alpha)\|_{p,4}\leq C\alpha^{1-8/p}\|\tth^{(1)}_\alpha-\tth^{(2)}_\alpha\|_{*,p,3}.
\label{sred 13}
\end{align}
This amounts to calculations similar as in the Section \ref{section red} but taking into account two  solutions $\phi^{(j)}$, $j=1,2$ of the projected nonlinear problem and subtracting the resulting projections. The key estimates are  (\ref{non 10a}) and also  (\ref{non 23a})--(\ref{non 23}).  The somewhat tedious details are omitted here. From (\ref{sred 13}) we conclude that $\cT_\alpha$ is Lipschitz with a constant proportional to $\alpha^{1-8/p}$. Taking $\alpha$ smaller if necessary we show that $\cT_\alpha$ is a contraction map.

\qed

\section{Conclusion of the proof of the main theorem}

Let us summarize the results of our considerations so far. Given the solution to the nonlinear projected problem $\phi$ and the corresponding solution $h_\alpha$ to the reduced problem  found above we have found a  function $u_\alpha$ such that
\begin{align*}
u_\alpha=\tt w+\tt w_1 +\eta^\alpha_{2\delta} \phi + \psi(\phi),
\end{align*}
and
\begin{align*}
\Delta u_\alpha +u_\alpha(1-u_\alpha)=0, \inn \R^9.
\end{align*}
Clearly $u_\alpha$ is a bounded function.  Also  $u_\alpha$ obeys the  symmetry of the minimal graph $\Gamma_\alpha$:
\begin{align}
u_\alpha(u,v,x_9)=-u(v,u,-x_9),
\label{conc 1}
\end{align}
from which it follows in particular
\begin{align*}
u_\alpha(0)=0.
\end{align*}
To finish the result of the theorem, we need to prove  that
the solution $u_\alpha$ of the Allen-Cahn equation obtained this way is
in fact monotone in the $x_9$-direction.

Observe that the function $\psi_\alpha := \partial_{x_9} u_\alpha$ is a solution of the linear problem
$$
\Delta \psi_\alpha + f'(u_\alpha) \psi_\alpha = 0 .
$$
We claim that the construction yields that inside any bounded  neighborhood of $\Gamma_\alpha$   of the form $\cN_M=\{\mbox{dist}\,(x,\Gamma_\alpha)<M\}$ we have
$\psi_\alpha>0$.  Indeed,
\begin{align*}
\partial_{x_9}u_\alpha&=\partial_{x_9} w(z-h_\alpha)+O\big(\frac{\alpha^2}{1+r_\alpha^2}\big)\\
&=w'(z-h_\alpha)\partial_{x_9} z+O\big(\frac{\alpha^2}{1+r_\alpha^2}\big),
\end{align*}
where $z$ is the  Fermi coordinate of $\Gamma_\alpha$. We see that if $|z|$ is bounded then
\begin{align*}
\partial_{x_9} z\sim \frac{1}{\sqrt{1+|\nabla F_\alpha|^2}}=O\big(\frac{1}{1+r^2_\alpha}\big),
\end{align*}
by (\ref{coord 6}).
This shows our claim.
Taking  $M$  sufficiently large (but independent on $\alpha$)  we can achieve $f'(u_\alpha)>-3/2$ outside of  $\cN_M$.  We claim that we cannot have that $\psi_\alpha<0$ in $\cN_M^c$.
Indeed, a non-positive  local minimum of $\psi_\alpha$  is discarded by maximum principle. If there were  a sequence of points $x_n\in \R^9$, such that
\begin{align*}
\psi_\alpha(x_n)<0,
\end{align*}
$|x_n|\to \infty$,  and at the same time $\mbox{dist}\,(x_n,\Gamma_\alpha)>M$, for some large $M$,   the usual
compactness argument would give us a nontrivial bounded solution of
$$
\Delta \psi -c\psi = 0  \inn \R^9,\quad c(x)>1,
$$
hence a contradiction. We conclude that $\psi_\alpha>0$ in entire $\R^9$ and the proof of the theorem is concluded. \qed

}

%%%%%%%%%%%%%%%%%%%%%%%%%%%%%%%%%%%%%%%%%%%%%%%%%%%

\setcounter{equation}{0}
\section{Appendix A}\label{apendix a}
In this appendix we will provide the details of the computations needed in the proof of Lemma \ref{sup4}. We will collect first some terms appearing in the expansion formula (\ref{a terms}). We have
$\varphi(t,s)=t r^{-\sigma}$ and
\begin{align}
\begin{aligned}
\partial_t\varphi&=\frac{1}{r^\sigma} ( 1 -\frac{\sigma}{3} \cos^2 \phi),\quad \partial_s\varphi=
-\frac{\sigma t \sin^2 \phi}{ 7 r^\sigma s} \\
\partial_t^2\varphi &=\frac{C \sigma}{9 r^\sigma t} \cos^2 \phi [ \sigma \cos^2 \phi - 3 +2  \phi^{'} \sin^2 \phi  ]\\
\partial_{ts}\varphi&=-\frac{\sigma\sin^2\phi}{7r^\sigma s}\Big(1+\frac{2\phi'\cos^2\phi}{3}
-\frac{\sigma\cos^2\phi}{3}\Big)
\end{aligned}
\label{apa 1}
\end{align}
We also have
\begin{align}
\begin{aligned}
\frac{\rho^{-2} \varphi_s}{|\nabla F_0|^2}&= - \frac{ 7 s \sigma  \cos^2 \phi}{9 t r^{\sigma}}\\
\partial_s \Big(\frac{\rho^{-2} \varphi_s}{|\nabla F_0|^2}\Big)&= -\frac{ 7  \sigma \cos^2 \phi}{ 9 t r^\sigma}\Big [ 1+ \frac{ \sin^2 \phi}{7} (2\phi^{'} -\sigma)\Big]\\
\frac{\rho^{-2} \varphi_s^2}{|\nabla F_0|^2}&=  \frac{\sigma^2 \sin^2 \phi \cos^2 \phi}{9  r^{2\sigma}}\\
\partial_s \Big(\frac{\rho^{-2} \varphi_s^2}{|\nabla F_0|^2}\Big)&= - \frac{2 \sigma^2 \sin^2 \phi \cos^2 \phi}{63  r^{2 \sigma} s} ( \sigma \sin^2 \phi +  \phi^{'} \cos (2\phi) )
\\
\partial_t \Big(\frac{\rho^{-2} \varphi_s^2}{|\nabla F_0|^2}\Big)&= \frac{ 2  \sigma^2 \sin^2 \phi \cos^2 \phi}{ 27 t r^{2\sigma}} [ -\sigma  \cos^2 \phi +  \phi^{'}\cos (2\phi)  ]
\end{aligned}
\label{apa 2}
\end{align}
Using formula (\ref{a terms}) we get
\begin{align}
\begin{aligned}
H_2&=-\frac{1}{2}\partial_t \Big(\frac{\rho^{-2} \varphi_s^2}{|\nabla F_0|^2}\Big)
+2\partial_t\varphi \partial_s \Big(\frac{\rho^{-2} \varphi_s}{|\nabla F_0|^2}\Big)
- \Big(\frac{\rho^{-2} \varphi_s}{|\nabla F_0|^2}\Big)\partial_{ts}\varphi\\
H_3&=\Big(\frac{\rho^{-2} \varphi^2_s}{|\nabla F_0|^2}\Big)\partial^2_t\varphi-\frac{1}{2}\partial_t\varphi \partial_t\Big(\frac{\rho^{-2} \varphi^2_s}{|\nabla F_0|^2}\Big)+\Big[(\partial_t\varphi)^2+\Big(\frac{\rho^{-2} \varphi^2_s}{|\nabla F_0|^2}\Big)\Big]\partial_s\Big(\frac{\rho^{-2} \varphi_s}{|\nabla F_0|^2}\Big)\\
&\quad-\partial_t\varphi\partial_{ts}\varphi \Big(\frac{\rho^{-2} \varphi_s}{|\nabla F_0|^2}\Big)
-\frac{1}{2}\Big(\frac{\rho^{-2} \varphi_s}{|\nabla F_0|^2}\Big)\partial_s\Big(\frac{\rho^{-2} \varphi^2_s}{|\nabla F_0|^2}\Big)
\end{aligned}
\label{apa 3}
\end{align}
From (\ref{apa 1})--(\ref{apa 3}) we get by direct calculation
\begin{align}
\begin{aligned}
H_2&=\frac{ \sigma^2 \sin^2 \phi \cos^2 \phi}{ 27 t r^{2 \sigma}} [\sigma \cos^2 \phi  - \cos (2\phi) \phi^{'}] \\
&\quad-\frac{ 2\sigma \cos^2 \phi}{27 t r^{2\sigma}} (3-\sigma \cos^2 \phi) [7 + (2\phi^{'}-\sigma) \sin^2 \phi]
\\
&\quad +  \frac{ \sigma \cos^2 \phi \sin^2 \phi}{27 t r^{2\sigma}} ( 3-\sigma \cos^2 \phi +2 \cos^2 \phi \phi^{'} )\\
&=  \frac{\sigma \cos^2 \phi}{27 t r^{2\sigma}} [ - 6 ( 7+2 \phi^{'} \sin^2 \phi) + (3 + 2 \cos^2 \phi \phi^{'})\sin^2 \phi  +  O(\sigma) ]\\
&=  \frac{\sigma \cos^2 \phi}{27 t r^{2\sigma}} [ - 42+ \sin^2 \phi (-12\phi^{'} +3+2\phi' \cos^2 \phi) + O(\sigma)] <0,
\end{aligned}
\label{apa 4}
\end{align}
and
\begin{align}
\begin{aligned}
H_3&=\frac{\sigma^2 \sin^2 \phi \cos^2 \phi}{81 t r^{3 \sigma}} [ \sigma \cos^2 \phi  - 3 \cos 2\phi   ] \phi^{'}\\
&\quad- \frac{ \sigma \cos^2 \phi}{ 81 t r^{3\sigma}} (  9- 6 \sigma \cos^2 \phi + \sigma^2 \cos^2 \phi) (7+ (2 \phi^{'} -\sigma) \sin^2 \phi)\\
&\quad
+ \frac{\sigma^3 \sin^2 \phi \cos^4 \phi}{ 81 t r^{3\sigma}} ( \sigma \sin^2 \phi  + \cos (2\phi) \phi^{'}) \\
&\quad+ \frac{ \sigma \sin^2 \phi \cos^2 \phi}{ 81 t r^{3 \sigma}} ( 3-\sigma \cos^2 \phi)( 3-\sigma \cos^2 \phi +2 \cos^2 \phi \phi^{'})\\
&=\frac{ \sigma  \cos^2 \phi}{ 27 t r^{3\sigma}}  \left [  \sin^2 \phi (3+2 \cos^2 \phi \phi^{'}) -3 ( 7+ (2\phi^{'} -\sigma) \sin^2 \phi)\right.
\\
&\qquad\left. -\sigma \sin^2 \cos (2\phi) \phi^{'} + O(\sigma^2 \cos^2 \phi)  \right ]
\\
&= \frac{ \sigma  \cos^2 \phi}{ 27 t r^{3\sigma}}  \left [ -21 \cos^2 \phi - (6-\sigma)  \sin^2 \phi (\phi^{'}+3) \right. \\
&\qquad\left.+ (2-2\sigma) \cos^2 \phi \sin^2 \phi \phi^{'}  + O(\sigma^2 \cos^2 \phi)  \right ] \leq 0,
\end{aligned}
\label{apa 5}
\end{align}
when $\sigma>0$ is sufficiently small. From this we get (\ref{mc5a}). The proof of (\ref{mc5}) is similar.

%\setcounter{equation}{0}
%\section{Appendix B}

\bigskip
{\bf Acknowledgments:} The first author has been partly supported
by research grants Fondecyt 1070389 and FONDAP, Chile.  The second
author has been supported by  Fondecyt grant 1050311 and Nucleus
Millennium grant P04-069-F. The research of the third author is
partially supported by an Earmarked Grant from RGC of Hong Kong and a Direct Grant from CUHK. We   wish to thank Professor L.F. Cheung, Professor E.N. Dancer, Professor C.-F. Gui, Professor N. Ghoussoub and Professor F. Pacard for useful discussions.

%%%%%%%%%%%%%%%%%%%%%%%%%%%%%%%%%%%%%%%%%%%%%%

\end{document}